\documentclass[12pt]{amsart}
\usepackage[margin=1.3in]{geometry}
\usepackage{amsthm}
\usepackage{amssymb}
\usepackage{mathrsfs}
\usepackage[colorlinks=true,linkcolor=black,anchorcolor=black,citecolor=black,filecolor=black,menucolor=black,runcolor=black,urlcolor=black]{hyperref}
\usepackage{verbatim}
\usepackage{graphicx}
\usepackage{xcolor}
\usepackage{array}

\makeatletter

\let\c@table\c@figure
\makeatother

\usepackage{todonotes}\newcommand{\todoin}[2][]{\todo[inline,#1]{#2}}

\def\incplotscl#1#2#3{\begin{minipage}{#1}\begin{center}\includegraphics[width=#1]{#2}\\[10pt] \sf #3 \end{center}\end{minipage}}

\newtheorem{theorem}{Theorem}[section]
\newtheorem{lemma}[theorem]{Lemma}
\newtheorem{prop}[theorem]{Proposition}

\theoremstyle{definition}
\newtheorem{definition}[theorem]{Definition}
\newtheorem{notation}[theorem]{Notation}

\newtheorem{question}[theorem]{Question}

\newtheorem{remark}[theorem]{Remark}

\newcommand{\boatstar}{{\bigstar}}
\newcommand{\pentagon}{{\rm pent}}
\newcommand{\robinson}{{\blacktriangle}}
\newcommand{\kitedart}{{\spadesuit}}
\newcommand{\rhombus}{{\blacklozenge}}
\newcommand{\AB}{{\blacksquare}}

\newcommand{\subst}{{\operatorname{S}}}

\newcommand{\bbN}{{\mathbb{N}}}
\newcommand{\bbR}{{\mathbb{R}}}
\newcommand{\bbZ}{{\mathbb{Z}}}


\newcommand{\calE}{{\mathcal{E}}}
\newcommand{\calP}{{\mathcal{P}}}
\newcommand{\calR}{{\mathcal{R}}}

\newcommand{\calT}{{\mathcal{T}}}
\newcommand{\calV}{{\mathcal{V}}}

\newcommand{\scrH}{{\mathscr{H}}}

\newcommand{\tr}{{\mathrm{Tr}}}

\definecolor{purple}{rgb}{.5,0,1}
\definecolor{green}{rgb}{0,.5,0}
\definecolor{orange}{rgb}{1,.5,0}

\DeclareMathOperator{\supp}{supp}

\sloppy
\raggedbottom

\numberwithin{equation}{section}

\begin{document}

\title[Discontinuities of the IDS for Aperiodic Tilings]{Discontinuities of the Integrated Density\\ of States for Laplacians Associated with\\ Penrose and Ammann--Beenker Tilings}

\author[D. Damanik]{David Damanik}
\address{Department of Mathematics, Rice University, Houston, TX~77005, USA}
\email{damanik@rice.edu}

\author[M.\ Embree]{Mark Embree}
\address{Department of Mathematics, Virginia Tech, Blacksburg, VA 24061, USA}
\email{embree@vt.edu}

\author[J. Fillman]{Jake Fillman}
\address{Department of Mathematics, Texas State University, San Marcos, TX 78666, USA}
\email{fillman@txstate.edu}

\author[M.\ Mei]{May Mei}
\address{Department of Mathematics, Denison University, Granville, OH 43023, USA}
\email{meim@denison.edu}

\begin{abstract}
Aperiodic substitution tilings provide popular models for quasicrystals, materials exhibiting aperiodic order.
We study the graph Laplacian associated with four tilings from the mutual local derivability class of the Penrose tiling,  as well as the Ammann--Beenker tiling.  
In each case we exhibit locally-supported eigenfunctions, which necessarily cause jump discontinuities in the integrated density of states for these models.  
By bounding the multiplicities of these locally-supported modes, in several cases we provide concrete lower bounds on this jump.
These results suggest a host of questions about spectral properties of the Laplacian on aperiodic tilings,
which we collect at the end of the paper.
\end{abstract}

\maketitle

\setcounter{tocdepth}{1}
\tableofcontents

\noindent\keywords{\textbf{Keywords:} Laplacians on aperiodic tilings, eigenvalue computations, locally-supported eigenfunctions, quasicrystals}

\hypersetup{
	linkcolor={black!30!blue},
	citecolor={red},
	urlcolor={black!30!blue}
}


\section{Introduction}
\subsection{Prologue}
The structure of ordered materials such as crystals has long been a topic of fascination in mathematics and science. The discovery of quasicrystals in the 1980s ushered in new techniques and motivations for investigating aperiodic structures with underlying symmetries. 
Following this discovery, the study of electronic transport properties of particles in quasi\-crystalline media has become a fundamental question in mathematics and physics.

Since their discovery in the 1980s by Shechtman \emph{et al.}\ \cite{SBGC1984PRL}, quasicrystals have generated substantial interest in mathematical physics. For a sample of the mathematical literature devoted to quasicrystals and the mathematics of aperiodic order, see \cite{BaakeGrimm2013:AOVol1, BaakeGrimm2013:AOVol2, BaakeMoody2000CRM, KellLenzSav2015, Moody1997NATO, Patera1998FIM} and references therein. Given the physical origins of these models, there has naturally been interest in the analysis of quantum mechanical systems associated with quasicrystals. As such, many researchers have studied spectral problems associated with self-adjoint operators that inherit their structure from a mathematical model of a quasicrystal. From this perspective, one-dimensional quasicrystal models have been discussed extensively, since those models enjoy the largest variety of tools in the spectral toolbox.  Particularly refined results have been obtained for the Fibonacci Hamiltonian, the most prominent one-dimensional quasicrystal model; see, e.g., \cite{DamGorYes2016Invent, KohSutTan1987, OPRSS1983, Suto1987CMP, Suto1989JSP} and references therein. 

The Penrose tiling is a two-dimensional structure that shares many features with quasicrystals discovered in nature, such as five-fold rotational symmetry and the pure point nature of suitable diffraction measures associated with the tiling \cite{BaakeGrimm2013:AOVol1, TilingsEncyclopedia,Gardner1997, GrunbaumShephard1987}. Despite a substantial amount of interest from mathematics and physics, there are relatively few results about the spectral theory of Laplacians on the Penrose tiling, due to the disappearance of some of the crucial tools used in the analysis of one-dimensional quasicrystals. One surprising spectral phenomenon that these operators can exhibit is the presence of locally-supported eigenfunctions. It is known that such locally-supported eigenfunctions can never occur for finite-range operators on $\ell^2(\bbZ^d)$.

One can construct Laplacians from tilings in two different ways: hopping between tiles and hopping between vertices. We distinguish these paradigms as the ``tile model'' and the ``vertex model,'' respectively. For both the tile and the vertex model associated with the rhombus tiling, the presence of locally-supported eigenfunctions was observed in the 1980s \cite{ATFK1988PRB, FATK1988PRB, KohSut1986PRL}. Several other prominent tilings (Robinson triangle, boat--star, and kite--dart) are equivalent to the Penrose tiling, in the sense of mutual local derivability (MLD).\ \ On one hand, the presence of finitely supported eigenfunctions depends very sensitively on the local structure of the tiling on which one studies the Laplacian, and hence one would not expect the existence of such eigenfunctions to hold universally in a given MLD class, since the MLD relation can alter the local structure of a tiling. Nevertheless, we study four tilings which are in the MLD class of the Penrose tiling and show that all of them exhibit locally-supported eigenfunctions (and hence exhibit a discontinuous IDS).\ \ More recently, vertex models associated with the Penrose and Ammann--Beenker tiling were studied in \cite{MirzOkt2020PRB, Oktel2021, Oktel2022}. We study the tile model for the Ammann--Beenker tiling (also called the octagonal tiling), showing that it too exhibits locally-supported eigenfunctions.

\subsection{Setting and Results}\label{ss:settings}

Let us now define the relevant objects and state our results. We will work with Laplacians on graphs associated with tilings. A \emph{graph} $\Gamma = (\calV,\calE)$ consists of a nonempty set $\calV$ of \emph{vertices} and a set $\calE$ comprised of unordered pairs of elements of $\calV$.
We write $u\sim v$ if $(u,v) \in \calE$ and say that $u$ and $v$ are \emph{connected} by an edge. The \emph{degree} of $v$ is the number of neighbors of $v$: $\deg(v) = \#\{u : u \sim v\}$.

The \emph{Laplacian} on the graph $\Gamma = (\calV,\calE)$ is the operator
\[\Delta=\Delta_\Gamma:\ell^2(\calV) \to \ell^2(\calV), \quad
[\Delta\psi](v) = \sum_{u \sim v}\big(\psi(v)-\psi(u)\big).\]
Equivalently, one can define $\Delta = \mathcal{D}-\mathcal{A}$, where $\mathcal{D}$ is the degree operator and $\mathcal{A}$ is the adjacency operator:
\begin{align*}
[\mathcal{D}\psi](v) & = \deg(v)\psi(v) \\
[\mathcal{A}\psi](v) & = \sum_{u \sim v} \psi(u).
\end{align*}

We are interested in infinite graphs that arise from substitution tilings of the plane by polygons.
Namely (once a tiling of the plane by polygons has been constructed), the associated graph has one vertex for each polygon of the tiling, and two vertices are connected if the associated polygons share at least one edge.

One fruitful way to study such infinite graphs is to analyze finite truncations.
Namely, one may consider finite subsets $\calV_n \subseteq \calV$ and $\calE_n = \{(u,v) : u,v \in \calV_n\}$ with $\calV_n \uparrow \calV$ in a suitable sense, and let $\Delta_n := \Delta_{\Gamma_n}$ denote the Laplacian on the finite graph $\Gamma_n = (\calV_n,\calE_n)$.
The normalized eigenvalue counting measure is given by
\begin{equation} \label{eq:intro:NuNDef}
 \nu_n(B) = \frac{1}{\#\calV_n} \tr \, \chi_B(\Delta_n), \quad B \subseteq \bbR \text{ measurable}.
 \end{equation}
Under suitable assumptions (which are met in all of the cases under consideration in the present work),
$\nu_{n}$ converges in the weak$^*$ sense to a limiting measure
\begin{equation} \label{eq:intro:NuDef}\nu = \nu_\Gamma,
\end{equation} which we call the \emph{density of states measure} (DOSM) of the graph $\Gamma$, and the limit is indepedent on the choice of $\{\calV_n\}_{n=1}^\infty$; this is described in more detail in \cite{LenzStollmann2003MPAG, LenzStollmann2001, LenzStollmann2005}.

The \emph{integrated density of states} of $\Gamma$ is the accumulation function of the measure $\nu_{\Gamma}$:
\begin{equation} \label{eq:intro:kGammaDef}
k_\Gamma(E) = \nu_\Gamma\big((-\infty,E]\big).
\end{equation}

One is then naturally interested in regularity properties of this function: is it continuous on suitable intervals, and if so, what can one say about the modulus of continuity there, and so on. 

Here, we study questions of this kind for various versions of the \emph{Penrose tiling}.

\begin{notation}
We use $\Gamma_{\boatstar}$, $\Gamma_{\robinson}$, $\Gamma_{\rhombus}$, $\Gamma_{\kitedart}$, $\Gamma_\AB$ to refer to the graphs of the boat--star tiling, the Robinson triangle tiling, the rhombus tiling, the kite--dart tiling, and the Ammann--Beenker tiling respectively. For ease of notation, we drop the $\Gamma$ when referring to the corresponding integrated density of states as $k_\boatstar$, $k_\robinson$, $k_\rhombus$, $k_\kitedart$, and $k_\AB$.
\end{notation}

\begin{theorem} \label{t:main}
If $\square \in \{\boatstar, \robinson, \rhombus, \kitedart\}$, then the integrated density of states $k_\square$ is discontinuous.
\end{theorem}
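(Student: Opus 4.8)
The plan is to prove discontinuity of the IDS by exhibiting, for each of the four tilings, a single compactly-supported eigenfunction of the graph Laplacian. The logical backbone is the standard fact (alluded to in the introduction) connecting locally-supported eigenfunctions to jumps in the IDS: if $\psi$ is a finitely-supported eigenfunction of $\Delta_\Gamma$ with eigenvalue $E$, then by the hierarchical (substitution) structure of the tiling one can find infinitely many disjoint translated copies of $\psi$'s support, each carrying a linearly independent eigenfunction at the same $E$. This forces the eigenvalue $E$ to have positive density in the spectrum, i.e., $\nu_\Gamma(\{E\}) > 0$, which is precisely a jump discontinuity of $k_\square$ at $E$. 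So the theorem reduces to producing one locally-supported mode for each $\square \in \{\boatstar,\robinson,\rhombus,\kitedart\}$.

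First I would fix, for each tiling, a small cluster of tiles (patch) that recurs infinitely often in the tiling — its recurrence is guaranteed by repetitivity/linear repetitivity of primitive substitution tilings. Within such a patch I would look for a vertex-supported vector $\psi$ that is zero outside the patch and satisfies $[\Delta\psi](v) = E\psi(v)$ at every vertex, \emph{including the boundary vertices} where $\psi=0$ but $\psi$ may be nonzero on neighbors inside the patch. The defining linear condition is: at each vertex $v$ outside the support, the neighbors of $v$ lying inside the support must have $\psi$-values summing to zero (so that $\mathcal{A}\psi$ vanishes there), and at each vertex $v$ inside the support, $(\deg(v)-E)\psi(v) = \sum_{u\sim v}\psi(u)$. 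The cleanest constructions exploit a local symmetry of the patch: if the patch admits a reflection or rotation under which two tiles are swapped, one can place values $+1$ and $-1$ on the swapped tiles (and $0$ elsewhere) so that the antisymmetry forces cancellation at every external neighbor, and one solves for the eigenvalue $E$ from the internal equations. I would carry this out concretely for each of the four tilings, recording the patch, the support of $\psi$, the values, and the resulting eigenvalue.

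The key verification steps, in order, are: (1) identify the recurring patch and its local symmetry; (2) write down the candidate $\psi$ supported on a few tiles; (3) check the eigenvalue equation at every vertex touched by the support, which is a finite linear algebra computation — the external-vertex equations confirm cancellation and the internal equations pin down $E$; (4) invoke repetitivity to get infinitely many disjoint copies of the patch and hence infinitely many linearly independent eigenfunctions at the same $E$, each supported on disjoint vertex sets; (5) conclude $\nu_\Gamma(\{E\})>0$ and therefore that $k_\square$ jumps at $E$. Steps (4)–(5) are uniform across all four cases and only need to be argued once, reducing the per-tiling work to step (3).

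The main obstacle I anticipate is step (2)–(3): finding a genuinely finitely-supported eigenfunction is not automatic, because the eigenvalue equation at the boundary vertices (where $\psi=0$) imposes the cancellation constraints $\sum_{u\sim v,\,u\in\supp\psi}\psi(u)=0$, and these constraints can be overdetermined unless the patch has enough local symmetry or enough internal degrees of freedom to absorb them. For the rhombus tiling this is classical \cite{KohSut1986PRL, ATFK1988PRB, FATK1988PRB}, but for the boat--star, Robinson triangle, and kite--dart tilings the adjacency structure differs (tiles meeting along shared edges rather than vertices), so I expect to hunt for the right patch separately in each case — perhaps enlarging the support beyond an antisymmetric pair when a single symmetry is insufficient to kill all boundary equations. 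Confirming that the chosen support is \emph{minimal enough} to recur, yet \emph{large enough} to satisfy every cancellation condition, is where the real case-by-case effort lies; once a valid $(\psi,E)$ is exhibited in each tiling, the remainder of the argument is a direct application of the density-of-states framework described in \cite{LenzStollmann2003MPAG, LenzStollmann2001, LenzStollmann2005}.
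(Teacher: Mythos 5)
Your overall framework is exactly the paper's: exhibit a finitely-supported eigenfunction for each tiling, note that (by linear repetitivity) a one-tile neighborhood of its support occurs with positive frequency, and conclude that $\nu_\Gamma(\{E\})>0$, i.e., that $k_\square$ jumps at $E$. That reduction is correct and is the easy, well-known half of the argument (the paper cites \cite{KlasLenzStol2003CMP} for the equivalence). One small precision point: ``infinitely many disjoint copies'' alone does not give $\nu_\Gamma(\{E\})>0$ --- you need the copies to occur with positive \emph{frequency} per unit volume, which is what linear repetitivity supplies; you gesture at this but should state it as the operative hypothesis.

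The genuine gap is that you never actually produce a single eigenfunction. The entire mathematical content of the theorem is the existence of locally-supported modes for these four \emph{specific} tilings, and that existence is not automatic --- the paper emphasizes that it depends sensitively on local structure and would not be expected to hold uniformly across an MLD class. Your step (2)--(3) is described as a ``hunt'' to be carried out later, and your proposed mechanism (an antisymmetric $\pm 1$ pair swapped by a local reflection) is too small to work in any of the four cases: the actual modes are a ring of ten pentagons at $E=4$ for the boat--star tiling, a $20$-tile ring at $E=2$ and $E=4$ for the Robinson triangles, several overlapping configurations on $15$--$50$ tiles at $E=6$ for the rhombi, and $40$-tile rings with values $\pm 1,\pm 1/\varphi$ at the irrational energies $E=6-\varphi$ and $E=5+\varphi$ for the kite--dart tiling. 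Each of these must be written down and verified against the cancellation conditions at every external neighbor (and one must check that the verified local environment recurs, i.e., that every occurrence of the enlarged patch supports the mode). Until you exhibit a concrete $(\psi,E)$ in each of the four cases, the proof is a correct strategy with its load-bearing step missing.
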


A similar result holds for the \emph{Ammann--Beenker tiling}.

\begin{theorem} \label{t:main:ABtiling}
The integrated density of states $k_\AB$ is discontinuous.
\end{theorem}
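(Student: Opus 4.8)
The plan is to prove Theorem~\ref{t:main:ABtiling} by exhibiting a compactly-supported eigenfunction of the Laplacian $\Delta_\AB$ on the Ammann--Beenker tiling, since the existence of even a single such eigenfunction forces a jump discontinuity in $k_\AB$. The logical engine here is that a finitely-supported eigenfunction produces an eigenvalue of infinite multiplicity for the infinite-volume operator: because the Ammann--Beenker tiling is repetitive (linearly recurrent), any local patch supporting the eigenfunction recurs with positive frequency throughout the plane, and translated copies of the eigenfunction are linearly independent and mutually orthogonal. This positive-frequency occurrence contributes a positive atom $\nu_\AB(\{\lambda\}) > 0$ to the density of states measure at the corresponding eigenvalue $\lambda$, which is exactly a jump of $k_\AB$ at $E = \lambda$.

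First I would fix the tile model: vertices are the octagons, squares (and their rotations) of a fixed Ammann--Beenker patch, with adjacency given by edge-sharing. Then I would search for a local configuration---most naturally a symmetric ``star'' or ring of tiles surrounding a central vertex of the tiling---on which one can write down an explicit vector that vanishes outside the patch and satisfies $\Delta_\AB \psi = \lambda \psi$ at \emph{every} vertex, including the boundary vertices where $\psi$ is zero but neighbors outside the patch must see a vanishing net contribution. The key structural input is the eightfold symmetry of the Ammann--Beenker tiling: around a suitable high-symmetry vertex the incident tiles form an orbit under a rotation of order $8$ (or a dihedral symmetry), and one takes $\psi$ to be an eigenvector of that finite symmetry group. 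Choosing $\psi$ in a nontrivial character (for instance assigning values $\omega^j$ with $\omega$ a primitive eighth root of unity, or an alternating $\pm 1$ pattern) makes the sums $\sum_{u \sim v}\psi(u)$ cancel at each boundary vertex adjacent to the support, which is precisely the condition needed for the eigenvalue equation to hold on all of $\ell^2(\calV)$ and not merely on the interior.

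The main obstacle is the boundary/consistency condition rather than the interior equation. Producing a vector that solves $\Delta\psi = \lambda\psi$ on the interior of a patch is easy---it is a finite linear-algebra problem---but the genuinely restrictive requirement is that for every vertex $v$ \emph{outside} the support that is adjacent to the support, the balance $\sum_{u \sim v}\psi(u) = 0$ holds, so that the equation $[\Delta\psi](v) = 0$ is satisfied there. One must therefore locate a patch whose interior geometry carries enough symmetry to annihilate these outgoing sums; this is why the candidate support is tied to a rotationally symmetric cluster of tiles whose outward-facing neighbors each receive a symmetric set of values summing to zero. I expect to verify this by identifying the precise adjacency pattern of the relevant vertex star in the Ammann--Beenker tile graph and checking that the symmetric ansatz kills every boundary sum; once a single such $\psi$ is found, the eigenvalue $\lambda$ is read off directly from the interior equations.

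Finally, I would record the frequency/multiplicity bookkeeping: by repetitivity the supporting patch appears with a well-defined positive frequency $\rho > 0$ per unit tile, the corresponding translates give linearly independent eigenfunctions for $\lambda$, and hence $\nu_\AB(\{\lambda\}) \geq \rho > 0$. This yields $k_\AB(\lambda) - \lim_{E \uparrow \lambda} k_\AB(E) > 0$, establishing the claimed discontinuity. The same scheme is what underlies Theorem~\ref{t:main}, so the Ammann--Beenker case differs only in the specific symmetry (eightfold rather than fivefold) and the precise local patch one must exhibit.
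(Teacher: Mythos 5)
Your proposal follows essentially the same route as the paper: exhibit a locally-supported eigenfunction on a symmetric patch around the eightfold vertex star (checking the cancellation condition $\sum_{u\sim v}\psi(u)=0$ at tiles adjacent to the support), then use linear repetitivity to convert the positive frequency of that patch into an atom of $\nu_\AB$ and hence a jump of $k_\AB$. The paper carries this out concretely, producing alternating-sign modes on the eight-tile vertex star at $E=4$ and $E=6$ (Theorem~\ref{thm:AB} and Figures~\ref{fig:ABlevel2_evec4}--\ref{fig:ABlevel2_evec6}), which is exactly the verification your plan defers.
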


We direct the reader to later sections for precise definitions of these tilings.

The tilings that we discuss are linearly repetitive (see, e.g., \cite{BaakeGrimm2013:AOVol1} for the definition and a discussion of this concept): in particular, any pattern that is observed once is observed infinitely often with positive frequency.
In view of \eqref{eq:intro:NuNDef}, \eqref{eq:intro:NuDef}, and \eqref{eq:intro:kGammaDef}, a locally-supported eigenfunction necessarily produces a discontinuity of the IDS at the corresponding eigenvalue. More precisely, if $\Delta_\Gamma$ enjoys an eigenvalue $E$ with an eigenfunction having local support, then (a one-tile neighborhood of) the support of the eigenfunction occurs with positive frequency, and hence one observes a jump discontinuity in $k_\Gamma$.\ \ 
In fact, it is known that (under suitable assumptions on the underlying graph) a discontinuity of the IDS at energy $E$ is \emph{equivalent} to the presence of a locally-supported eigenfunction with eigenvalue~$E$~\cite{KlasLenzStol2003CMP}.

Furthermore, with this picture, one can estimate the size of the jump discontinuity by estimating the frequency with which the support of the eigenfunction occurs. Concretely, we can sharpen the conclusions of Theorem~\ref{t:main} in some individual cases. Here is a representative selection of theorems that one can prove.

\begin{theorem} \label{t:boatstarquant}
If $\Gamma$ is a graph associated with the boat--star tiling,
\begin{equation}
k_\boatstar(4+)-k_\boatstar(4-) \geq \frac{65-29\sqrt{5}}{10} \approx 0.01540 \ldots \ . 
\end{equation}
\end{theorem}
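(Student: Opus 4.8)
The plan is to localize the discontinuity at $E=4$ to a single explicit mode and then convert the size of the jump into a patch-frequency count. Since $k_\boatstar$ is the distribution function of the DOSM $\nu_{\Gamma_\boatstar}$, the jump equals the point mass $k_\boatstar(4+)-k_\boatstar(4-)=\nu_{\Gamma_\boatstar}(\{4\})$. Because the boat--star tiling is linearly repetitive, the equivalence recorded after Theorem~\ref{t:main} (see~\cite{KlasLenzStol2003CMP}) identifies this mass with the asymptotic density of the eigenvalue $4$ in the finite truncations,
\[
\nu_{\Gamma_\boatstar}(\{4\}) \;=\; \lim_{n\to\infty}\frac{\dim\ker(\Delta_n-4)}{\#\calV_n}.
\]
Thus it suffices to produce, for all large $n$, at least $(c+o(1))\,\#\calV_n$ linearly independent eigenfunctions of $\Delta_n$ at energy $4$, where $c=\tfrac{65-29\sqrt5}{10}$.

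\emph{A locally-supported mode and its occurrences.} I would take the finitely supported eigenfunction $\psi$ used to establish the boat--star case of Theorem~\ref{t:main}: fix the finite patch $P$ carrying $\supp\psi$ together with its one-tile collar, and verify $\Delta\psi=4\psi$ by two finite checks, namely the interior equations $\deg(v)\psi(v)-\sum_{u\sim v}\psi(u)=4\psi(v)$ for $v\in\supp\psi$ and the matching conditions $\sum_{u\sim w}\psi(u)=0$ for each collar vertex $w\notin\supp\psi$; the latter ensure that extending $\psi$ by zero gives a genuine eigenfunction of $\Delta_{\Gamma_\boatstar}$. Every occurrence of $P$ (in any of its rotational copies) produces a rigid translate of $\psi$, and selecting occurrences with pairwise disjoint supports --- or, more efficiently, distinguishing in each occurrence a vertex on which no other chosen mode is supported --- makes the resulting family linearly independent. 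The number of such occurrences contained in $\calV_n$ is $(\mathrm{freq}(P)+o(1))\,\#\calV_n$, so $\nu_{\Gamma_\boatstar}(\{4\})\ge \mathrm{freq}(P)$.

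\emph{Computing $\mathrm{freq}(P)$ --- the crux.} The remaining task, and the main obstacle, is to evaluate the patch frequency exactly and confirm $\mathrm{freq}(P)=\tfrac{65-29\sqrt5}{10}$. For this I would pass to the Penrose inflation rule underlying the boat--star tiling, with inflation factor $\tau=\tfrac{1+\sqrt5}{2}$ and substitution matrix $M$ having Perron--Frobenius eigenvalue $\tau^2$; the normalized Perron eigenvector records the relative frequencies of the finitely many tile and local-pattern types. Tracking how $P$ is created under repeated inflation expresses $\mathrm{freq}(P)$ inside the module $\bbZ[\tau]$, and a direct computation should give $\mathrm{freq}(P)=\tfrac{47-29\tau}{5}=\tfrac{65-29\sqrt5}{10}$. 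The delicate points are enumerating all routes by which $P$ can appear so that the count is neither an over- nor an undercount, and propagating the exact golden-ratio arithmetic so the bound lands on the stated closed form rather than a mere numerical estimate; a secondary check is that the independence argument above does not discard so many overlapping modes that the resulting density drops below $\mathrm{freq}(P)$.
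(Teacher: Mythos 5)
Your overall framework is the same as the paper's: exhibit a finitely supported eigenfunction at $E=4$, check the interior and collar equations so that the extension by zero is a genuine eigenfunction, observe that each occurrence of the supporting patch contributes a linearly independent vector, and bound the jump below by the asymptotic density of occurrences. That part is sound. The genuine gap is that the entire quantitative content of the theorem --- the evaluation of the occurrence density --- is left as ``a direct computation should give $\mathrm{freq}(P)=\tfrac{47-29\tau}{5}$.'' That computation \emph{is} the proof, and your sketch of it does not identify the mechanism that makes it tractable. The paper's route is: the eigenfunction is a ring of ten pentagons sitting at the centre of every level-two pentagonal supertile $\calT_2^\kappa$ (Lemma~\ref{lem:boatStar:ringmodes}); every pentagon present in $\calT_{n-2}^\boatstar$ spawns one such supertile inside $\calT_n^\boatstar$, so the multiplicity of $4$ at level $n$ is at least the pentagon count $P_\pentagon(n-2)$; and Lemma~\ref{lem:boatStar:tilecount} computes both $P_\pentagon$ and the total tile count $P_\boatstar$ in closed form from the eigendecomposition of the $6\times 6$ matrix $M_\boatstar$, yielding
\[
\lim_{n\to\infty}\frac{P_\pentagon(n)}{P_\boatstar(n+2)}=\frac{17+7\sqrt5}{(25+9\sqrt5)\,\varphi^{8}}=\frac{65-29\sqrt5}{10}.
\]
Without specifying which local configurations generate the patch and at which substitution depth, your $\bbZ[\tau]$ bookkeeping has nothing concrete to track.

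Two further points. First, you frame the task as computing $\mathrm{freq}(P)$ \emph{exactly} and confirming it equals the stated constant; but the theorem only asserts a lower bound, and the paper's own remark explains why exactness (enumerating all routes by which occurrences arise, and all patches supporting eigenfunctions at $E=4$) is currently out of reach. The constant is the density of one particular generating mechanism, not a claimed exact frequency, so your plan aims at a strictly harder and unnecessary target. Second, the substitution data you invoke is off: the boat--star substitution matrix $M_\boatstar$ given in the paper has Perron--Frobenius eigenvalue $\varphi^4$, not $\tau^2$, and the factor $\varphi^{-8}$ in the final constant reflects precisely that the ring pattern lives two substitution levels deep. Getting this wrong would derail the exact golden-ratio arithmetic you rightly identify as delicate.
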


\begin{theorem}\label{t:robinsonquant}
If $\Gamma$ is a graph associated with the Robinson triangle tiling,
\begin{equation}
k_\robinson(E+)-k_\robinson(E-) \geq \frac{65-29\sqrt{5}}{20} \approx 0.007701\ldots ,\quad
E \in\{ 2,  4\}.
\end{equation}
\end{theorem}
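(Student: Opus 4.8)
The plan is to combine the general principle relating jumps of the IDS to locally-supported modes with an explicit construction of such modes and a frequency count driven by the substitution structure. By the discussion following \eqref{eq:intro:kGammaDef} and the equivalence established in \cite{KlasLenzStol2003CMP}, the jump of $k_\robinson$ at an energy $E$ is the mass that the density of states measure places at $E$, which is the asymptotic density of the locally-supported eigenspace:
\[
k_\robinson(E+)-k_\robinson(E-)=\nu_{\Gamma_\robinson}(\{E\})=\lim_{n\to\infty}\frac{1}{\#\calV_n}\dim\ker(\Delta_n-E).
\]
Thus it suffices to exhibit, inside the infinite Robinson triangle graph $\Gamma_\robinson$, a family of linearly independent finitely-supported eigenfunctions at $E$ whose supports occur with asymptotic density at least $\tfrac{65-29\sqrt5}{20}$. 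Modes whose support lies entirely inside $\calV_n$ descend to genuine eigenfunctions of $\Delta_n$, and those clipped by the boundary number only $O(\#\partial\calV_n)=o(\#\calV_n)$, so such a family yields a matching lower bound for the limit above.

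First I would construct the modes. Writing the eigenvalue equation in the form $(\deg(v)-E)\psi(v)=\sum_{u\sim v}\psi(u)$, a function $\psi$ supported on a finite vertex set $S$ is an eigenfunction at $E$ precisely when this identity holds at every $v\in S$ and, at every neighbor $w\notin S$ of $S$, the compatibility condition $\sum_{u\sim w,\,u\in S}\psi(u)=0$ holds. I would locate a local patch of $\Gamma_\robinson$, namely a ring of Robinson triangles closing up around a vertex of the tiling, on which an alternating $\pm$ assignment of values simultaneously realizes the interior equation and the boundary cancellation. Carrying this out at the two relevant ring geometries produces the desired modes at $E=2$ and at $E=4$, each supported on a configuration whose frequency turns out to be at least the stated value; the degree data of the triangles forming each ring is exactly what selects these two eigenvalues.

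Next I would count. Distinct occurrences of the supporting patch yield linearly independent eigenfunctions—this can be arranged by choosing the patch so that its occurrences have essentially disjoint supports, or by a leading-vertex argument along a nested exhaustion—so the jump is bounded below by the asymptotic frequency with which the patch appears. Because the Robinson triangle tiling is a linearly repetitive substitution tiling, the relative frequencies of the tiles, and of any fixed finite configuration, are computed from the normalized Perron--Frobenius eigenvector of the substitution matrix, whose entries are golden-ratio expressions. Tracking the supporting configuration through one or more substitution steps and normalizing by the asymptotic number of triangles per unit area yields the value $\tfrac{65-29\sqrt5}{20}$. The clean factor of two relative to Theorem~\ref{t:boatstarquant} reflects that each boat--star region is subdivided into twice as many Robinson triangles, halving the per-vertex frequency of the analogous configuration.

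The main obstacle is the combinatorial bookkeeping needed to make the frequency bound honest rather than merely existential: one must identify enough forced occurrences of the supporting configuration from the substitution rule so as not to undercount, and simultaneously guarantee that the eigenfunctions drawn from those occurrences are genuinely linearly independent, so that their contributions to $\dim\ker(\Delta_n-E)$ add. The independence and non-overlap analysis is the delicate geometric step; once the configuration and its multiplicity per supertile are pinned down, extracting the exact frequency $\tfrac{65-29\sqrt5}{20}$ from the Perron--Frobenius data is a careful but routine linear-algebra computation.
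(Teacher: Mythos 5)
Your overall strategy coincides with the paper's: exhibit a locally-supported ring mode at $E=2$ and $E=4$ on the Robinson triangle graph, observe that each occurrence of its support produces an independent eigenvector of the finite-volume Laplacians, and bound the jump from below by the asymptotic frequency of that support. The construction of the modes themselves (alternating $\pm 1$ on a $20$-tile ring, with the cancellation condition at neighbors outside the support) matches Proposition~\ref{prop:tri:ring}, which the paper likewise settles by direct calculation.

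The genuine gap is in the frequency count, which is exactly where the stated constant comes from. You propose to read the frequency of the supporting configuration off the Perron--Frobenius eigenvector of the substitution matrix after ``tracking the configuration through one or more substitution steps,'' and you justify the factor of two relative to Theorem~\ref{t:boatstarquant} by a heuristic about the MLD map doubling the tile count. Neither step is a proof. The Perron--Frobenius eigenvector gives the frequencies of the \emph{prototiles}, not of the $20$-tile ring $\calR$; to count occurrences of $\calR$ one needs a combinatorial mechanism inside the substitution that forces the ring to appear. The paper's Proposition~\ref{prop:tri:ringcount} supplies this: occurrences of $\calR$ in $\calT_n^\robinson$ are seeded by tiles at \emph{every} level $n-5, n-7, n-9,\dots$, with each obtuse tile contributing one ring and each acute tile contributing one half (because those rings straddle two supertiles), so that
\[
k_\robinson(2+)-k_\robinson(2-)\;\geq\;\Bigl(\varphi^{-11}+\tfrac{1}{2}\varphi^{-12}\Bigr)\bigl(1-\varphi^{-4}\bigr)^{-1}\;=\;\frac{65-29\sqrt{5}}{20}.
\]
The geometric-series factor $(1-\varphi^{-4})^{-1}$ and the weights $1$ and $\tfrac12$ are invisible in your sketch, and the ``halving under MLD'' argument cannot replace them: the MLD relation does not preserve local structure in a way that transports frequencies of eigenfunction supports (indeed the paper emphasizes that existence of such supports is not even MLD-invariant), and the two tilings do not partition the plane into regions with a fixed $2{:}1$ tile ratio in the sense you would need. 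As written, your argument would produce \emph{a} positive lower bound on the jump (which suffices for Theorem~\ref{t:main}) but not the specific value $\tfrac{65-29\sqrt{5}}{20}$ asserted in Theorem~\ref{t:robinsonquant}.
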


\begin{theorem} \label{t:ABquant}
If $\Gamma$ is a graph associated with the Ammann--Beenker tiling,
\begin{align}
k_\AB(4+)-k_\AB(4-) & \geq  1270-898\sqrt{2} \approx 0.036221\ldots \ ,\\
k_\AB(6+)-k_\AB(6-) & \geq  116 - 82\sqrt{2}\kern12pt  \approx 0.0344879 \ldots \ . 
\end{align}
\end{theorem}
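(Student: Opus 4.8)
The plan is to produce both jumps from explicit, finitely supported eigenfunctions and then convert their geometric frequency into a lower bound on the mass that the density of states measure places at the two energies; Theorem~\ref{t:main:ABtiling} already guarantees that these jumps are positive, so the task here is purely quantitative. The first step is a structural simplification. Since every tile of the Ammann--Beenker tiling is a quadrilateral and the tiling is edge-to-edge, each tile shares each of its four edges with exactly one distinct neighbor, so the tile graph $\Gamma_\AB$ is $4$-regular and $\Delta_\AB = 4I - \mathcal{A}$. Consequently a locally supported $\Delta_\AB$-eigenfunction at energy $E$ is exactly a finitely supported eigenvector of $\mathcal{A}$ at eigenvalue $4 - E$, so the target energies $E=4$ and $E=6$ correspond to adjacency eigenvalues $0$ and $-2$. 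Writing $S = \supp\psi$, such a $\psi$ is pinned down by two conditions: the internal eigenvalue equation $[\mathcal{A}\psi](v) = (4-E)\psi(v)$ for $v \in S$, together with the \emph{no-leakage} condition $\sum_{u \sim w,\,u\in S}\psi(u)=0$ for every tile $w\notin S$ adjacent to $S$.

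Next I would construct such eigenvectors on explicit patches organized around the eightfold-symmetric vertices of the tiling. For $E=6$ the natural candidate is an even closed ring of tiles encircling such a vertex, carrying alternating values $+1,-1,+1,\dots$; on a $2m$-cycle this is precisely the adjacency eigenvector for the eigenvalue $-2$, and the $D_8$ symmetry of the surrounding patch should force the neighbor-sums at each exterior tile to cancel in pairs, securing the no-leakage condition. For $E=4$ one instead needs an adjacency-harmonic configuration on a symmetric patch, obtained by combining tile-values transforming under a nontrivial representation of the local rotational symmetry so that the contributions to each boundary tile again cancel. Verifying the no-leakage condition is where the local rigidity of Ammann--Beenker patches does the real work, since it depends on the precise geometry of the second coordination shell; it is cleanest to isolate a single minimal patch $P_E$ whose mere presence guarantees the existence of the eigenfunction.

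To turn existence into a quantitative bound, I would compute the frequency of the forcing patch $P_E$ from the inflation structure. The Ammann--Beenker tiling is a primitive substitution tiling with inflation factor $\lambda = 1+\sqrt{2}$ and area scaling $\lambda^2 = 3 + 2\sqrt{2}$, and assembling the substitution matrix together with its Perron--Frobenius eigendata yields the relative frequencies of all controlled local configurations as exact elements of $\bbZ[\sqrt{2}]$. The appearance of $\sqrt{2}$ in the claimed bounds $1270-898\sqrt{2}$ and $116-82\sqrt{2}$ is exactly the signature of these algebraic frequencies, and I would identify each bound as the number of linearly independent eigenfunctions supported on one copy of $P_E$, times $\mathrm{freq}(P_E)$, after accounting for overlaps.

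Finally I would pass from patch frequency to the size of the discontinuity. By the weak$^*$ convergence $\nu_n \to \nu_{\Gamma_\AB}$ and the portmanteau inequality for the closed set $\{E\}$, we have $\nu_{\Gamma_\AB}(\{E\}) \geq \limsup_n \#\calV_n^{-1}\dim\ker(\Delta_n - E)$, while $k_\AB(E+)-k_\AB(E-) = \nu_{\Gamma_\AB}(\{E\})$. It therefore suffices to exhibit, inside $\calV_n$, at least $c\,\#\calV_n(1+o(1))$ linearly independent locally supported eigenfunctions at eigenvalue $E$, where $c$ is the claimed constant; each copy of $P_E$ lying well inside $\calV_n$ (so that its support and the adjacent leakage tiles are interior) contributes such an eigenfunction, and the number of copies divided by $\#\calV_n$ converges to $\mathrm{freq}(P_E)$ by unique ergodicity, itself a consequence of linear repetitivity. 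The genuinely hard point, which I expect to be the main obstacle, is linear independence: eigenfunctions supported on overlapping copies of $P_E$ need not be independent, so a naive count overshoots. I would resolve this either by selecting a positive-density subfamily with pairwise disjoint supports, or by bounding the rank of the full family directly; the clean algebraic values in the statement should be the exact outcome of this counting once the dependencies are accounted for.
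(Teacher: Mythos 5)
Your overall strategy---locally supported eigenfunctions organized around the eightfold-symmetric vertices, patch frequencies computed from the inflation structure, and the weak$^*$/unique-ergodicity argument converting a positive-density family of independent eigenvectors into a jump of the IDS---is exactly the strategy of the paper's proof, and your observation that $\Gamma_\AB$ is $4$-regular (so that $E=4,6$ correspond to adjacency eigenvalues $0,-2$) is a correct simplification the paper does not even need. But there is a genuine gap in the quantitative step. You propose to isolate a \emph{single} minimal forcing patch $P_E$ and bound the jump by $(\text{multiplicity on one copy of } P_E)\cdot\mathrm{freq}(P_E)$. No single patch can produce the stated constants: with $\lambda=\sqrt{2}-1$ one has $1270-898\sqrt{2}=\lambda^4+\lambda^6+2\lambda^8$ and $116-82\sqrt{2}=\lambda^4+\lambda^6$, i.e.\ each bound is a sum of \emph{distinct} powers of $\lambda$, whereas one patch type yields $m\cdot\lambda^k$ for a single $k$. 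The paper's count rests on a hierarchy of three nested patches: the eightfold vertex star (frequency $\lambda^4$, one eigenfunction at each of $E=4$ and $E=6$), its once-substituted image (frequency bounded below by $\lambda^6$, one further eigenfunction at each energy), and its twice-substituted image (frequency bounded below by $\lambda^8$), which supports \emph{two additional} eigenfunctions at $E=4$ and none at $E=6$---the 104- and 328-tile modes of Figure~\ref{fig:ABlevel3_bigmodes}. This hierarchical structure is simultaneously the source of the particular algebraic numbers and the entire explanation for the asymmetry between the two energies; your proposal has no mechanism for discovering it.

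Second, you correctly flag linear independence as the main obstacle but leave it unresolved, and your fallback of passing to a subfamily with pairwise disjoint supports would destroy the bound here, since the supports of the three patch types are nested rather than disjoint. The paper's resolution is simpler than either of your options: one verifies that no eigenfunction's support is contained in the union of the supports of the others, which already forces linear independence of the entire family. As written, your argument establishes the qualitative statement of Theorem~\ref{t:main:ABtiling} (positivity of the jumps) but does not reach the specific constants of Theorem~\ref{t:ABquant}.
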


\begin{remark}Let us make some remarks about these theorems.
\begin{enumerate}
\item[(a)] Since the frequency calculations are somewhat similar in the different examples, we do not discuss quantitative estimates in all cases, but rather focus on a representative subset of examples. An estimate for the lower bound on the jump in the IDS for the rhombus tiling is discussed in \cite{FATK1988PRB}. The jump discontinuity for the kite--dart tiling may be estimated similarly to the others.
\item[(b)] One may naturally be interested in whether the bounds are sharp, that is, whether the jump in the IDS is precisely given by the enumerated expressions.
Let us comment on the difficulties associated with ``the other direction.''
The lower bounds are computed by (1)~identifying patterns in a given tiling that can support a finitely-supported eigenfunction and (2)~finding combinatorial mechanisms in the substitution structure generating the tiling that enable us to estimate the frequency with which the desired pattern(s) occur.
Thus, if one wishes to prove that the estimates are sharp, one must overcome two obstacles:
\begin{enumerate}
\item[(1)] One must show that one has identified all pattern(s) in the tiling that permit a finitely-supported eigenfunction with the desired energy.

\item[(2)] One must show that the pattern(s) that one has identified can only arise via the combinatorial mechanisms that one used to estimate the frequency.
\end{enumerate}
The second obstacle can likely be overcome with a sufficiently careful analysis of suitably large supertiles.
However, the first obstacle appears to be genuinely intractable with current technology.
(Indeed, Figures~\ref{fig:rhombus_more_ev} and~\ref{fig:ABlevel3_bigmodes} 
show eigenfunctions with large-but-finite support that emerge on larger tilings, 
and cannot be expressed as linear combinations of our simpler eigenfunctions supported on small patches.)
\end{enumerate}
\end{remark}

One crucial point that we want to emphasize is the synergy between the numerical and spectral analyses. The eigenfunctions discussed in this paper were first discovered via numerical spectral computations on finite graph Laplacians $\Delta_n$.  
Given the finite nature of the sought-after eigenfunctions, such numerical calculations (once carried out on a sufficiently large finite patch) suffice to demonstrate the existence of finitely supported eigenfunctions and discontinuities of the IDS.\ \ 
Once found, simple locally-supported modes can readily be verified by hand. 
However for some tilings, larger graphs reveal additional eigenfunctions whose 
local support extends to several hundred tiles, making manual calculations inadvisable.  

In addition to suggesting theorems, numerics can also provide evidence for new conjectures. In that spirit, we will conclude the paper with numerical plots of large finite-volume approximations of the integrated densities of states associated with these tilings, and pose some interesting open problems suggested by this work.

\subsection*{Acknowledgements} The authors thank Michael Baake, Semyon Dyatlov, and Anton Gorodetski for many helpful conversations and the American Institute of Mathematics for hospitality and support through the SQuaRE program during a remote meeting in January~2021 and a January~2022 visit, during which part of this work was completed. D.D.\ was supported in part by NSF grants DMS--1700131 and DMS--2054752, and Simons Fellowship $\# 669836$. M.E.\ was supported in part by NSF grant DMS-1720257.  J.F.\ was supported in part by NSF grant DMS--2213196 and Simons Foundation Collaboration grant $\# 711663$.



\section{Preliminaries} \label{sec:tilingDefs}

\subsection{Tilings and Associated Laplacians}
To set the stage and fix notation, let us recall some notation, conventions, and definitions largely following Baake--Grimm~\cite{BaakeGrimm2013:AOVol1}. 
\begin{definition}[Patterns, Fragments, and Tiles]
A \emph{pattern} $\calT = \{T_i : i \in I\}$ in $\bbR^2$ is a nonempty set whose elements $T_i$ are nonempty subsets of $\bbR^2$. We write $\calT \sqsubset \bbR^2$ to denote that $\calT$ is a pattern in $\bbR^2$ and say $\calT$ is a \emph{tiling} if $I$ is countable, the $T_i$ are closed and nonempty sets, $\bigcup_{i \in \bbZ} T_i=\bbR^2$, and $T_i^\circ \cap T_j^\circ = \emptyset$ for all $i \neq j$.  The elements $T_i$ of $\calT$ are called \emph{tiles} or \emph{fragments} of $\calT$. 

In the sequel, we will occasionally want to distinguish tiles that are the same as subsets of $\bbR^2$ but that nevertheless have different behavior under substitution rules.
For instance, the reader may consider the example below in Definition~\ref{def:triangleRules}, in which there are two basic tile shapes (acute and obtuse triangles), but two different colors of each shape (each of which behaves as the mirror image of the other under substitutions). One often uses colors or decorations to distinguish between different types of the same shape. By abuse of notation, we will still refer to tilings with colors or decorations as \emph{tilings} rather than \emph{decorated tilings}.

For $\calT \sqsubset \bbR^2$ and $K \subseteq \bbR^2$, $\calT \sqcap K$ is the pattern consisting of all fragments of $\calT$ that intersect $K$ nontrivially:
\[\calT \sqcap K := \{T_i : T_i \in \calT \text{ and }   T_i \cap K \neq \emptyset\}.\]
Naturally, for $\calT \sqsubset \bbR^2$ and $t \in \bbR^2$, the \emph{translation} of $\calT$ by $t$ is given by
\[t+\calT = \{t+T_i : T_i \in \calT\}.\] We refer to the equivalence class of tiles up to translation as \emph{prototiles}. Finally, given patterns $\calT_0 \subseteq \calT \sqsubset \bbR^2$, an \emph{occurrence} of $\calT_0$ in $\calT$ is any translation of $\calT_0$ that is also a subset of $\calT$; in other words, an occurrence of $\calT_0$ is any arrangement of tiles in $\calT$ that looks the same as $\calT_0$, up to translation.

The pattern $\calT' \sqsubset \bbR^2$ is said to be \emph{locally derivable} from $\calT \sqsubset \bbR^2$ (denoted $\calT \overset{\rm LD}\rightsquigarrow \calT'$) if for some $R>0$ one has
\[(-x+\calT)\sqcap B_R = (-y+\calT)\sqcap B_R \implies (-x+\calT')\sqcap \{0\} = (-y+\calT')\sqcap \{0\}, \]
where $B_R$ denotes the open ball of radius $R$ centered at the origin (note that equality of patterns includes equality of colors as well). If $\calT \overset{\rm LD}\rightsquigarrow \calT'$ and $\calT' \overset{\rm LD}\rightsquigarrow \calT$, we say that $\calT$ and $\calT'$ are \emph{mutually locally derivable} (MLD) and denote this by $\calT \overset{\rm MLD} \leftrightsquigarrow \calT'$.
\end{definition}

\begin{definition}
A \emph{polygon} is a nonempty compact subset $\bbR^2$ with dense interior obtained by intersecting finitely many closed half-planes. From this point onward, all tiles are assumed to be polygonal. Let $\calT = \{T_i : i \in I\}$ be such a tiling of $\bbR^2$. The \emph{induced graph} $\Gamma = \Gamma_\calT = (\calV, \calE)$ has $\calV = I$ and one has $u \sim v$ if and only if $T_u$ and $T_v$ share at least one edge.
The associated \emph{Laplace operator} acts on the space $\scrH = \ell^2(\calV)$ via
\begin{equation}\label{eq:laplacian}
[\Delta\psi](v) = \sum_{u \sim v} (\psi(u)-\psi(v)) \, \quad \psi \in \ell^2(\calV).
\end{equation}
\end{definition}

\subsection{Substitution Tilings}

Let us now describe the main setting in which we work: tilings that are generated by a substitution rule.

\begin{definition}[Substitution Tilings]
Let $\calP = \{P_1,\ldots,P_n\}$ denote a finite \emph{protoset}, or collection of prototiles in $\bbR^2$. Denote by $\calP^*$ the collection of finite patterns $\calT \sqsubset \bbR^2$ whose elements are images of elements of $\calP$ under translation and rotation. A \emph{substitution} is a map $\subst:\calP \to\calP^*$.
One can extend $\subst$ to $\calP^*$ in a natural manner, so we can speak of iterates of $\subst$.

A \emph{substitution tiling} associated with $\subst$ is a polygonal tiling $\mathcal{T}$ such that any finite patch of $\mathcal{T}$ occurs in $\subst^n(P)$ for some $P \in \mathcal{P}$ and some $n \in \mathbb{N}$. The collection $\mathbb{X}_\subst$ of all such tilings is called the \emph{hull} of $\subst$ and is a compact set in a suitable tiling metric. Since it is not central to our work, we will not specify the tiling metric precisely, but we simply say that two tilings are close in the tiling metric if after a small shift they coincide on a large ball centered at the origin. Clearly $\mathbb{R}^2$ acts on $\mathbb{X}_\subst$ by translations. It is known that for suitable substitutions, this translation action is \emph{minimal} (i.e., the translation orbit of any element of $\mathbb{X}_\subst$ is dense in $\mathbb{X}_\subst$).

Given a substitution $\subst$ on a set $\calP$ as above, the associated \emph{substitution matrix} is the $n \times n$ matrix $M$ whose entry in row $i$ and column $j$ is the number of occurrences of tile $P_i$ in $\subst(P_j)$.
\end{definition}

In what follows, we will consider five tilings generated by substitution rules. Let us start with an example.
\begin{definition} \label{def:triangleRules}
The \emph{Robinson triangle substitution} has four basic tiles:

\begin{center}
\includegraphics[width=1.25in]{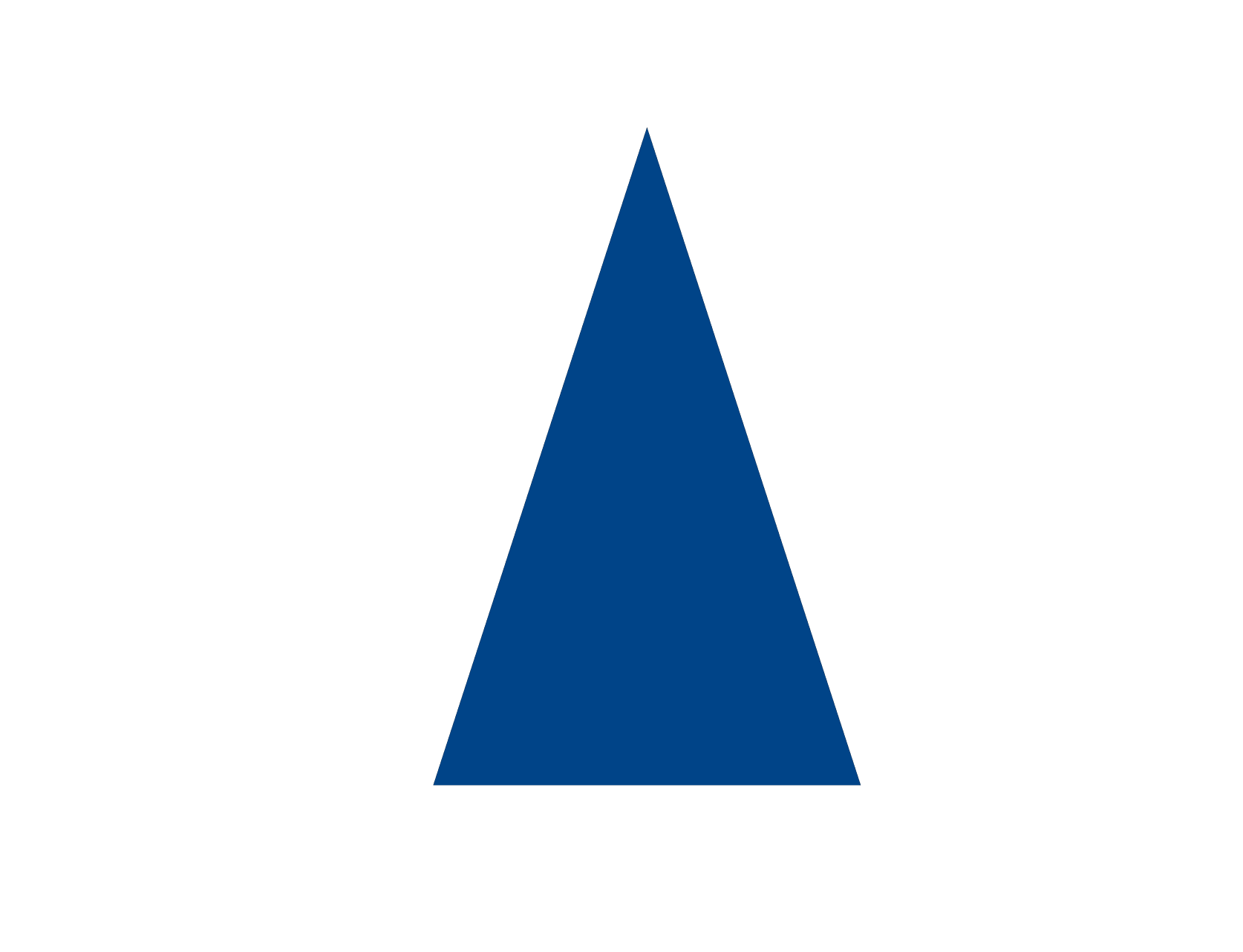}\quad
\raisebox{8pt}{\includegraphics[width=1.25in]{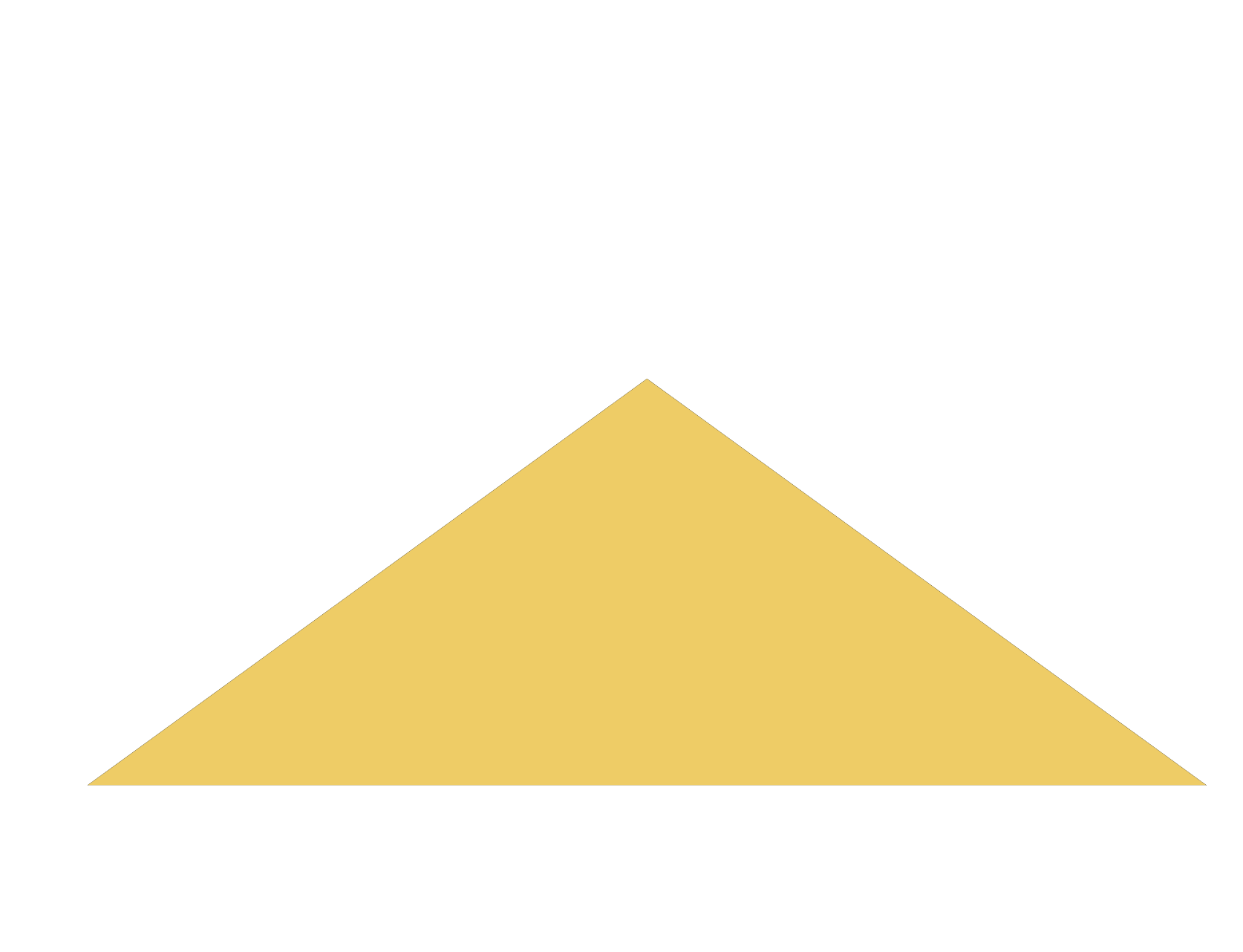}}\quad
\includegraphics[width=1.25in]{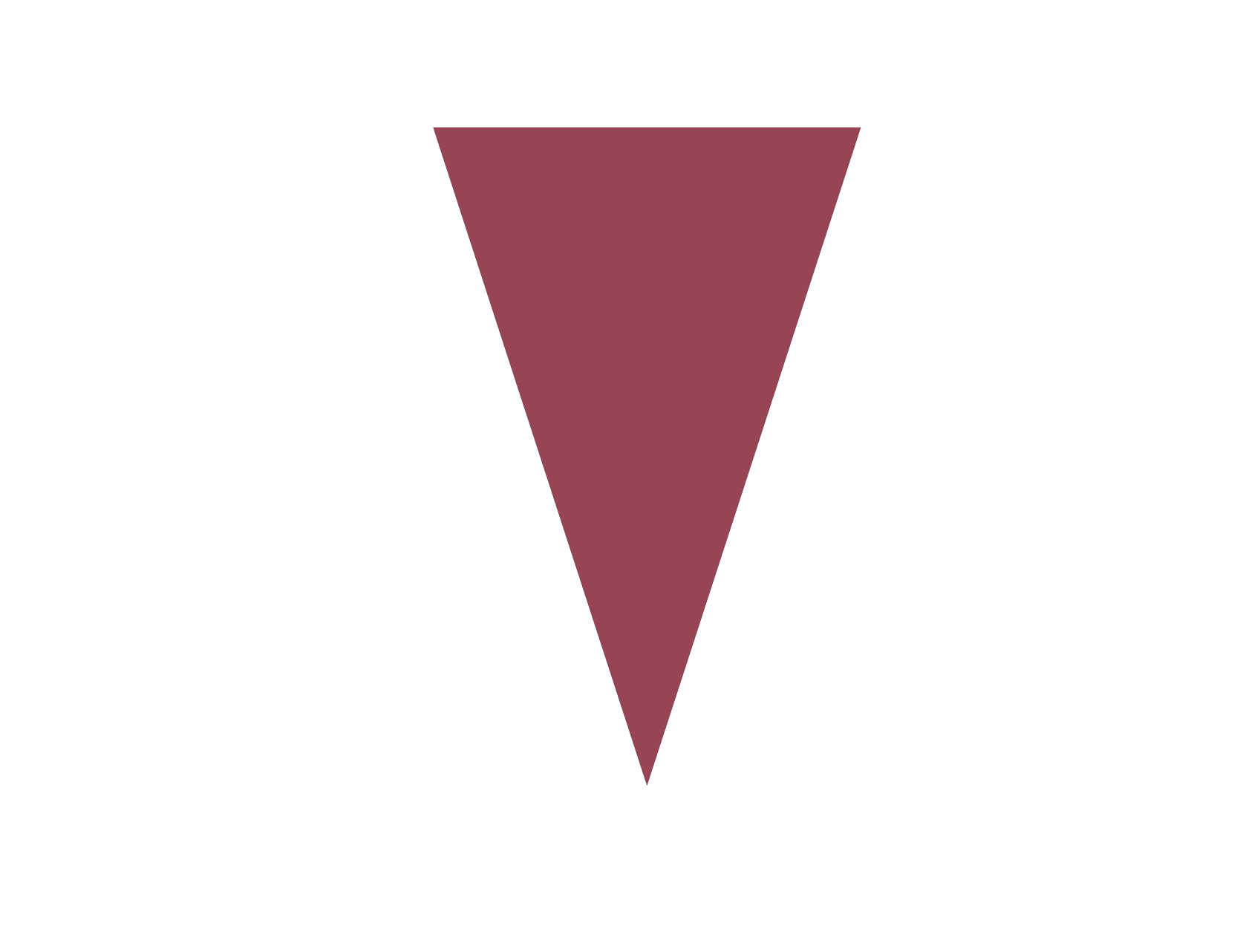}\quad
\raisebox{8pt}{\includegraphics[width=1.25in]{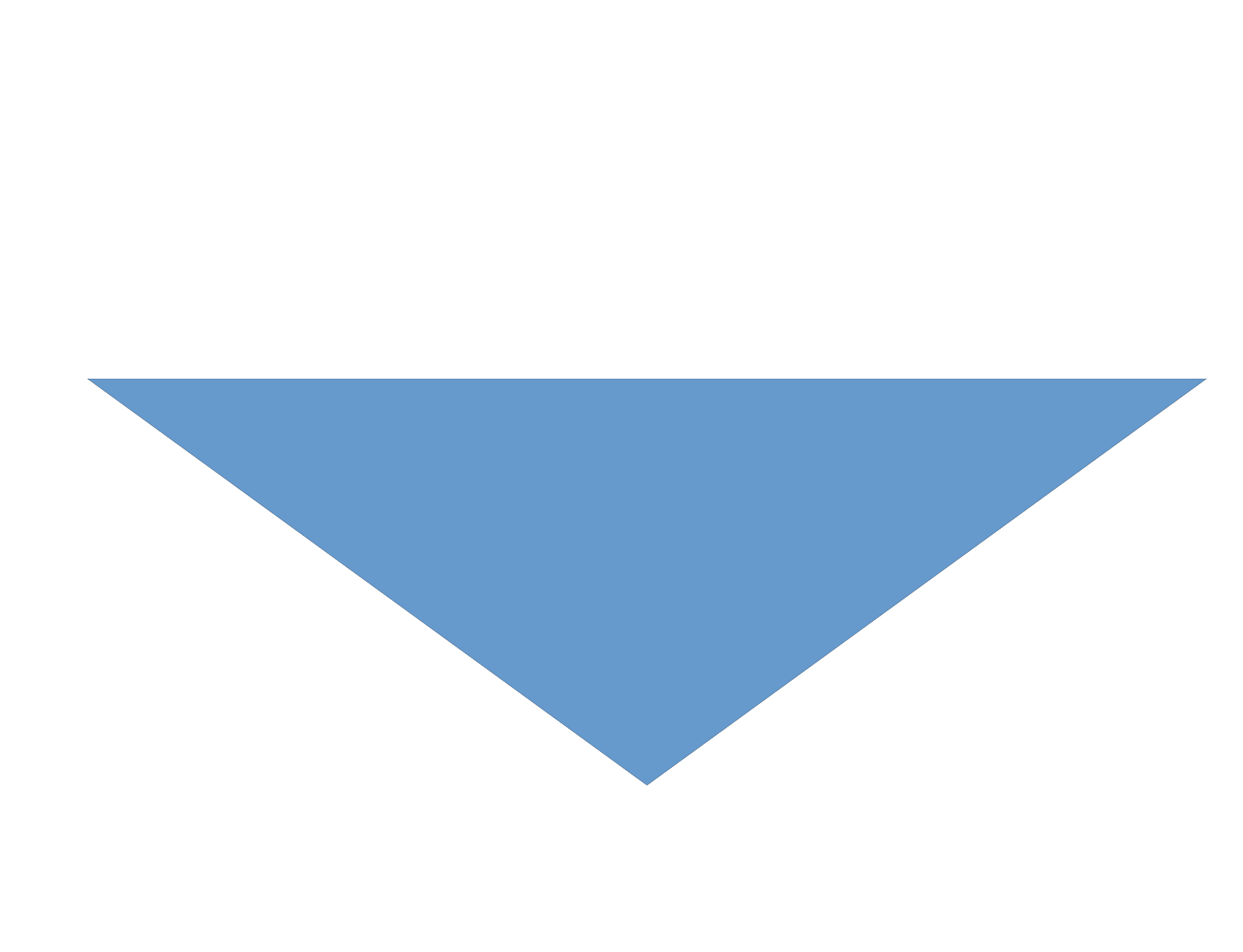}}
\end{center}

The substitution rules are (redrawing \cite[Fig.~13]{Frank2008Expo}):

\begin{center}
\includegraphics[width=1.25in]{Images/triangle_sub_4a}\hspace{-2em}
\includegraphics[width=1.25in]{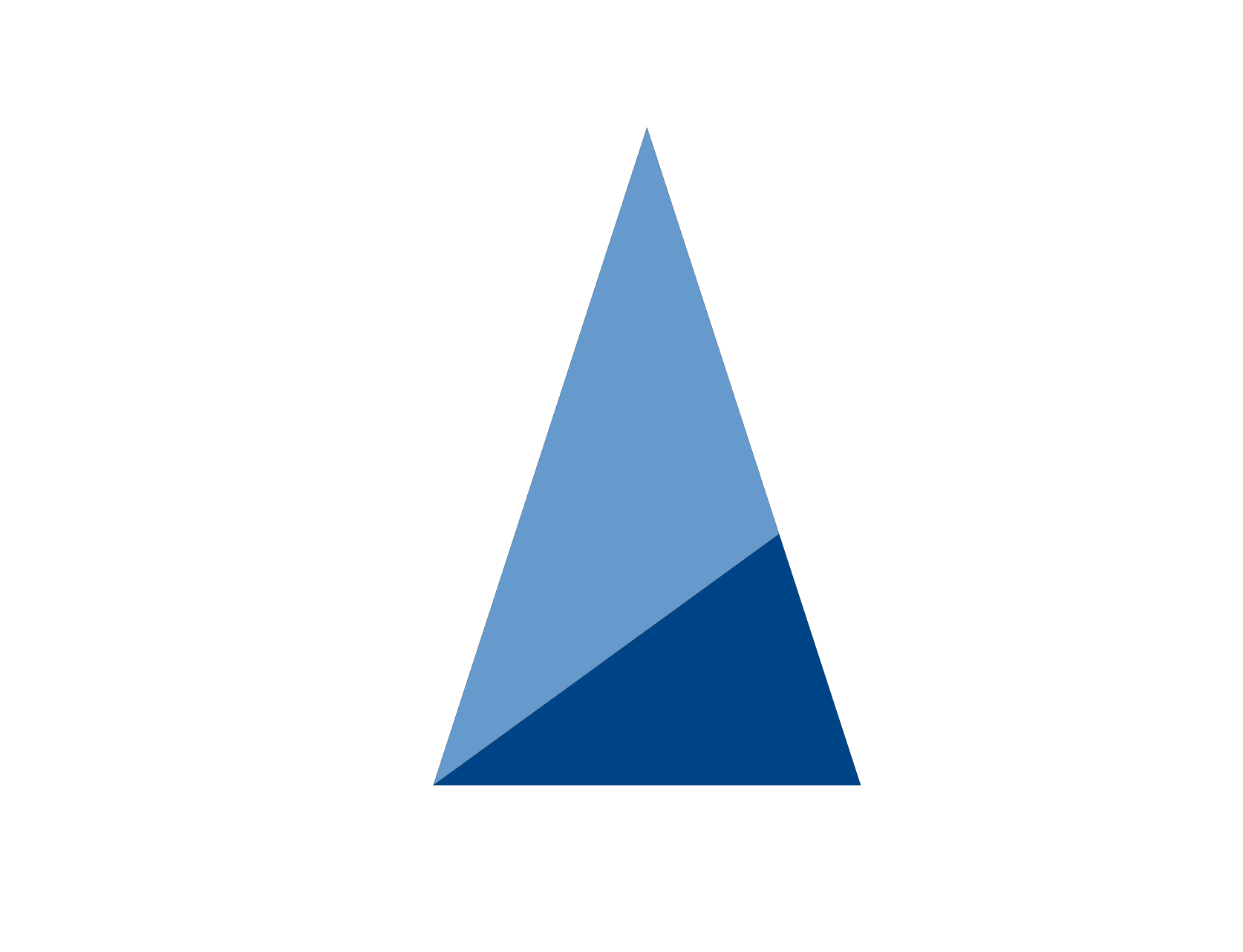}
\qquad
\includegraphics[width=1.25in]{Images/triangle_sub_1a}\hspace*{2em} 
\includegraphics[width=1.25in]{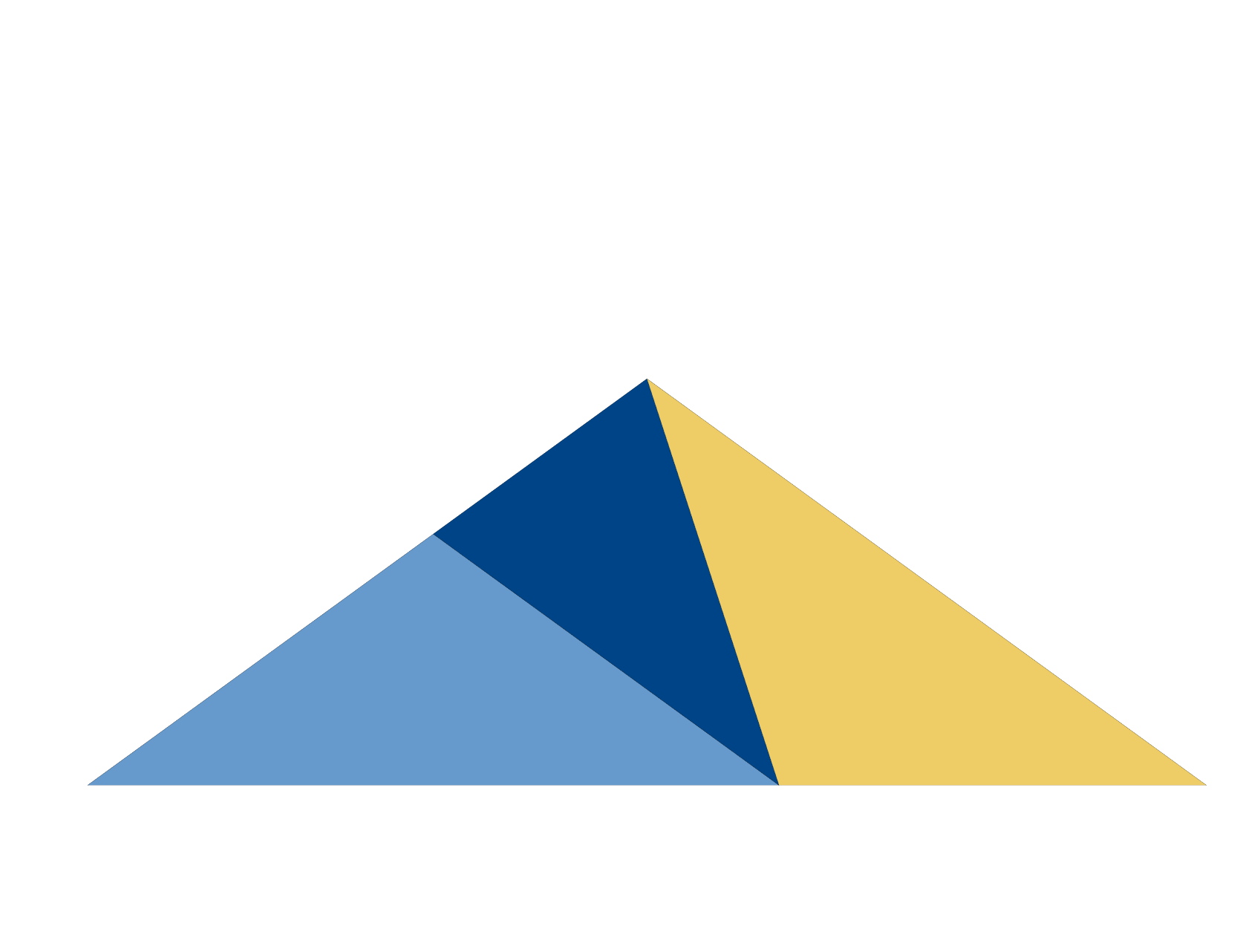}

\begin{picture}(0,0)
\put(-122,40){\large $\to$}
\put( 92,40){\large $\to$}
\end{picture}

\vspace*{-1em}
\includegraphics[width=1.25in]{Images/triangle_sub_3a}\hspace{-2em}
\includegraphics[width=1.25in]{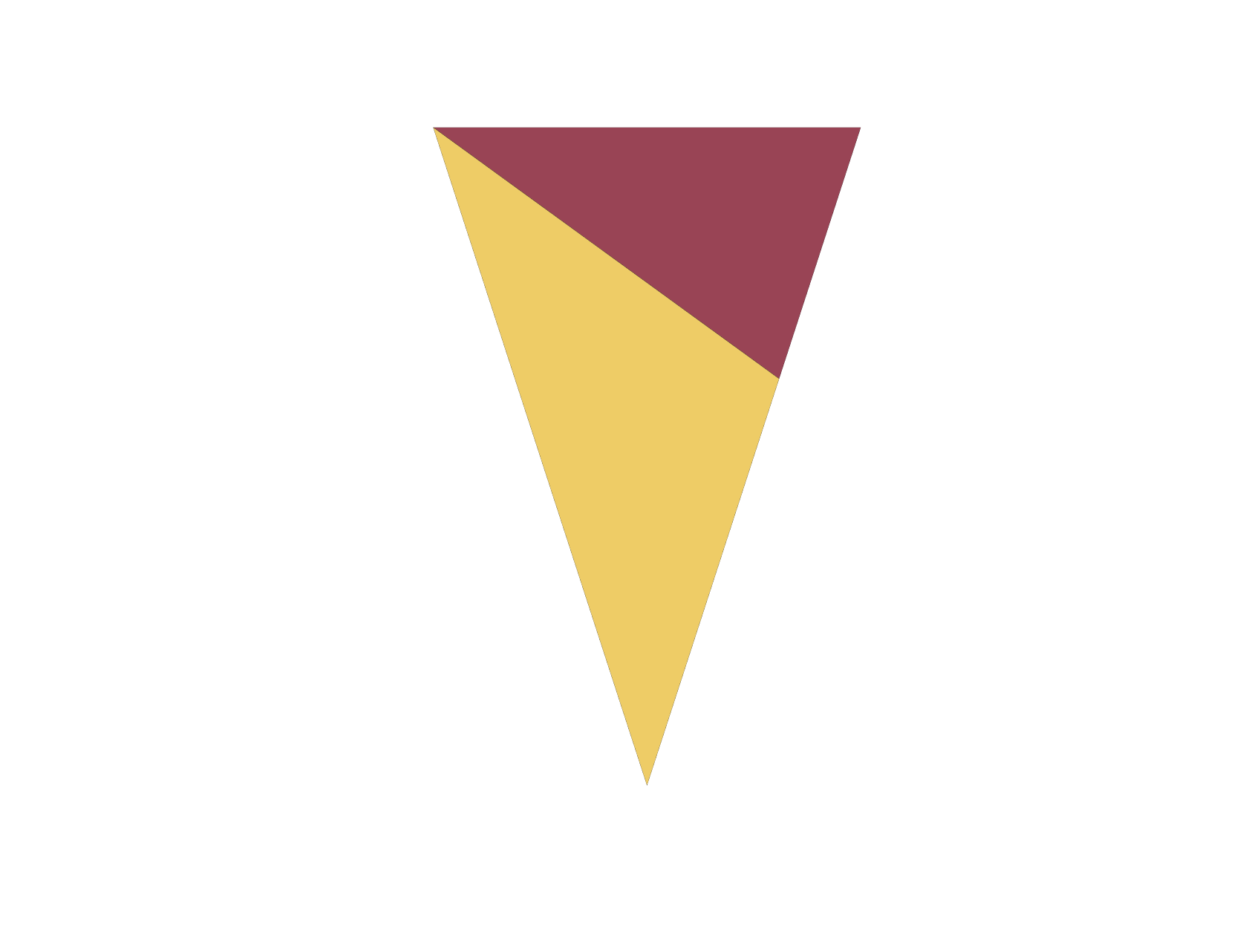}
\qquad
\includegraphics[width=1.25in]{Images/triangle_sub_2a}\hspace*{2em} 
\includegraphics[width=1.25in]{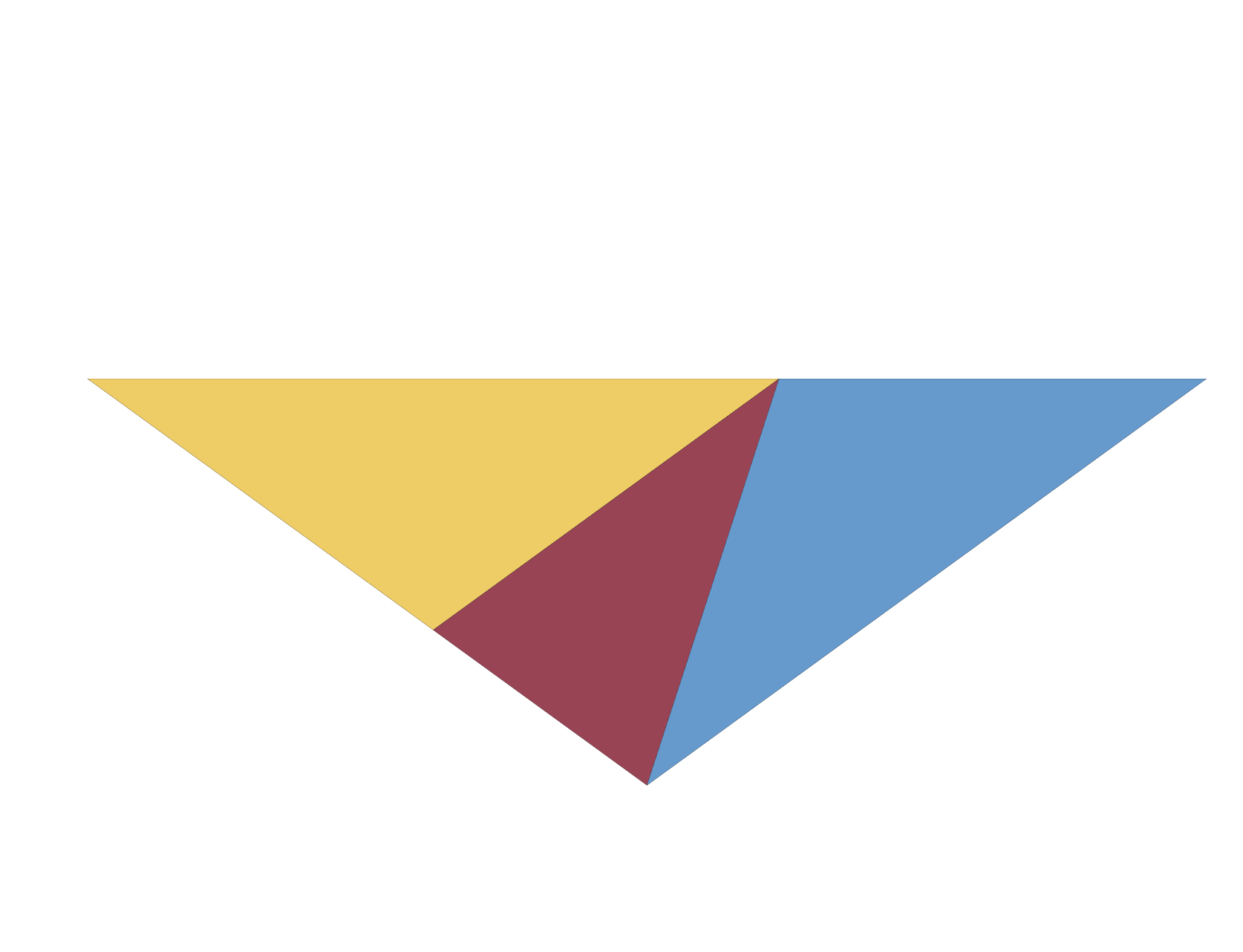}

\begin{picture}(0,0)
\put(-122,40){\large $\to$}
\put( 92,40){\large $\to$}
\end{picture}
\end{center}

We denote this substitution by $\subst_\robinson$. Abusing notation somewhat, we write $M_\robinson := M_{\subst_{\robinson}}$ for its substitution matrix. Ordering the tiles from left-to-right as above, 
\[ M_\robinson = \begin{bmatrix} \ 1 & 1 & 0 & 0\  \\ \ 0 & 1 & 1 & 1\  \\ \ 0 & 0 & 1 & 1\  \\ \ 1 & 1 & 0 & 1\ \end{bmatrix}.\]
\end{definition}

Recall that the graph $\Gamma = (\calV,\calE)$ associated with a tiling $\calT = \{T_i : i \in I\}$ has vertex set $\calV = \{T_i : i \in I\}$ and edges between two vertices if and only if the corresponding tiles share an edge. 

Let us now describe more precisely the mechanism that enables one to estimate the discontinuity in the IDS.\ \ The following result is well known, but we make it explicit for the reader's benefit. Throughout this discussion, fix a substitution tiling $\calT$ and associated graph $\Gamma = (\calV,\calE)$.\ \ If $\calV_0 \subseteq \calV$ is a finite patch, we denote its boundary by $\partial \calV_0$, which consists of all the tiles in $\calV_0$ that share an edge with a tile in $\calV \setminus \calV_0$. 
A priori, one may be concerned that the degree of the tiles in $\partial \calV_0$ are ill-defined. In the specific patches we consider in this paper, one may verify directly that this is not the case.

\begin{definition}
We say $P \subseteq \calV$ is a \emph{good eigenfunction support} at energy $E$ if 
\begin{enumerate}
\item $P$ is finite; 
\item there is a nontrivial eigenfunction $\psi$ of $\Delta_\Gamma$ with $\Delta_\Gamma \psi = E\psi$ and $\supp(\psi) = P$;
\item no proper subset of $P$ enjoys the previous property;
\item every occurrence of $P$ in $\calV$ supports an eigenfunction.
\end{enumerate}
\end{definition}

\begin{prop}  Suppose $P \subseteq \calV$ is a good eigenfunction support at energy $E$. For any finite patch $\Gamma_0 = (\calV_0,\calE_0)$, $\calV_0 \subseteq \calV$, the multiplicity of $E$ for $\Delta_{\Gamma_0}$ is bounded from below by the largest cardinality of a set of occurrences of $P$ in $\calV_0$ with the following properties: no occurrence intersects $\partial\calV_0$ and no occurrence is contained in the union of other occurrences.
\end{prop}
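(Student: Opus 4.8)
The plan is to show that a suitably chosen collection of occurrences of $P$ gives rise to linearly independent eigenfunctions of $\Delta_{\Gamma_0}$ at energy $E$, whence the multiplicity is at least the size of that collection. First I would fix a maximal collection $\{P_1,\dots,P_m\}$ of occurrences of $P$ in $\calV_0$ satisfying the two stated constraints: each $P_j$ is disjoint from $\partial\calV_0$, and no $P_j$ is contained in the union of the others. By property~(4) of a good eigenfunction support, each occurrence $P_j$ carries an eigenfunction $\psi_j$ of $\Delta_\Gamma$ with $\supp(\psi_j) = P_j$ and $\Delta_\Gamma\psi_j = E\psi_j$. The first key observation is that because $P_j$ avoids $\partial\calV_0$, every tile adjacent to a tile of $P_j$ still lies in $\calV_0$; hence the eigenvalue equation $[\Delta_\Gamma\psi_j](v) = E\psi_j(v)$, which only involves $v$ and its neighbors, holds verbatim for the truncated operator $\Delta_{\Gamma_0}$ at every vertex of $\calV_0$. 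Thus each $\psi_j$, viewed as an element of $\ell^2(\calV_0)$, is a genuine eigenfunction of $\Delta_{\Gamma_0}$ with eigenvalue $E$.

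The second, and main, step is to establish linear independence of $\{\psi_1,\dots,\psi_m\}$. The natural tool is the minimality condition on the collection: no occurrence is contained in the union of the others. I would argue by contradiction. Suppose there is a nontrivial relation $\sum_{j} c_j \psi_j = 0$, and pick an index $k$ with $c_k \neq 0$. Since $P_k$ is not contained in $\bigcup_{j \neq k} P_j$, there exists a vertex $v \in P_k \setminus \bigcup_{j\neq k} P_j$. At such a $v$ we have $\psi_j(v) = 0$ for all $j \neq k$ (as $\supp(\psi_j) = P_j$), so the relation forces $c_k \psi_k(v) = 0$. The remaining point is to ensure $\psi_k(v) \neq 0$ for \emph{some} such exposed vertex $v$; this is where minimality property~(3) of the support enters. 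Because no proper subset of $P$ supports an eigenfunction, the eigenfunction $\psi_k$ cannot vanish on any nonempty ``interior-detachable'' portion of $P_k$ without contradicting the minimality of $P_k$ as a support. I would make this precise by showing that the set $\{v \in P_k : \psi_k(v) \neq 0\}$ equals all of $P_k$ (again by property~(3), since otherwise $\psi_k$ restricted to its true support would give a strictly smaller good support), and then $P_k \setminus \bigcup_{j \neq k} P_j$, being nonempty, contains a vertex where $\psi_k$ is nonzero. This yields $c_k = 0$, contradicting the choice of $k$, so the relation is trivial and the $\psi_j$ are independent.

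The hard part will be the linear-independence argument, specifically reconciling the ``not contained in the union'' condition (which is about sets of vertices) with the requirement that the exposed vertices be places where the eigenfunction is actually nonzero. The subtlety is that property~(3) guarantees $P$ is a \emph{minimal} support but does not a priori guarantee $\psi$ is nonvanishing on all of $P$; one must argue that $\supp(\psi) = P$ together with minimality forces the support to be exactly $P$, which is precisely the content of property~(2) combined with~(3). I would want to verify carefully that an exposed vertex of $P_k$ on which $\psi_k$ happens to vanish cannot sabotage the argument — this is handled by choosing the relation-index $k$ judiciously (e.g., peeling off occurrences in an order determined by inclusion, or selecting a minimal-support witness among the exposed vertices), so that the relevant exposed vertex is genuinely in the support of $\psi_k$. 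Once independence is secured, the lower bound on the multiplicity of $E$ for $\Delta_{\Gamma_0}$ is immediate, completing the proof.
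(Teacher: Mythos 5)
Your proposal is correct and follows the same route as the paper's (one-line) proof: each boundary-avoiding occurrence yields an eigenfunction of the truncated Laplacian, and the ``not contained in the union of the others'' condition supplies an exposed vertex that forces linear independence. The only remark worth making is that your worry about $\psi_k$ possibly vanishing at the exposed vertex is moot: property~(2) states $\supp(\psi_k)=P_k$, which by definition of support already means $\psi_k$ is nonzero at \emph{every} vertex of $P_k$, so no appeal to the minimality property~(3) is needed at that step.
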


\begin{proof}
Choose a collection of occurrences of $P$ having the enumerated properties. The definitions ensure that each occurrence of $P$ yields an eigenfunction and that the collection of these eigenfunctions is linearly independent.
\end{proof}




\medskip
\section{Boats and Stars}

\subsection{Basics}

\begin{definition} \label{def:boatStarRules}
Following \cite{GrunbaumShephard1987}, the \emph{boat--star substitution} has six basic tiles:

\begin{center}
\includegraphics[width=1.25in]{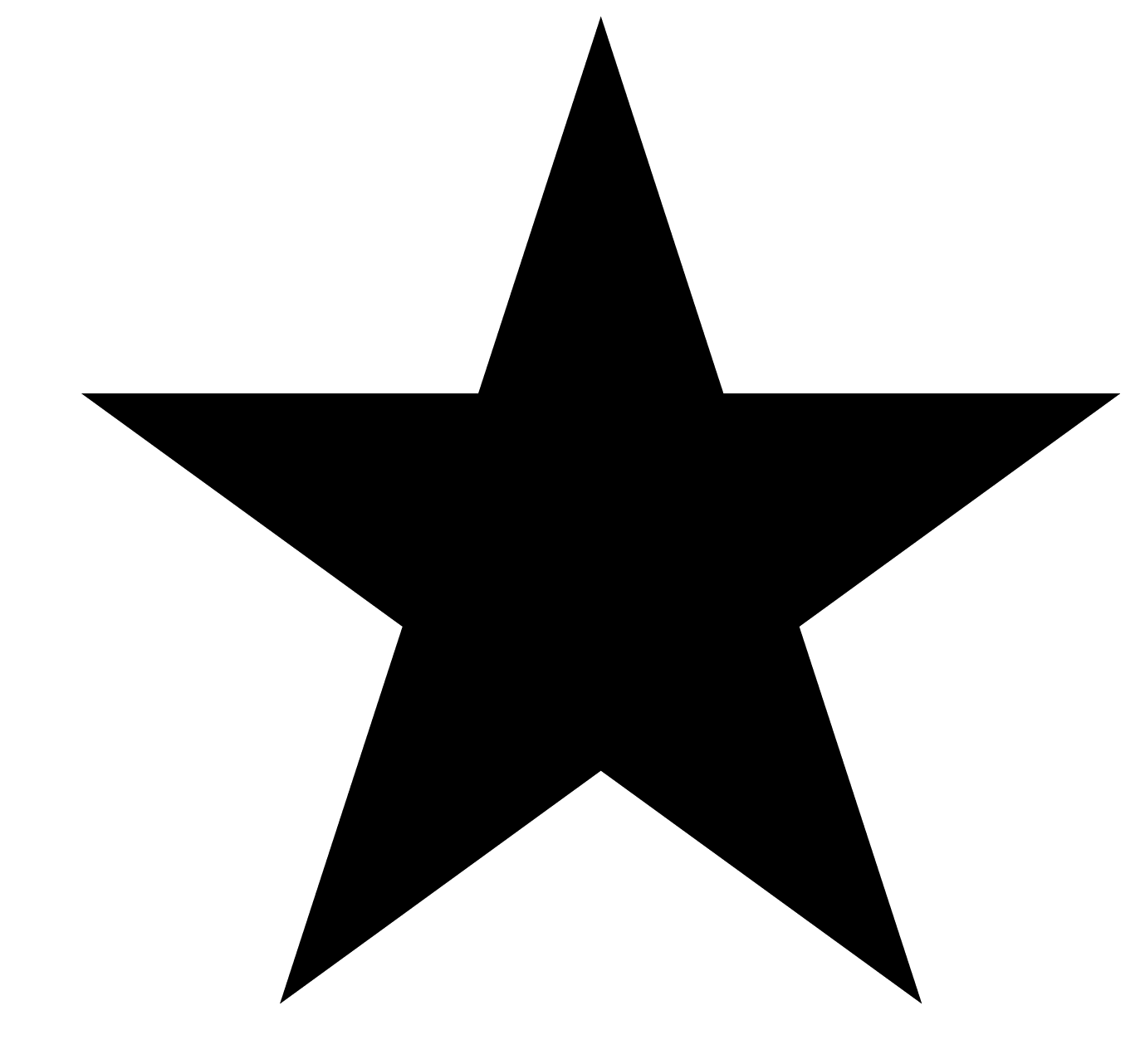}\quad
\includegraphics[width=1.25in]{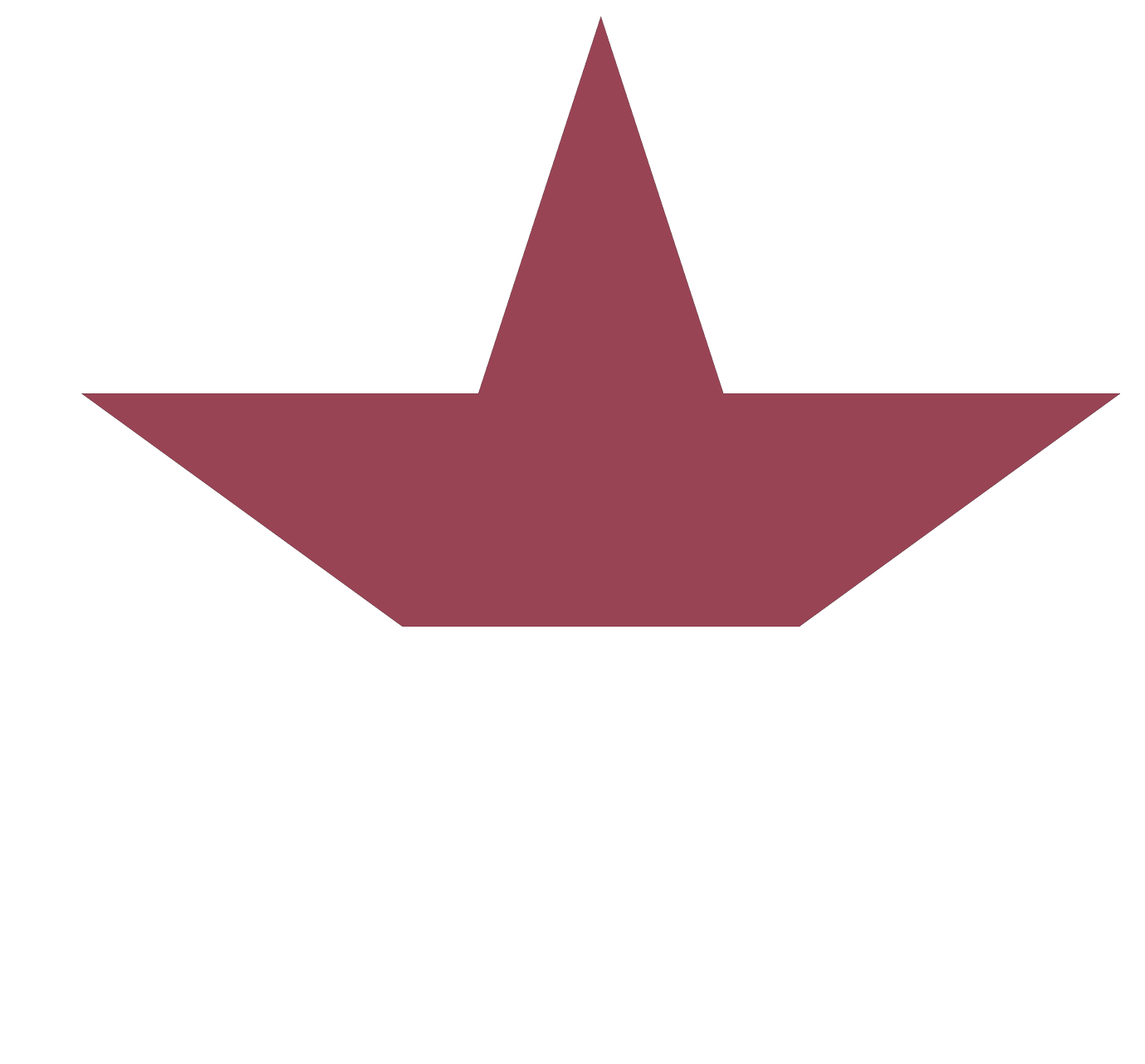}\quad
\includegraphics[width=1.25in]{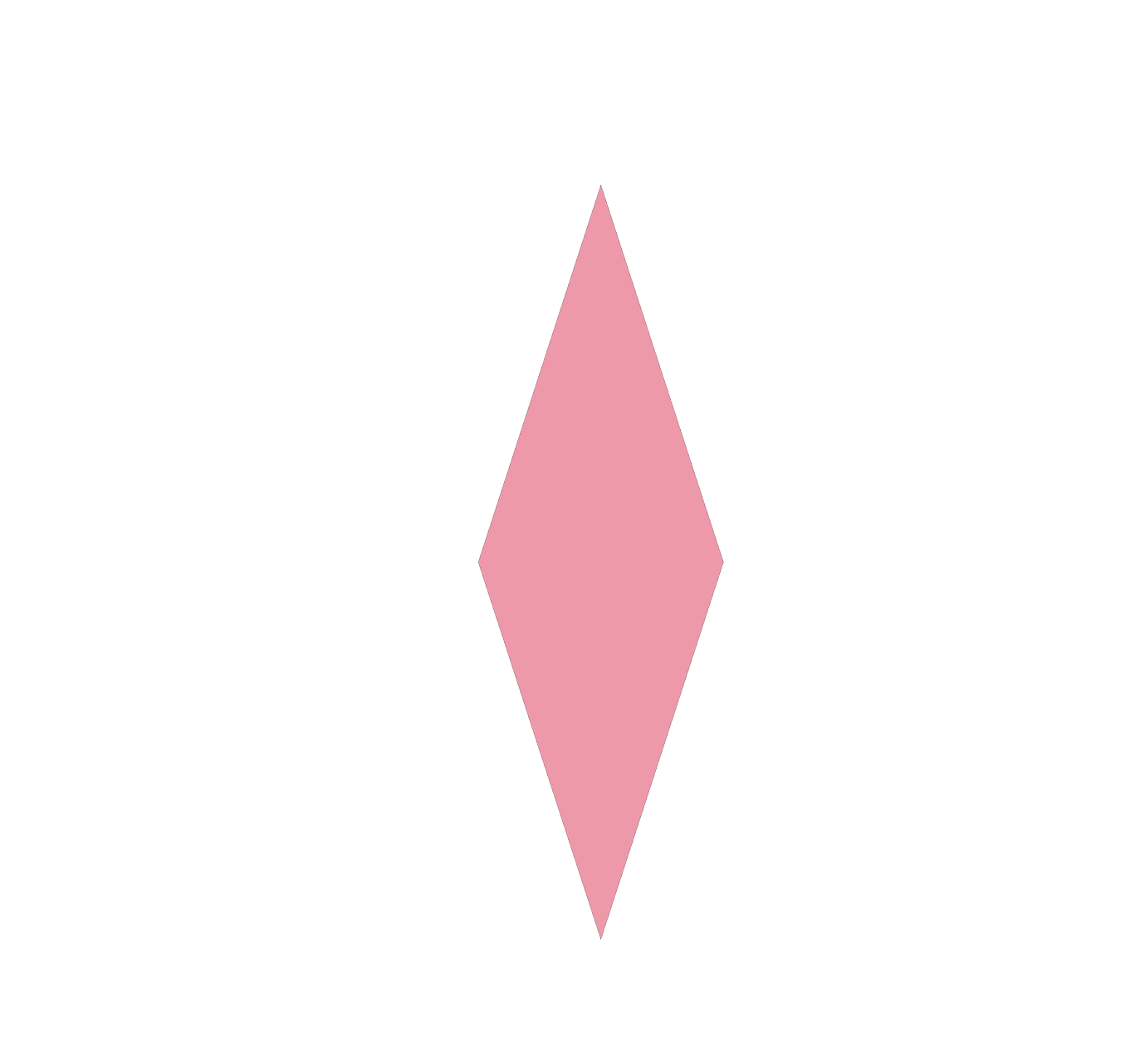}

\includegraphics[width=1.25in]{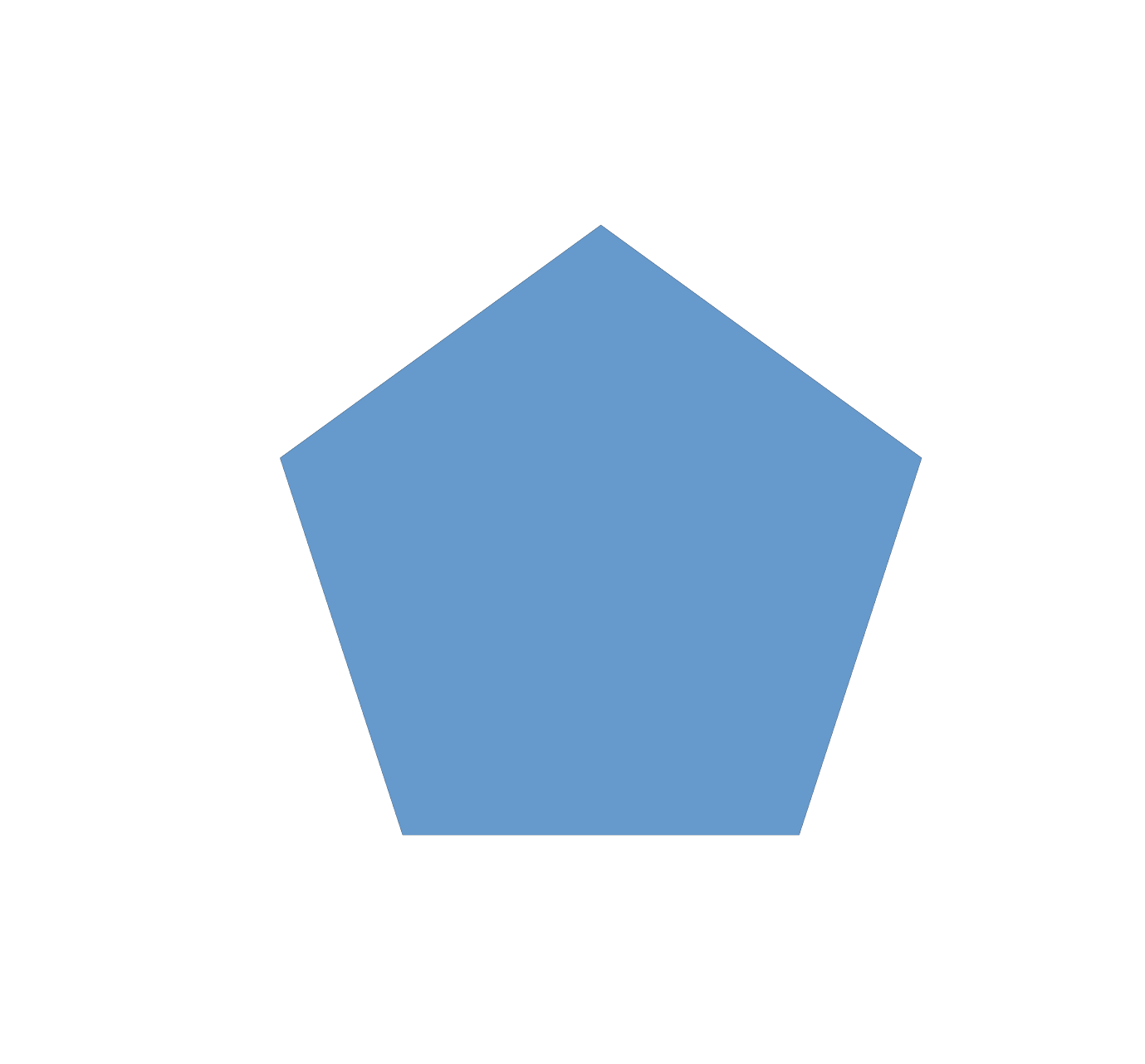}\quad
\includegraphics[width=1.25in]{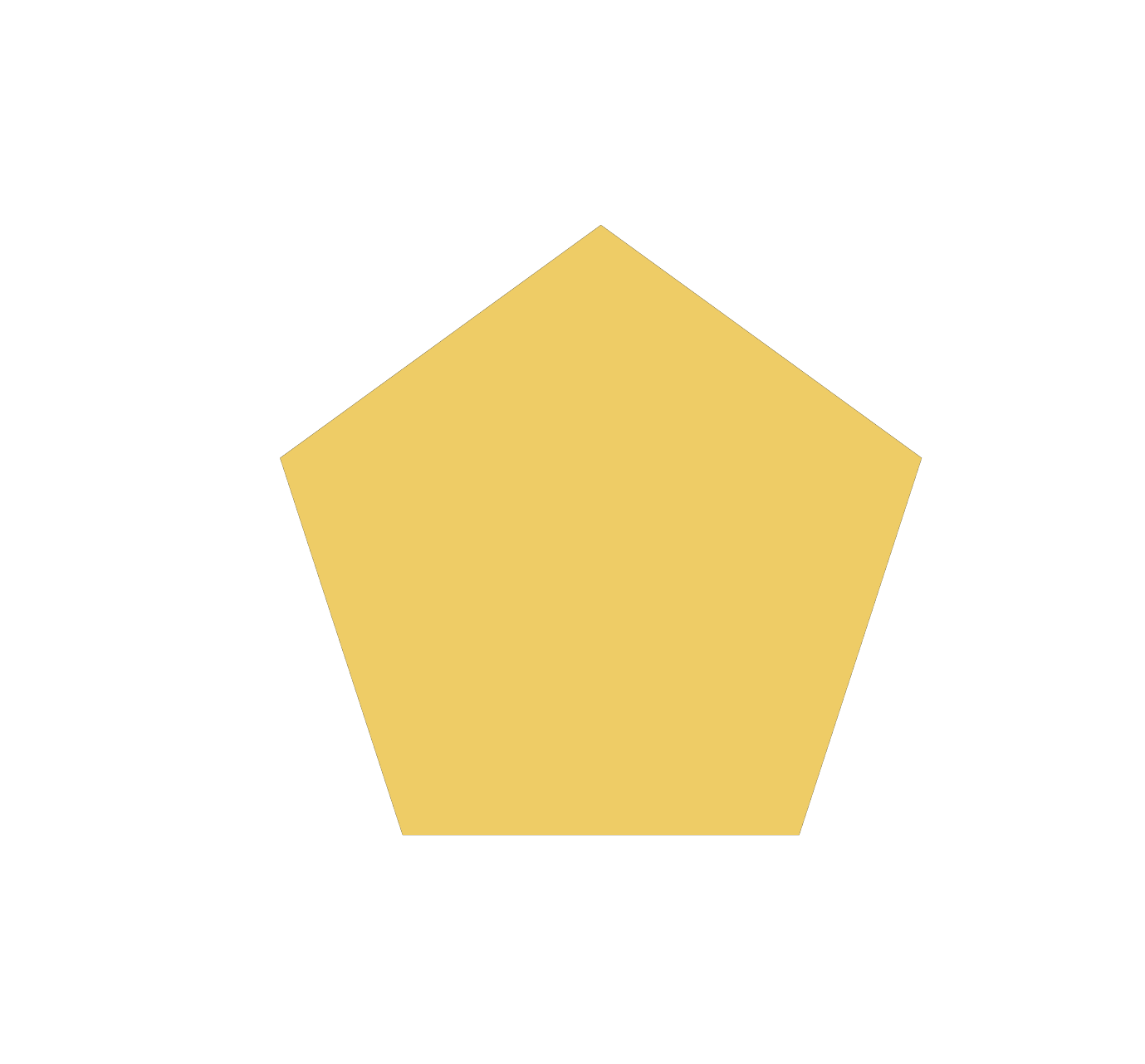}\quad
\includegraphics[width=1.25in]{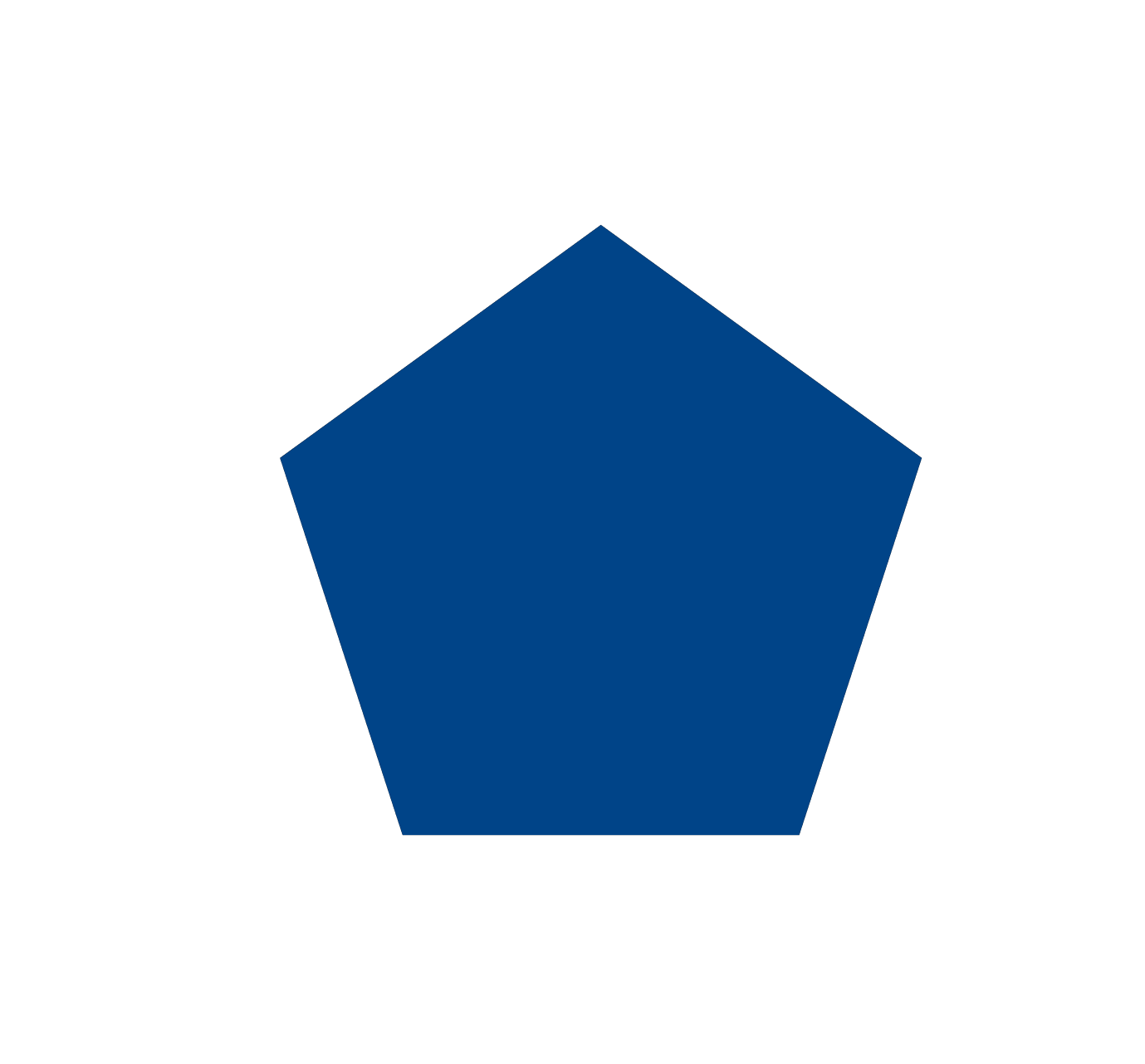}
\end{center}

The substitution rules are:

\begin{center}
\includegraphics[width=1.in]{Images/boat_star_sub1a}
\hspace*{0.25em}\raisebox{30pt}{\large $\to$} \hspace*{0.25em}
\includegraphics[width=1.in]{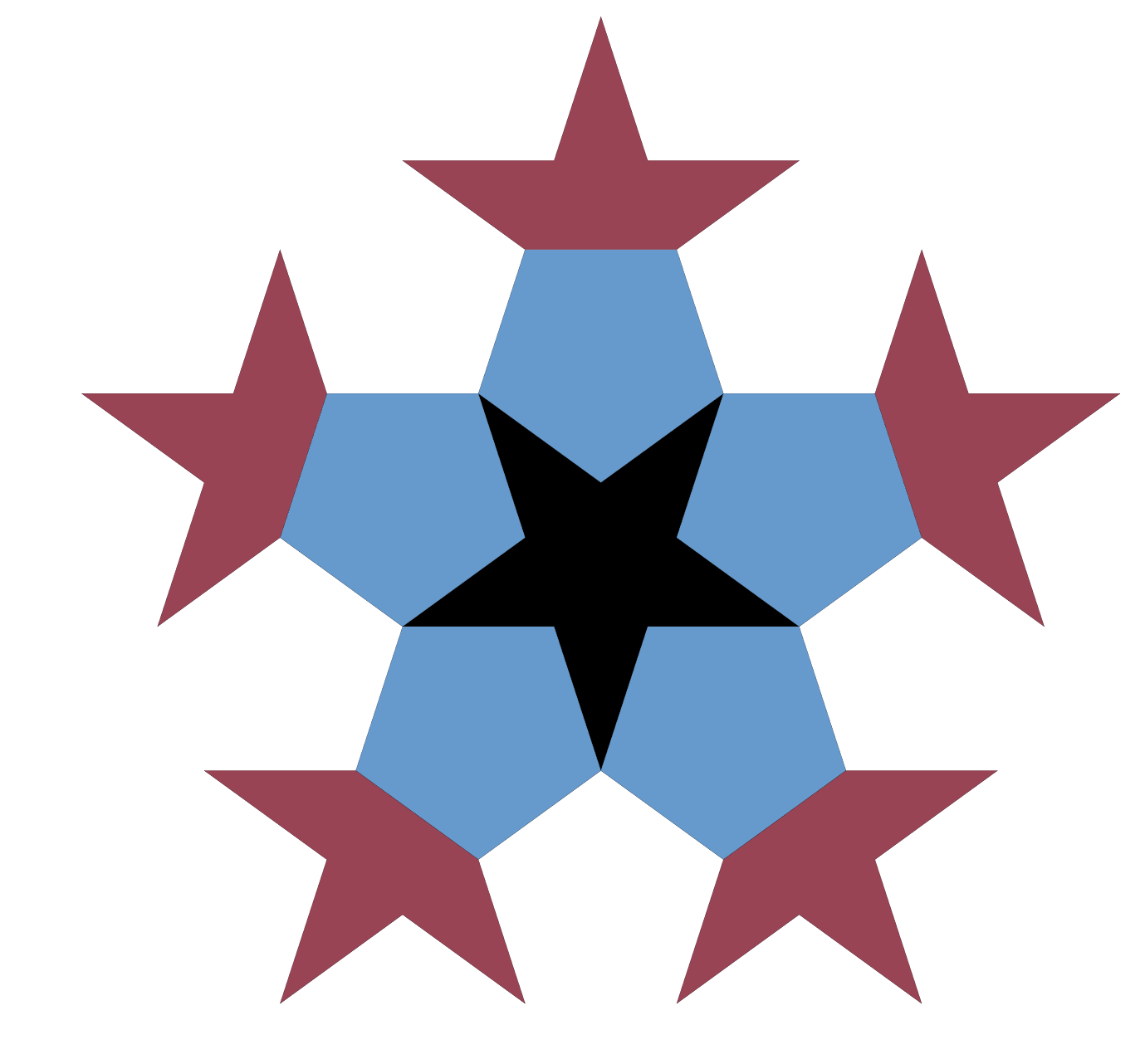}
\qquad
\includegraphics[width=1.in]{Images/boat_star_sub2a}
\hspace*{0.25em}\raisebox{30pt}{\large $\to$} \hspace*{0.25em}
\includegraphics[width=1.in]{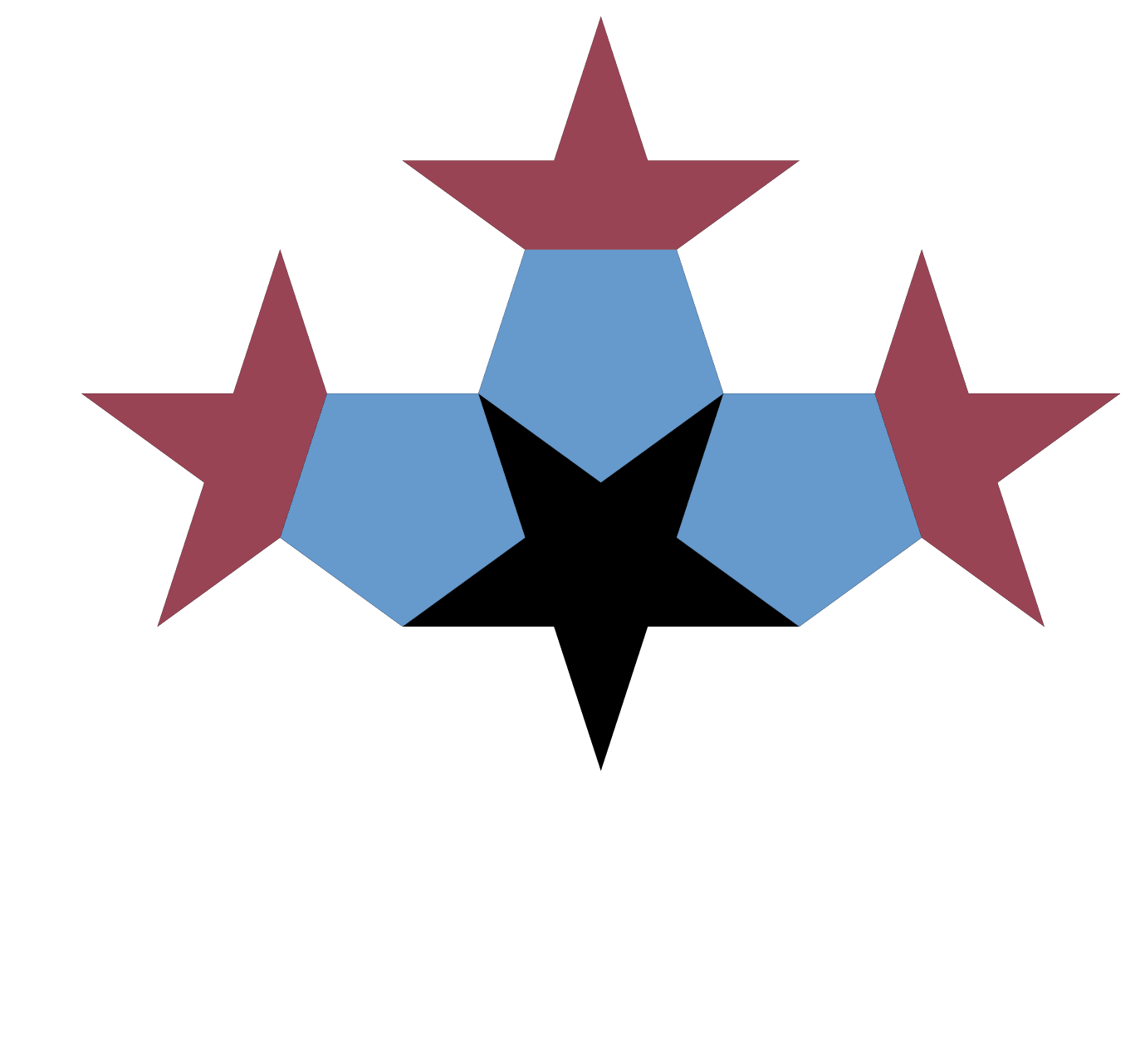}

\includegraphics[width=1.in]{Images/boat_star_sub3a}
\hspace*{0.25em}\raisebox{30pt}{\large $\to$} \hspace*{0.25em}
\includegraphics[width=1.in]{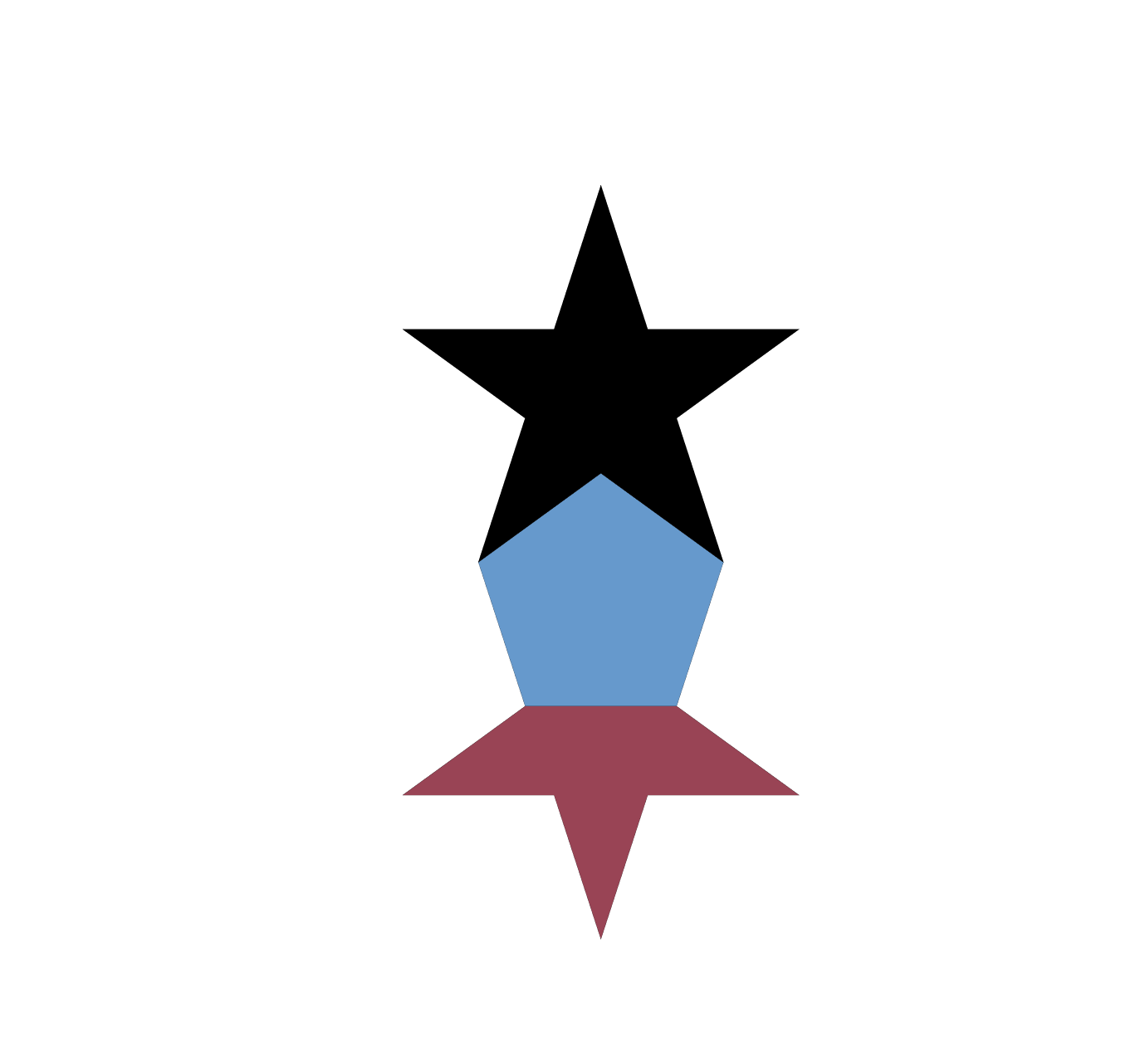}
\qquad
\includegraphics[width=1.in]{Images/boat_star_sub4a}
\hspace*{0.25em}\raisebox{30pt}{\large $\to$} \hspace*{0.25em}
\includegraphics[width=1.in]{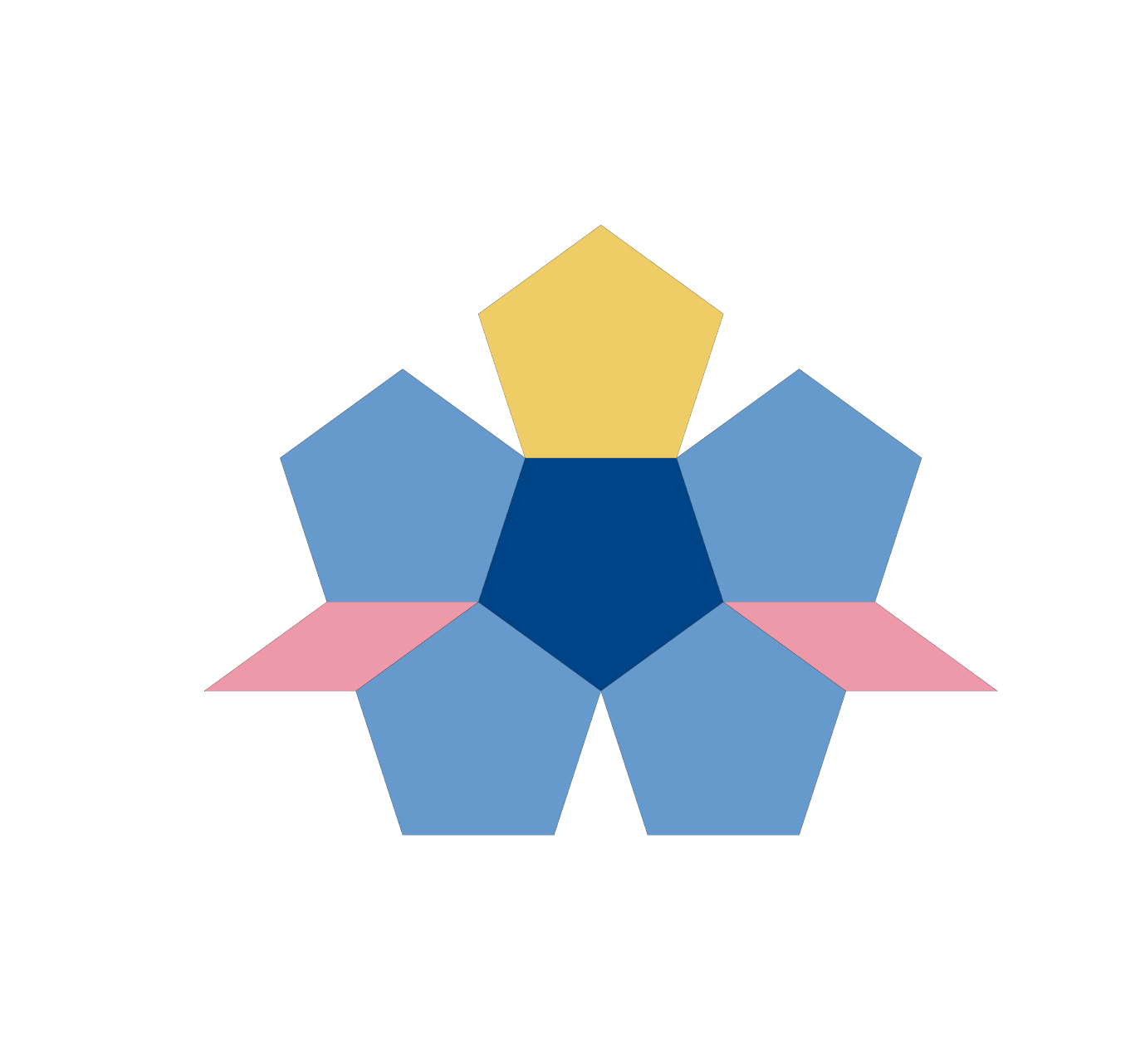}

\includegraphics[width=1.in]{Images/boat_star_sub5a}
\hspace*{0.25em}\raisebox{30pt}{\large $\to$} \hspace*{0.25em}
\includegraphics[width=1.in]{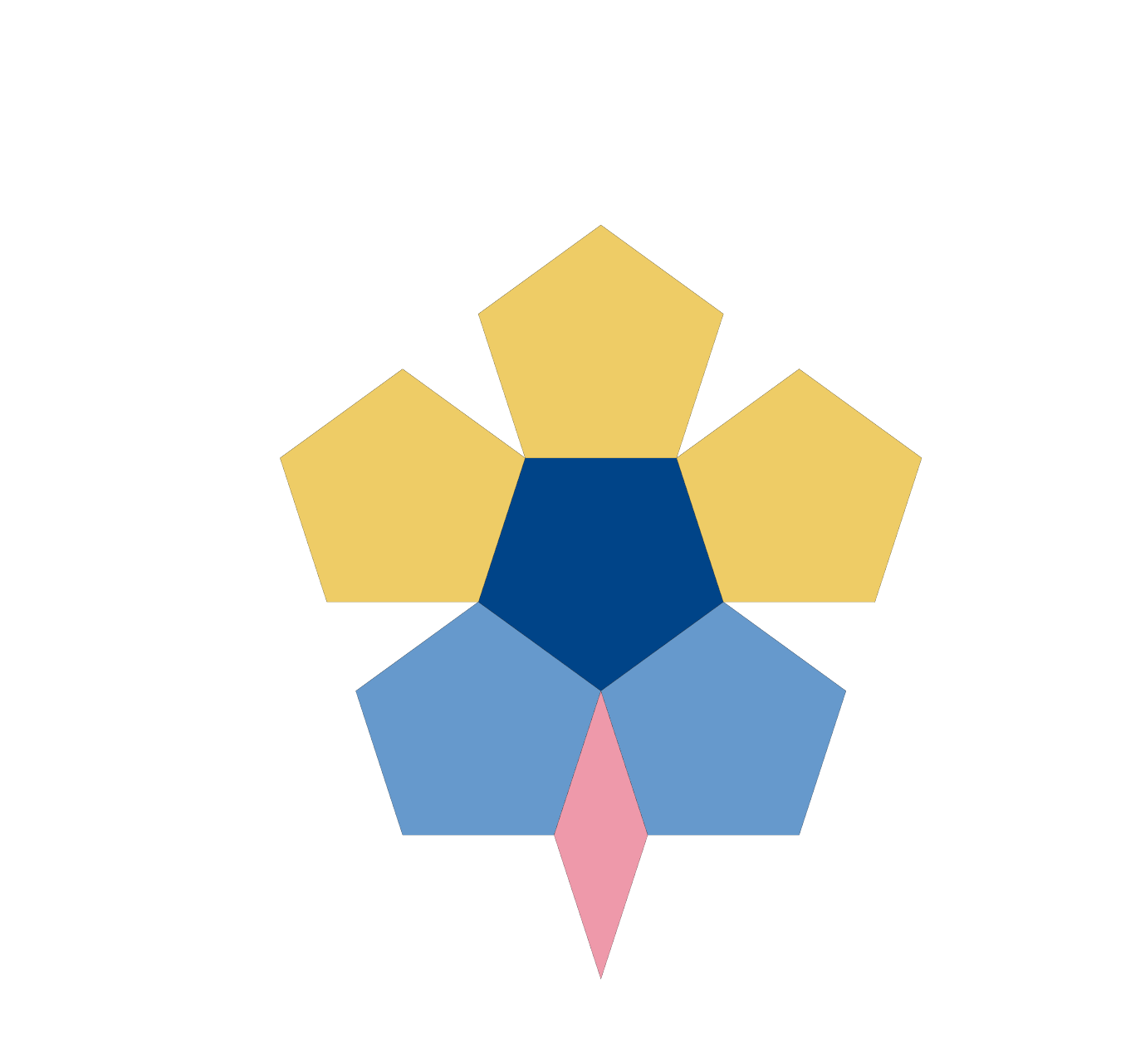}
\qquad
\includegraphics[width=1.in]{Images/boat_star_sub6a}
\hspace*{0.25em}\raisebox{30pt}{\large $\to$} \hspace*{0.25em}
\includegraphics[width=1.in]{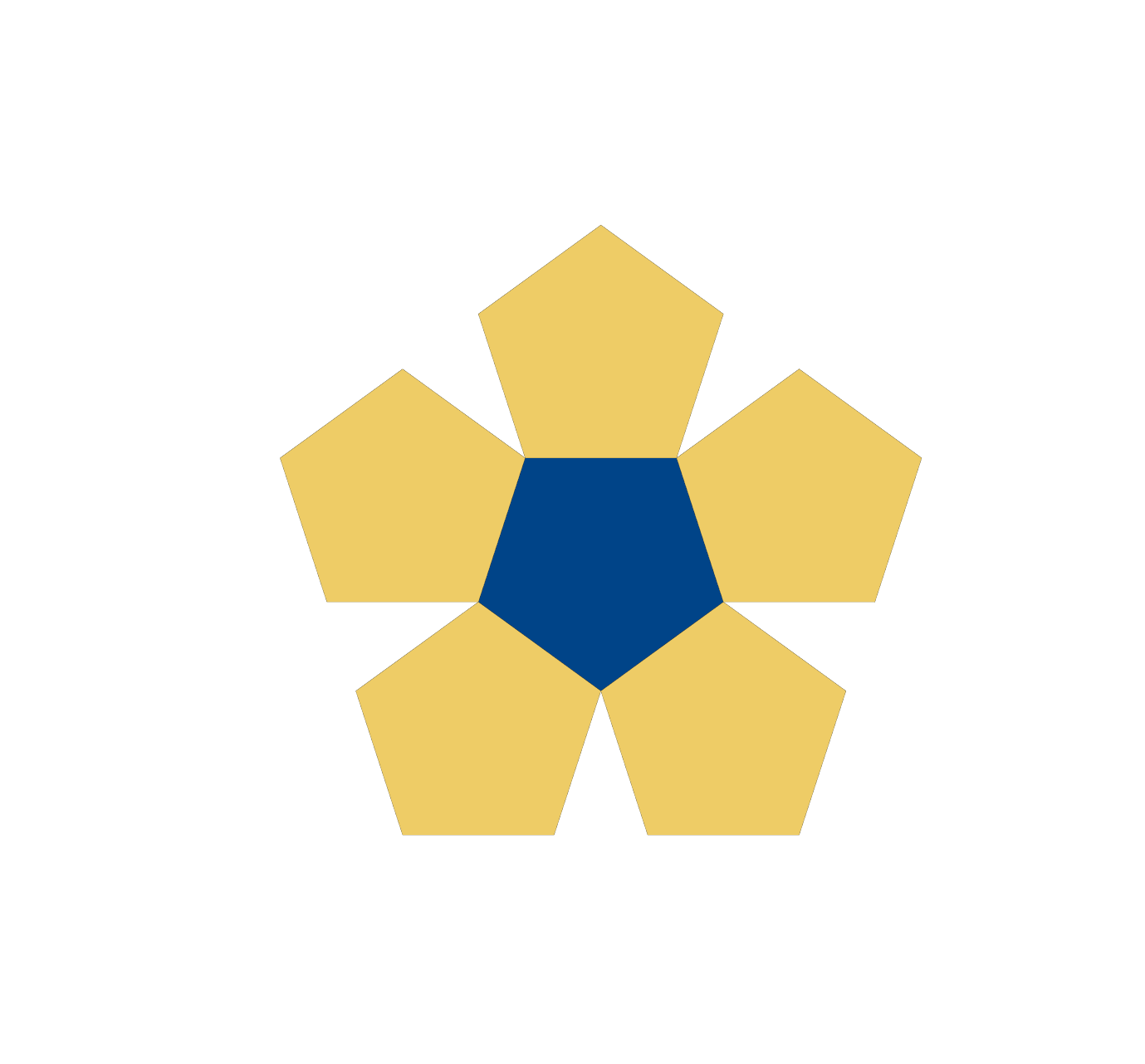}
\end{center}
We denote this substitution by $\subst_\boatstar$.
\end{definition}

There are six tiles: stars, boats, diamonds, and three types of pentagons. Ordering the tiles as in Definition~\ref{def:boatStarRules}, one can see that the substitution matrix for the boat--star substitution is
\[M_\boatstar = 
\begin{bmatrix} 
1 & 1 & 1 & 0 & 0 & 0 \\
5 & 3 & 1 & 0 & 0 & 0 \\
0 & 0 & 0 & 2 & 1 & 0 \\
5 & 3 & 1 & 4 & 2 & 0 \\
0 & 0 & 0 & 1 & 3 & 5 \\
0 & 0 & 0 & 1 & 1 & 1 \end{bmatrix}.\]

Let $\calT_0^\boatstar$ denote the pattern consisting of a single star tile and $\calT_n^\boatstar =\subst^n_{\boatstar}(\calT_0^\boatstar)$. Denote the golden ratio by
\[\varphi = \frac{\sqrt{5}+1}{2}.\]

\begin{lemma} \label{lem:boatStar:tilecount}
For each $n \geq 0$, the total number of tiles in $\calT_n^\boatstar$ is
\[P_\boatstar(n) = \frac{1}{22} (25+9\sqrt{5}) \varphi^{4n}
 - \frac{5}{33} 4^{n+1} - \frac{2}{3} + \frac{1}{22} (25-9\sqrt{5}) \varphi^{-4n},\]
 of which precisely
 \[P_{\textup{\pentagon}}(n) = \frac{1}{22} (17+7\sqrt{5}) \varphi^{4n}
 - \frac{40}{33} 4^{n} - \frac{1}{3} + \frac{1}{22}(17-7\sqrt{5}) \varphi^{-4n}\]
 are pentagons.
\end{lemma}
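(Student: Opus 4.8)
The plan is to encode the prototile statistics of $\calT_n^\boatstar$ as powers of the substitution matrix $M := M_\boatstar$ and to read off the closed form from the spectrum of $M$. Order the prototiles as in Definition~\ref{def:boatStarRules} (star, boat, diamond, and the three pentagons), and let $v_n \in \bbR^6$ record how many of each prototile appears in $\calT_n^\boatstar$. Since $\calT_n^\boatstar = \subst_\boatstar^n(\calT_0^\boatstar)$ and $\calT_0^\boatstar$ is a single star, the defining property of the substitution matrix gives $v_n = M^n e_1$, where $e_1 = (1,0,0,0,0,0)^\top$. The two quantities in the lemma are the linear functionals $P_\boatstar(n) = \langle \mathbf 1, v_n\rangle$ and $P_\pentagon(n) = \langle e_4 + e_5 + e_6, v_n\rangle$, with $\mathbf 1 = (1,1,1,1,1,1)^\top$.

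First I would compute the characteristic polynomial of $M$. Expanding $\det(\lambda I - M)$ along the last column (nonzero only in the last two rows) and then along the bottom rows of the resulting $5 \times 5$ minors reduces the problem to a few $3\times 3$ and $4\times 4$ determinants; crucially, the minors obtained by deleting a pentagon column coincide, so little genuinely new computation is required, and one obtains
\[ \det(\lambda I - M) = \lambda^2(\lambda - 1)(\lambda - 4)(\lambda^2 - 7\lambda + 1). \]
Because $\varphi^4 + \varphi^{-4} = 7$ and $\varphi^4\varphi^{-4} = 1$, the quadratic factor has roots $\varphi^{\pm 4}$, so the spectrum of $M$ is $\{\varphi^4,\,4,\,1,\,\varphi^{-4},\,0,\,0\}$. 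Two visible row relations, $R_4 = R_2 + 2R_3$ and $2R_3 + R_5 = 5R_6$, show $\operatorname{rank} M = 4$, so the eigenvalue $0$ is semisimple; as sanity checks, $\tr M = 12 = \varphi^4 + \varphi^{-4} + 4 + 1$ and $\det M = 0$.

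With the spectrum in hand, the spectral decomposition $M^n = \varphi^{4n}\Pi_1 + 4^n\Pi_2 + \Pi_3 + \varphi^{-4n}\Pi_4 + 0^n\,\Pi_0$ (where $\Pi_0$ projects onto the two-dimensional kernel) shows that for every $n \ge 1$ both $P_\boatstar(n)$ and $P_\pentagon(n)$ have the form $X\varphi^{4n} + Y4^n + Z + W\varphi^{-4n}$, the kernel contributing nothing once $n \ge 1$. I would then fix the four coefficients for each functional by solving the $4 \times 4$ linear system coming from directly computed initial values; the system is invertible since $\varphi^4, 4, 1, \varphi^{-4}$ are distinct. Iterating $v_{n+1} = Mv_n$ from $v_0 = e_1$ gives $v_1 = (1,5,0,5,0,0)^\top$, $v_2 = (6,20,10,40,5,5)^\top$, and $v_3 = (36,100,85,270,80,50)^\top$, hence $P_\boatstar(n) = 1,11,86,621,\dots$ and $P_\pentagon(n) = 0,5,50,400,\dots$ for $n=0,1,2,3$. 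Solving the two systems reproduces exactly the coefficients in the statement, and one finally checks that the resulting closed forms also return the correct values at $n = 0$ (equivalently, that $\langle \mathbf 1, \Pi_0 e_1\rangle = \langle e_4+e_5+e_6, \Pi_0 e_1\rangle = 0$), so the formulas are valid for all $n \ge 0$.

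The only real obstacle is the $6\times 6$ determinant; everything downstream is routine linear algebra and arithmetic in $\bbZ[\varphi]$. That determinant is tamed by the block structure of $M$: the pentagon columns $4,5,6$ are supported away from the first two rows, so the minors that arise are small and, as noted, repeat, collapsing the computation to the displayed factorization. If one prefers to avoid spectral projections, the same conclusion follows from Cayley--Hamilton: each sequence satisfies the order-six recurrence with symbol $\det(\lambda I - M)$, hence the order-four recurrence with symbol $(\lambda-1)(\lambda-4)(\lambda^2-7\lambda+1)$ for $n \ge 2$, and matching finitely many initial values forces the closed form.
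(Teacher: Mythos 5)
Your proposal is correct and rests on the same engine as the paper's proof: both express the counts as $\langle w, M_\boatstar^n e_1\rangle$ and $\langle u, M_\boatstar^n e_1\rangle$ and exploit the spectrum $\{\varphi^4,4,1,\varphi^{-4},0,0\}$ of the substitution matrix. The only real difference is how the coefficients of $\varphi^{4n},4^n,1,\varphi^{-4n}$ are extracted: the paper computes the eigenvectors explicitly, expands $e_1$ in that basis, and evaluates the inner products $\langle w,v_i\rangle$, $\langle u,v_i\rangle$, whereas you use undetermined coefficients fitted to initial data. Your route spares you the eigenvector computation, but it has one small circularity as written: the form $X\varphi^{4n}+Y4^n+Z+W\varphi^{-4n}$ is guaranteed only for $n\ge1$ until one knows that $\Pi_0 e_1=0$, so using $P_\boatstar(0)$ as one of your four interpolation points already presupposes the very fact you propose to ``check at $n=0$'' afterwards --- that check is then automatically satisfied and verifies nothing. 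The fix is cheap: either compute one more iterate ($P_\boatstar(4)=4371$, $P_\pentagon(4)=2965$) and interpolate at $n=1,2,3,4$, or verify directly that $e_1$ lies in the span of the eigenvectors for the nonzero eigenvalues --- which is exactly what the paper's explicit expansion $e_1=\tfrac{1}{22}(7-\sqrt5)v_1-\tfrac{5}{33}v_2-\tfrac13 v_3+\tfrac{1}{22}(7+\sqrt5)v_4$ accomplishes, since no $v_5,v_6$ component appears. Your rank computation (the relations $R_4=R_2+2R_3$ and $2R_3+R_5=5R_6$), the factorization of the characteristic polynomial, and the initial vectors $v_1,v_2,v_3$ all check out against the paper's data.
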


\begin{proof}
Observe that the substitution matrix $M_\boatstar$ has eigenvalues $\varphi^{ 4},4,1,\varphi^{-4},0,0$ with corresponding eigenfunctions (listed in the same order)
\[v_1 = \frac{1}{2} \begin{bmatrix} 3-\sqrt{5} \\ -5 + 3\sqrt{5} \\ 5 - \sqrt{5} \\ 5+\sqrt{5} \\ 2\sqrt{5} \\ 2 \end{bmatrix}, \quad v_2 = \begin{bmatrix} -1 \\ -4 \\ 1 \\ -2\\ 8 \\ 2 \end{bmatrix}, \quad
v_3 = \begin{bmatrix} 1 \\ -5 \\ 5 \\5 \\-5 \\ 1 \end{bmatrix},\]
\[v_4 = \frac{1}{2} \begin{bmatrix} 3+\sqrt{5} \\ -5 - 3\sqrt{5} \\ 5 + \sqrt{5} \\ 5 - \sqrt{5} \\ -2\sqrt{5} \\ 2 \end{bmatrix}, \quad v_5 = \begin{bmatrix} 1 \\ -2 \\ 1 \\ 0\\ 0 \\ 0 \end{bmatrix}, \quad
v_6 = \begin{bmatrix}0\\ 0 \\ 0 \\  1 \\ -2 \\ 1 \end{bmatrix}.\]
Since the tiling begins with a single star, the total number of tiles at stage $n$ is precisely $\langle w, M_\boatstar^n e_1\rangle$ where $w = \begin{bmatrix} 1 & 1 & 1 & 1 & 1 & 1 \end{bmatrix}^\top$.
Decomposing $e_1$ in the basis of eigenfunctions of $M_\boatstar$, one observes
\[e_1 = \frac{1}{22}(7-\sqrt{5})v_1 - \frac{5}{33}v_2 - \frac{1}{3} v_3 + \frac{1}{22} (7+\sqrt{5})v_4.\]
Calculate
\[\langle w,v_1 \rangle = 5 + 2\sqrt{5},
\quad \langle w,v_2 \rangle = 4,
\quad \langle w,v_3 \rangle = 2,
\quad \langle w,v_4 \rangle = 5 - 2\sqrt{5}. \]
Thus, the total number of tiles at stage $n$ is
\begin{align*}
\langle w, M_\boatstar^n e_1 \rangle
&= \frac{1}{22} (7-\sqrt{5})(5+2\sqrt{5}) \varphi^{4n}
 - \frac{5}{33} 4^{n+1} - \frac{2}{3} + \frac{1}{22} (7+\sqrt{5})(5-2\sqrt{5}) \varphi^{-4n} \\
& = \frac{1}{22} (25+9\sqrt{5}) \varphi^{4n}
 - \frac{5}{33} 4^{n+1} - \frac{2}{3} + \frac{1}{22} (25-9\sqrt{5}) \varphi^{-4n}.
\end{align*}
Similarly, to count pentagons let $u= \begin{bmatrix} 0 & 0 & 0 & 1 & 1 & 1 \end{bmatrix}^\top$ and compute
\[\langle u,v_1 \rangle = \frac{1}{2}(7 + 3\sqrt{5}),
\quad \langle u,v_2 \rangle = 8,
\quad \langle u,v_3 \rangle = 1,
\quad \langle u,v_4 \rangle = 7 - 3\sqrt{5}. \]
Thus, the total number of pentagons at stage $n$ is
\begin{align*}
\langle u, M_\boatstar^n e_1 \rangle
&= \frac{1}{22} (7-\sqrt{5})\frac{1}{2}(7 + 3\sqrt{5}) \varphi^{4n}
 - \frac{40}{33} 4^{n} - \frac{1}{3} + \frac{1}{22} (7+\sqrt{5})\frac{1}{2}(7 - 3\sqrt{5}) \varphi^{-4n} \\
& = \frac{1}{22} (17+7\sqrt{5}) \varphi^{4n}
 - \frac{40}{33} 4^{n} - \frac{1}{3} + \frac{1}{22}(17 - 7\sqrt{5}) \varphi^{-4n},
\end{align*}
as desired.
\end{proof}

\begin{figure}[t!]
\begin{center}
\includegraphics[width=1.5in]{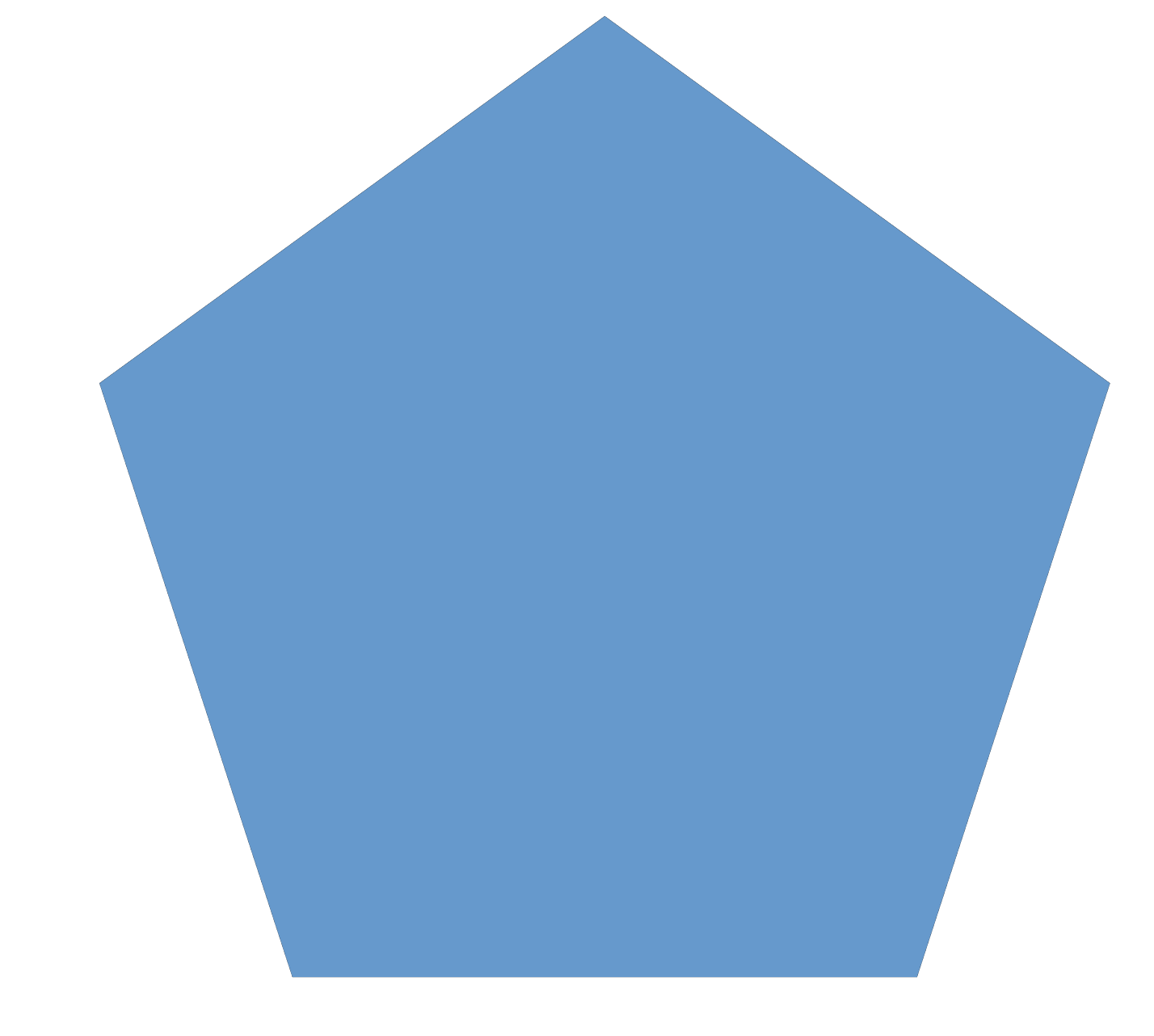}
\hspace*{23pt}
\includegraphics[width=1.5in]{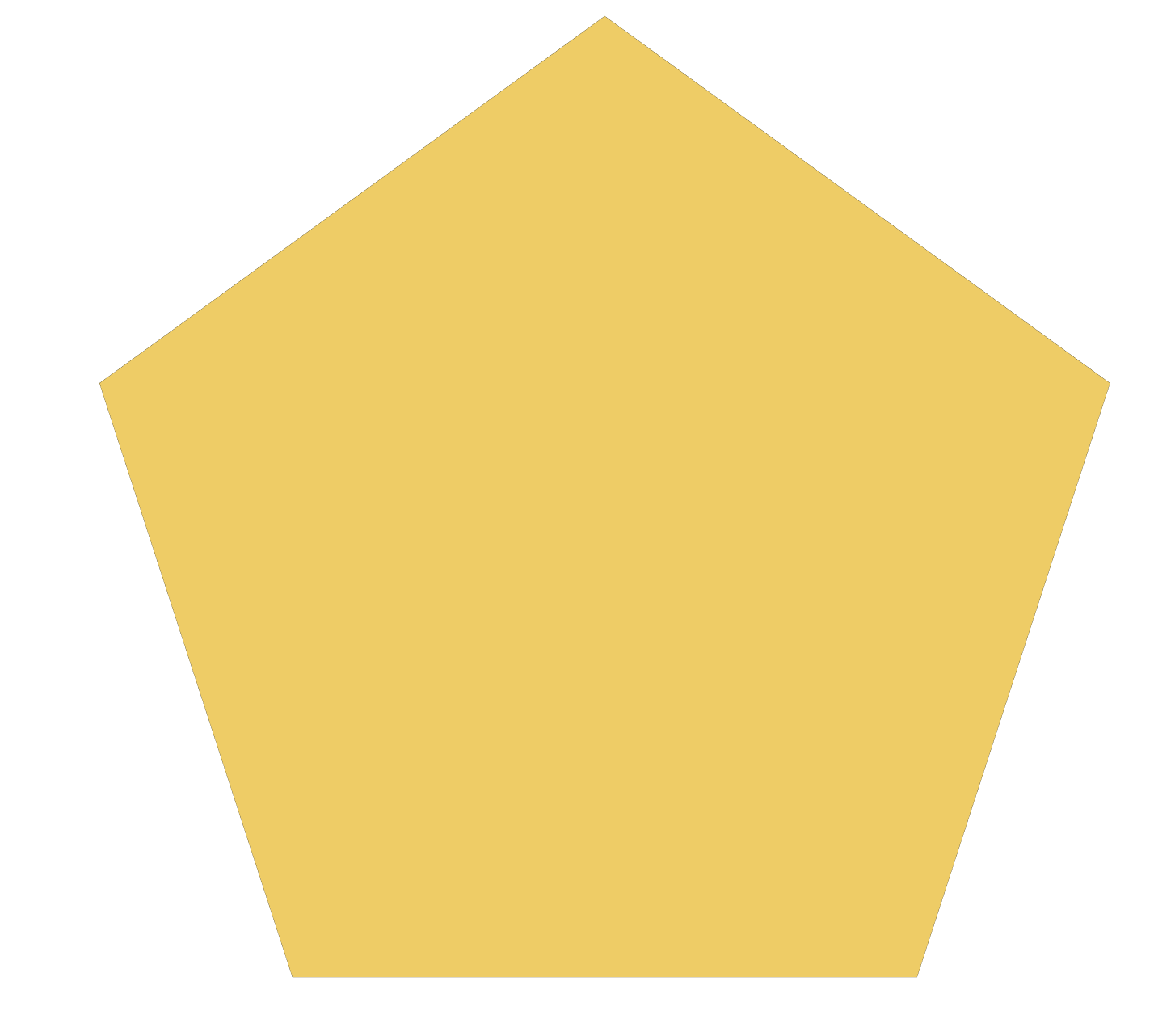}
\hspace*{21pt}
\includegraphics[width=1.5in]{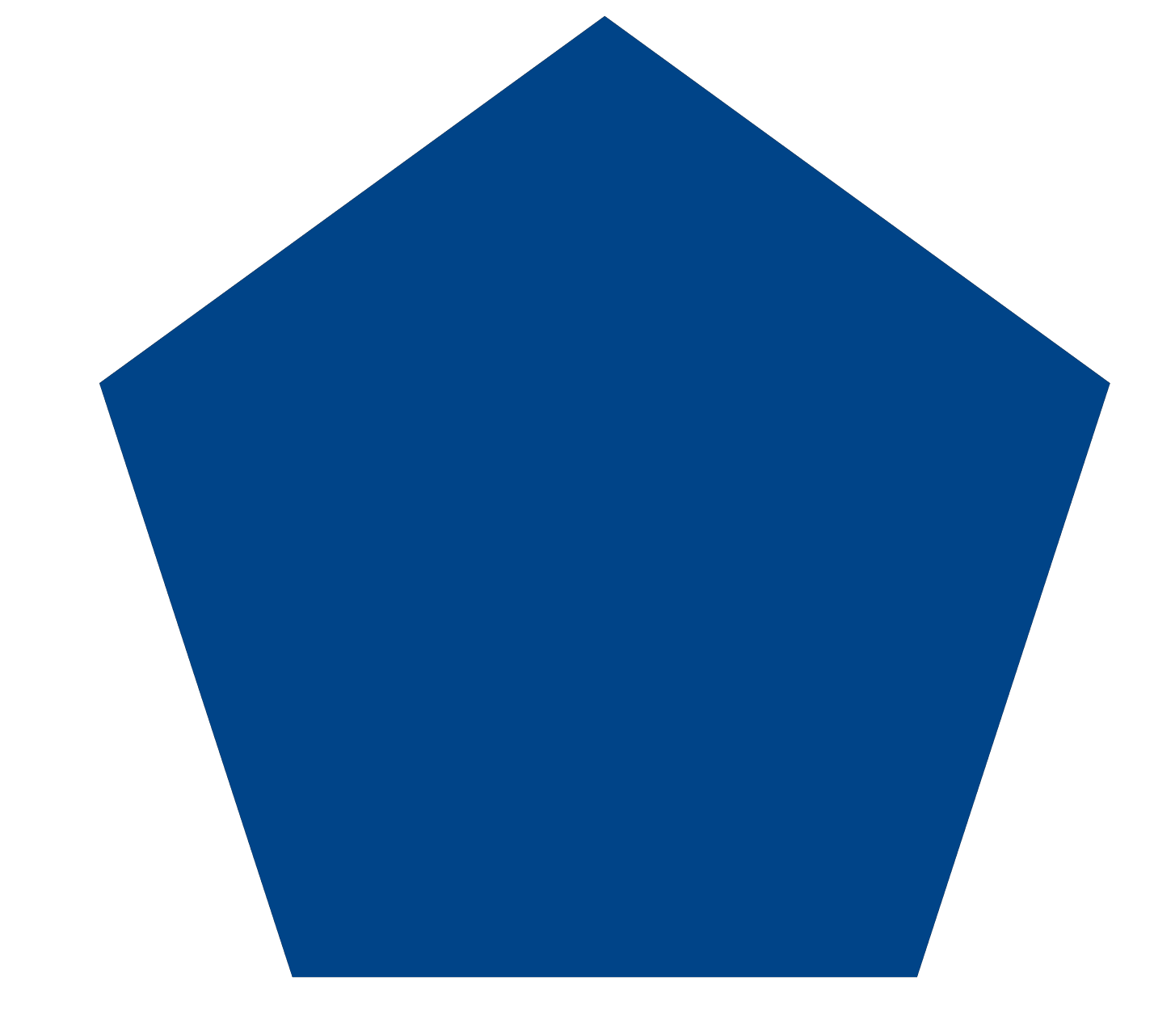}

\begin{picture}(0,0)
\put(-141,2){$\downarrow$}
\put(0,2){$\downarrow$}
\put(141.2,2){$\downarrow$}
\end{picture}

\vspace*{4pt}
\includegraphics[width=1.9in]{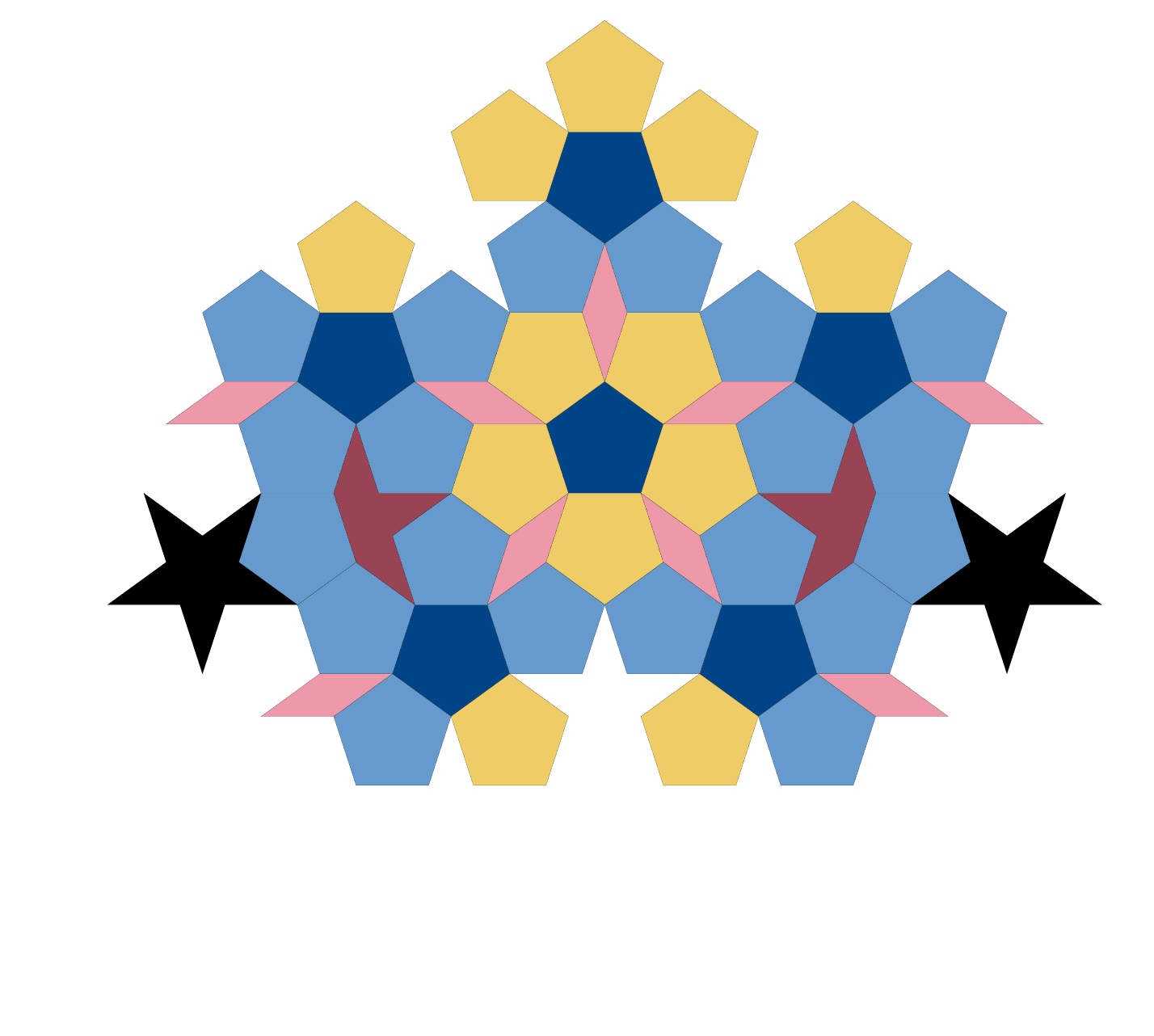}
\includegraphics[width=1.9in]{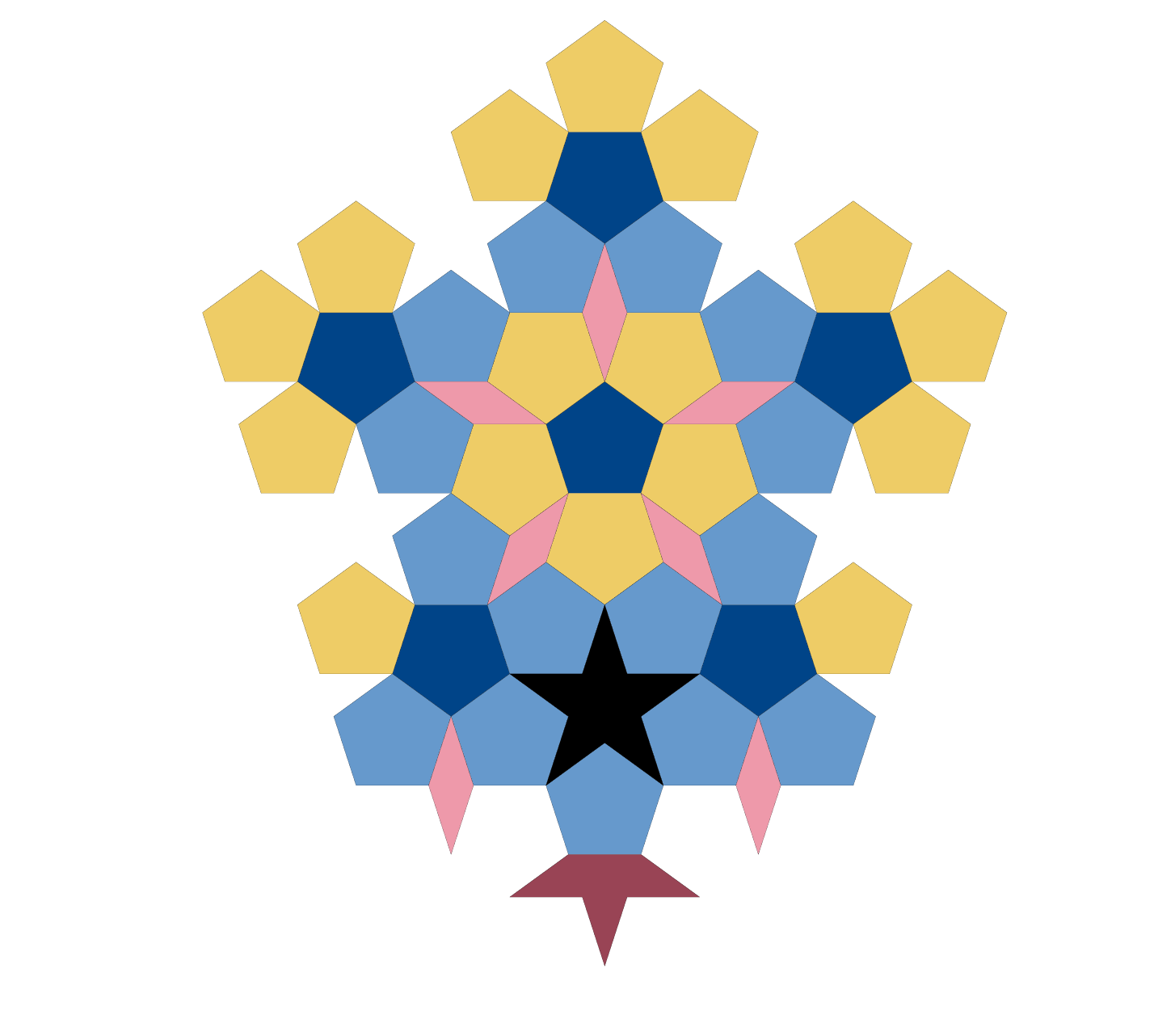}
\includegraphics[width=1.9in]{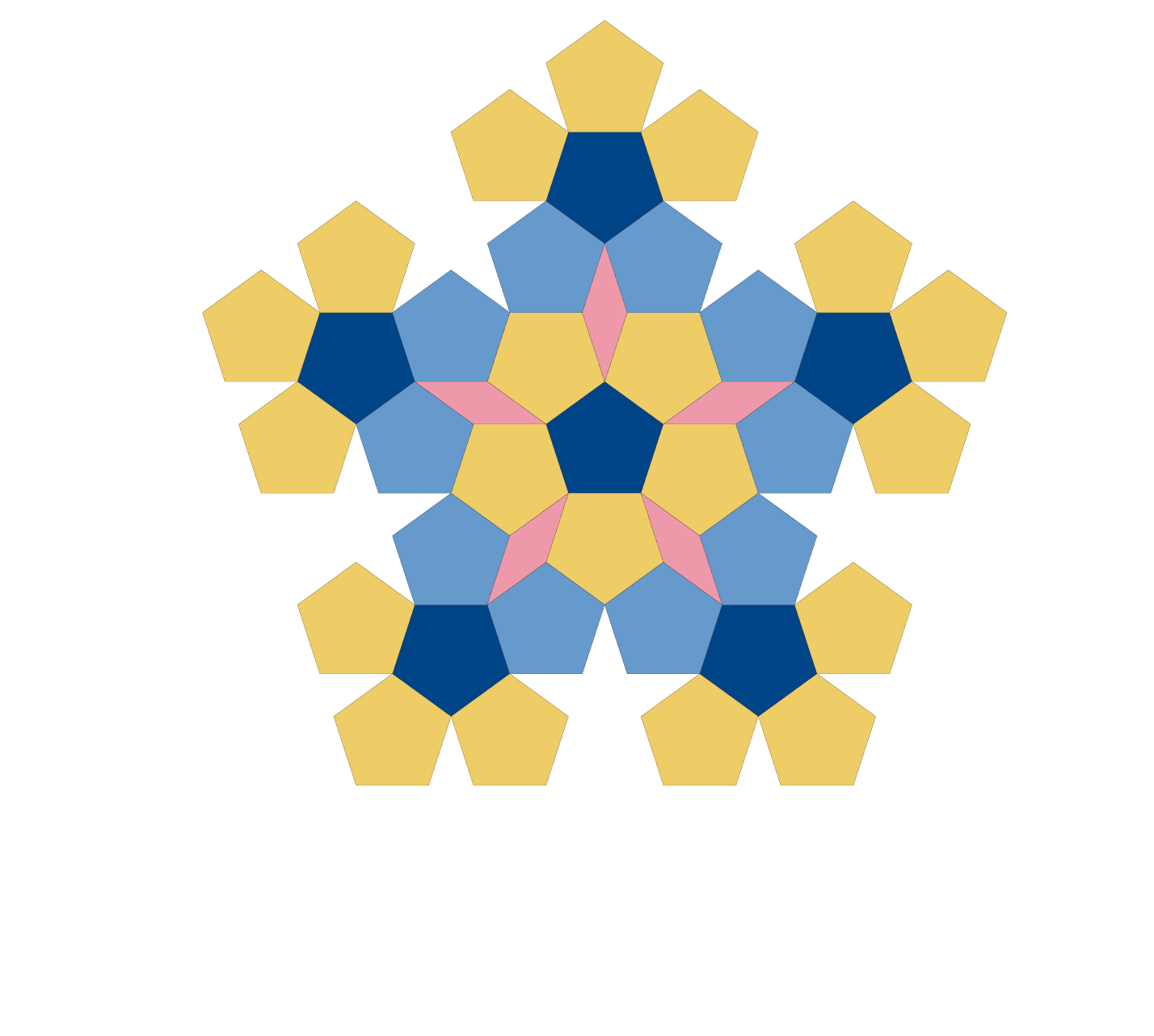}
\end{center}

\vspace*{-12pt}
\caption{Pentagonal supertiles at level $2$ (i.e., $\calT_2^\kappa$ for each color $\kappa$).} \label{fig:boatStar:pent2}
\end{figure}

\subsection{Ring Modes}

We now explain how the locally-supported eigenfunctions arise and how to estimate their frequency.
For each of the three colors $\kappa$, let $\calT_0^{\kappa}$ denote the pattern that consists of a single pentagon with color $\kappa$,  let $\calT_n^{\kappa} = \subst_\boatstar^n(\calT_0^\kappa)$ denote the result of substituting $n$ times, let $\Gamma_n^\kappa$ denote the induced finite graph, and denote the corresponding graph Laplacian by $\Delta_n^\kappa$. We will refer to $\calT_n^\kappa$ as a \emph{level-$n$ pentagonal supertile}. See Figure~\ref{fig:boatStar:pent2} for the three level-two pentagonal supertiles. 

\begin{figure}[b!]
\includegraphics[width=2in]{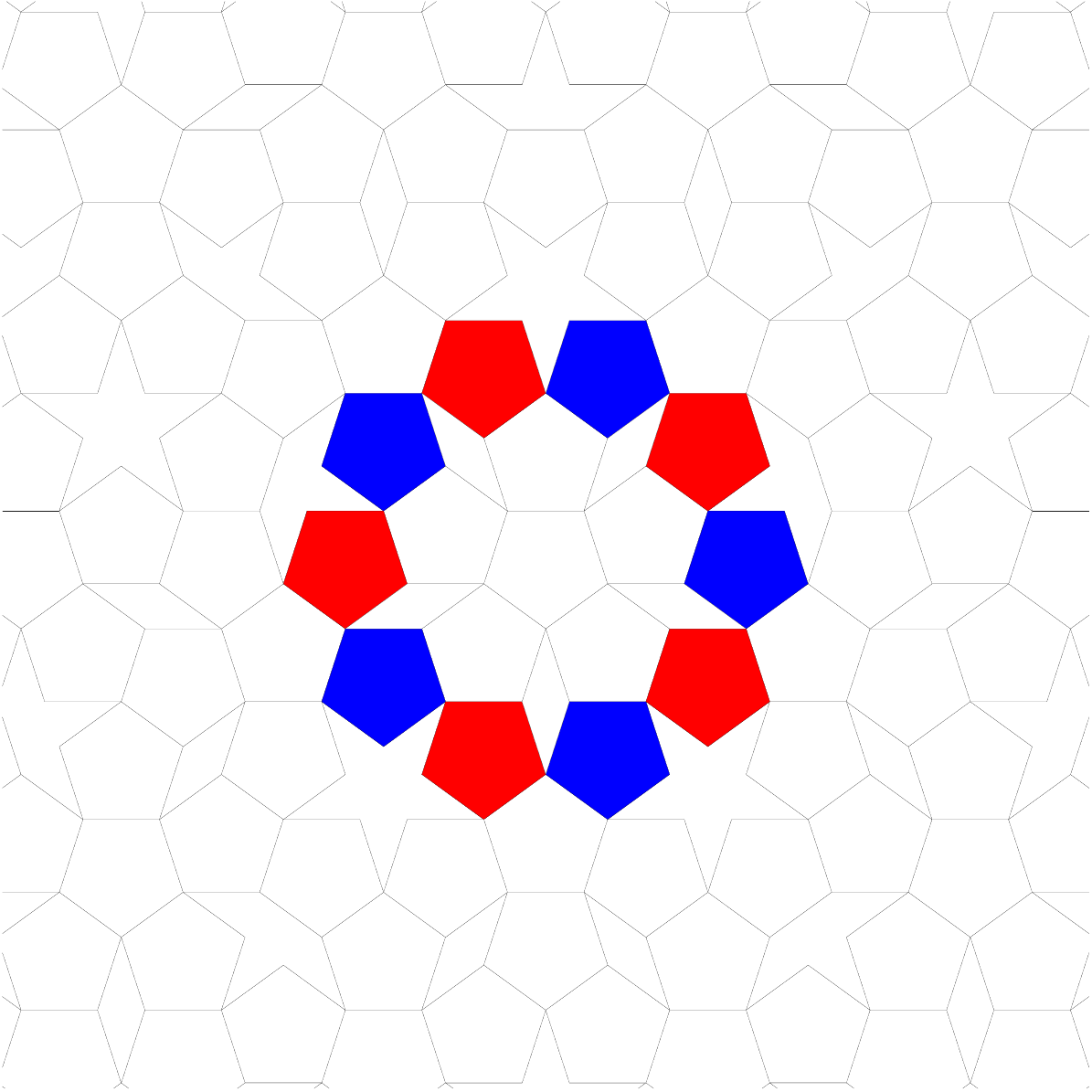}
\caption{A locally-supported eigenfunction for $E=4$ on the boat--star tiling.  
The function takes the value $+1$ on blue tiles, $-1$ on red tiles, and zero elsewhere.} 
\label{fig:boatStar:pentSupport}
\end{figure}

The crucial observation is that each level-two pentagonal supertile contains a pattern that supports a locally-supported eigenfunction.
Namely, the ring of ten pentagons encircling the center is precisely the tile set that can be used to support a locally-supported eigenfunction.
One can locate \emph{fifty} level-two pentagonal supertiles in the level-four supertile shown in Figure~\ref{fig:boatStar:E=4lev4}.

\begin{figure}[t!]
\begin{center}
\includegraphics[width=4in]{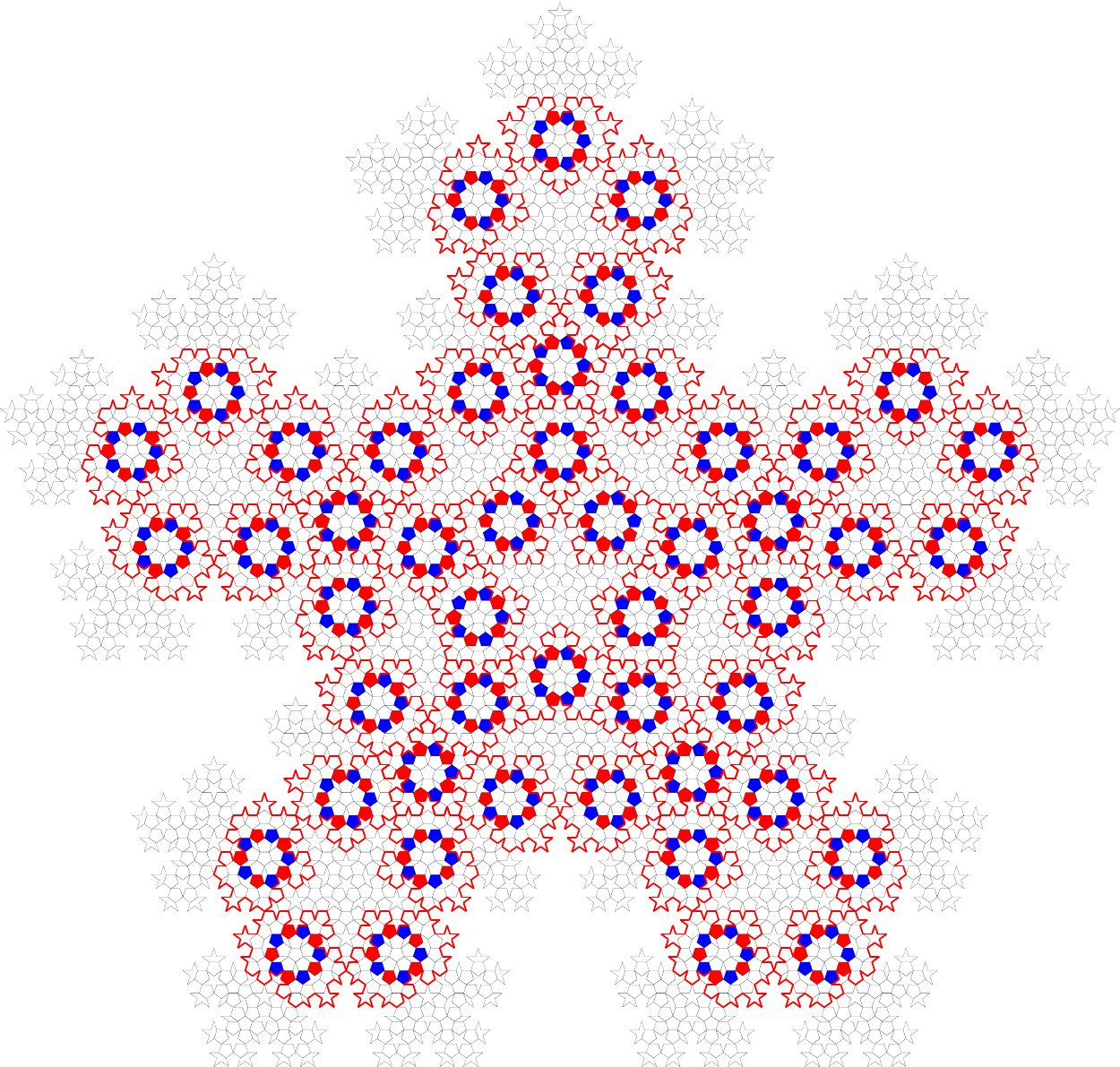}
\end{center}

\caption{Boat--star tiling, level~4, $E=4$.  Fifty locally-supported ring modes: 
the colored tiles correspond to mode entries equal to $\pm1$.  
The red lines show the boundaries of the fifty pentagonal supertiles 
(each of which is generated by a pentagon on the level~2 tiling).
\label{fig:boatStar:E=4lev4}}
\end{figure}

\begin{lemma} \label{lem:boatStar:ringmodes}
For all $\kappa$, $4$ is an eigenvalue of $\Delta_2^\kappa$.
\end{lemma}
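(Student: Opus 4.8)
The plan is to prove the lemma by exhibiting the explicit ``ring mode'' and verifying the eigenvalue equation by hand. Concretely, let $\psi$ be the function on the vertex set of $\Gamma_2^\kappa$ that is supported on the ring of ten pentagons encircling the central pentagon of $\calT_2^\kappa$, taking the values $+1$ and $-1$ indicated in Figure~\ref{fig:boatStar:pentSupport} and vanishing on every other tile. Since $\psi$ is supported on only ten tiles, checking $\Delta_2^\kappa \psi = 4\psi$ reduces to a finite, purely local computation involving these ten pentagons together with the tiles adjacent to them, so no global information about the supertile is needed.

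First I would split the verification into two families of conditions coming from the identity $[\Delta_2^\kappa\psi](v) = \deg(v)\psi(v) - \sum_{u\sim v}\psi(u)$. For a tile $v$ in the support one must confirm $\deg(v)\psi(v) - \sum_{u \sim v}\psi(u) = 4\psi(v)$; here the only contributions to the neighbor sum come from support tiles adjacent to $v$, so this amounts to recording $\deg(v)$ inside $\Gamma_2^\kappa$ and checking that it, together with the signed contributions of any support-neighbors, yields the factor $4$. For a tile $v$ outside the support one has $\psi(v)=0$, and the requirement collapses to the cancellation condition $\sum_{u\sim v}\psi(u)=0$: the signed values of the ring pentagons bordering $v$ must sum to zero. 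The central pentagon and each outer tile (boat, diamond, or pentagon) adjacent to the ring must be checked against this condition.

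The bulk of the work — and the step I expect to be the main obstacle — is the combinatorial bookkeeping of the adjacency structure of the ring within each colored supertile: one must read off, from the substitution rules and Figure~\ref{fig:boatStar:pent2}, exactly which tiles share an edge with each ring pentagon, confirm the degree that produces the factor $4$, and verify that the alternating $\pm 1$ pattern cancels at every neighboring tile. This is not a single computation but must be carried out for all three colors $\kappa$, whose local environments around the central pentagon differ. I expect the dihedral symmetry of the pentagonal supertile to streamline this considerably: the ring pentagons and their surrounding tiles fall into a small number of symmetry orbits, so only one representative of each orbit need be examined, and the sign pattern is compatible with the rotation carrying one ring pentagon to the next. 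Finally, one should confirm the mild well-definedness caveat recorded before the Proposition, namely that the degrees of the relevant tiles are unambiguous; for the interior ring and its immediate neighbors within $\Gamma_2^\kappa$ this is immediate.
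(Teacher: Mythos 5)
Your proposal is correct and follows essentially the same route as the paper: exhibit the explicit $\pm 1$ ring mode on the ten pentagons encircling the center of each $\calT_2^\kappa$ and verify $\Delta_2^\kappa\psi = 4\psi$ by the local case split — tiles in the ring, tiles at combinatorial distance one (where the alternating signs force $\sum_{u\sim v}\psi(u)=0$), and tiles farther away. The paper's proof records exactly this computation, including the key observation that each tile adjacent to the ring has precisely two ring-neighbors of opposite sign.
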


\begin{proof}Let us begin by explaining how the eigenfunction arises.
Consider Figure~\ref{fig:boatStar:pent2}, which shows $\calT_2^\kappa$ for each color $\kappa$.
In each $\calT_2^\kappa$, one observes a ring of ten pentagons encircling the center, highlighted in Figure~\ref{fig:boatStar:pentSupport}.
Denote this pattern by $\calR$.
Define a vector $\psi$ by assigning the value $+1$ to each red pentagon, $-1$ to each blue pentagon, and $0$ to all other tiles.

A brief calculation shows $\Delta_2^\kappa \psi = 4\psi$.
Indeed, when $u$ corresponds to a face with combinatorial distance $2$ from $\calR$, then
\[[\Delta_2^\kappa\psi](u) = 0 = 4 \psi(u).\]
Similarly, one checks $[\Delta_2^\kappa\psi](u)=4\psi(u)$ for any $u$ coming from a face of $\calR$.
Each face with combinatorial distance one from $\calR$ has precisely two neighbors in $\calR$, so, due to the alternating pattern, one observes 
\[[\Delta_2^\kappa\psi](u) = 1-1=0 = 4 \psi(u),\]
hence showing that $\psi$ is an eigenfunction of eigenvalue $4$, as desired.\end{proof}

\begin{proof}[Proof of Theorem~\ref{t:boatstarquant}]
By Lemma~\ref{lem:boatStar:ringmodes}, the number of occurrences of the pattern $\calR$ at level $n$ may be bounded from below by the number of pentagons that appear in level $n-2$.

For instance, in level $3$, there are five occurrences of the pattern, each of which is precipitated by a pentagon from $\calT_1$; compare Figure~\ref{fig:boatStar:E=4lev4}.

We now make two observations.
First, each of these occurrences will be separated by all other occurrences by a tiling distance of at least two.

Second, we need to address a minor technicality.
Namely: some of the ring patterns from Lemma~\ref{lem:boatStar:ringmodes} may appear on the interior of the tiling, while others may occur on the boundary.
As can be seen from Figure~\ref{fig:boatStar:pent2}, either occurrence leads to an eigenfunction.

Thus, we see that the multiplicity of the eigenvalue $4$ at level $n$ is bounded from below by the number of pentagons that occur in level $n-2$.
Denoting the IDS by $k_\boatstar$, Lemma~\ref{lem:boatStar:tilecount} gives 
\begin{align*}
k_\boatstar(4+)- k_\boatstar(4-) & \geq \lim_{n\to\infty} \frac{P_{\text{\pentagon}}(n)}{P(n+2)} \\[1mm]
& = \lim_{n \to \infty} \frac{\frac{1}{22} (17+7\sqrt{5}) \varphi^{4n}+ O(4^n)}{\frac{1}{22} (25+9\sqrt{5}) \varphi^{4(n+2)} +O(4^n)} \\[1mm]
& = \frac{17+7\sqrt{5}}{(25+9\sqrt{5})\varphi^8} \\[1mm]
& = \frac{65-29\sqrt{5}}{10},
\end{align*}
as claimed.\end{proof}

One might naturally question whether this estimate on the multiplicity is sharp.

\begin{question} \label{quest:boatStar:4mult}
For $n=1,2,\ldots,8$, the multiplicity of the eigenvalue $4$ for $\calT_n$ is  given by
\begin{equation}
m(n) = \begin{cases}
0, & n=0,1;\\
1, & n = 2 ;\\
\frac{1}{22} (17+7\sqrt{5}) \varphi^{4(n-2)}
 - \frac{40}{33} 4^{n-2} - \frac{1}{3} + \frac{1}{22}(17-7\sqrt{5}) \varphi^{-4(n-2)}, & n \ge 3 .
\end{cases}
\end{equation}
Does this pattern persist? That is, is it true that the multiplicity of $E=4$ at level $n$ is given by $m(n)$ for all $n \geq 1$?
\end{question}

\begin{remark}
Question~\ref{quest:boatStar:4mult} has been answered in the affirmative (numerically) for all $n \leq 8$; compare Table~\ref{fig:boatStar:multTable}.
(The single eigenfunction that appears at energy $4$ at level~2 is not a ring consisting of ten pentagons, but is qualitatively different; its support comprises thirty tiles, all on the boundary; in contrast to the ring modes, this pattern does \emph{not} extend to a locally-supported eigenfunction of larger patches of the tiling.) 
\end{remark}

\vspace*{1em}
\begin{table}[b!]

\caption{Boat--star tiling: Multiplicities of different eigenvalues at levels $2$--$8$, with a comparison to the conjectured multiplicity of $E=4$. The final column shows the numerical approximation to the IDS jump at $E=4$. The theoretically obtained lower bound on $k_\boatstar(4+) - k_\boatstar(4-)$ from Theorem~\ref{t:boatstarquant} is $(65-29\sqrt{5})/10 \approx 0.01540286525\ldots$.}\label{fig:boatStar:multTable}

\begin{center}
\begin{tabular}{crrrrrc}
\emph{level} &
\multicolumn{1}{c}{\emph{tiles}} &
\multicolumn{1}{c}{$E = 1/\varphi^2$} &
\multicolumn{1}{c}{$E = \varphi^2$} &
\multicolumn{1}{c}{$E = 4$}  &
\multicolumn{1}{c}{$m(n)$} &
\multicolumn{1}{c}{$k_{\boatstar,n}(4+) - k_{\boatstar,n}(4-)$}
  \\ \hline
 1 & 16 & 0 & 0 & 0 & 0 & 0.0000000\ldots\\
 2 & 86      & 10 & 10 & 1 & 1           & 0.0116279\ldots \\
 3 & 621     & 30 & 30 & 5 & 5           & 0.0080515\ldots \\
 4 & 4\,371    & 110 & 110 & 50 & 50       & 0.0114390\ldots \\
 5 & 30\,406   & 430 & 430 & 400  & 400    & 0.0131552\ldots \\
 6 & 210\,181  & 1\,710 & 1\,710 & 2\,965 & 2\,965 & 0.0141068\ldots \\
 7 & 1\,447\,691 & 6\,830 & 6\,830 & 21\,210 & 21\,210         & 0.0146509\ldots \\
 8 & 9\,950\,966 & 27\,310& 27\,310 & 148\,920 & 148\,920       & 0.0149653\ldots
\end{tabular}
\end{center}
\end{table}

\begin{figure}[b!]
\begin{center}
\incplotscl{2.85in}{Images/boat_star3_bdy1}{}\quad
\incplotscl{2.85in}{Images/boat_star3_bdy2}{}

\begin{picture}(0,0)
\put(-206,105){$E=1/\varphi^2$}
\put(12,105){$E=\varphi^2$}
\end{picture}
\end{center}
\caption{Locally-supported boundary eigenfunctions for the boat--star tiling for
$E=1/\varphi^2$ (left) and $E=\varphi^2$ (right) at level~3.
Each plot shows nine linearly independent eigenfunctions, each supported on four tiles.
The nonzero entries of the eigenfunctions are 
$\pm 1$ (dark blue and red) and $\pm 1/\varphi$ (light blue and red).
\label{fig:boatStar:bdy}}
\end{figure}
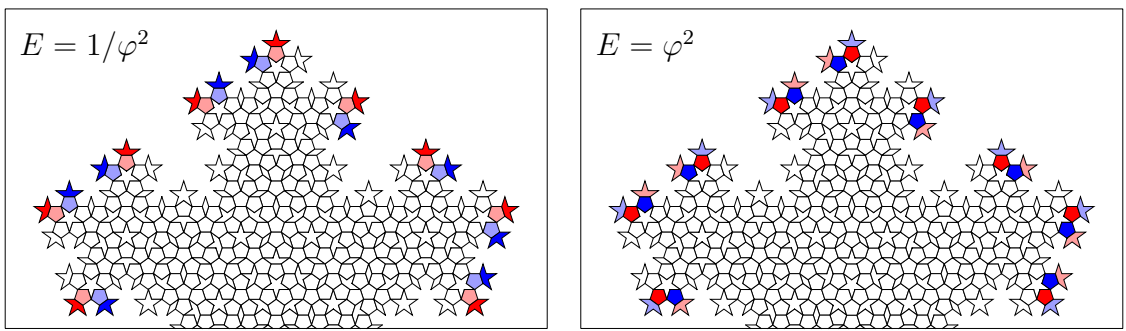

Table~\ref{fig:boatStar:multTable} summarizes some results of our computations. 
In addition to the ring modes at $E=4$, Table~\ref{fig:boatStar:multTable} also 
contains counts for high-multiplicity eigenvalues at $E=1/\varphi^2$ and $E=\varphi^2$
associated with modes that are locally-supported, but only on the boundary of the finite
patch $\calT_n^\boatstar$.  Figure~\ref{fig:boatStar:bdy} shows a few of these boundary modes.
Denote by $k_{\boatstar,n}$ the IDS associated with $\calT_n^\boatstar$.  The boundary modes
outnumber the ring modes at early levels, and will cause a jump in $k_{\boatstar,n}$
at $E=1/\varphi^2$ and $E=\varphi^2$ that diminishes as the level increases;
The jump at $E=4$ grows with the level, as quantified in Table~\ref{fig:boatStar:multTable}. 
(Peek ahead to Figure~\ref{fig:ids_boatstar} for an illustration.)





\section{Triangles}

This section discusses the Robinson triangle substitution, including proofs of the relevant portion of Theorem~\ref{t:main}, as well as Theorem~\ref{t:robinsonquant}. The Robinson triangle part of Theorem~\ref{t:main} follows immediately from the observation of a single locally-supported eigenfunction~\cite{KlasLenzStol2003CMP}. The bulk of this section is then concerned with the proof of Theorem~\ref{t:robinsonquant}, the estimate from below of the discontinuity in the integrated density of states at energies $E = 4$ and $E=6$.

\subsection{Basics}

The \emph{Robinson triangle substitution}, denoted $\subst_\robinson$, was specified in Definition~\ref{def:triangleRules}.

\begin{notation}
We will refer to the acute and obtuse triangles as $A$ and $O$ tiles, respectively. \end{notation}

Let $\calT_0^\robinson$ denote the pattern consisting of 10 $O$ tiles of oscillating color arranged in a star.  We will refer to $\calT_n^\robinson := \subst_\robinson^{n}(\calT^\robinson_0)$ as the level-$n$ tiling; see Figure~\ref{fig:robinsonLvl1-3}.

\begin{figure}[b!]
\includegraphics[width=1.6in]{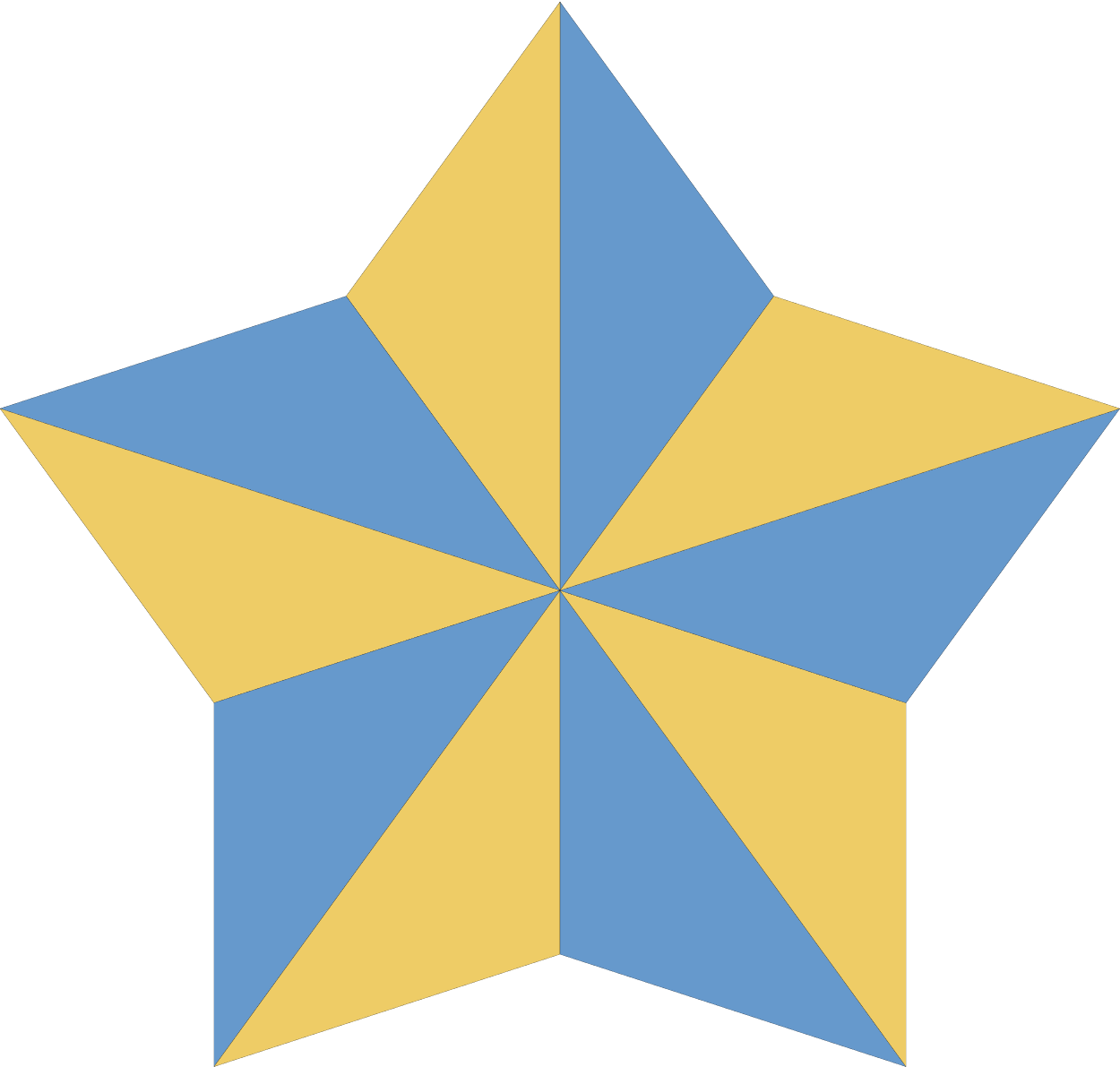}\quad
\includegraphics[width=1.6in]{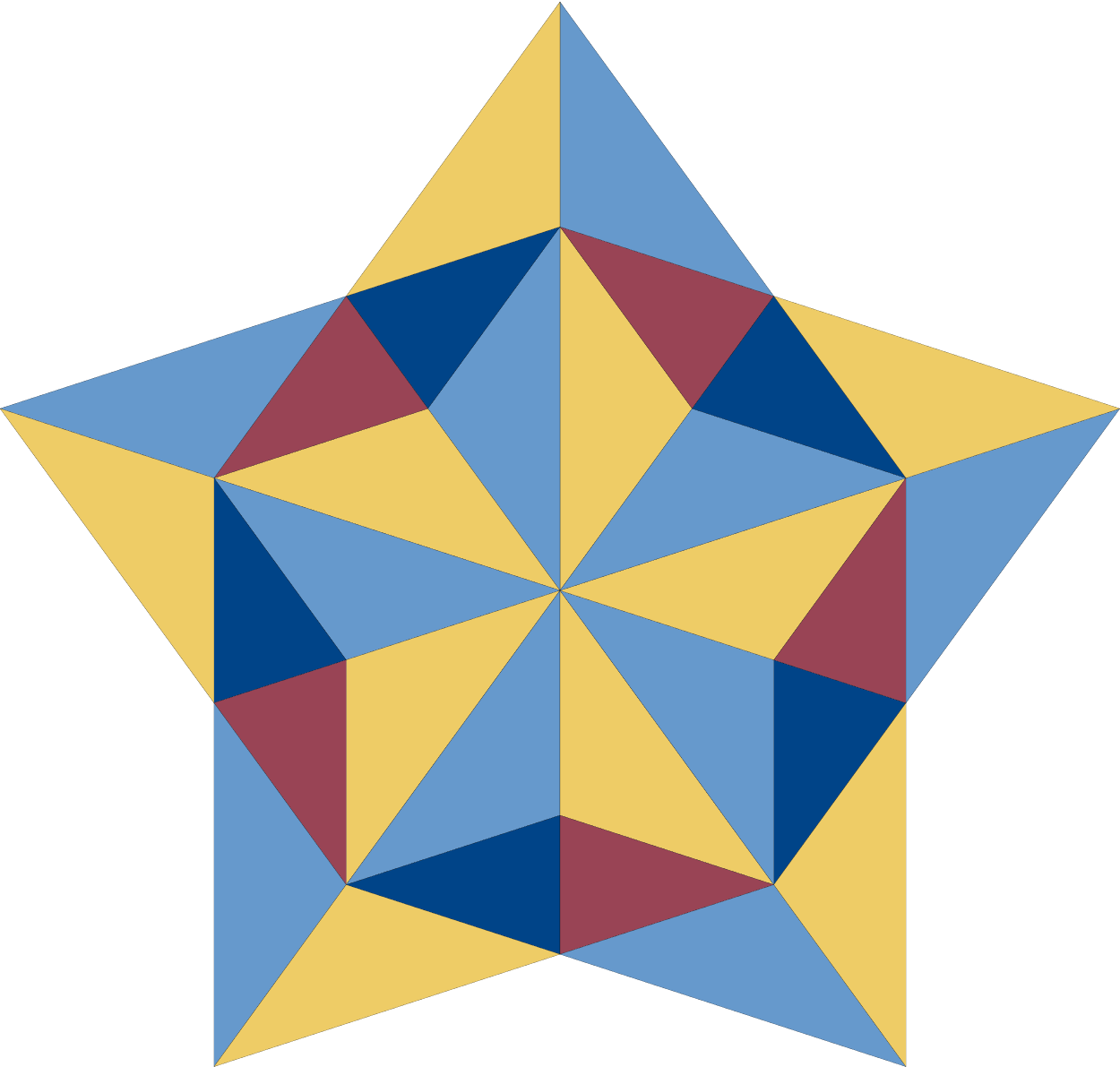}\quad
\includegraphics[width=1.6in]{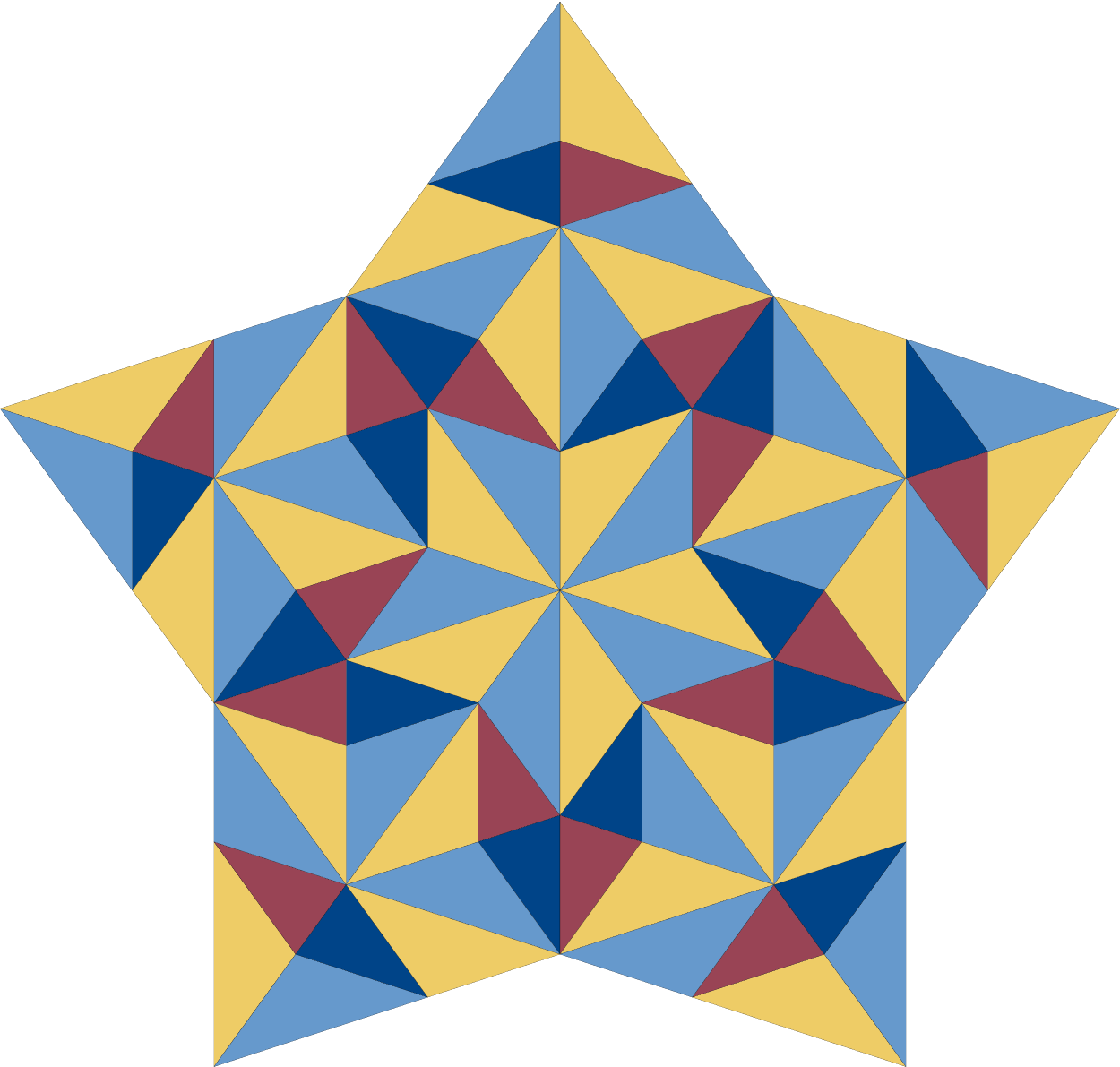}

\includegraphics[width=1.6in]{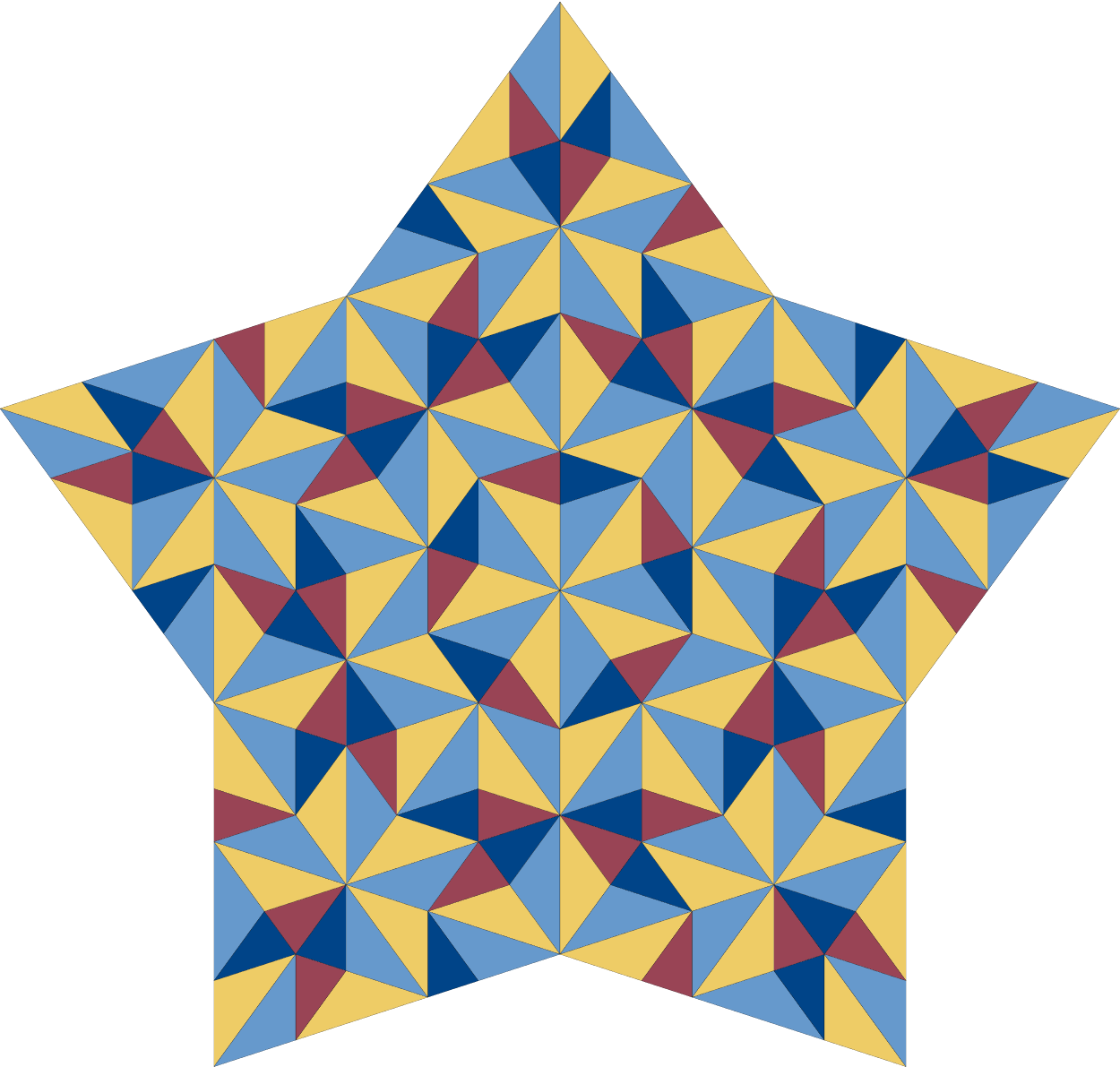}\quad
\includegraphics[width=1.6in]{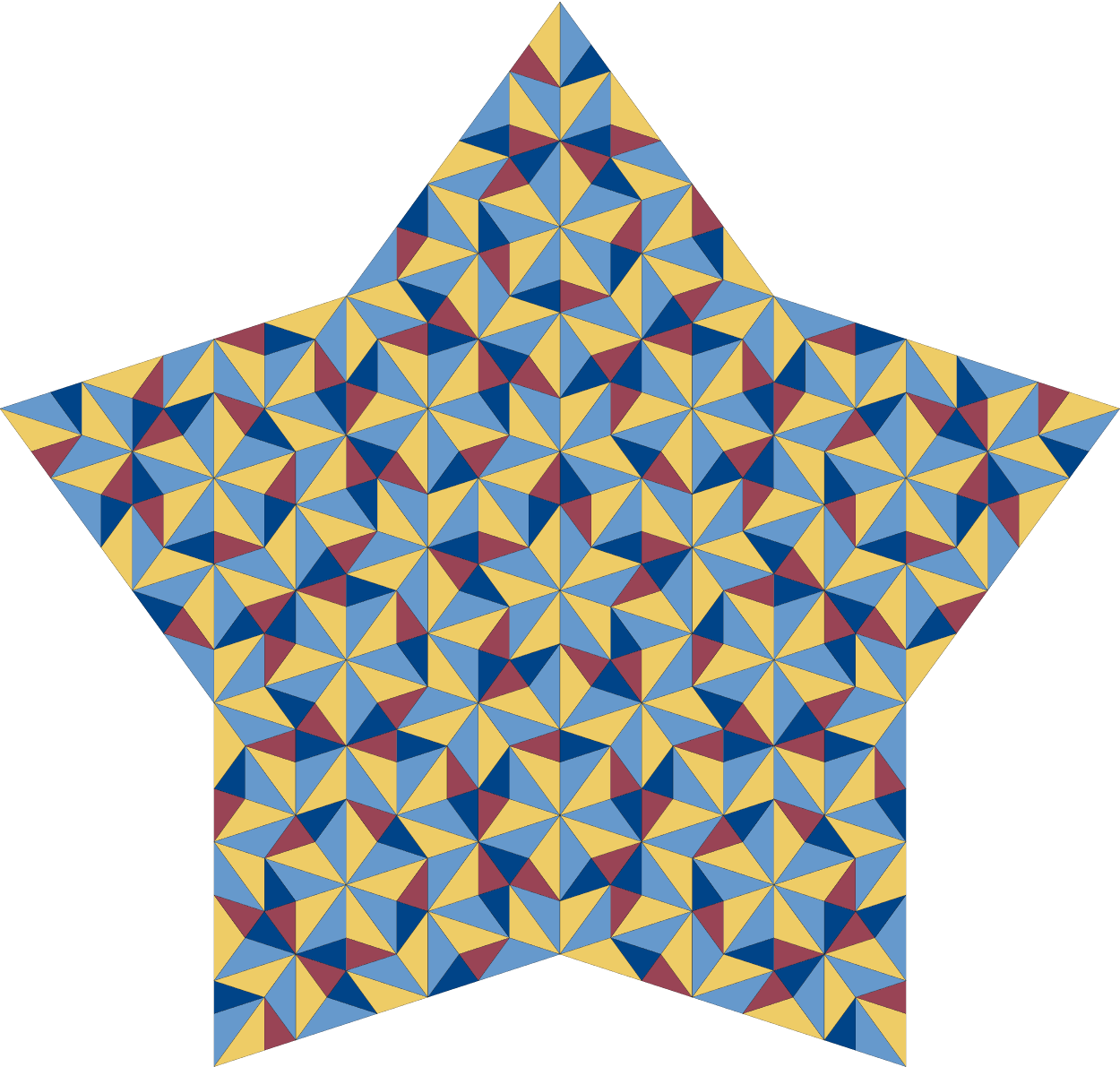}

\caption{The first five levels of the tiling generated by the
triangular substitution rule: $\calT_0^\robinson, \ \calT_1^\robinson, \ldots, \ \calT_4^\robinson$.}
\label{fig:robinsonLvl1-3}
\end{figure}

\medskip
For $\alpha \in \{A,O\}$, let $\alpha_n$ denote the number of $\alpha$-type tiles in $\calT^\robinson_n$.
Additionally, let $\{F_n\}_{n=0}^\infty$ denote the sequence of Fibonacci numbers, given as
\begin{equation} \label{eq:FibDef}
F_0=0, \ F_1=1, \quad F_{n+1} = F_n + F_{n-1}, \ n \geq 1.
\end{equation}
\begin{prop} \label{prop:tri:tilecount}
For every $n \geq 0$,
 \begin{equation} \label{eq:RnDn:rec}
O_{n+1} = 2O_n + A_n, \qquad A_{n+1} = O_n + A_n
\end{equation}
and
\begin{equation} \label{eq:RnDnformulae}
\hspace*{0.5pt}
O_n = 10F_{2n+1},
 \hspace*{43.5pt}
A_n = 10 F_{2n}.
\end{equation}
The total number of tiles at level $n$ is then
\begin{equation} \label{eq:tri:totaltiles}
O_n + A_n = 10F_{2n+2}.
\end{equation}
\end{prop}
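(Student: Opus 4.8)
The plan is to reduce the four-color bookkeeping to the two-species recurrence~\eqref{eq:RnDn:rec}, after which the closed forms~\eqref{eq:RnDnformulae} and the total~\eqref{eq:tri:totaltiles} follow from a one-step induction built on the Fibonacci recurrence~\eqref{eq:FibDef}.

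First I would establish~\eqref{eq:RnDn:rec}. A priori the substitution acts on the four colored prototiles of Definition~\ref{def:triangleRules}, so the honest object to track is the full population vector $x_n \in \bbR^4$ evolving by $x_{n+1} = M_\robinson x_n$. The shape counts are color-blind sums of its coordinates: in the left-to-right ordering of Definition~\ref{def:triangleRules}, the obtuse tiles occupy coordinates $2,4$ and the acute tiles coordinates $1,3$ (consistent with the fact that columns $2,4$ of $M_\robinson$ have column-sum $3$ and columns $1,3$ have column-sum $2$), so $O_n = (x_n)_2 + (x_n)_4$ and $A_n = (x_n)_1 + (x_n)_3$. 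The one point that needs checking is that these two functionals evolve autonomously, i.e.\ that $O_{n+1}$ and $A_{n+1}$ depend only on $O_n$ and $A_n$ and not on the finer color data. This amounts to verifying that, within each shape class, the two columns of $M_\robinson$ have the same shape-composition; summing the obtuse rows $2,4$ and the acute rows $1,3$ then gives $O_{n+1} = 2O_n + A_n$ and $A_{n+1} = O_n + A_n$. Equivalently, one reads the same counts directly off the substitution pictures: each obtuse triangle subdivides into two obtuse tiles and one acute tile, while each acute triangle subdivides into one of each, independent of color.

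With~\eqref{eq:RnDn:rec} in hand the rest is routine. The base case comes from $\calT_0^\robinson$, a star of ten obtuse tiles, so $O_0 = 10 = 10F_1$ and $A_0 = 0 = 10F_0$. For the inductive step, assuming $O_n = 10F_{2n+1}$ and $A_n = 10F_{2n}$, the recurrence gives $A_{n+1} = O_n + A_n = 10(F_{2n+1} + F_{2n}) = 10F_{2n+2}$ and $O_{n+1} = 2O_n + A_n = 10(2F_{2n+1} + F_{2n})$; the only identity required is $2F_{2n+1} + F_{2n} = F_{2n+3}$, which is two applications of~\eqref{eq:FibDef}. The total~\eqref{eq:tri:totaltiles} is then immediate from $F_{2n+1} + F_{2n} = F_{2n+2}$.

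The only step carrying any content is the color-blindness check in the second paragraph; I do not expect a genuine obstacle, since the rules of Definition~\ref{def:triangleRules} manifestly treat the two colors of each shape symmetrically, and the subsequent Fibonacci induction is mechanical.
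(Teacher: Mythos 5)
Your proposal is correct and follows essentially the same route as the paper: read the shape-level recurrence \eqref{eq:RnDn:rec} off the substitution rule and then run a Fibonacci induction for \eqref{eq:RnDnformulae} and \eqref{eq:tri:totaltiles}. The only difference is cosmetic---you make explicit the ``color-blindness'' check that the obtuse/acute counts evolve autonomously under $M_\robinson$, which the paper compresses into ``follows immediately from the substitutions,'' and you use a single base case $n=0$ where the paper checks $n=0,1$; neither affects correctness.
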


\begin{proof}
The recursion \eqref{eq:RnDn:rec} follows immediately from the substitutions in Definition~\ref{def:triangleRules}. One can check that~\eqref{eq:RnDnformulae} holds for $n=0$ and $n=1$ by inspection. Assuming it holds for all $k \leq n$ with $n \geq 1$ \eqref{eq:RnDn:rec} yields
\[
O_{n+1}
= 2O_n + A_n
= 10(2F_{2n+1} + F_{2n})
= 10 F_{2n+3},
\]
where we have applied the recursion of \eqref{eq:FibDef} twice in the final step. Similarly,
\[
A_{n+1}
=
O_n + A_n
=
10(F_{2n+1} + F_{2n})
=
10 F_{2n+2},
\]
which proves \eqref{eq:RnDnformulae} by induction. Combining \eqref{eq:RnDnformulae} and \eqref{eq:FibDef} gives \eqref{eq:tri:totaltiles}.
\end{proof}

\begin{figure}[b!]
\begin{center}
\begin{minipage}{2.1in} \begin{center}
\includegraphics[width=2in]{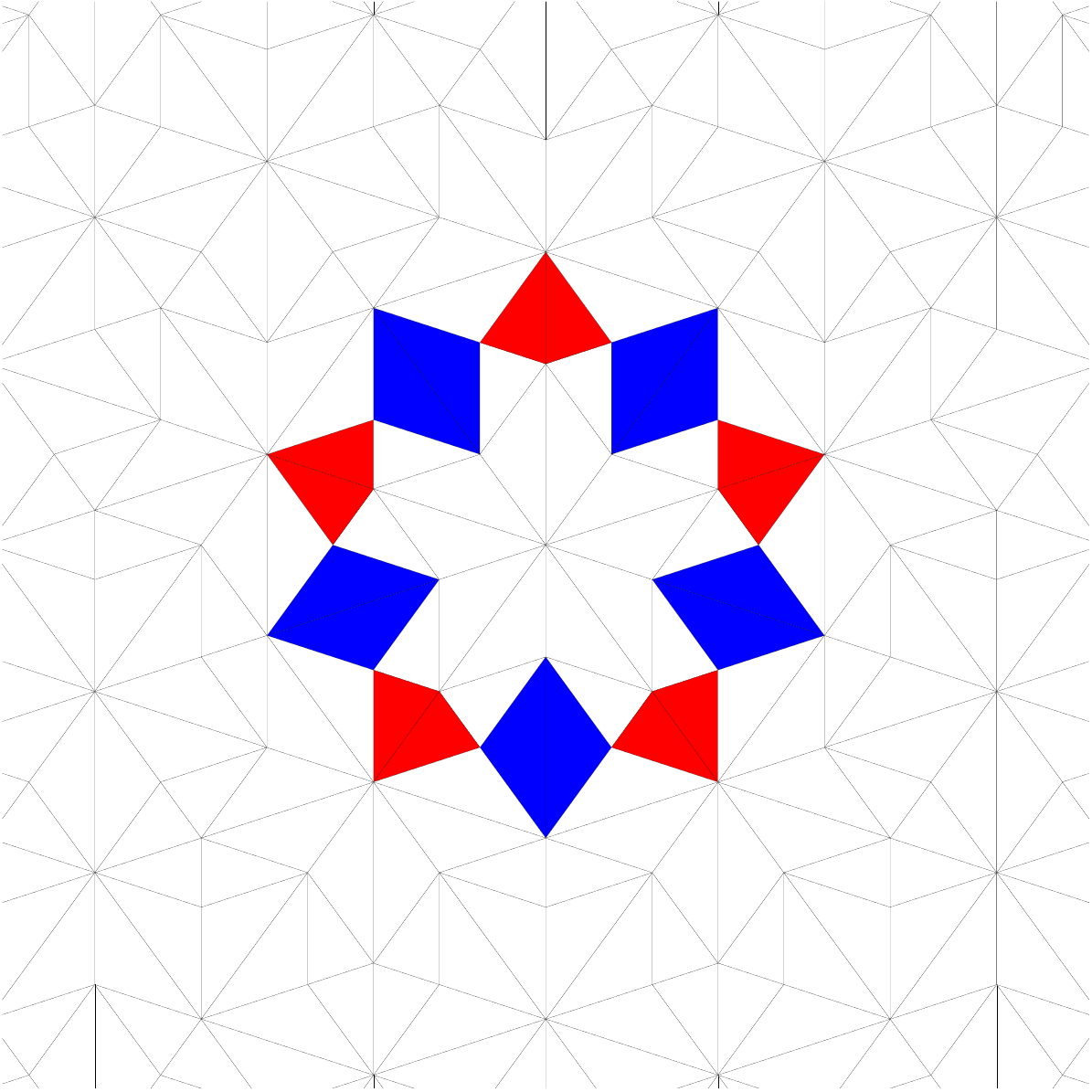}\\ \small $E=2$
\end{center}\end{minipage}
\quad
\begin{minipage}{2.1in} \begin{center}
\includegraphics[width=2in]{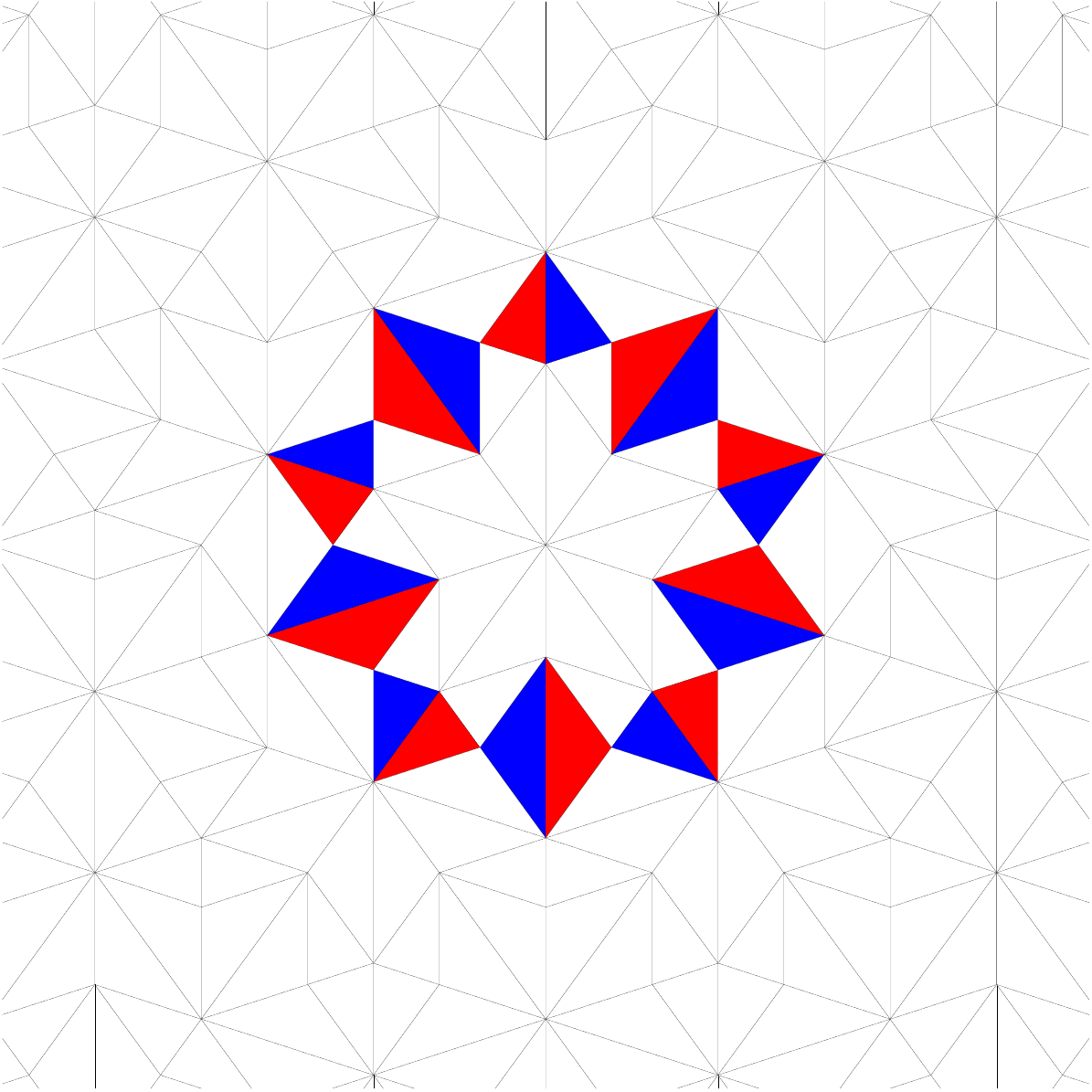}\\ \small $E=4$
\end{center}\end{minipage}
\end{center}
\caption{Locally-supported eigenfunctions for the Robinson triangle substitution,
with $E=2$ and $E=4$.
The nonzero entries of these modes take the values $+1$ (blue) and $-1$ (red).} \label{fig:tri:ringmode}
\end{figure}

\subsection{Ring Modes}

\begin{prop} \label{prop:tri:ring}
Let $\Gamma$ denote the graph associated with the polygonal tiling shown in Figure~\ref{fig:tri:ringmode} and let $\Delta$ denote the corresponding Laplace operator.
Define a vector $\psi$ by
\begin{equation}
\psi(T) = \begin{cases} \phantom{-}1, & T \text{ is blue;} \\
-1, & T \text{ is red;} \\
\phantom{-}0, &  \text{otherwise.}\end{cases}
\end{equation}
Then $\Delta\psi = 2\psi$.
\end{prop}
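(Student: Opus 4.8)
The plan is to verify the eigenvalue equation $\Delta\psi = 2\psi$ directly, tile-by-tile, by partitioning the vertices of $\Gamma$ into cases according to their combinatorial relationship to the support $P = \supp(\psi)$ and checking $[\Delta\psi](T) = 2\psi(T)$ in each case. This is the same strategy used in the proof of Lemma~\ref{lem:boatStar:ringmodes}, adapted to the Robinson triangle geometry. Recall that $[\Delta\psi](T) = \deg(T)\psi(T) - \sum_{U \sim T}\psi(U)$, so the equation to check at each vertex $T$ is
\begin{equation}
\deg(T)\psi(T) - \sum_{U \sim T}\psi(U) = 2\psi(T).
\end{equation}

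First I would divide the vertices into three classes: (i) tiles $T$ in the support $P$ (the blue and red tiles of Figure~\ref{fig:tri:ringmode}), (ii) tiles $T \notin P$ that are adjacent to at least one tile of $P$, and (iii) tiles $T$ at combinatorial distance $\geq 2$ from $P$. For class (iii) the equation is trivial: both sides vanish since $\psi(T) = 0$ and every neighbor $U$ also has $\psi(U) = 0$. For class (ii), where $\psi(T) = 0$, the right-hand side is $0$, so I must show that the signed sum $\sum_{U \sim T}\psi(U)$ over the neighbors of $T$ lying in $P$ cancels; this is exactly where the alternating blue/red coloring does its work, just as the $+1/-1$ alternation forced the cancellation $1 - 1 = 0$ in the boat--star case. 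For class (i), each supported tile $T$ has $\psi(T) = \pm 1$, and I must verify that $\deg(T)\psi(T) - \sum_{U\sim T}\psi(U) = 2\psi(T)$, i.e.\ that $\sum_{U\sim T}\psi(U) = (\deg(T)-2)\psi(T)$.

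The key step, and the one requiring genuine care, is reading off from Figure~\ref{fig:tri:ringmode} the exact adjacency data: for each tile in $P$ I need its degree and the signed values on its neighbors, and for each class-(ii) tile I need the multiset of signs on its neighbors in $P$. Because the figure mixes acute ($A$) and obtuse ($O$) Robinson triangles, different tiles in $P$ may have different degrees, so the identity $\sum_{U\sim T}\psi(U) = (\deg(T)-2)\psi(T)$ must be checked case by case rather than by a single uniform count; this bookkeeping is the main obstacle, since it rests entirely on correctly extracting the local combinatorics of the depicted patch. The verification itself is a finite, elementary computation once the adjacencies are tabulated.

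The fact that $\psi$ is genuinely an eigenfunction supported precisely on $P$ (and not identically zero outside a smaller set) then follows from its construction, and together with Proposition~2.11 and the repetitivity of the tiling this produces the claimed discontinuity of $k_\robinson$ at $E = 2$. I would close by noting that the $E=4$ mode of Figure~\ref{fig:tri:ringmode} is handled by an identical case analysis, which sets up the quantitative frequency count needed for Theorem~\ref{t:robinsonquant}.
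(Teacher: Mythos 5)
Your proposal is correct and takes essentially the same route as the paper, whose entire proof of Proposition~\ref{prop:tri:ring} reads ``The proof follows from a direct calculation.'' Your three-way case split (tiles in the support, tiles adjacent to it, and tiles at distance at least two) together with the identity $\sum_{U\sim T}\psi(U) = (\deg(T)-2)\psi(T)$ on the support is precisely the direct verification the authors have in mind, modeled on their proof of Lemma~\ref{lem:boatStar:ringmodes}.
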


\begin{proof}The proof follows from a direct calculation.\end{proof}

\begin{figure}[b!]
\begin{center}
\begin{minipage}{2.6in} \begin{center}
\includegraphics[width=2.5in]{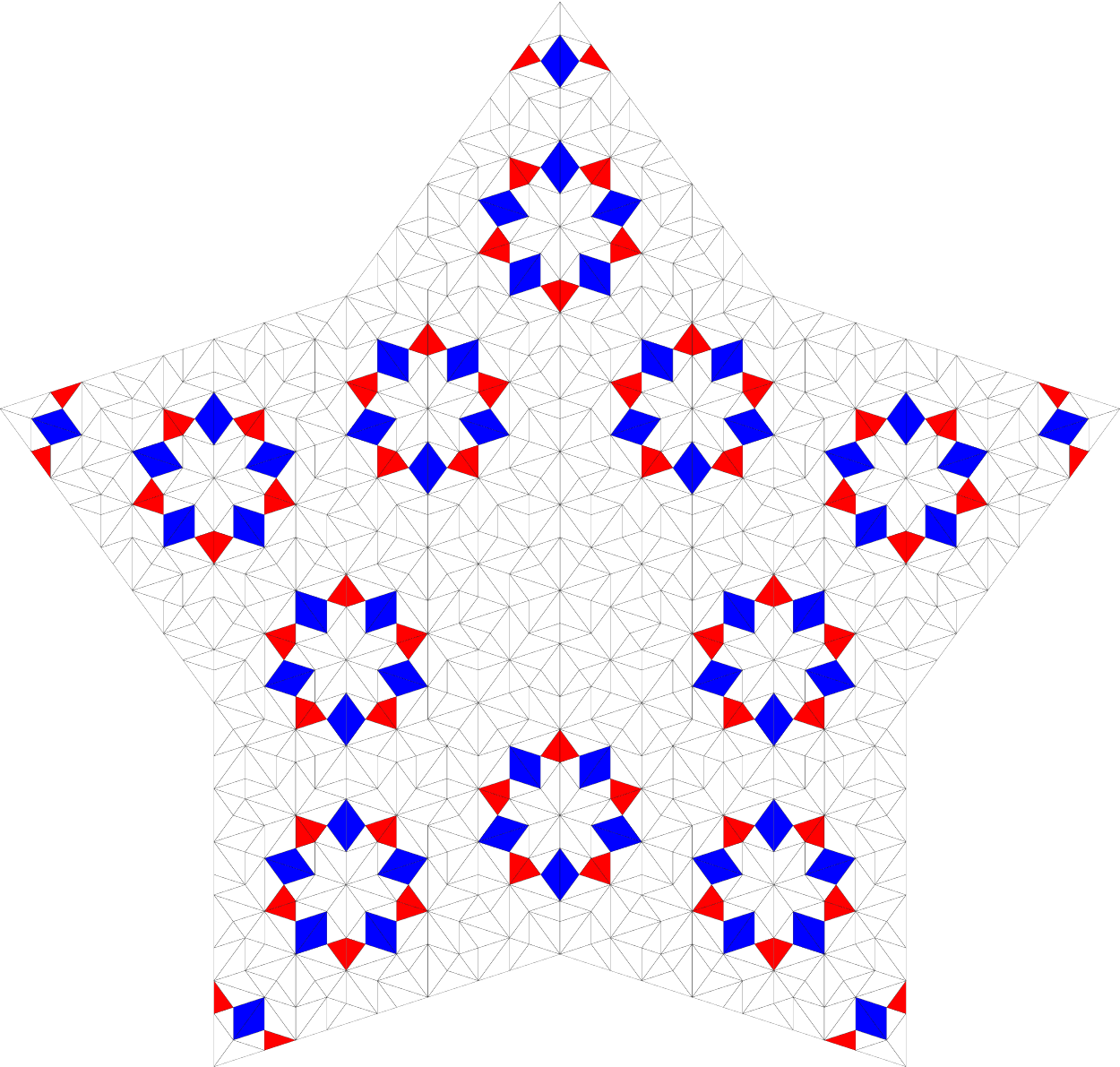}\\ \small $E=2$
\end{center}\end{minipage}
\quad
\begin{minipage}{2.6in} \begin{center}
\includegraphics[width=2.5in]{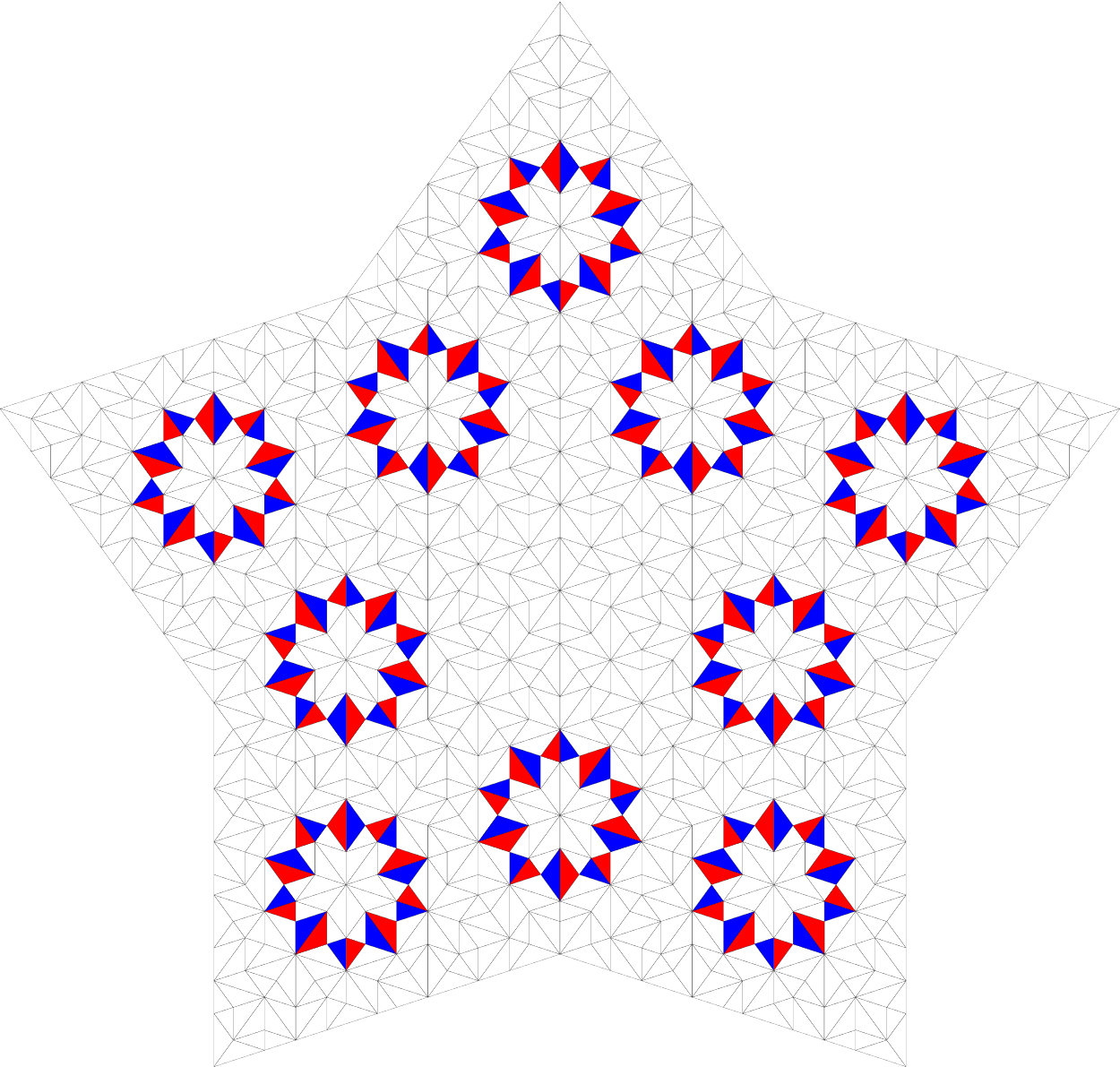}\\ \small $E=4$
\end{center}\end{minipage}
\end{center}
\caption{Localized modes for level~5 of the Robinson triangle substitution:
for both eigenvalues there are ten ``ring modes,'' each supported on 20~tiles;
for $E=2$, five additional modes, each supported on four tiles,
are made possible by the boundary.} \label{fig:tri:ringmode5}
\end{figure}

Let $\calR$ denote the 20-tile pattern corresponding to the ring mode.

\begin{prop} \label{prop:tri:ringcount}
Let $n \geq 5$ be given.
The total number of occurrences of $\calR$ in $\calT_n$ is bounded from below by
\begin{equation}
M_n = \sum_{\substack{\lfloor\frac{n-5}{2}\rfloor \geq k \geq 0 \\  } } O_{n-5-2k} + \frac{1}{2}A_{n-5-2k} \end{equation}
\end{prop}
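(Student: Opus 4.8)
The plan is to exhibit at least $M_n$ pairwise disjoint occurrences of the ring pattern $\calR$ lying in the interior of $\calT_n^\robinson$; disjointness immediately guarantees that these are genuinely distinct occurrences (and, by Proposition~\ref{prop:tri:ring}, each supports an $E=2$ eigenfunction, which is how the count will be used later). I would organize the occurrences into \emph{generations} indexed by $k \ge 0$, where the generation-$k$ rings are those ``seeded'' by the tiles of $\calT_{n-5-2k}^\robinson$, so that the generation-$k$ contribution is exactly $O_{n-5-2k} + \tfrac12 A_{n-5-2k}$ and the grand total is $M_n$.

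First I would establish a base seeding lemma by a finite inspection of inflated prototiles. Writing $g_m \eqdef O_{m-5} + \tfrac12 A_{m-5}$, the goal is to show that each obtuse tile of $\calT_{n-5}^\robinson$, viewed as a level-$5$ supertile inside $\calT_n^\robinson = \subst_\robinson^5(\calT_{n-5}^\robinson)$, contains exactly one occurrence of $\calR$ well inside it, while the acute tiles contribute one further occurrence per suitably adjacent pair (equivalently, at rate $\tfrac12$ per acute tile, which is an integer count since $A_m = 10F_{2m}$ is even). Concretely, this is the assertion that $\subst_\robinson^5(O)$, and the five-fold inflation of an adjacent pair of acute tiles, each carry $\calR$ interiorly; it is a bounded computation, and its correctness at the smallest level is precisely the statement that $\calT_5^\robinson$ carries the ten rings of Figure~\ref{fig:tri:ringmode5} (indeed $g_5 = O_0 + \tfrac12 A_0 = 10$). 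This produces the $g_n$ generation-$0$ rings, all disjoint and interior.

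To obtain the remaining generations I would set up an induction in steps of two, which is the natural period of $\subst_\robinson$ (a single application interchanges the two colors, so $\subst_\robinson^2$ is the color-preserving inflation reflected in the even Fibonacci indices of Proposition~\ref{prop:tri:tilecount}). The key structural input is a \emph{persistence} statement: the center of any interior occurrence of $\calR$ in $\calT_{n-2}^\robinson$ is a distinguished (``sun'') vertex that survives inflation, and each application of $\subst_\robinson$ redecorates it with a fresh finest-scale occurrence of $\calR$. Hence $\calT_n^\robinson = \subst_\robinson^2(\calT_{n-2}^\robinson)$ contains at least one interior occurrence of $\calR$ about each of the $M_{n-2}$ centers supplied by the inductive hypothesis, and these lie at positive tiling distance from the $g_n$ generation-$0$ rings. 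Writing $r_n$ for the number of admissible occurrences, this gives the recursion $r_n \ge r_{n-2} + g_n$; since $M_n = g_n + M_{n-2}$ with base cases $M_5 = g_5$ and $M_6 = g_6$ (the sums for $n=3,4$ being empty), induction yields $r_n \ge M_n$, as claimed.

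The main obstacle is the geometric bookkeeping underlying both the seeding lemma and the persistence statement, carried out uniformly over the whole nested supertile hierarchy rather than at a single scale. In particular one must (i) confirm that the ring produced inside each inflated seed, and the fresh ring attached to each persistent sun vertex, lands strictly in the interior and never meets $\partial\calT_n^\robinson$, and (ii) verify that occurrences arising from distinct seeds and distinct generations have disjoint supports, so that none is contained in the union of the others. The boundary control in (i) is exactly what forces a lower bound rather than an equality: rings whose centers sit too close to the boundary of a supertile, and additional modes created only by boundary adjacencies (such as the five extra $E=2$ modes visible in Figure~\ref{fig:tri:ringmode5}), are deliberately omitted from the count.
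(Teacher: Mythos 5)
First, a word about the target: the paper does not actually prove Proposition~\ref{prop:tri:ringcount}; the statement is followed only by the one-sentence remark that the approach parallels the boat--star computation but with more complicated combinatorics because ``the supports of the eigenfunctions may overlap with multiple supertiles.'' So there is no detailed argument to compare against line by line. Your skeleton --- splitting $M_n = \bigl(O_{n-5}+\tfrac12 A_{n-5}\bigr) + M_{n-2}$, seeding generation~$0$ from the tiles of $\calT_{n-5}^\robinson$ with the factor $\tfrac12$ coming from rings shared between two adjacent acute supertiles, and propagating older generations by a two-step persistence of ring centers --- is the natural reading of the formula, is consistent with the paper's hint about overlapping supertiles, and checks out against the data ($g_5 = O_0 + \tfrac12 A_0 = 10$ rings at level~5, as in Figure~\ref{fig:tri:ringmode5}). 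The two geometric inputs (the seeding count for $\subst_\robinson^5$ applied to an obtuse tile and to an adjacent acute pair, and the regeneration of a ring at a persistent center under $\subst_\robinson^2$) are correctly isolated as finite inspections, though you do not carry them out; since the paper supplies no more detail, I will not hold that deferral against you.

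There is, however, one concrete error in your framing. You propose to exhibit $M_n$ pairwise disjoint occurrences of $\calR$ \emph{lying in the interior} of $\calT_n^\robinson$, so that each supports an eigenfunction of $\Delta_n$. That strengthened statement is false, and the paper's own Table~\ref{tbl:triangle} refutes it: at level~$6$ one has $M_6 = O_1 + \tfrac12 A_1 = 25$, while the multiplicity of $E=4$ (equivalently, the number of non-boundary $E=2$ modes, $36-15$) is only $21$. Twenty-five disjoint interior occurrences would force multiplicity at least $25$. Hence at least four of the occurrences guaranteed by the proposition must meet the one-tile neighborhood of $\partial\calT_6^\robinson$, where the drop in degree destroys the eigenvalue equation, and your verification step~(i) cannot succeed for all of them. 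The proposition is purely combinatorial: it counts occurrences of the pattern $\calR$ in the patch $\calT_n^\robinson$, whether or not they sit deep enough inside to yield eigenfunctions of the truncated Laplacian, and your argument should be run at that level (drop interiority from the main claim; distinct ring centers already give distinct occurrences). The interiority bookkeeping belongs to the subsequent limit in the proof of Theorem~\ref{t:robinsonquant}, where the $O(\varphi^{n})$ boundary-affected occurrences are negligible against the $O(\varphi^{2n})$ total. This is a fixable misplacement rather than a wrong approach, but as written the plan sets out to prove a statement that the paper's own numerics contradict.
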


The overall approach is similar to the corresponding calculation for the boat--star tiling, but the combinatorics are more complicated since the supports of the eigenfunctions may overlap with multiple supertiles.

\begin{proof}[Proof of Theorem~\ref{t:robinsonquant}]
With the help of Propositions~\ref{prop:tri:tilecount}, \ref{prop:tri:ring}, and \ref{prop:tri:ringcount}, we get
\begin{align*}
k_\robinson(2+) - k_\robinson(2-) 
& \geq \lim_{n\to\infty} \frac{(O_{n-5} + O_{n-7} + \cdots) + \frac{1}{2}(A_{n-5} + A_{n-7} + \cdots)}{O_n+A_n} \\[1mm]
& = \lim_{n \to \infty} \frac{(F_{2n-9} + F_{2n-13} + \cdots) + \frac{1}{2} (F_{2n-10} + F_{2n-14} + \cdots )}{F_{2n+2}} \\[1mm]
& = \left(\varphi^{-11} + \frac{1}{2}\varphi^{-12}\right) \left( 1- \varphi^{-4} \right)^{-1} \\[1mm]
& = \frac{65-29\sqrt{5}}{20},
\end{align*}
as desired. Since the same patterns produce the eigenfunctions at energy $E = 4$, the same argument works for that energy as well.
\end{proof}

Figure~\ref{fig:tri:ringmode5} shows the locally-supported eigenfunctions at $E=2$ and $E=4$ for the 
level~5 tiling.  Each of these energies correspond to ten ring modes; however, $E=2$ has greater 
multiplicity as an eigenvalue of $\Delta_5$ because of boundary modes supported on four tiles.
The discrepancy of multiplicities between $E=2$ and $E=4$ grows at additional levels, 
as evident in the numerical calculations presented in Table~\ref{tbl:triangle}.
The convergence of the jump in the IDS at $E=2$ and $E=4$ is painfully slow. 
The largest tiling for which we have data contains 390,881,690 tiles, yet the IDS jump 
at $E=4$ only agrees with the theoretically computed lower bound to about four decimal places.

\begin{table}[t!]
\caption{Robinson triangle tiling: the level of the tiling, the number of tiles, the multiplicity of $E=2$, the multiplicity of $E=4$, the number of boundary eigenfunctions for $E=2$, and the jump in the approximant of the IDS at $E=4$. The theoretically obtained lower bound from Theorem~\ref{t:robinsonquant} is $(65-29\sqrt{5})/20 \approx 0.007701432625$.  \label{tbl:triangle}}
\begin{tabular}{crrrr<{\ \ }c}
\emph{level} & \multicolumn{1}{c}{\emph{tiles}} &		\multicolumn{1}{c}{$E=2$} & 	\multicolumn{1}{c}{$E=4$} &	\multicolumn{1}{c}{\emph{boundary}} & \multicolumn{1}{c}{$k_{\robinson,n}(4+) - k_{\robinson,n}(4-)$} \\
\hline
    1 &             30  &            0  &            1 &       0  &  0.03333333\ldots \\ 
    2 &             80  &            1  &            1 &       0  &  0.01250000\ldots \\ 
    3 &            210  &            5  &            0 &       5  &  0.00000000\ldots \\ 
    4 &            550  &            6  &            1 &       5  &  0.00181818\ldots \\ 
    5 &         1\,440  &           15  &           10 &       5  &  0.00694444\ldots \\ 
    6 &         3\,770  &           36  &           21 &      15  &  0.00557029\ldots \\ 
    7 &         9\,870  &           90  &           65 &      25  &  0.00658561\ldots \\ 
    8 &        25\,840  &          216  &          181 &      35  &  0.00700464\ldots \\ 
    9 &        67\,650  &          550  &          495 &      55  &  0.00731707\ldots \\ 
   10 &       177\,110  &       1\,411  &       1\,316 &      95  &  0.00743041\ldots \\ 
   11 &       463\,680  &       3\,650  &       3\,495 &     155  &  0.00753752\ldots \\ 
   12 &    1\,213\,930  &       9\,471  &       9\,226 &     245  &  0.00760010\ldots \\ 
   13 &    3\,178\,110  &      24\,675  &      24\,280 &     395  &  0.00763976\ldots \\ 
   14 &    8\,320\,400  &      64\,401  &      63\,756 &     645  &  0.00766261\ldots \\ 
   15 &   21\,783\,090  &     168\,285  &     167\,240 &  1\,045  &  0.00767751\ldots \\ 
   16 &   57\,028\,870  &     440\,046  &     438\,361 &  1\,685  &  0.00768665\ldots \\ 
   17 &  149\,303\,520  &  1\,151\,215  &  1\,148\,490 &  2\,725  &  0.00769231\ldots \\ 
   18 &  390\,881\,690  &  3\,012\,556  &  3\,008\,141 &  4\,415  &  0.00769578\ldots \\ 
\hline
\end{tabular}
\end{table}




\section{Rhombi}

\begin{figure}[b!]

\includegraphics[width=1.7in]{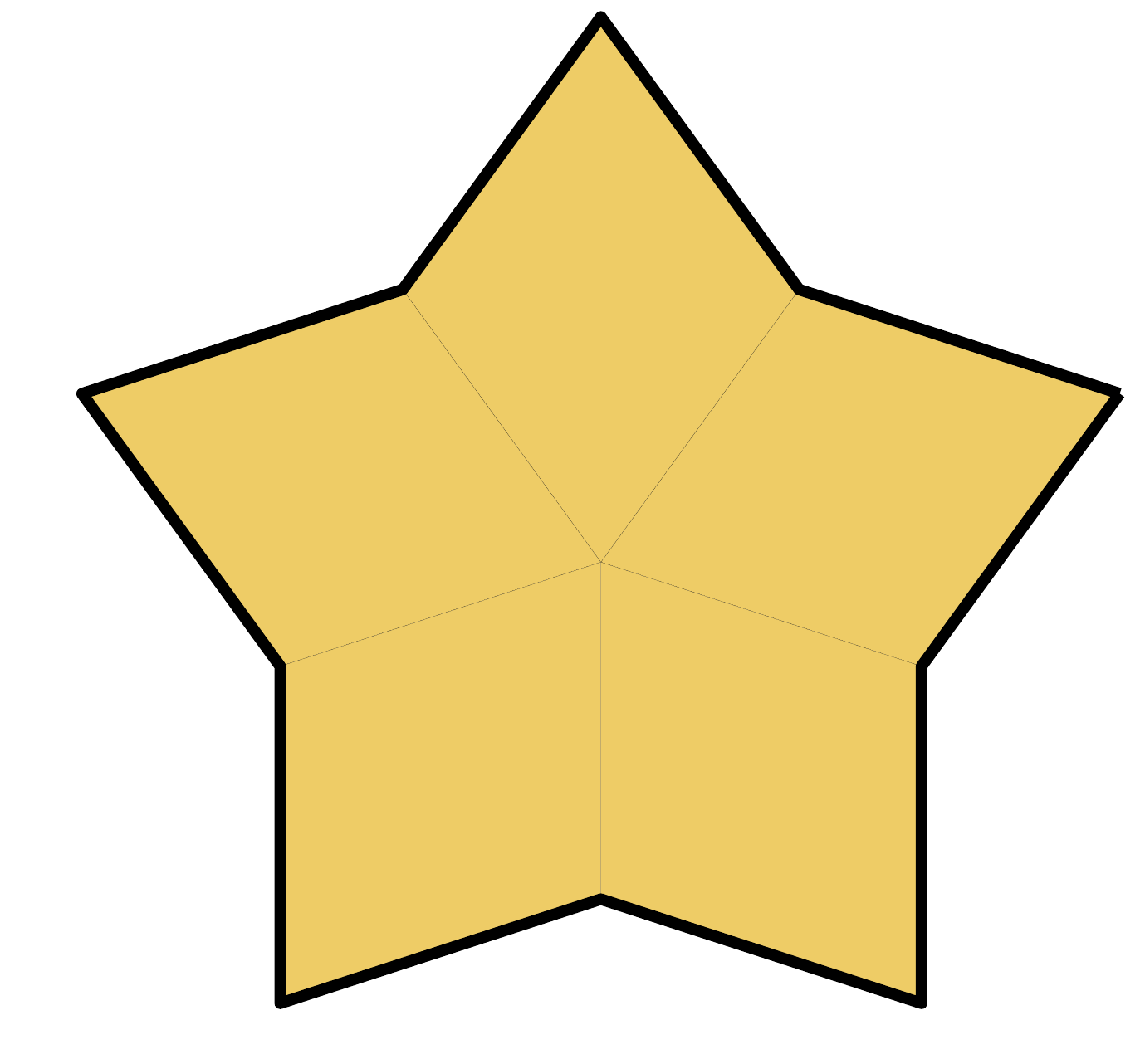}\qquad
\includegraphics[width=1.7in]{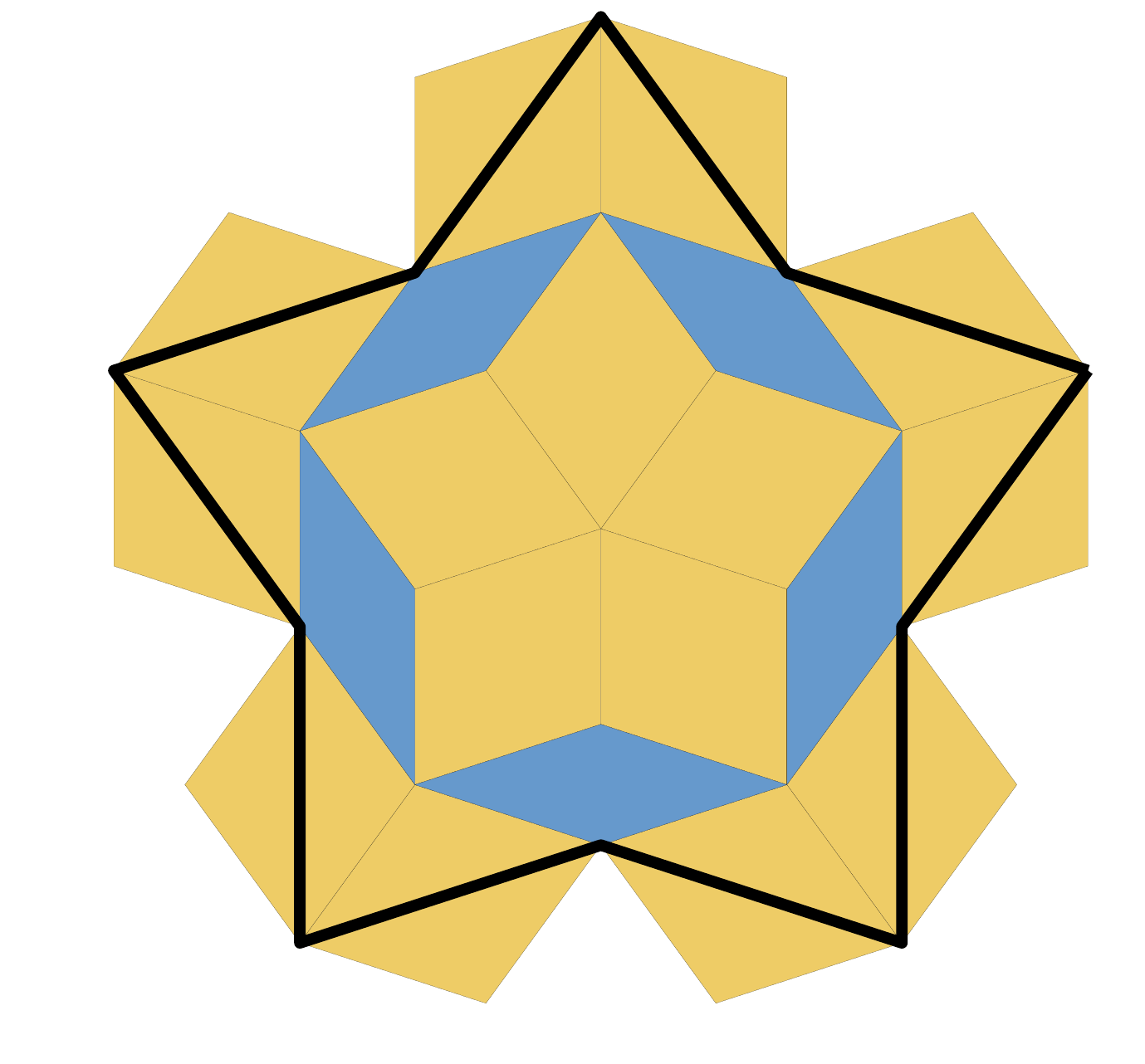}

\begin{picture}(0,0)
\put(-86,4){\small \textsl{Level 0}}
\put( 60,4){\small \textsl{Level 1}}
\end{picture}

\vspace*{0.5em}
\includegraphics[width=1.7in]{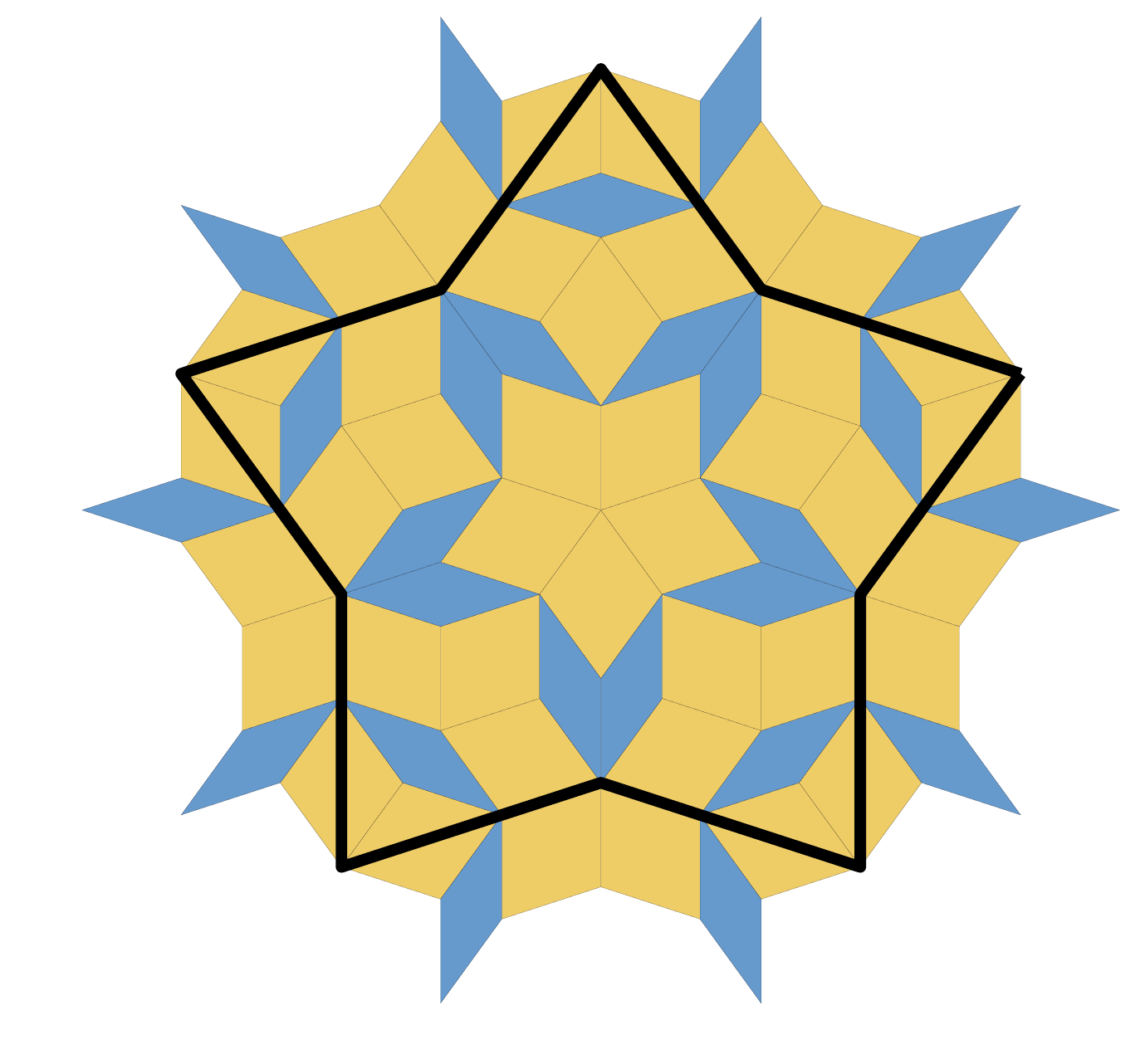}\qquad
\includegraphics[width=1.7in]{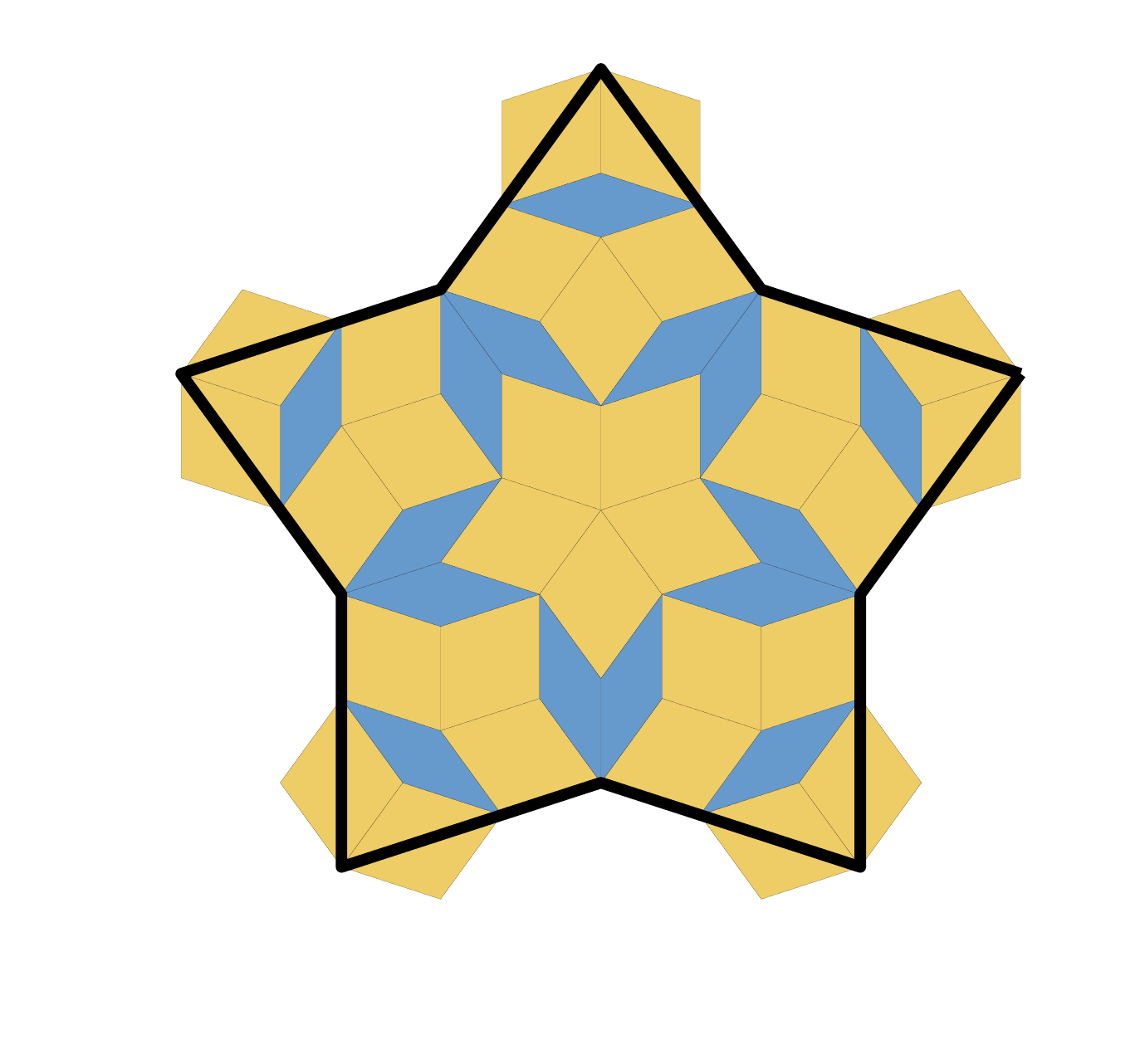}

\begin{picture}(0,0)
\put(-120,7){\small \textsl{Level 2} (untrimmed)}
\put( 32,7){\small \textsl{Level 2} (trimmed)}
\end{picture}

\vspace*{0.5em}
\includegraphics[width=1.7in]{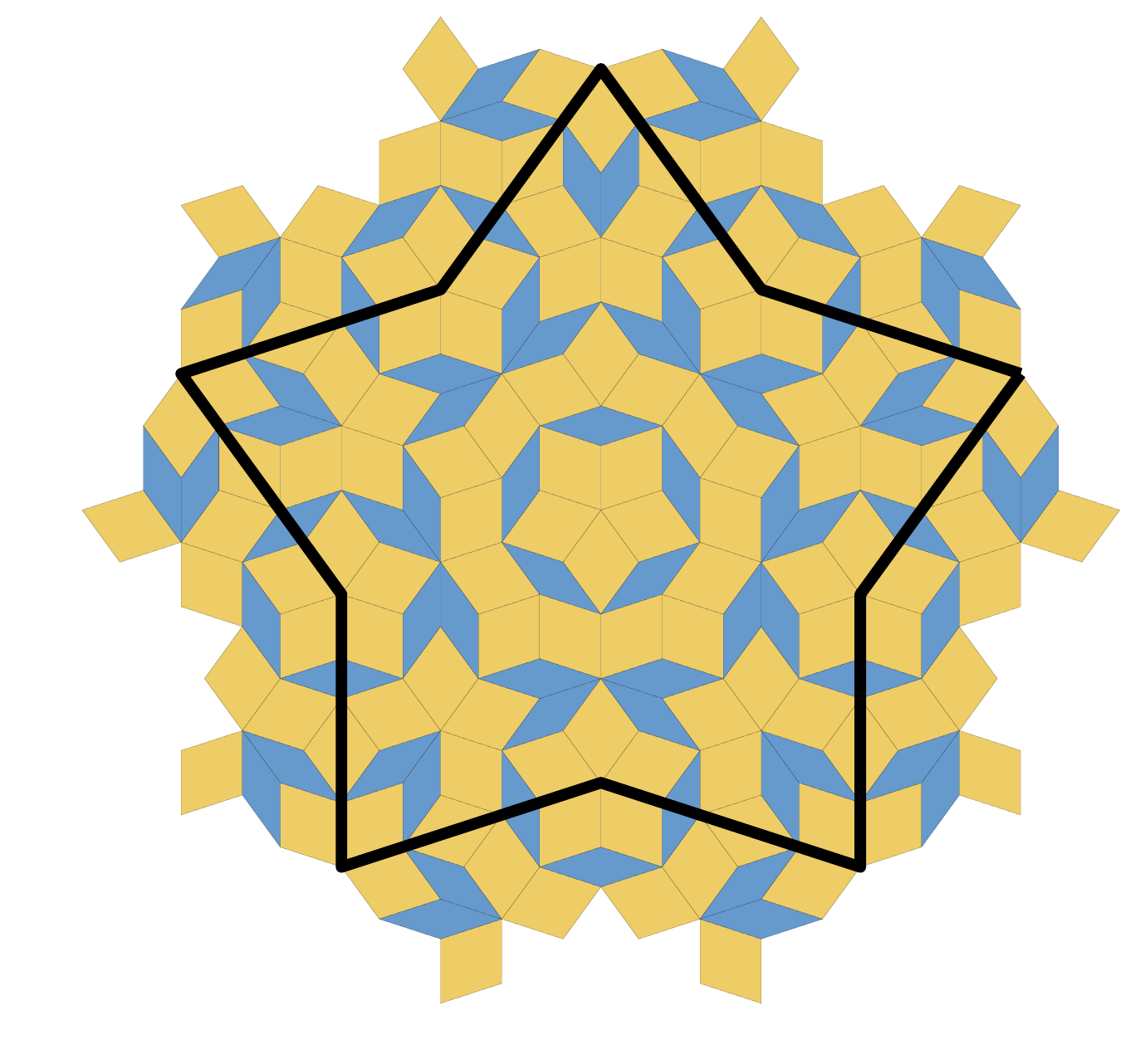}\qquad
\includegraphics[width=1.7in]{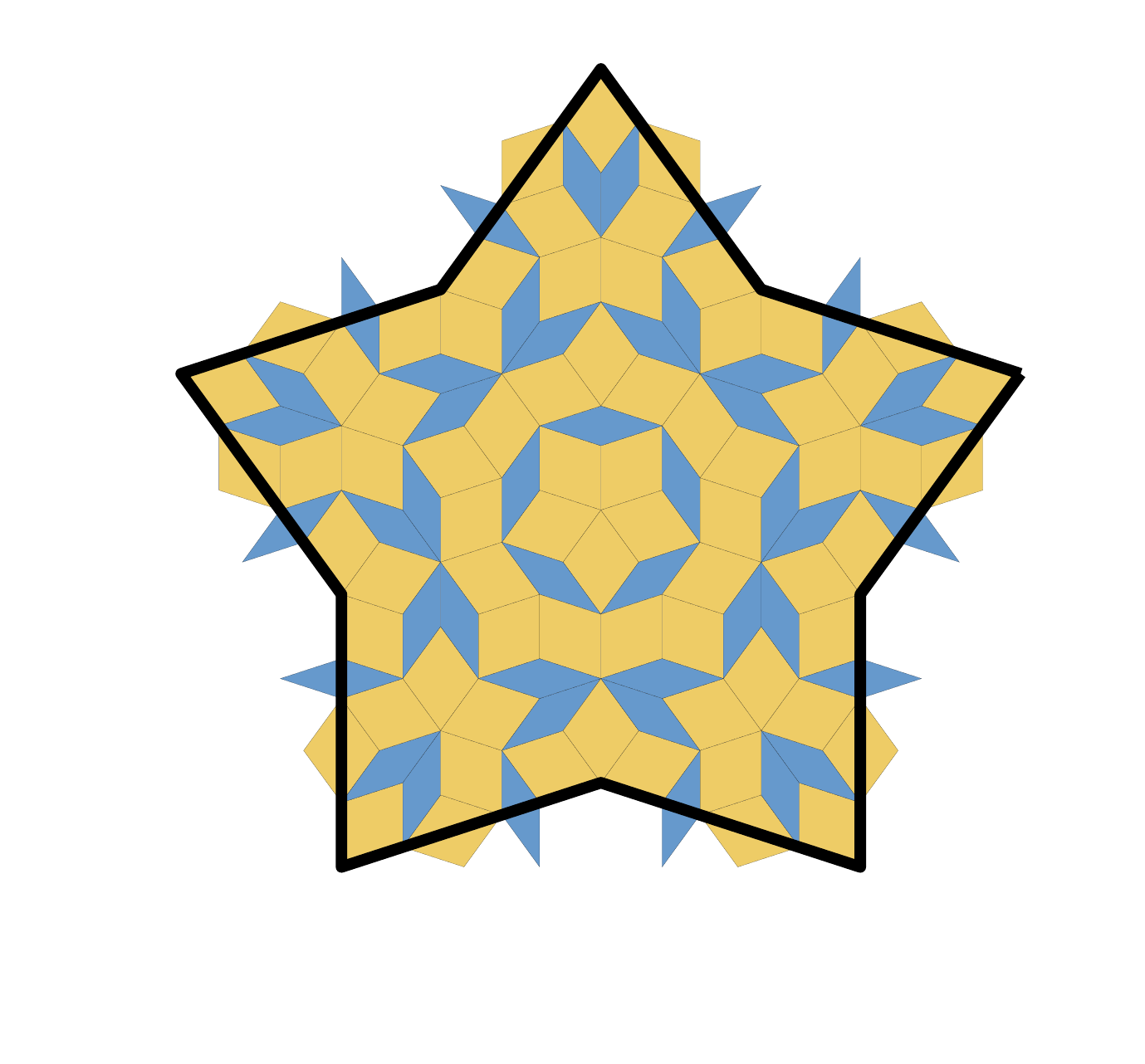}

\begin{picture}(0,0)
\put(-120,7){\small \textsl{Level 3} (untrimmed)}
\put( 32,7){\small \textsl{Level 3} (trimmed)}
\end{picture}

\caption{\label{fig:rhombus_trim}
Rhombus rules applied to five rhombi at level~0, trimming to the original star shape at each iteration.}
\end{figure}

\begin{definition}\label{def:RhombusRules}
The \emph{rhombus} substitution\footnote{Illustration following 
{\tt https://tilings.math.uni-bielefeld.de/substitution/penrose-rhomb/}} is given by

\begin{center}
\hspace*{-2em}
\includegraphics[width=1.25in]{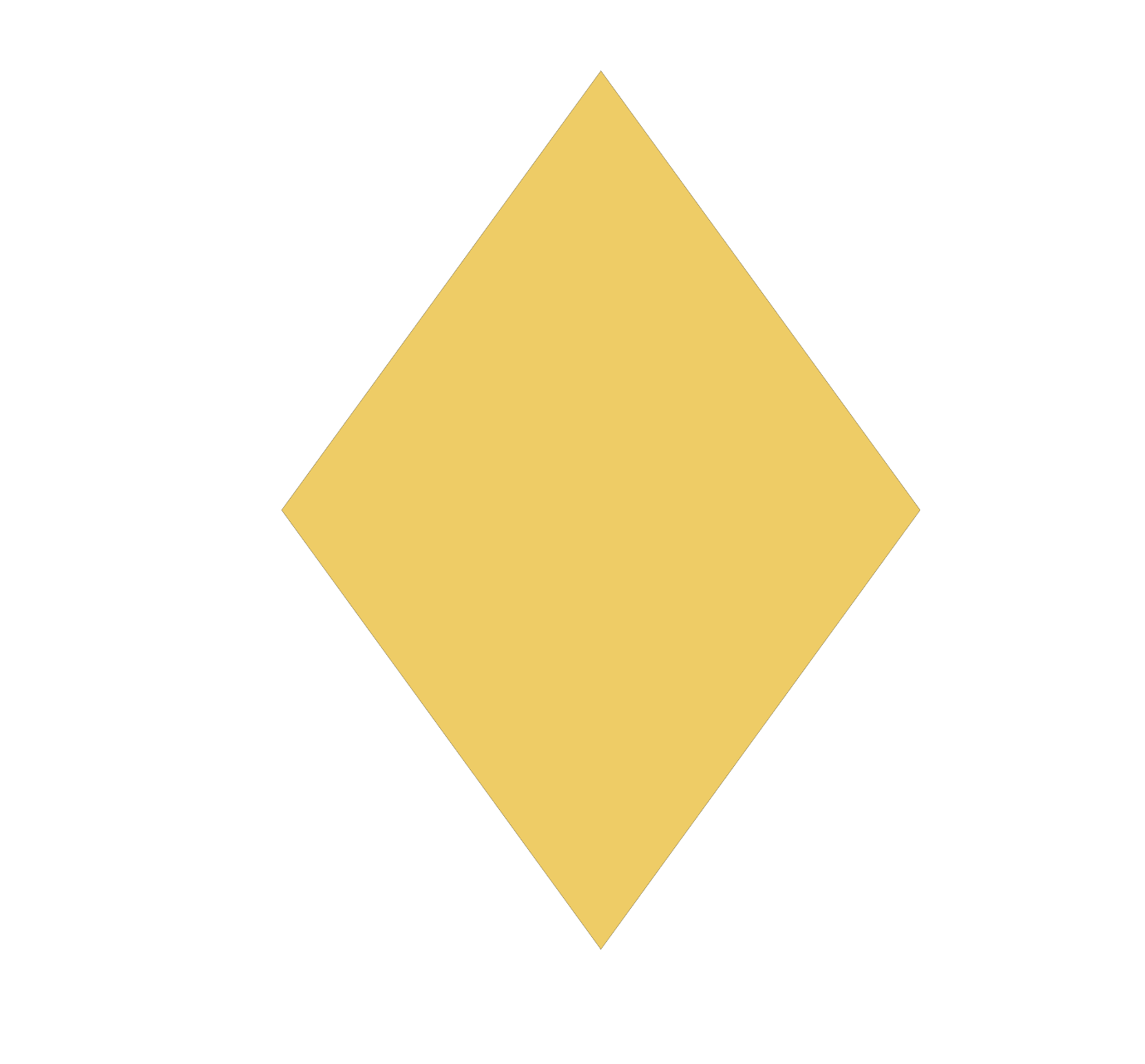}\quad
\includegraphics[width=1.25in]{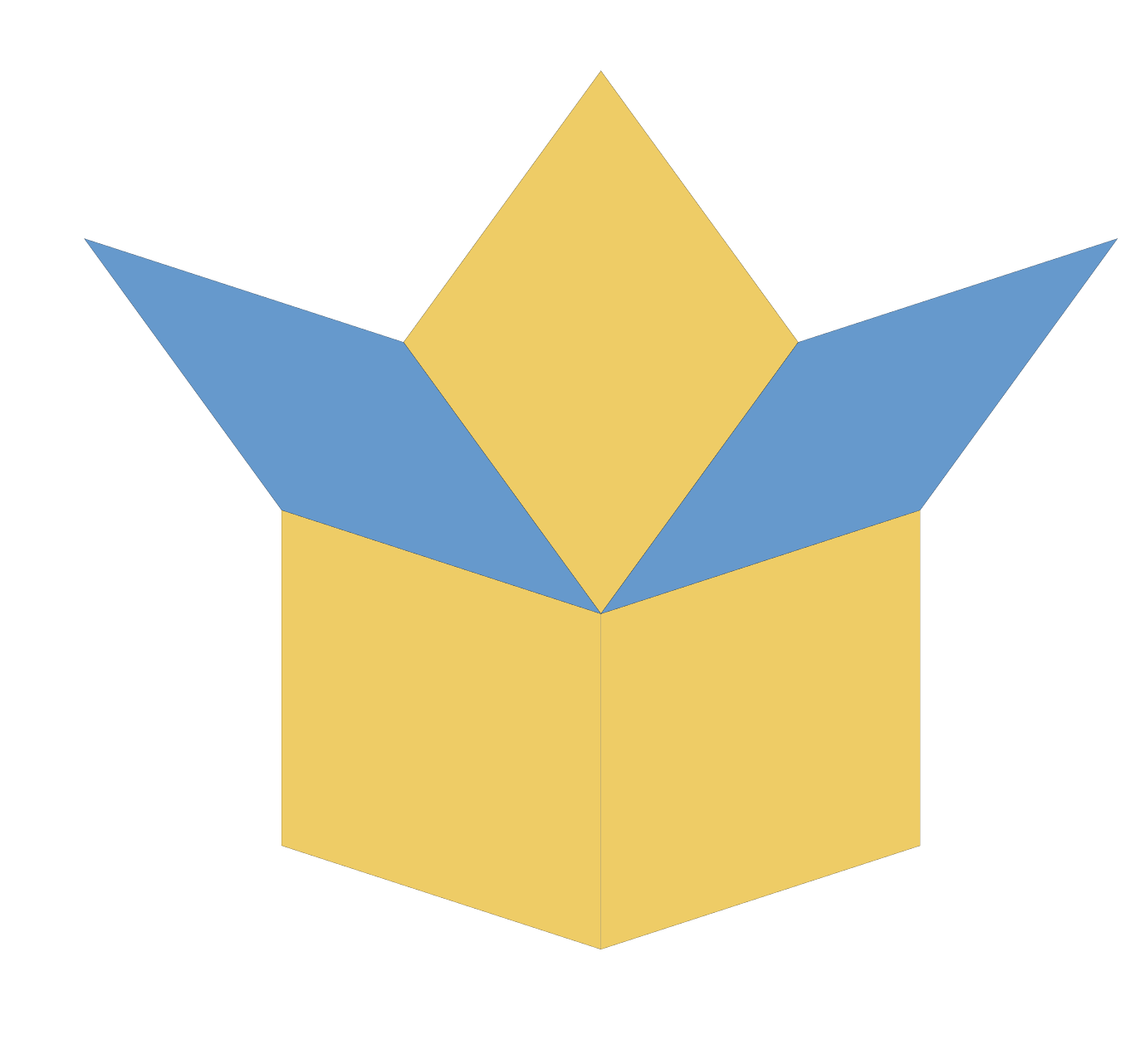}
\qquad\quad
\includegraphics[width=1.25in]{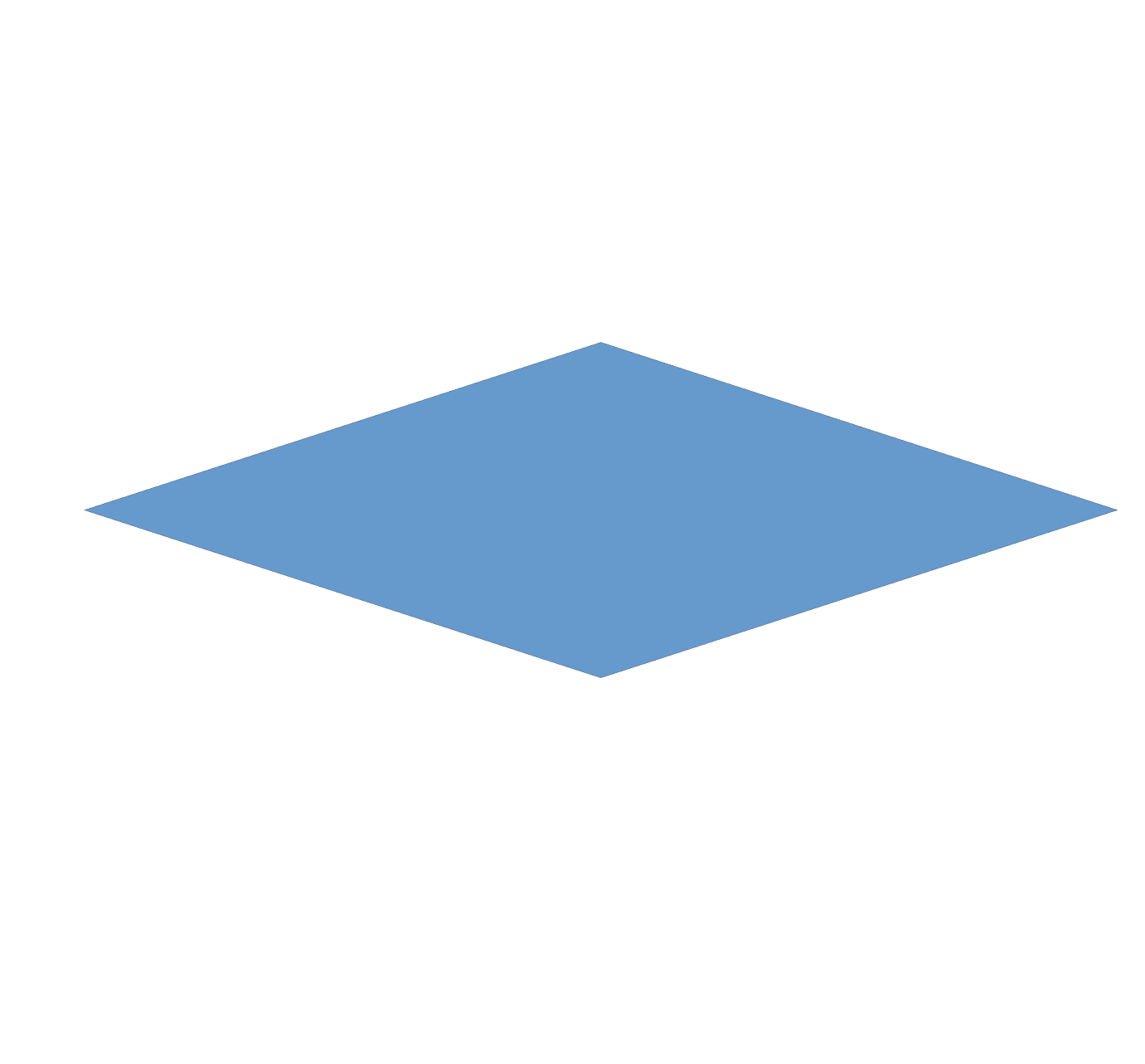}\qquad
\includegraphics[width=1.25in]{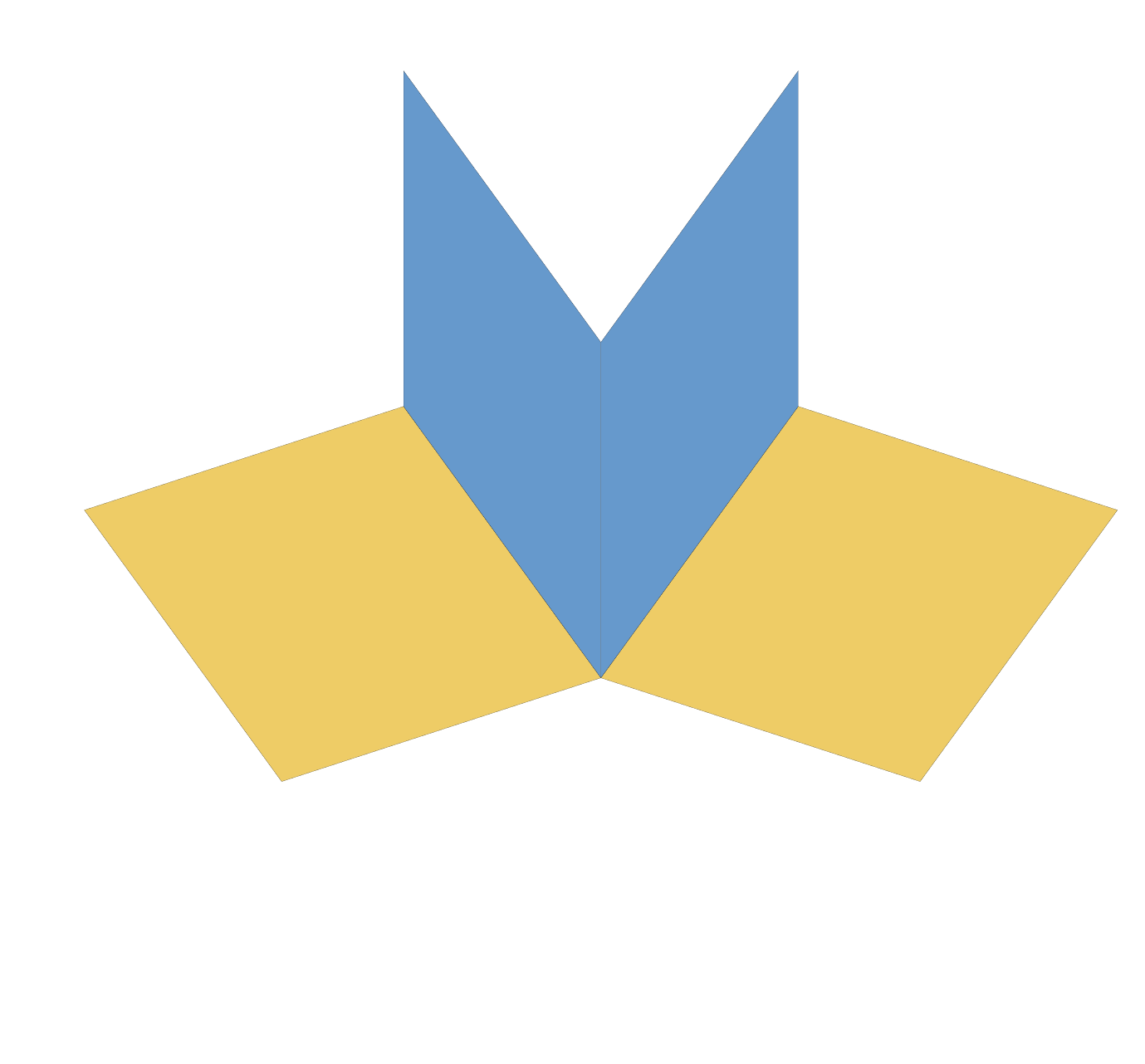}

\begin{picture}(0,0)
\put(-142,53){\large $\to$}
\put(100,53){\large $\to$}
\end{picture}
\end{center}
\end{definition}

\vspace*{-5pt}
In keeping with the star-shaped patterns generated by Definitions~\ref{def:triangleRules} and \ref{def:boatStarRules}, we proceed as in Figure~\ref{fig:rhombus_trim}, alternating between applications of the substitution rule and trimming to a star shape.
We obtain the eigenfunctions in Figures~\ref{fig:rhombus_ev}, \ref{fig:fatk2}, and \ref{fig:rhombus_more_ev}, and numerically compute the values in Table~\ref{fig:rhombus:multTable}.

\begin{figure}[b!]
\begin{center}
\begin{minipage}{2in}
\begin{center} 
\includegraphics[width=1.9in]{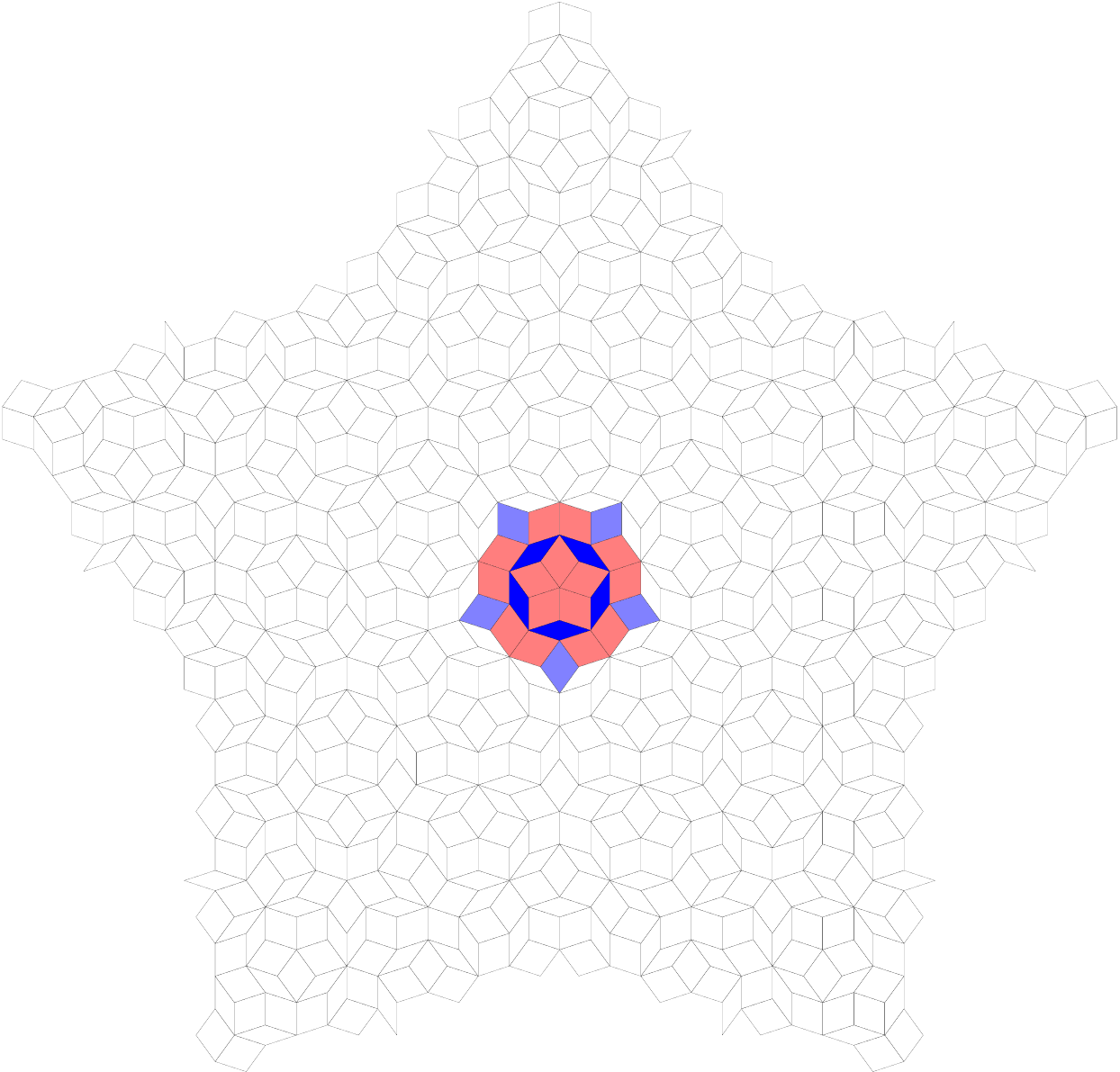}\\
\emph{filled circle}
\end{center}
\end{minipage}
\begin{minipage}{2in}
\begin{center} 
\includegraphics[width=1.9in]{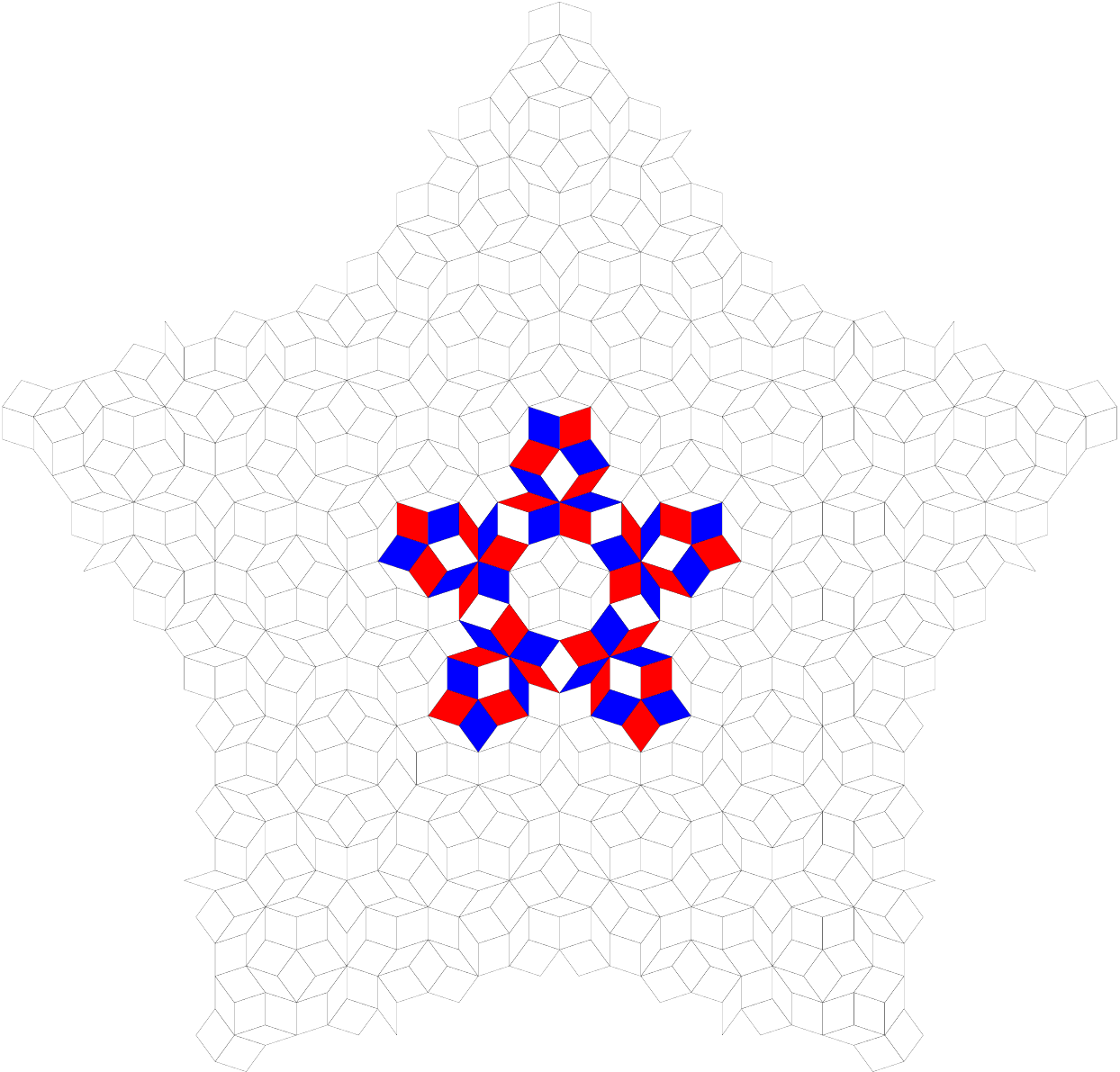}\\
\emph{big star}
\end{center}
\end{minipage}

\vspace*{10pt}
\begin{minipage}{2in}
\begin{center} 
\includegraphics[width=1.9in]{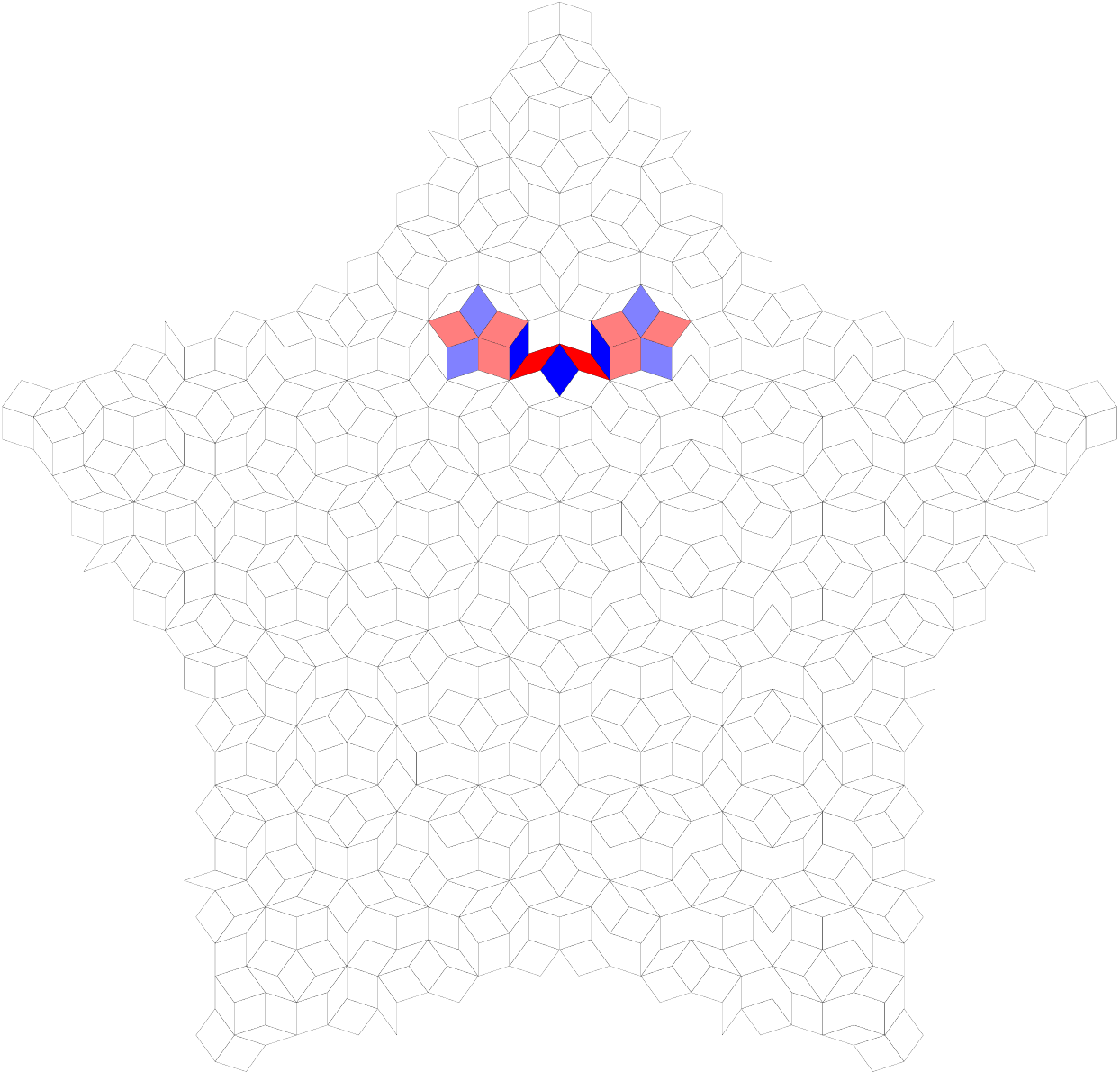}\\
\emph{two star}
\end{center}
\end{minipage}
\begin{minipage}{2in}
\begin{center} 
\includegraphics[width=1.9in]{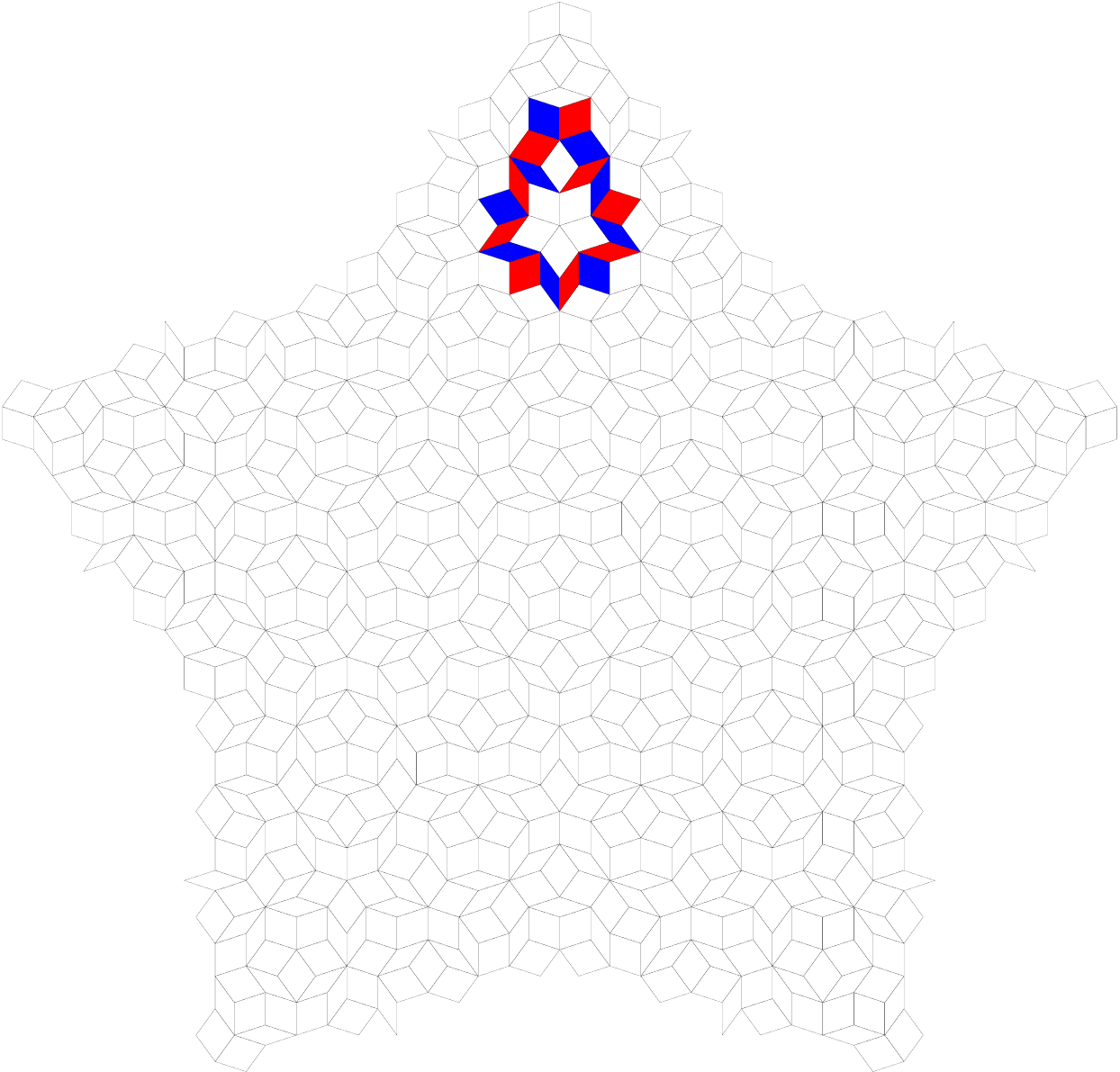}\\
\emph{diamond ring}
\end{center}
\end{minipage}
\end{center}

\caption{\label{fig:rhombus_ev}
For the rhombus tiling, four linearly independent eigenfunctions at energy $E=6$ at level~5. 
We refer to these configurations as the 
\emph{filled circle}, \emph{big star}, \emph{two star}, and \emph{diamond ring}.
(Dark blue and dark red correspond to values $\pm 1$; light blue and light red correspond to $\pm 1/2$.)
Level~5 exhibits 1~filled circle mode, 1~big star mode, 10~diamond ring modes, 
and 15~two star modes, giving a total of 27~linearly independent eigenfunctions.}
\end{figure}

In contrast with the locally-supported eigenfunctions identified for the boat--star and Robinson triangle cases
in the last two sections, for the rhombus tiling a variety of distinct locally-supported eigenfunction configurations
with overlapping local support emerge at low levels, associated with energy $E=6$.
Figure~\ref{fig:rhombus_ev} shows four such mode shapes at level~5.  
The eigenfunctions take values $+1$ on blue tiles and $-1$ on red tiles 
(with intermediate values indicated by a difference in shading) and are zero 
on the uncolored tiles. We classify these mode shapes as:
\begin{itemize}
\item \emph{filled circle}, supported on 25~tiles;
\item \emph{big star}, supported on 50~tiles;
\item \emph{two star}, supported on 15~tiles;
\item \emph{diamond ring}, supported on 18~tiles.
\end{itemize}

Locally-supported eigenfunctions of the Laplacian on the rhombus tiling have been studied before, 
notably by Fujiwara, Arai, Tokihiro, and Kohmoto in~\cite{FATK1988PRB}. 
(It bears mentioning that Equation~(2.1) in \cite{FATK1988PRB} differs from our Equation~(\ref{eq:laplacian}) 
in the first term, so energy $E=2$ in \cite{FATK1988PRB} corresponds to $E=6$ in Table~\ref{fig:rhombus:multTable}.)
Indeed, Fujiwara et al.\ describe five eigenfunctions named A1, A2, B, C, and D, 
and find a cumulative frequency of 0.068189. 
In Figure~\ref{fig:rhombus_ev}, we refer to their B~state as a \emph{two star} mode, and their D~state as a \emph{diamond ring} mode. 
Figure~\ref{fig:fatk2} shows instances where the modes A1, A2, and~C can be constructed as linear combinations of the primitive mode 
shapes in Figure~\ref{fig:rhombus_ev}.  The construction of~C reveals a subtlety that complicates the counting of linearly independent modes: in some cases a \emph{diamond ring} mode can be realized as the combination of a different \emph{diamond ring} and a \emph{two star} mode.

\begin{figure}[b!]
\begin{center}
\begin{picture}(420,160)
\put(0,84){\includegraphics[width=72pt]{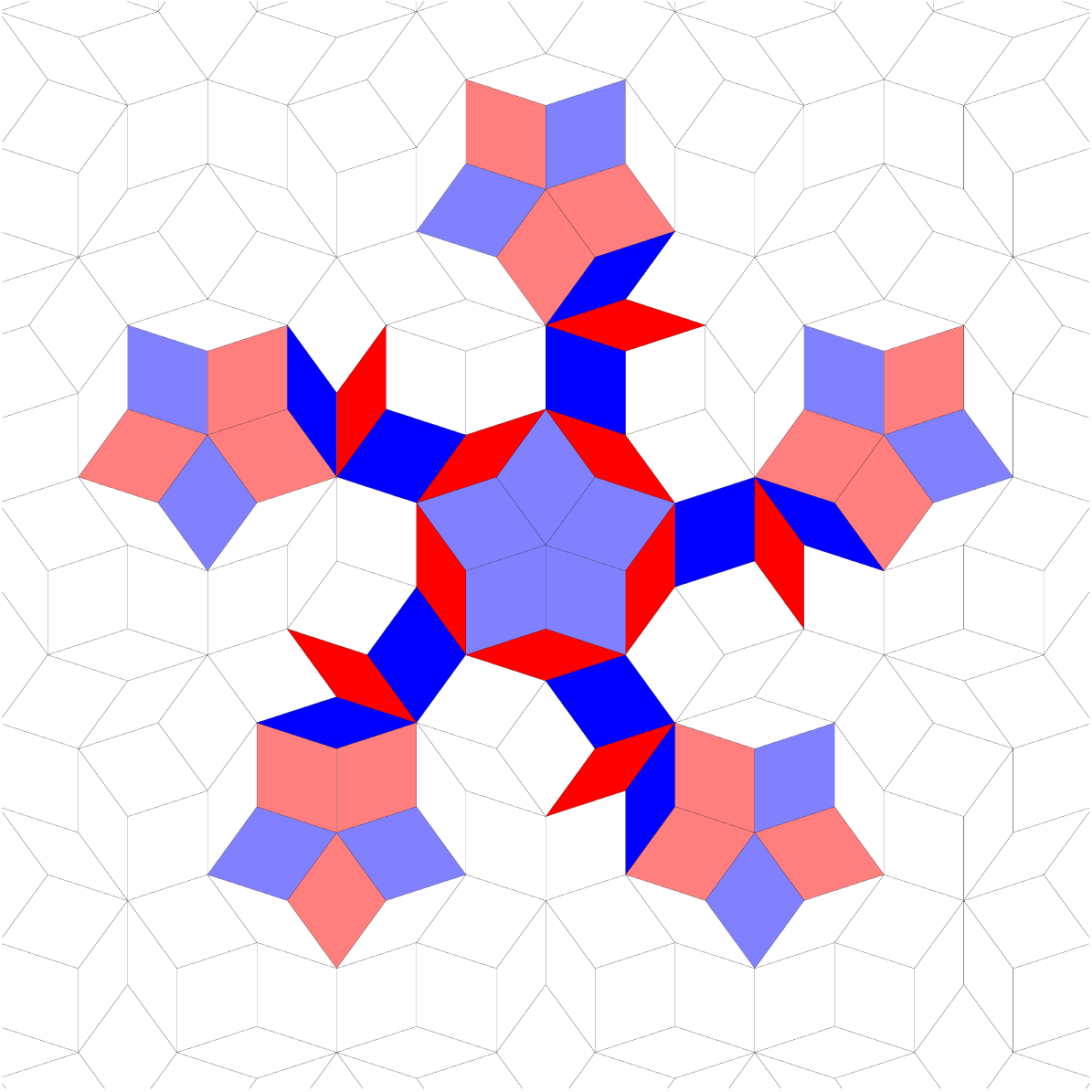}}
\put(0,0){\includegraphics[width=72pt]{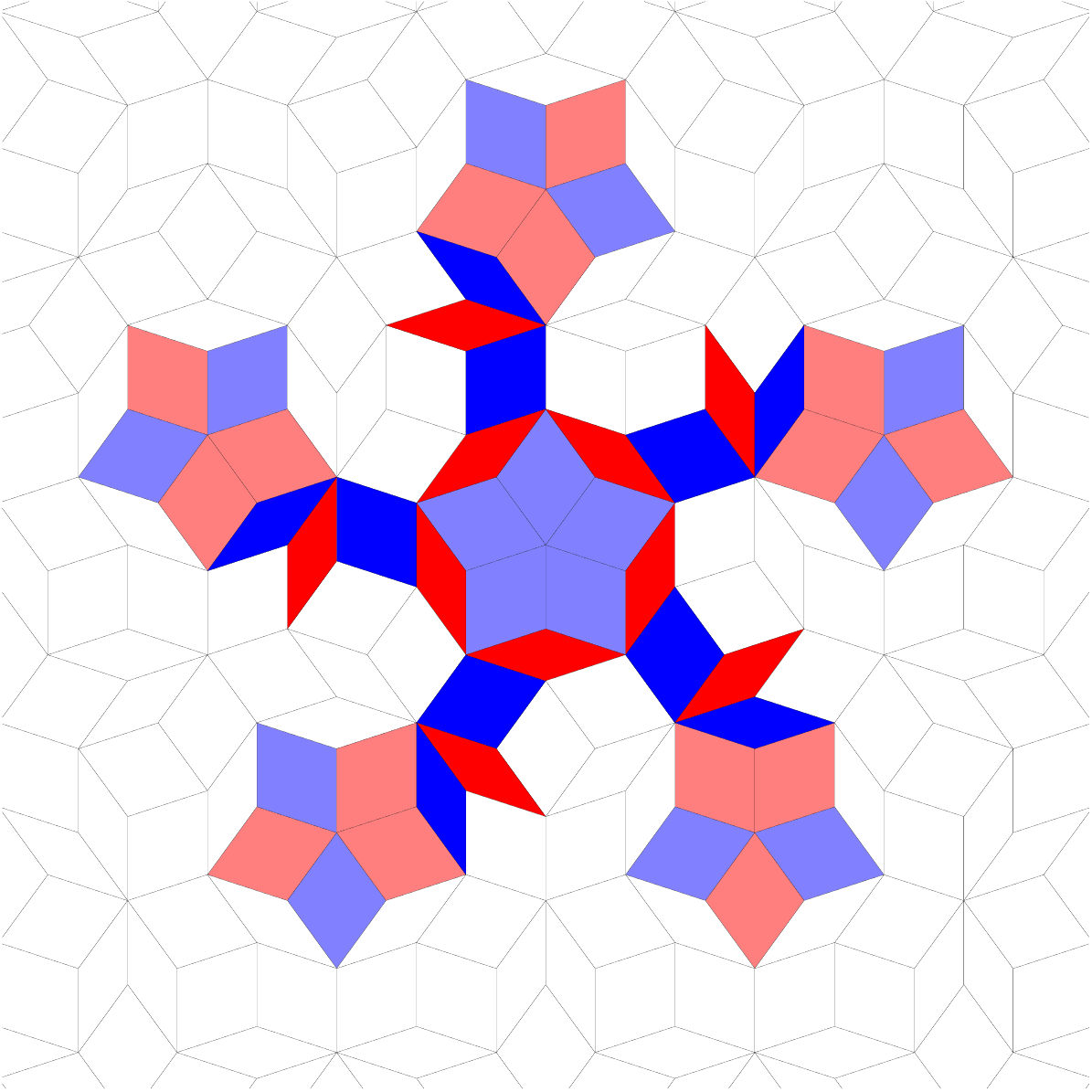}}
\put(102,90){\includegraphics[width=60pt]{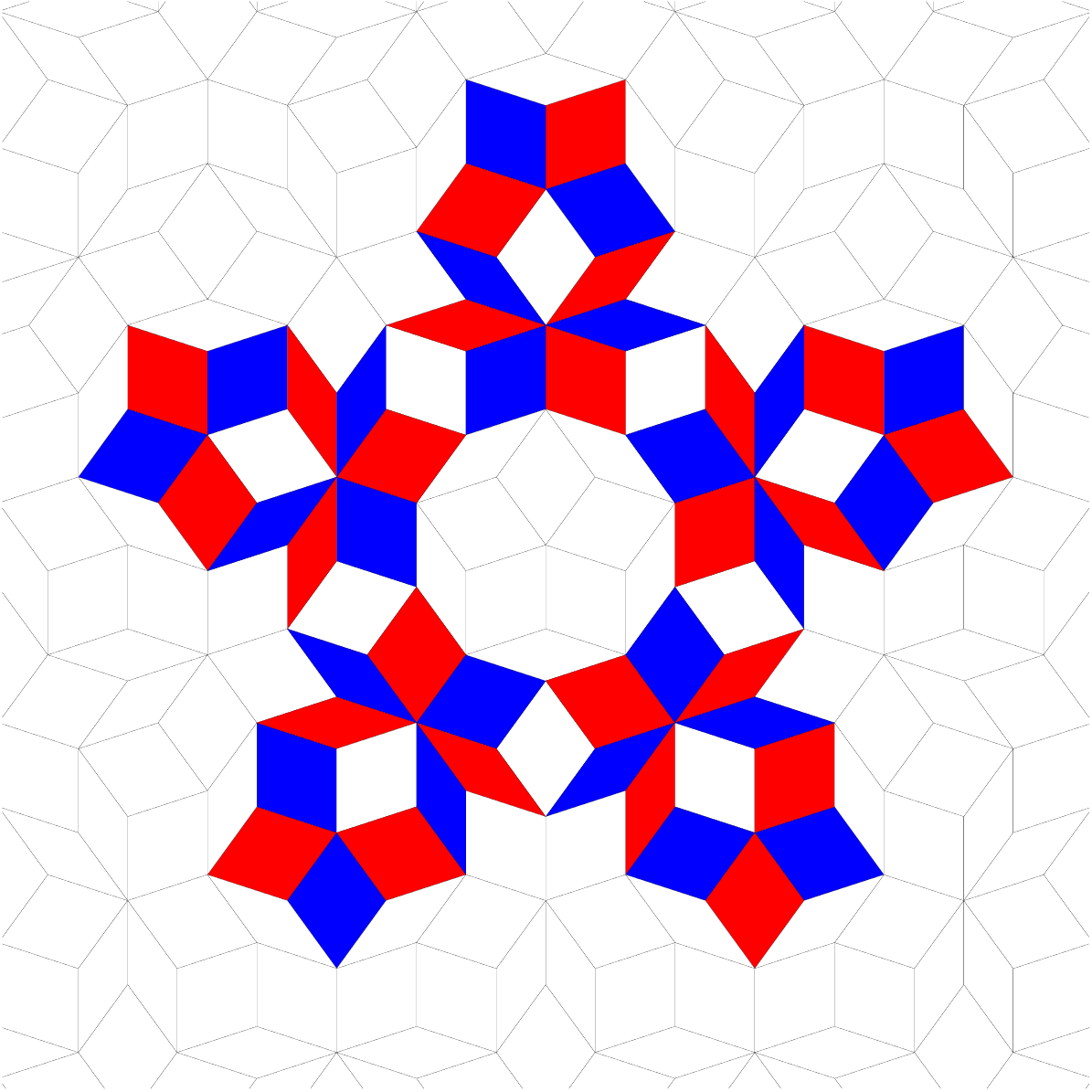}}
\put(102,6){\includegraphics[width=60pt]{Images/rhombus_fatk5_basis7_zoom}}
\put(73,117){\small ${} = -\phantom{1}$}
\put(73,33){\small ${} = +\phantom{1}$}
\put(167,74){\small $+$}
\put(180,74){$\left\{\rule[-80pt]{0pt}{160pt}\right.$}
\put(190,117){\small $-2$}
\put(196,33){\small $+$}
\put(273,117){\small $+$}
\put(273,33){\small $+$}
\put(348,117){\small $+$}
\put(348,33){\small $+$}
\put(210,90){\includegraphics[width=60pt]{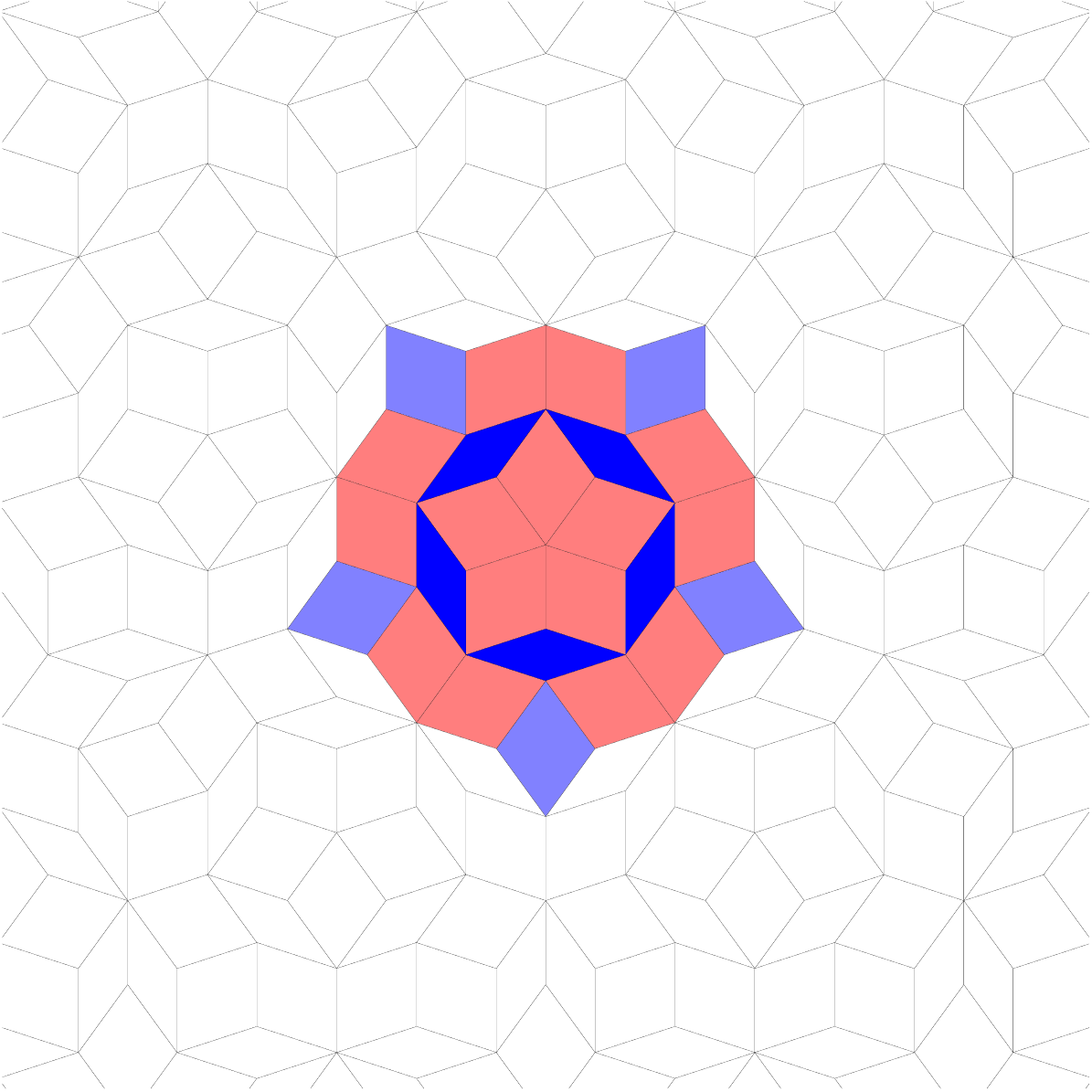}}
\put(285,90){\includegraphics[width=60pt]{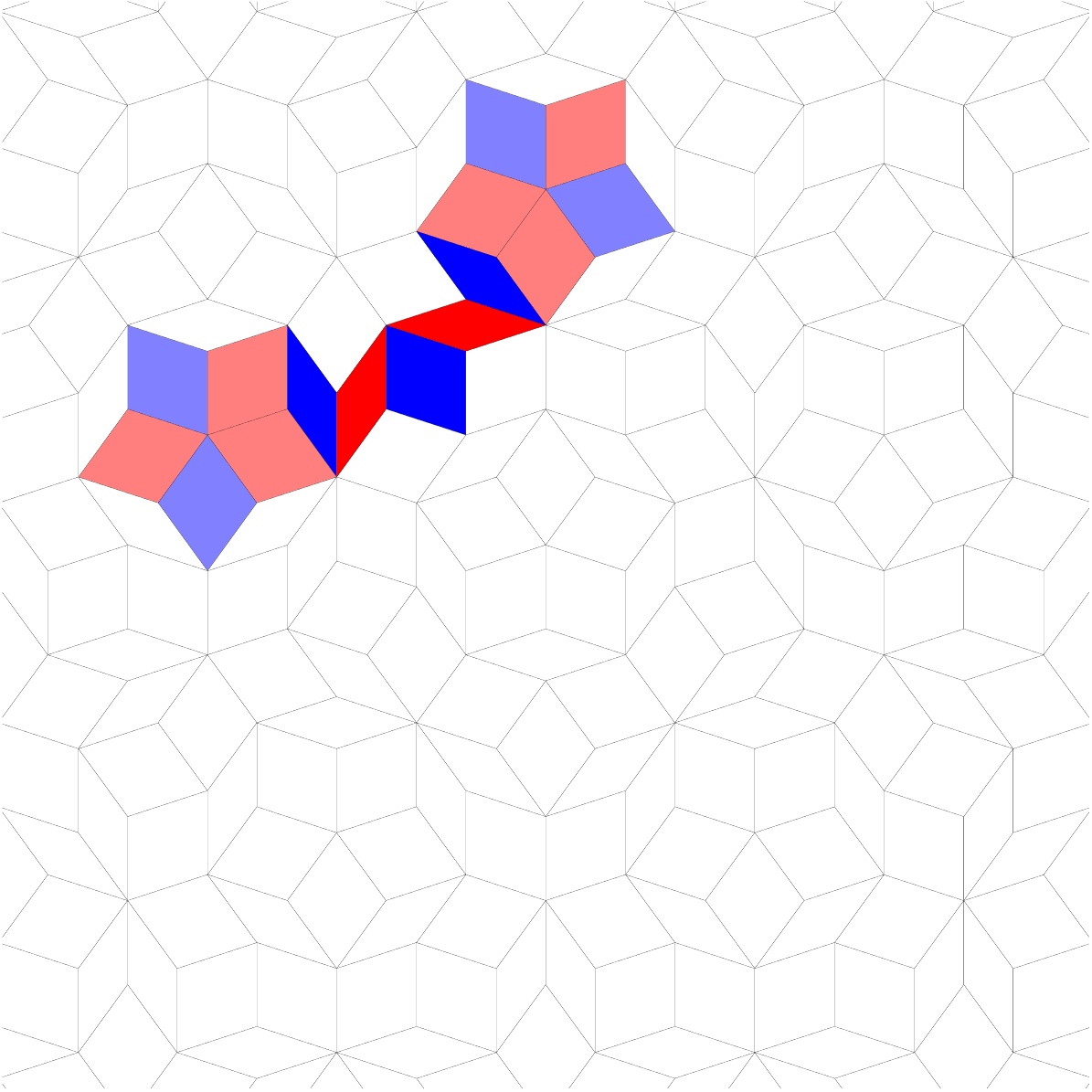}}
\put(360,90){\includegraphics[width=60pt]{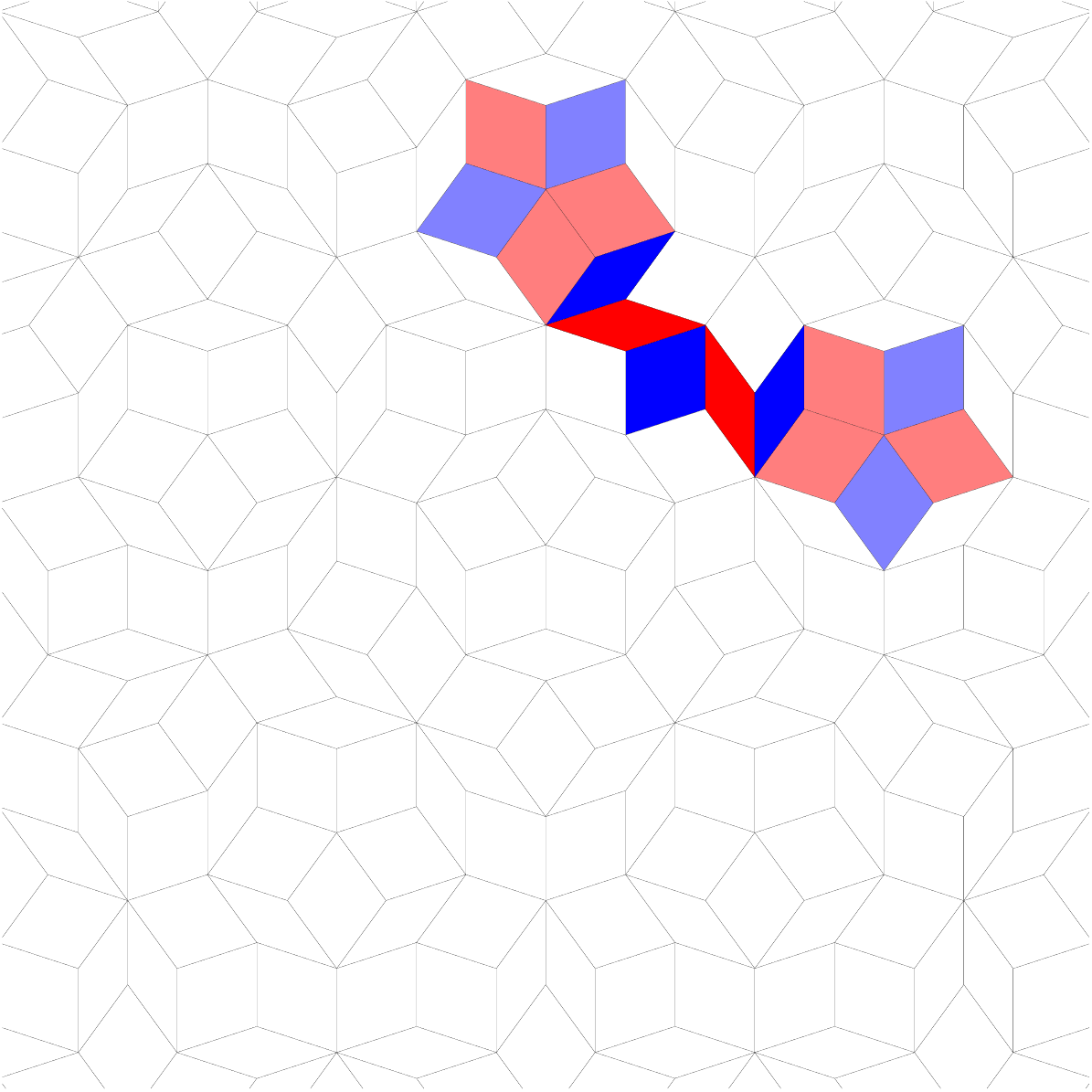}}
\put(210,6){\includegraphics[width=60pt]{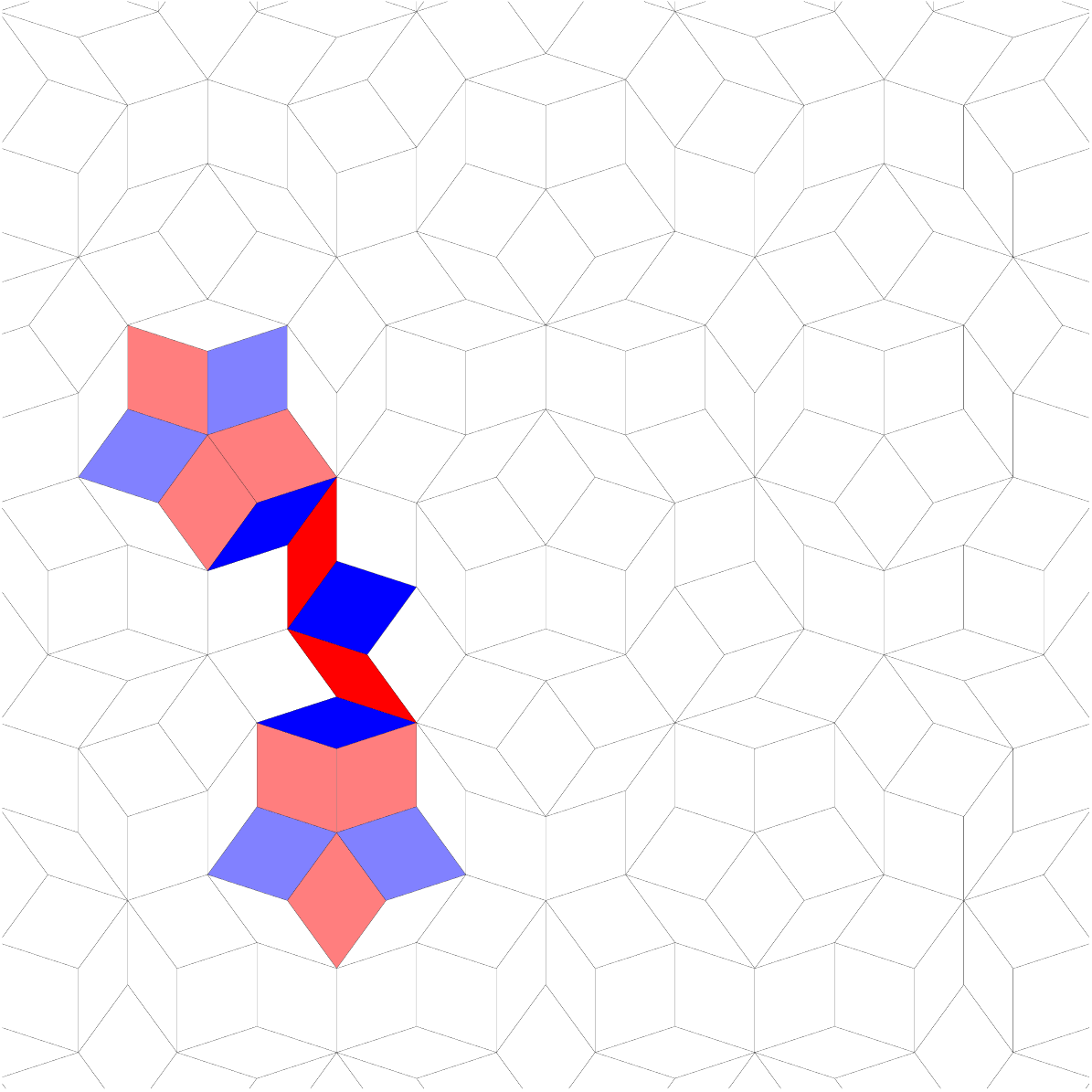}}
\put(285,6){\includegraphics[width=60pt]{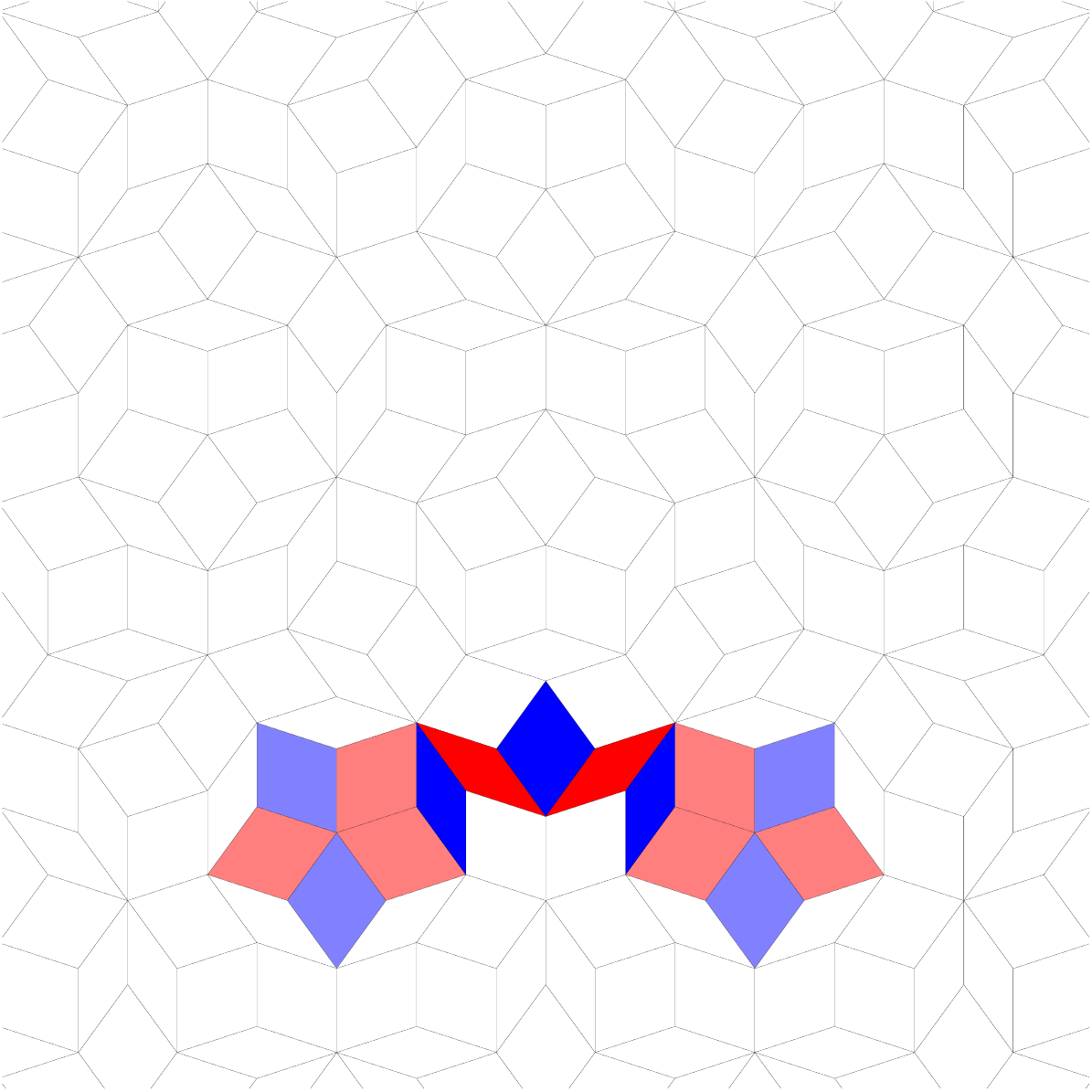}}
\put(360,6){\includegraphics[width=60pt]{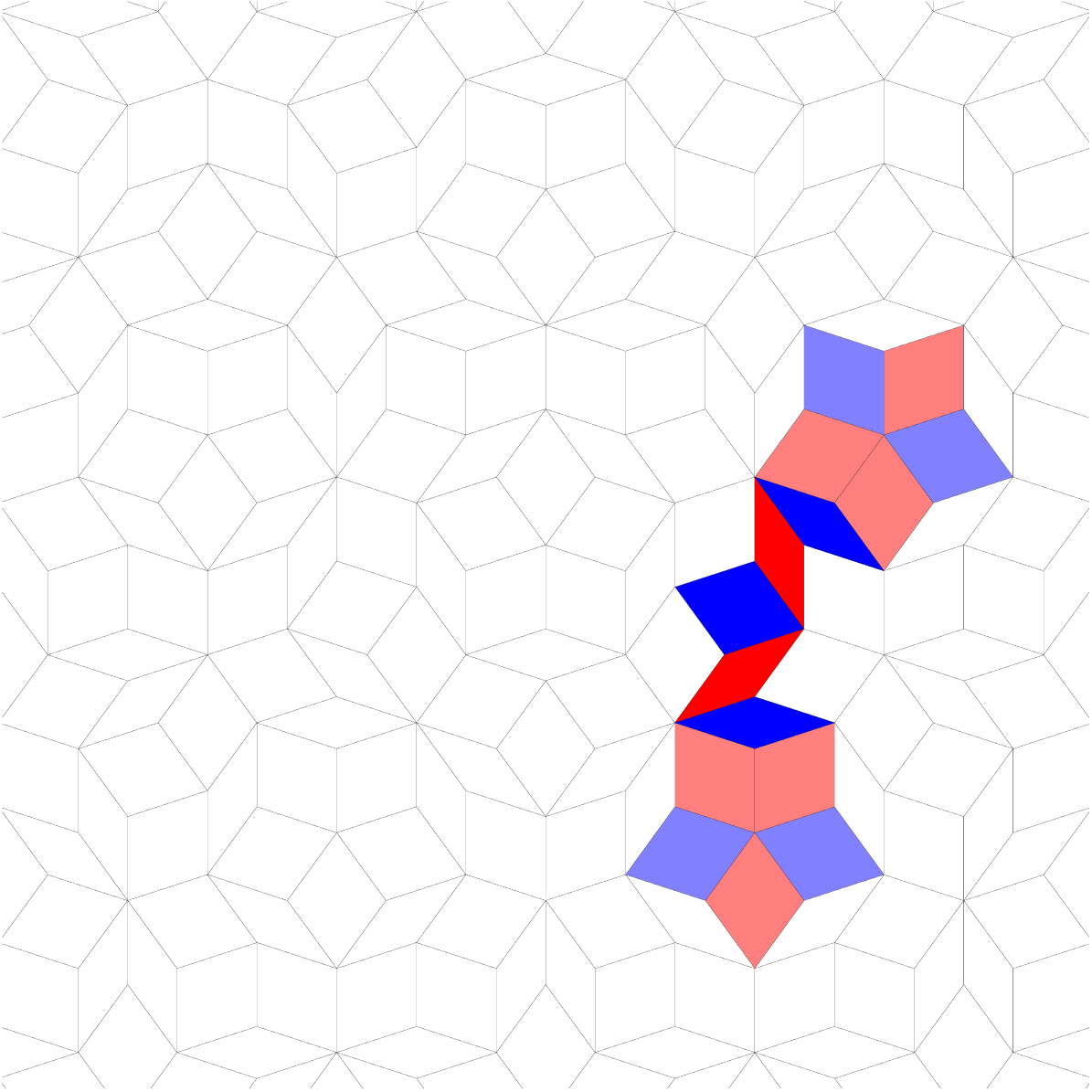}}
\end{picture}

\begin{picture}(420,93)
\put(0,0){\includegraphics[width=72pt]{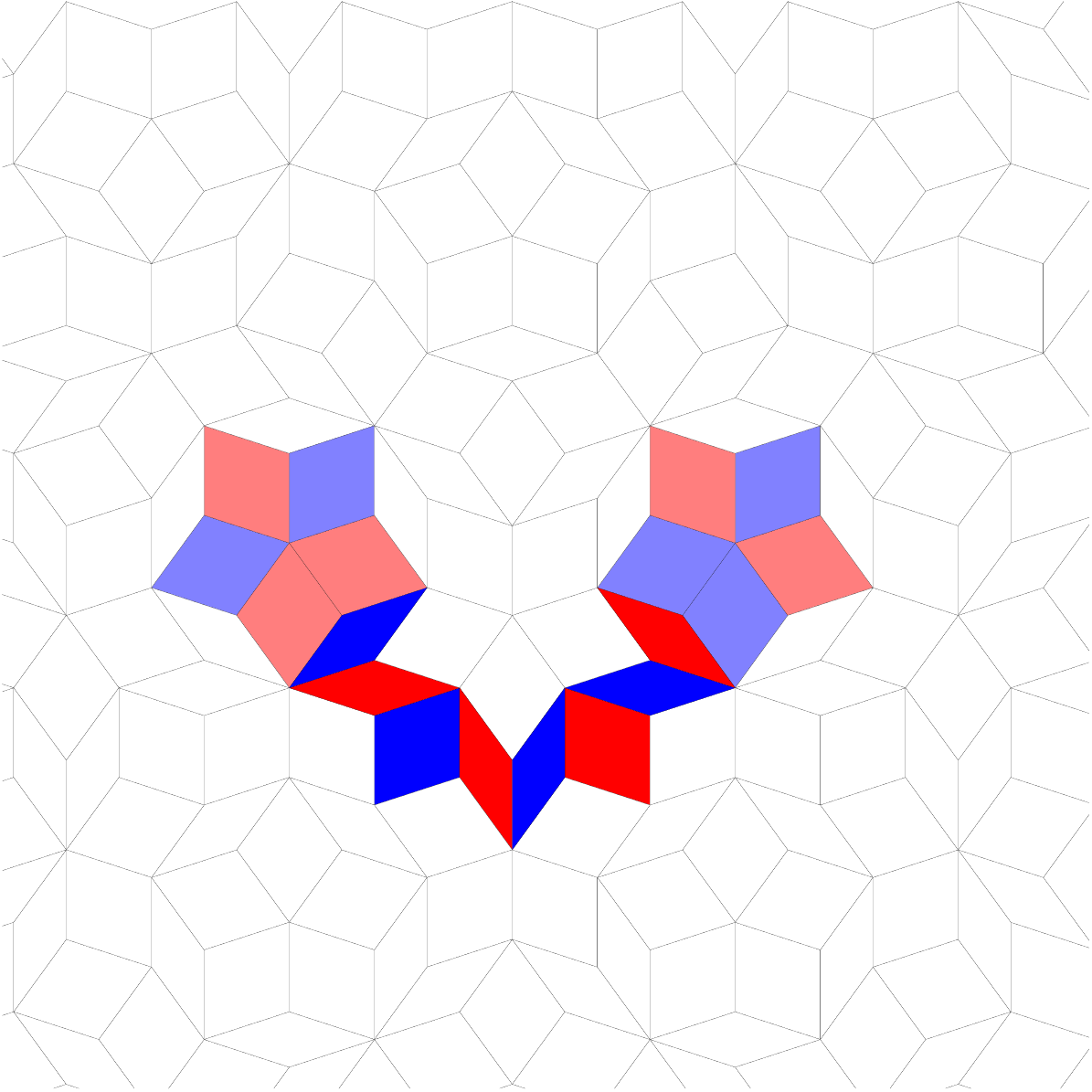}}
\put(73,33){\small ${} =\,2$}
\put(102,6){\includegraphics[width=60pt]{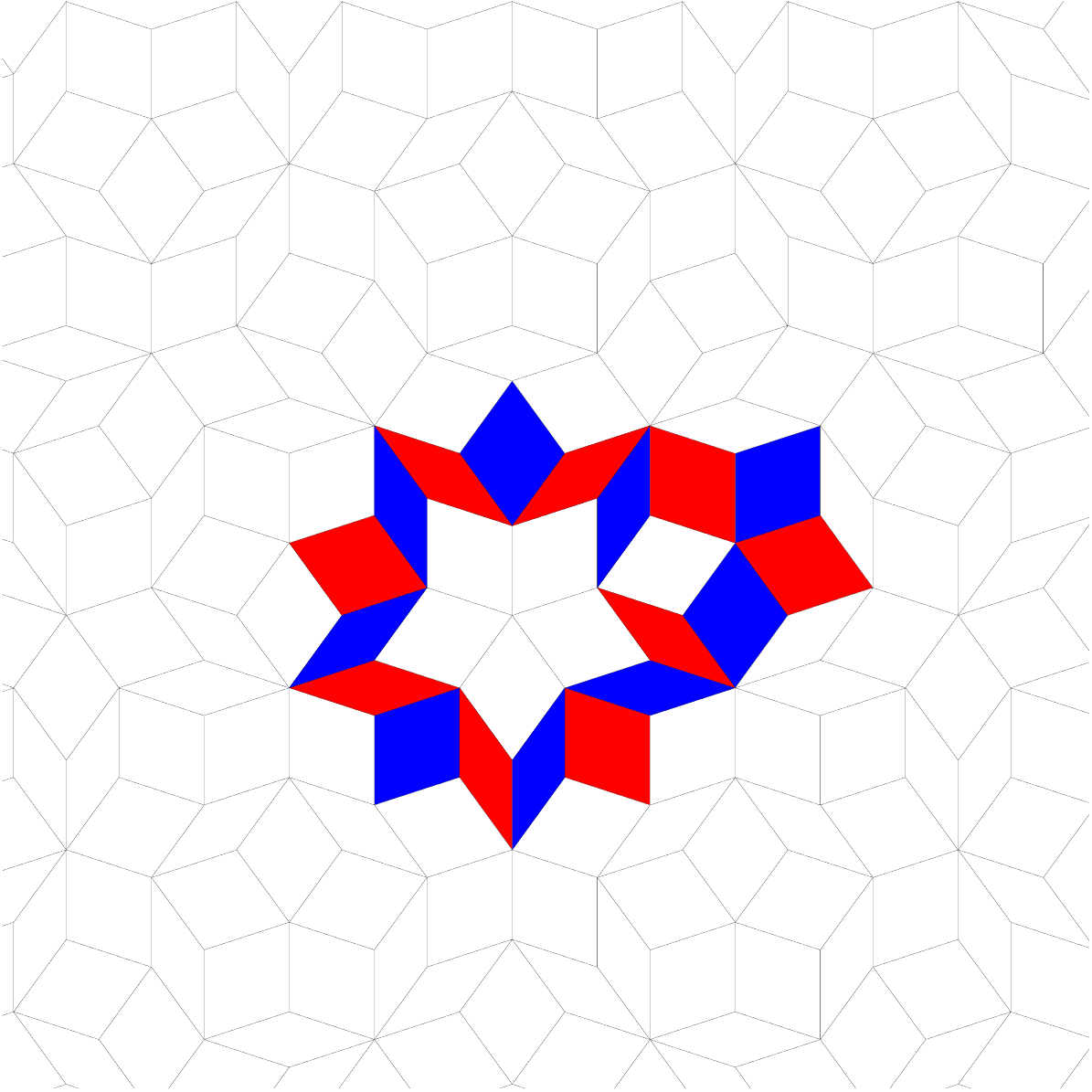}}
\put(165,33){\small ${} - 2$}
\put(190,6){\includegraphics[width=60pt]{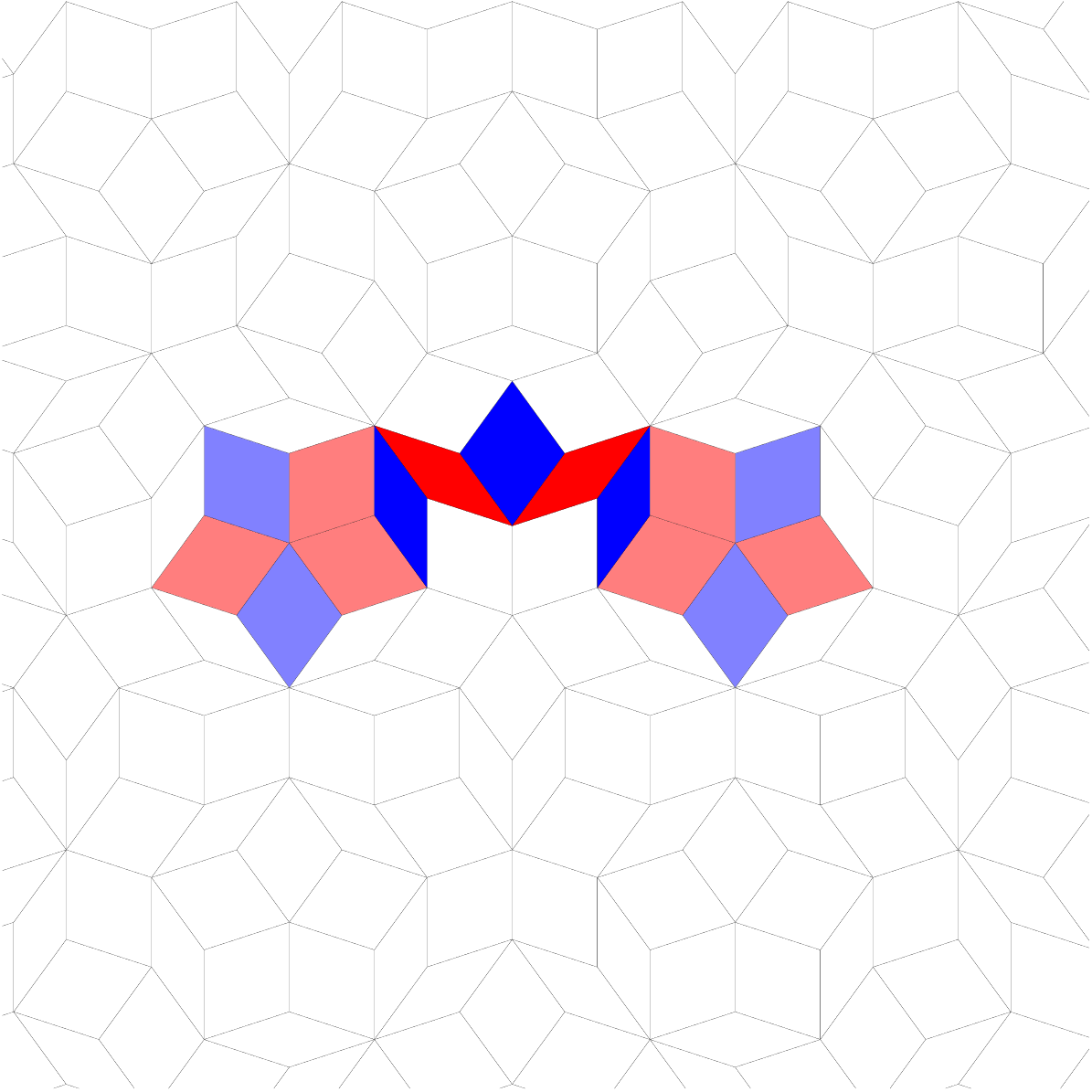}}
\put(263,33){\small $=$}
\put(285,6){\includegraphics[width=60pt]{Images/rhombus_fatk6_D1}}
\put(348,33){\small $+$}
\put(360,6){\includegraphics[width=60pt]{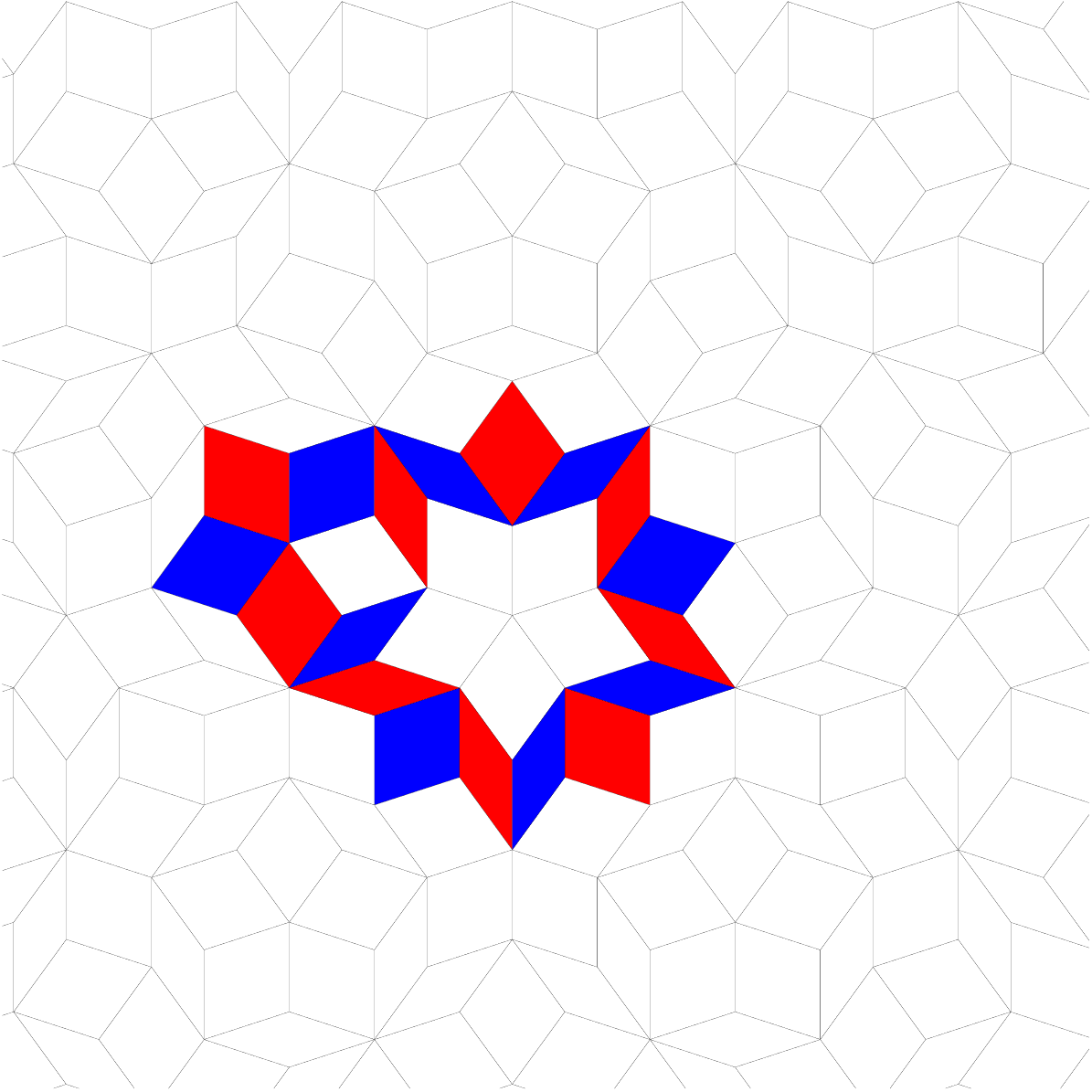}}
\end{picture}
\end{center}
\caption{\label{fig:fatk2}
Construction of eigenfunctions A1, A2, and C of Fujiwara--Arai--Rokihiro--Kohmoto~\cite{FATK1988PRB}
for the rhombus substitution as linear combinations of our basis of locally-supported eigenfunctions.
}
\end{figure}

\begin{figure}[t!]
\begin{center}
\includegraphics[width=2.4in]{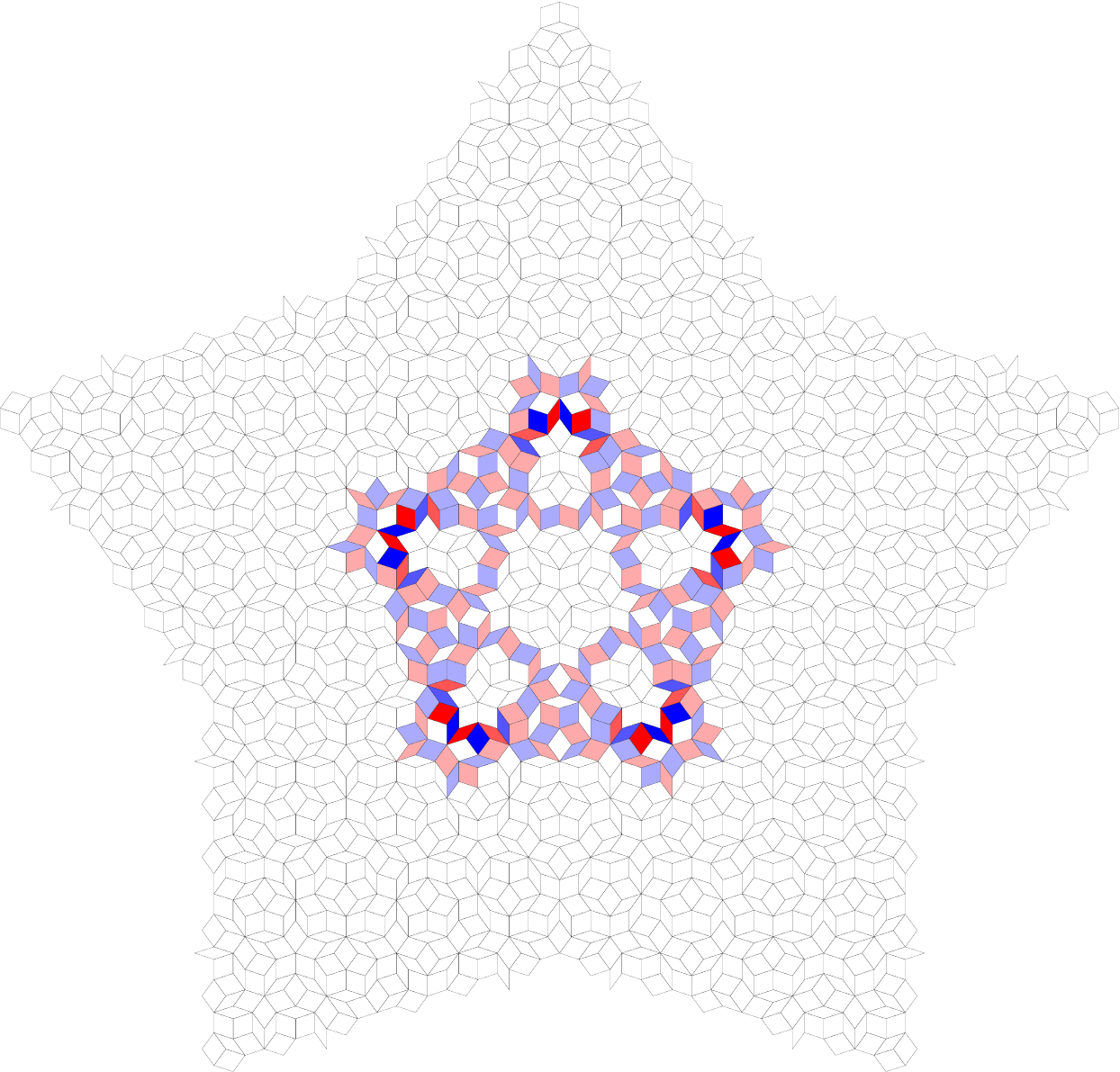}\qquad
\includegraphics[width=2.4in]{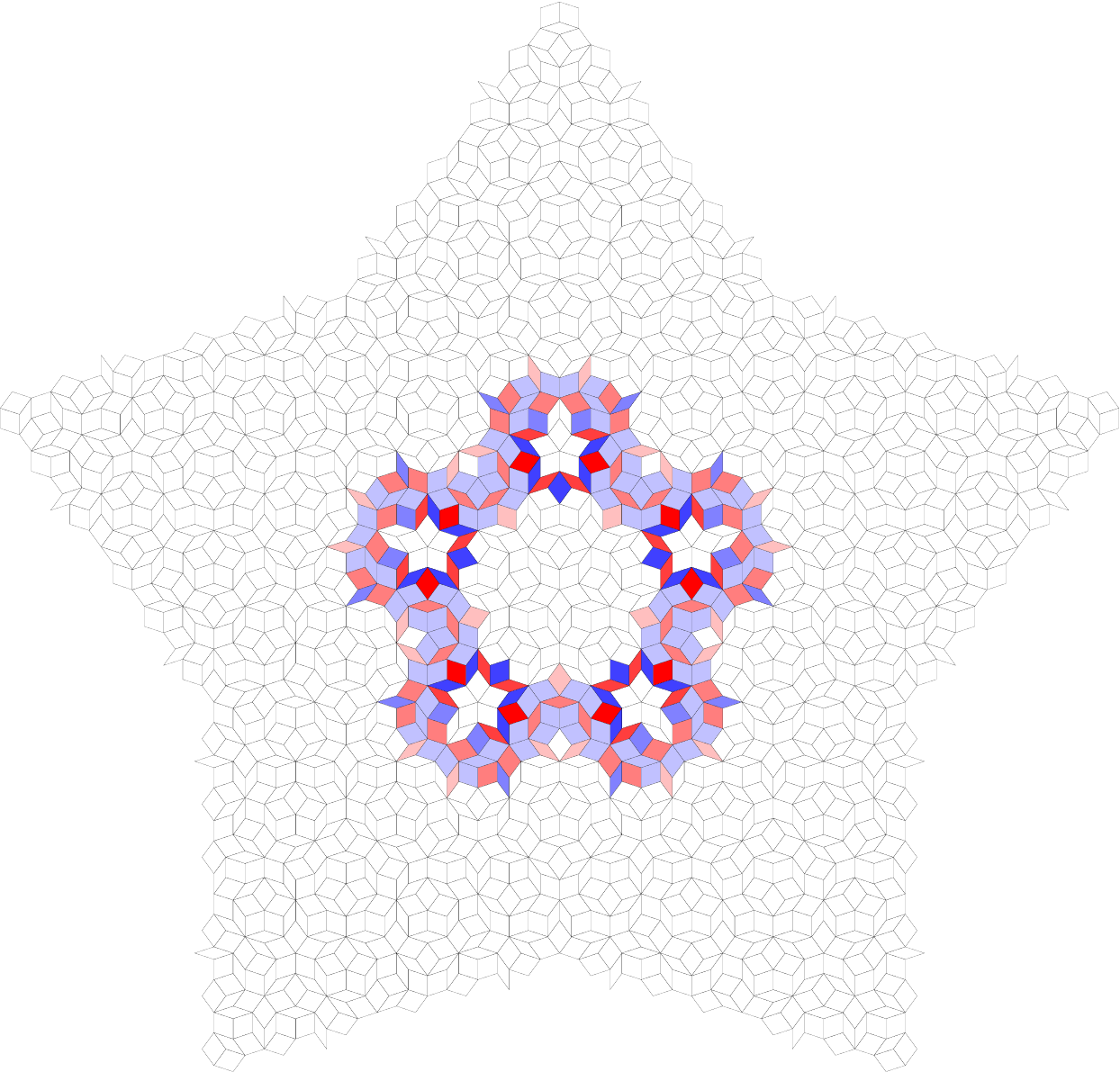}
\end{center}
\caption{\label{fig:rhombus_more_ev}
Two linearly independent eigenfunctions for $E=6$ at level~6. 
Though locally supported, neither is a linear combination of the other 100~modes of the types shown in Figure~\ref{fig:rhombus_ev}.   
(The vector on the left is supported on 200~tiles, with nonzero entries $\pm 1$, $\pm 2/3$, and $\pm 1/3$;
the vector on the right is supported on 245~tiles, with nonzero entries $1$, $\pm 3/4$, $\pm 1/2$, and $\pm 1/4$.)}
\end{figure}


\begin{table}[t!]
\caption{Rhombus tiling: the level of the tiling, the number of tiles, 
and the multiplicity of eigenvalue $E=6$ at levels $1$--$14$.
The final column shows the numerical approximation to the IDS jump.}\label{fig:rhombus:multTable}
\begin{center}
\begin{tabular}{crrc}
\emph{level} &
\multicolumn{1}{c}{\emph{tiles}} &
\multicolumn{1}{c}{$E = 6$} &
\multicolumn{1}{c}{$k_{\rhombus,n}(6+) - k_{\rhombus,n}(6-)$}
  \\ \hline
 1  & 20          & 0        & 0.00000000\ldots \\
 2  & 45          & 0        & 0.00000000\ldots \\
 3  & 115         & 2        & 0.01739130\ldots \\
 4  & 290         & 5        & 0.01724137\ldots \\
 5  & 745         & 27       & 0.03624161\ldots \\
 6  & 1\,925      & 102      & 0.05298701\ldots \\
 7  & 5\,000      & 287      & 0.05740000\ldots \\
 8  & 13\,025     & 797      & 0.06119001\ldots \\
 9  & 33\,995     & 2\,164   & 0.06365642\ldots \\
 10 & 88\,830     & 5\,792   & 0.06520319\ldots \\
 11 & 232\,285    & 15\,409  & 0.06633661\ldots \\
 12 & 607\,685    & 40\,744  & 0.06704789\ldots \\ 
 13 & 1\,590\,220 & 107\,289 & 0.06746802\ldots \\
 14 & 4\,162\,085 & 281\,939 & 0.06773984\ldots 
\end{tabular}
\end{center}
\end{table}

The rhombus tiling exhibits another intriguing property: the emergence of more complicated
locally-supported modes on larger tilings.  At level~6 the eigenvalue $E=6$
has multiplicity~102.  One can identify 10~filled circle modes, 10~big star modes, 20~diamond ring modes, 
and 60~two star modes, accounting for 100~linearly independent eigenfunctions.   
One can then find two additional linearly independent eigenfunctions, 
still having local support away from the boundary, but now involving many more tiles.  
Figure~\ref{fig:rhombus_more_ev} shows these two modes, one supported on 200~tiles, the other on 245~tiles.
(It does not appear that the modes in Figure~\ref{fig:rhombus_more_ev} were identified in~\cite{FATK1988PRB}.)
Like the simpler modes in Figure~\ref{fig:rhombus_ev}, these shapes must recur at higher levels;
moreover, yet more sophisticated locally-supported modes could also manifest at higher levels.
The emergence of such modes illustrates the challenge in explicitly calculating the jump in the
integrated density of states at $E=6$; moreover, the rarity of such modes (in comparison with the
more abundant mode shapes in Figure~\ref{fig:rhombus_ev}) indicates the challenge of precisely
estimating this jump numerically.
Table~\ref{fig:rhombus:multTable} shows numerical computations for this jump up through level~14
(4,162,085~tiles).



\section{Kites and Darts}
\begin{definition}\label{def:KiteDartRules}
The \emph{kite--dart} substitution\footnote{Illustration following \tt{https://tilings.math.uni-bielefeld.de/substitution/penrose-kite--dart/}} is given by

\begin{picture}(0,80)
\put(-10,0){\includegraphics[width=1.15in]{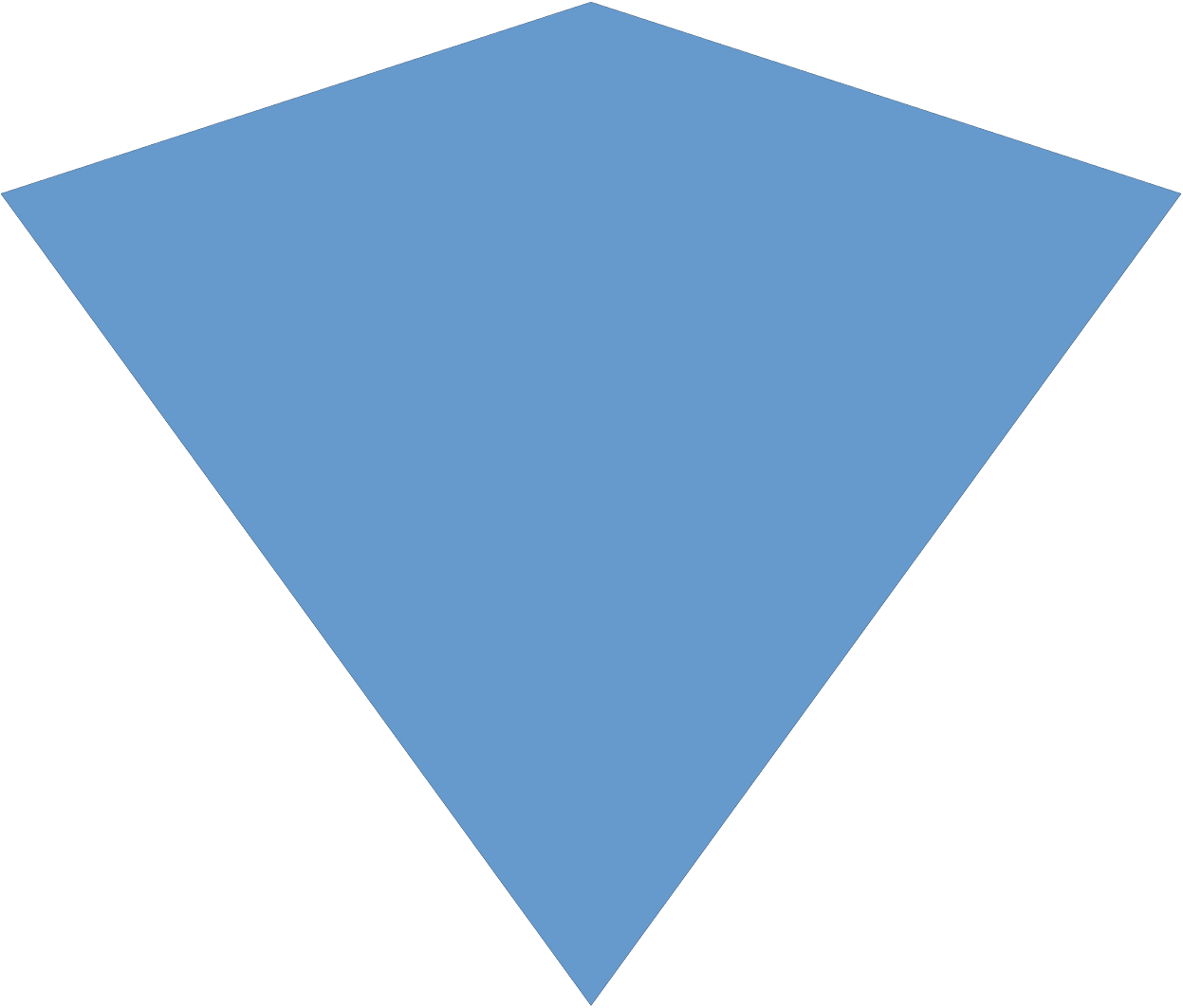}}
\put(100,0){\includegraphics[width=1.15in]{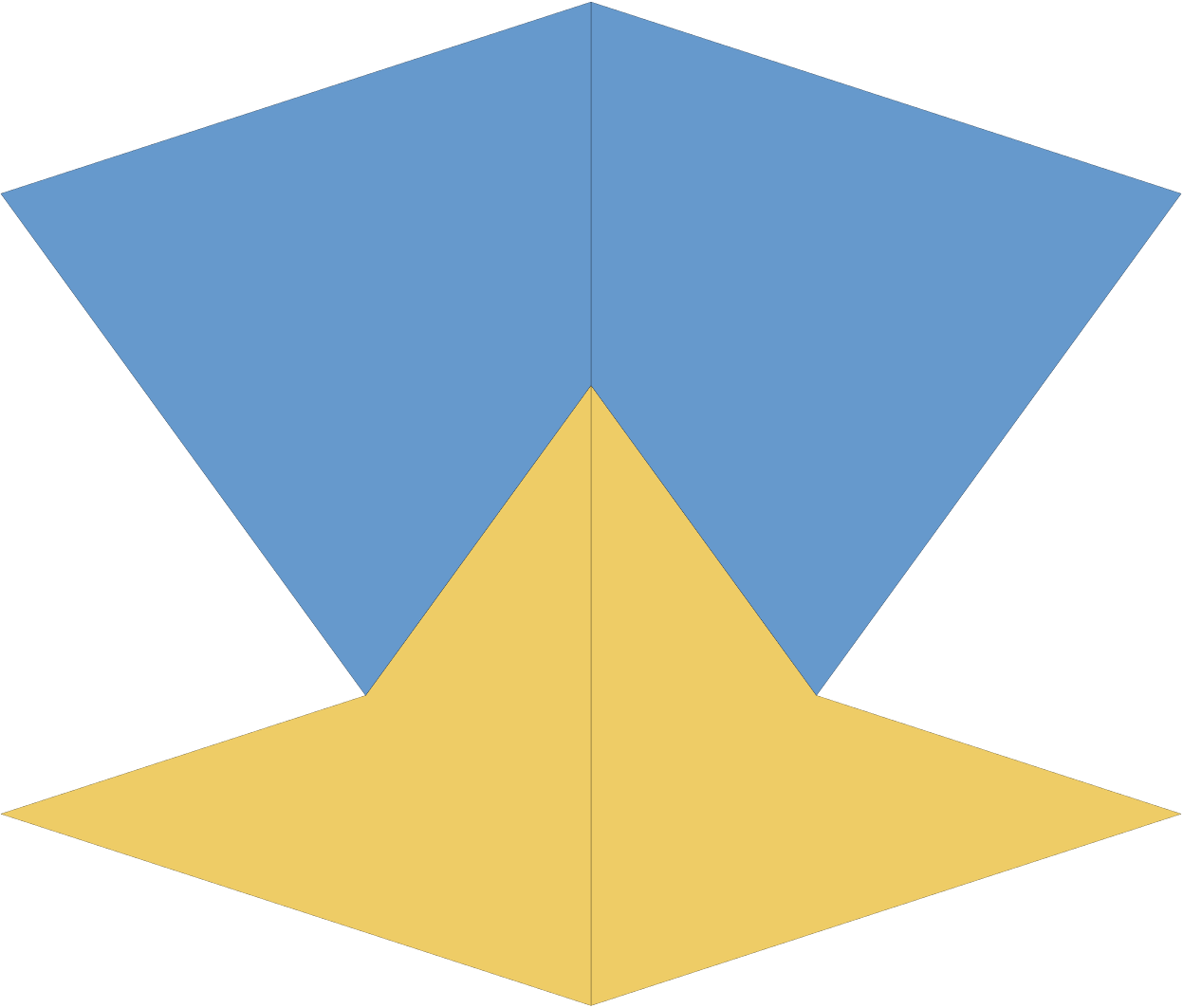}}
\put(217,0){\includegraphics[width=1.15in]{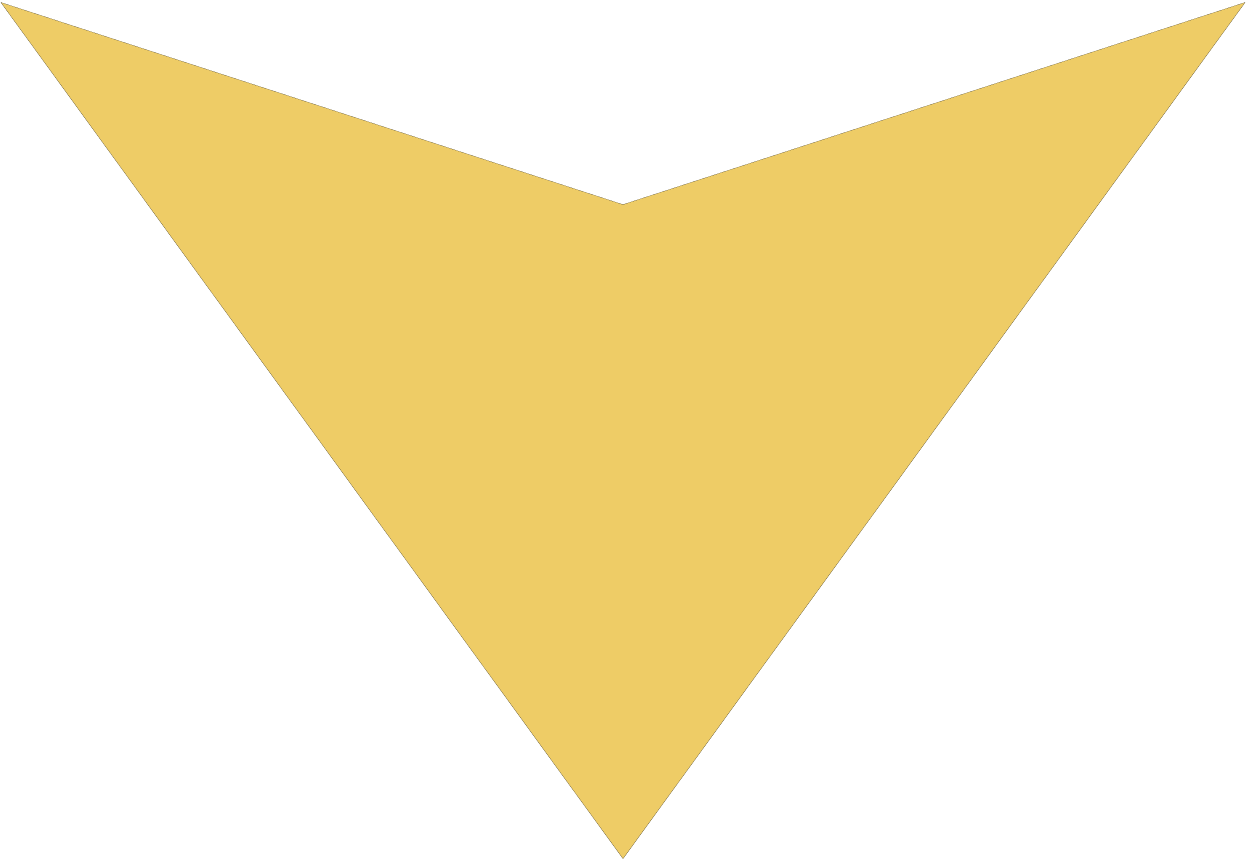}}
\put(327,0){\includegraphics[width=1.15in]{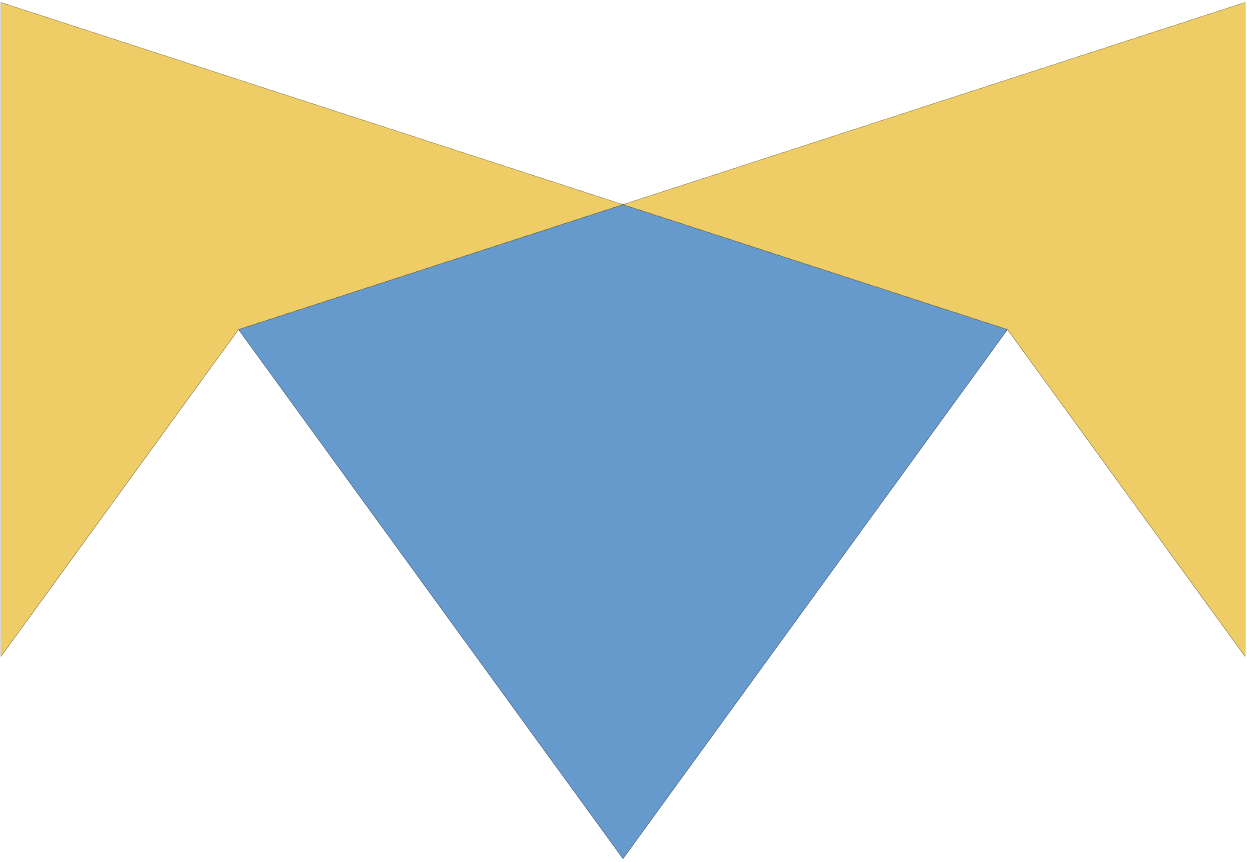}}
\put(81,30){\large $\to$}
\put(304,30){\large $\to$}
\end{picture}
\end{definition}
 
Analogous to the rhombus tiling,  we start at level~0 with a star-shaped 
configuration (comprising five darts), and then alternate between applications 
of the substitution rule and trimming back to maintain the star-shaped pattern. 
Figure~\ref{fig:kd_star} shows the first four steps of this process.

\begin{figure}[b!]
\begin{center}
\begin{minipage}{2.1in} \begin{center}
\includegraphics[width=2in]{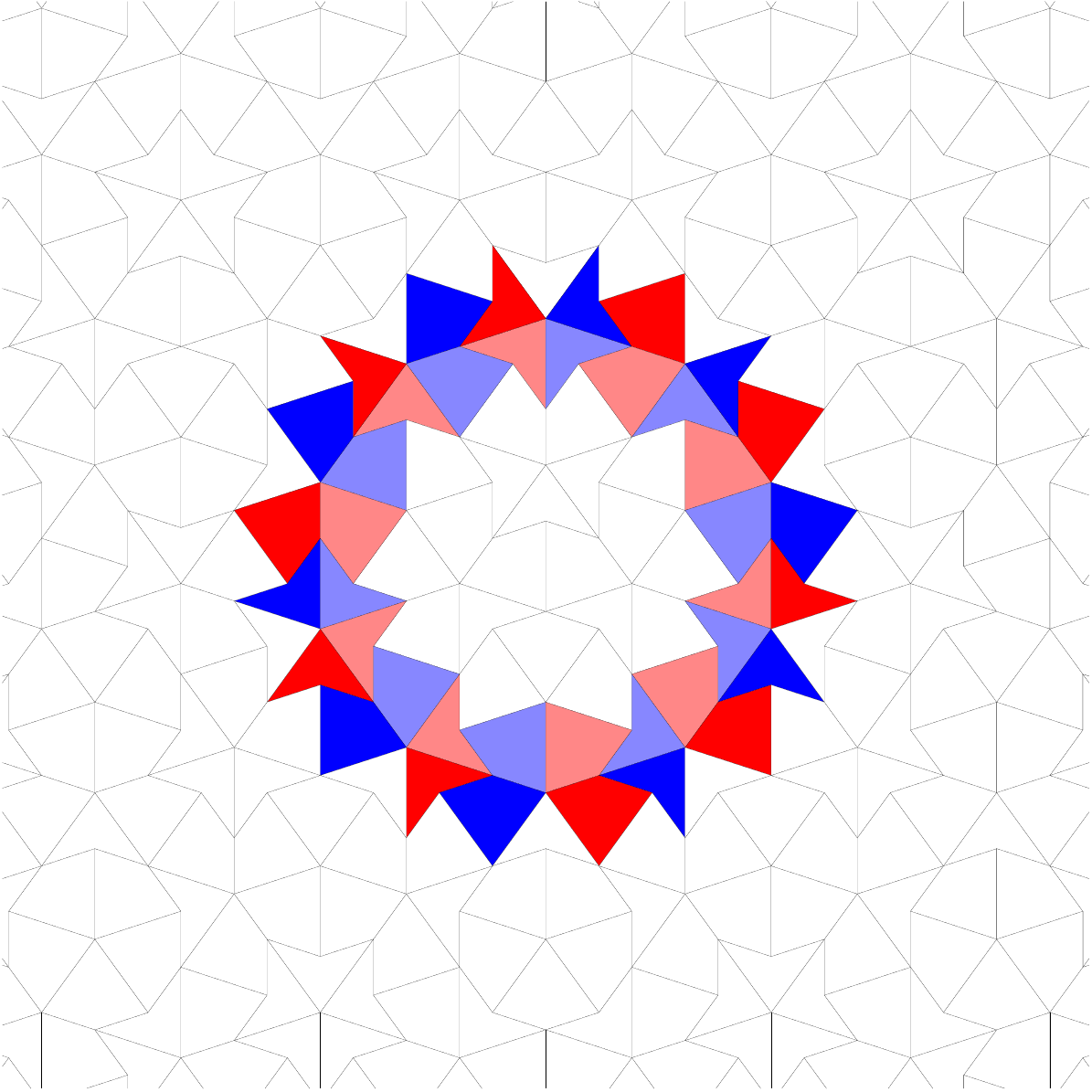}\\ \small $E=6-\varphi$
\end{center}\end{minipage}
\quad
\begin{minipage}{2.1in} \begin{center}
\includegraphics[width=2in]{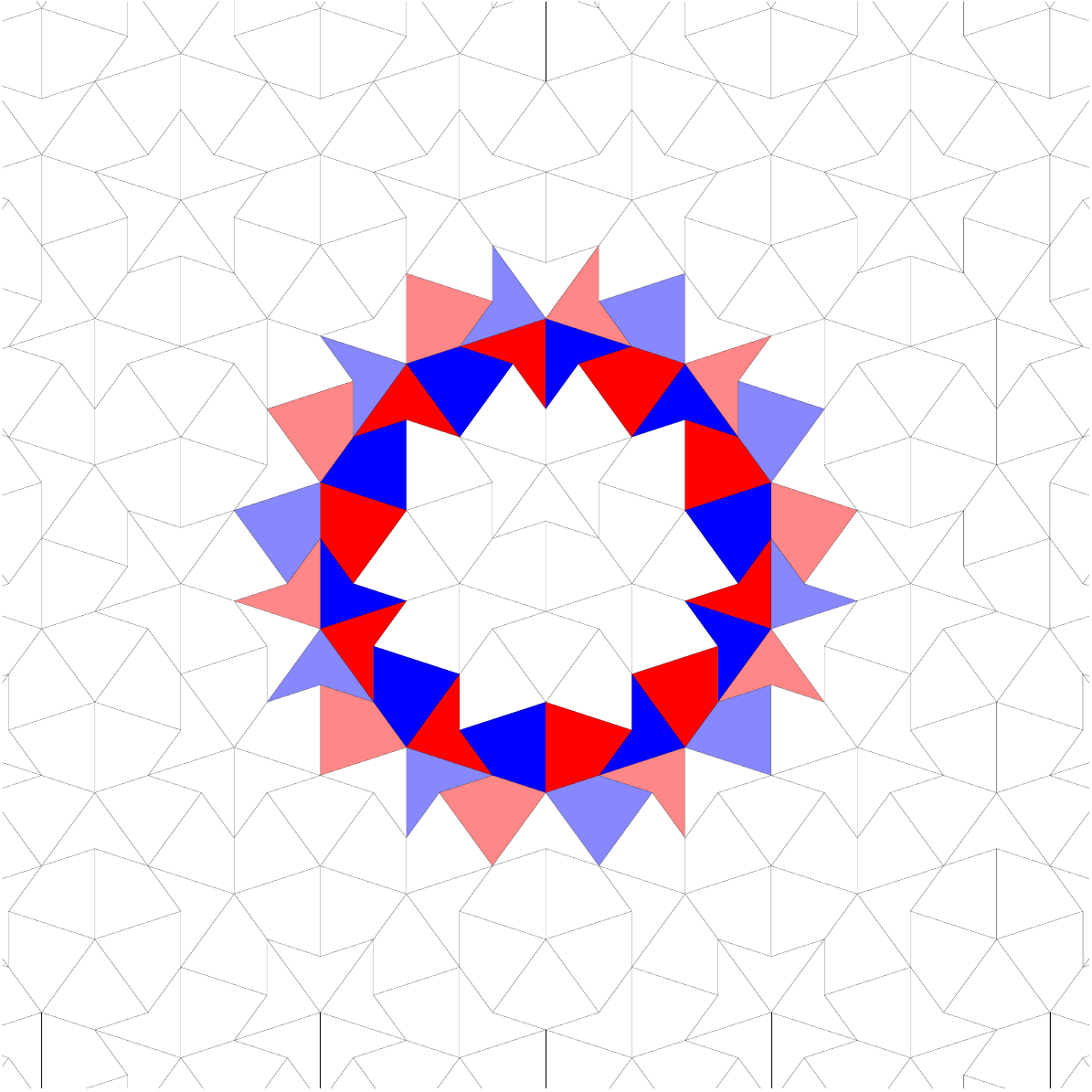}\\ \small $E=5+\varphi$
\end{center}\end{minipage}
\end{center}
\caption{ \label{fig:kd:ringmode}
Locally-supported eigenfunctions for the kite--dart substitution,
corresponding to $E=6-\varphi=4.381966\ldots$ and $E=5+\varphi=6.618033\ldots$.
The nonzero entries of these ring modes take the values $+1$ (dark blue), $+1/\varphi$ (light blue),
$-1/\varphi$ (light red), and $-1$ (dark red).} 
\end{figure}

\begin{figure}[t!]
\includegraphics[width=1.9in]{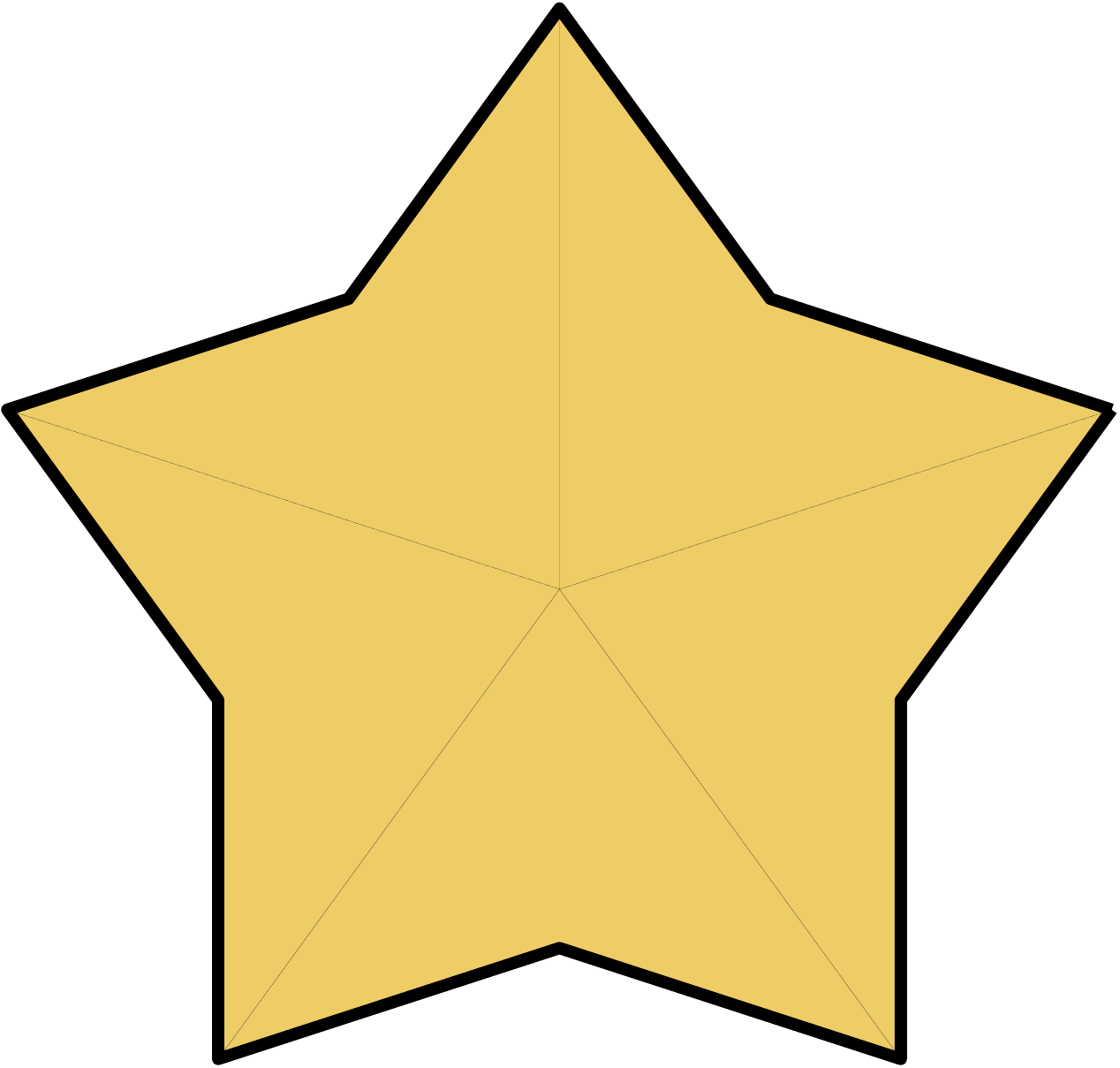}\qquad
\includegraphics[width=1.9in]{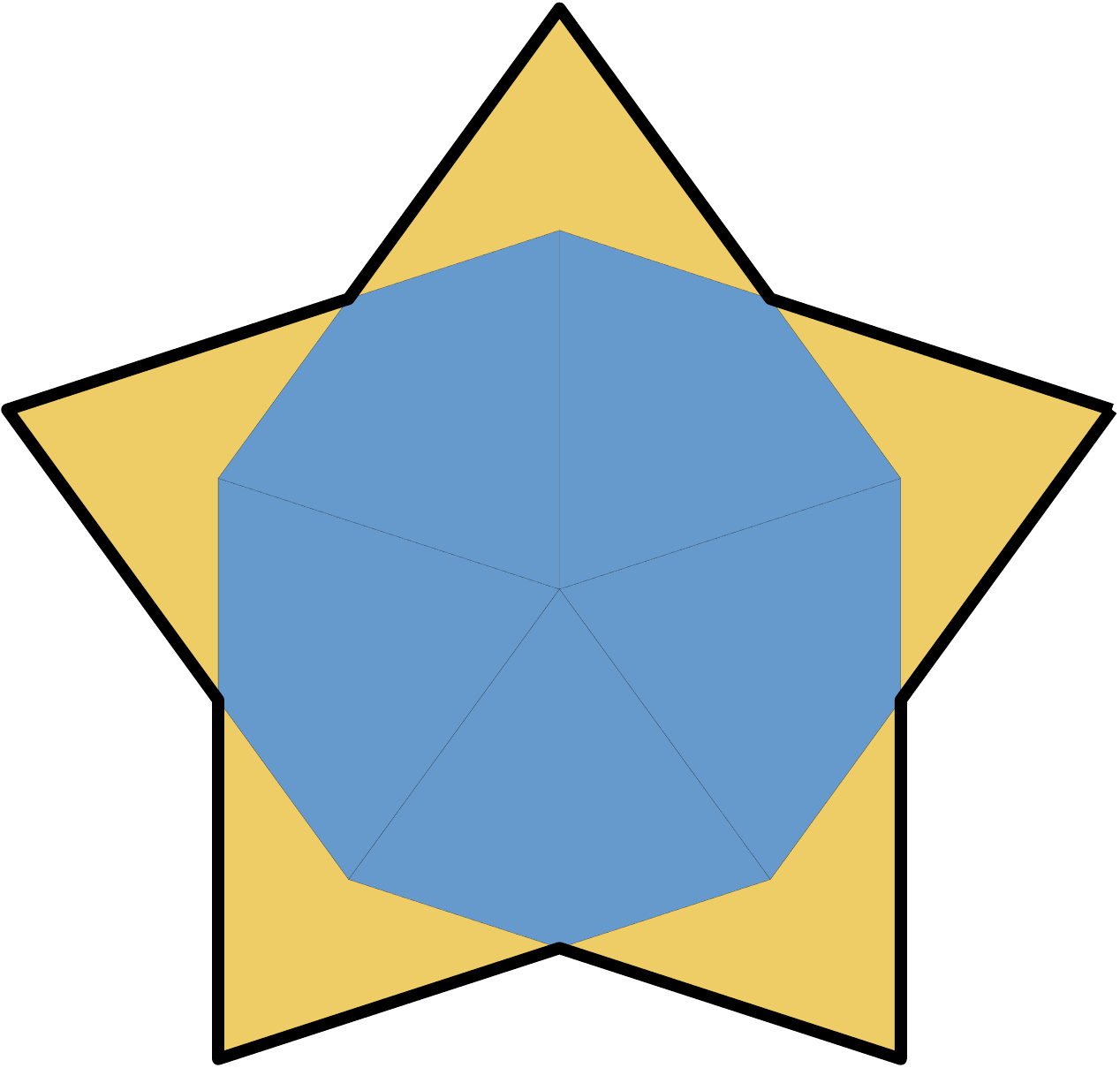}

\begin{picture}(0,0)
\put(-98,2){\small \textsl{Level 0}}
\put( 66,2){\small \textsl{Level 1}}
\end{picture}

\vspace*{1em}
\includegraphics[width=1.9in]{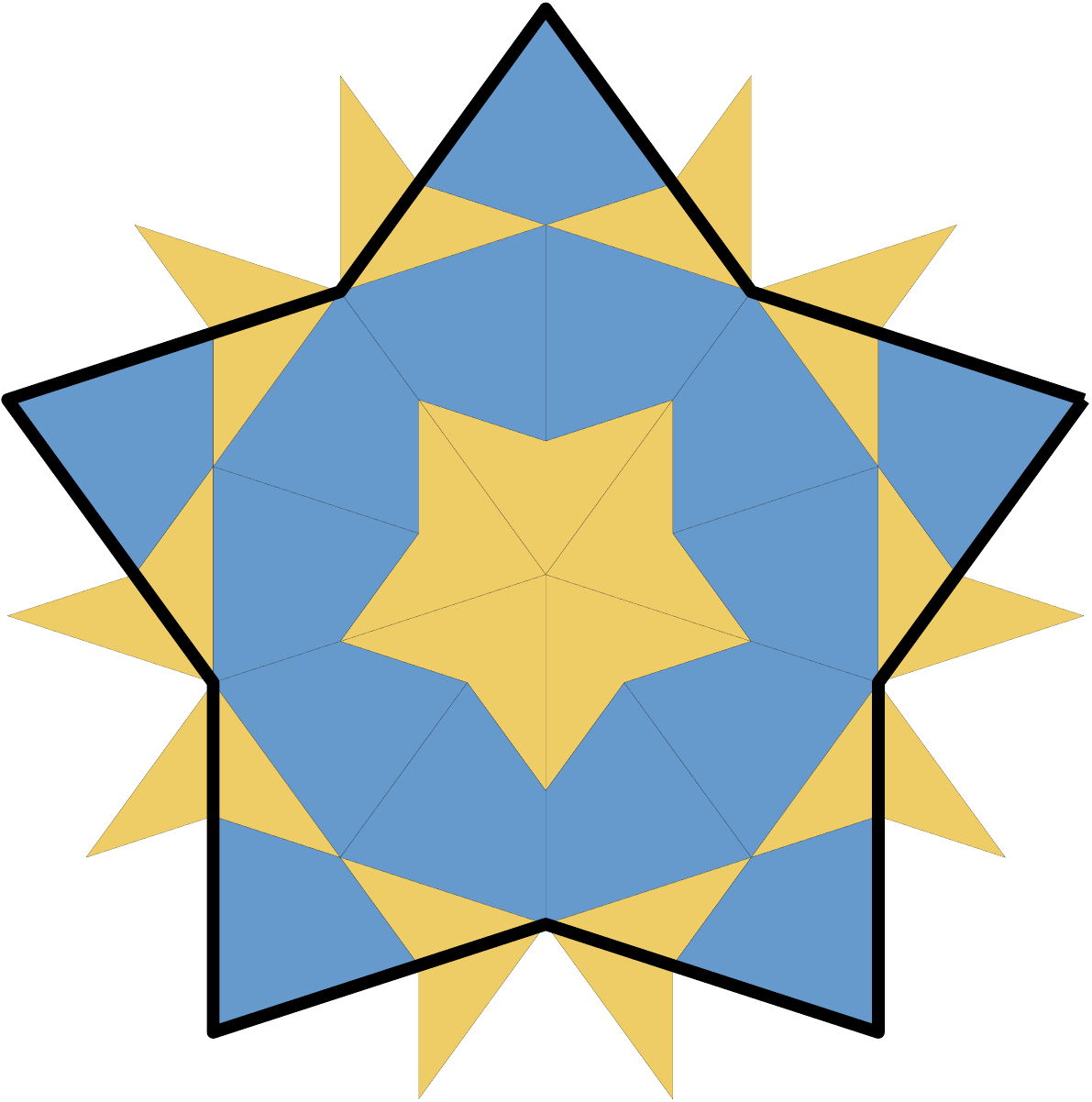}\qquad
\includegraphics[width=1.9in]{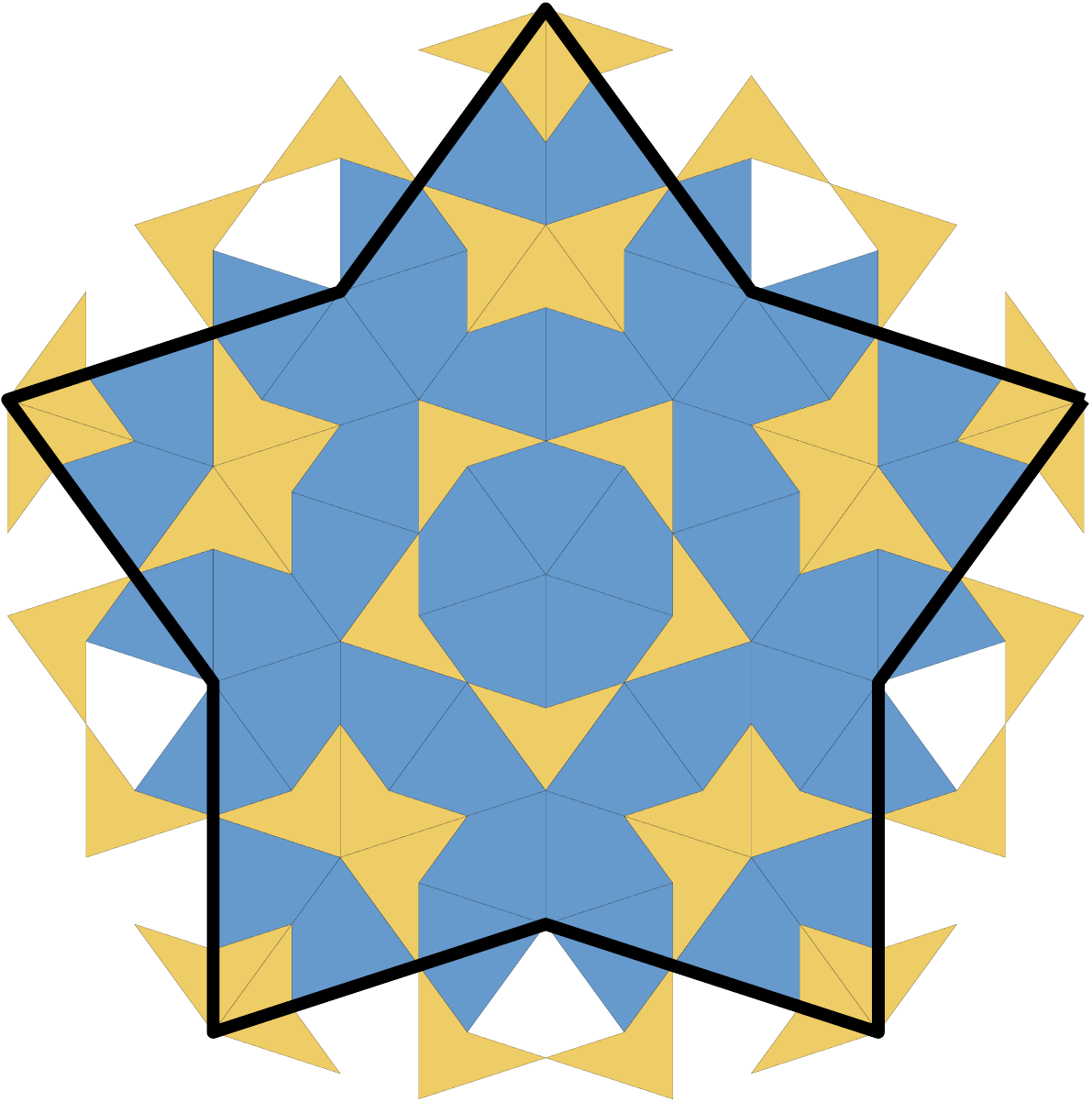}

\begin{picture}(0,0)
\put(-98,1){\small \textsl{Level 2}}
\put( 30,1){\small \textsl{Level 3} (untrimmed)}
\end{picture}

\vspace*{0.75em}
\includegraphics[width=1.9in]{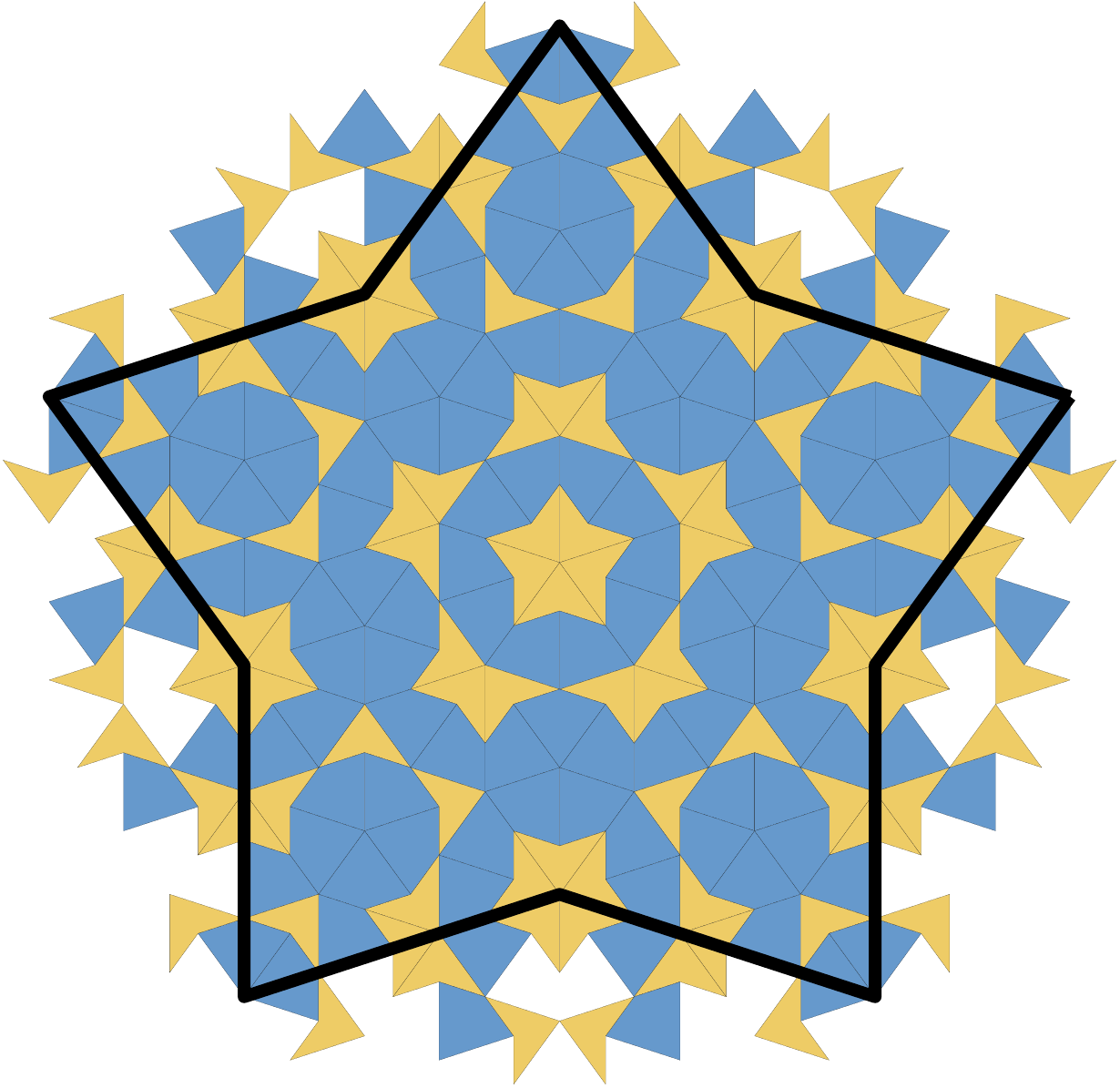}\qquad
\includegraphics[width=1.9in]{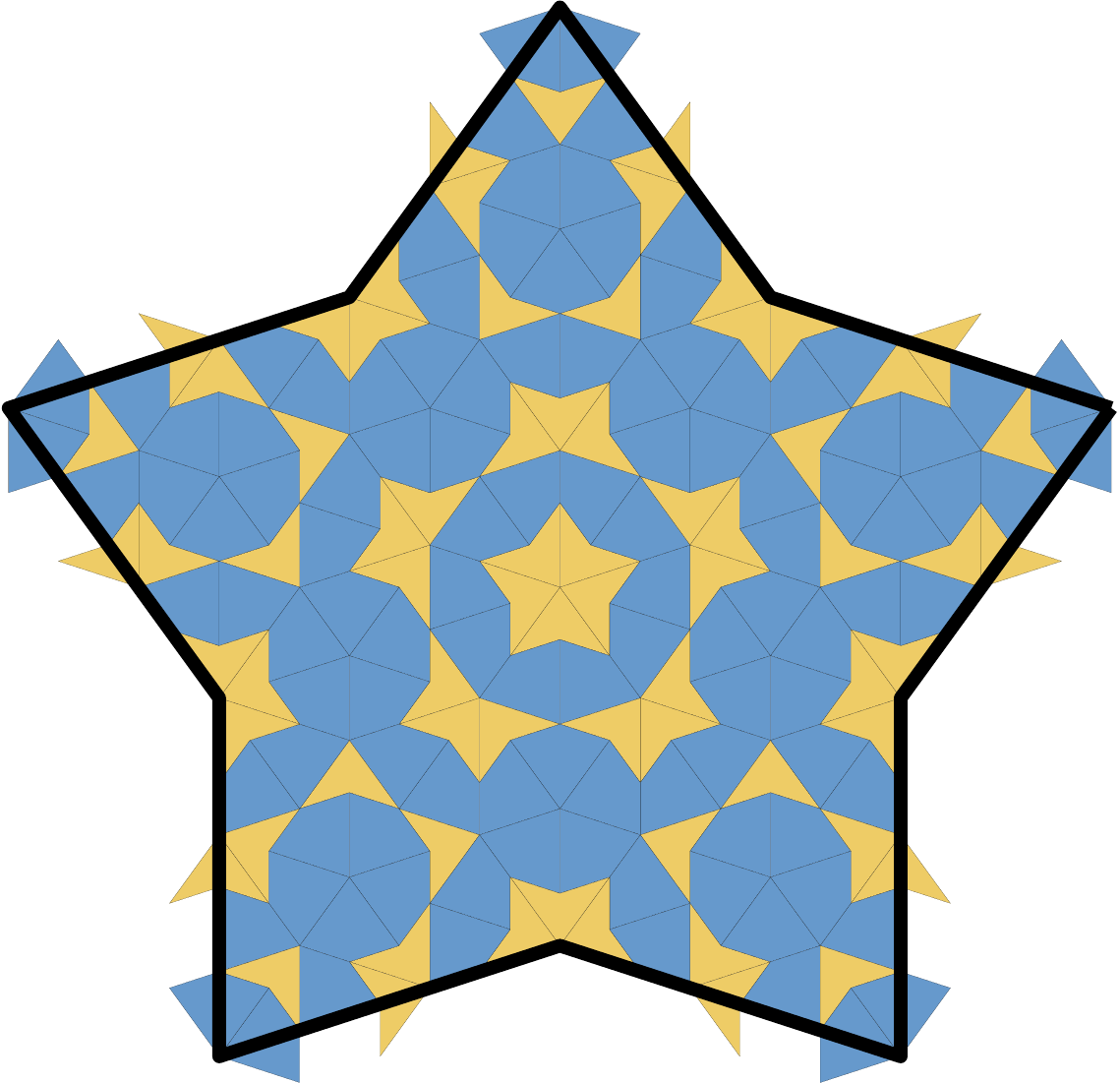}

\begin{picture}(0,0)
\put(-130,1){\small \textsl{Level 4} (untrimmed)}
\put( 38,1){\small \textsl{Level 4} (trimmed)}
\end{picture}
\caption{\label{fig:kd_star} Kite--dart rules applied to a starting configuration of five darts at level~0.}
\end{figure}

\begin{figure}[t!]
\begin{center}
\begin{minipage}{2.6in} \begin{center}
\includegraphics[width=2.5in]{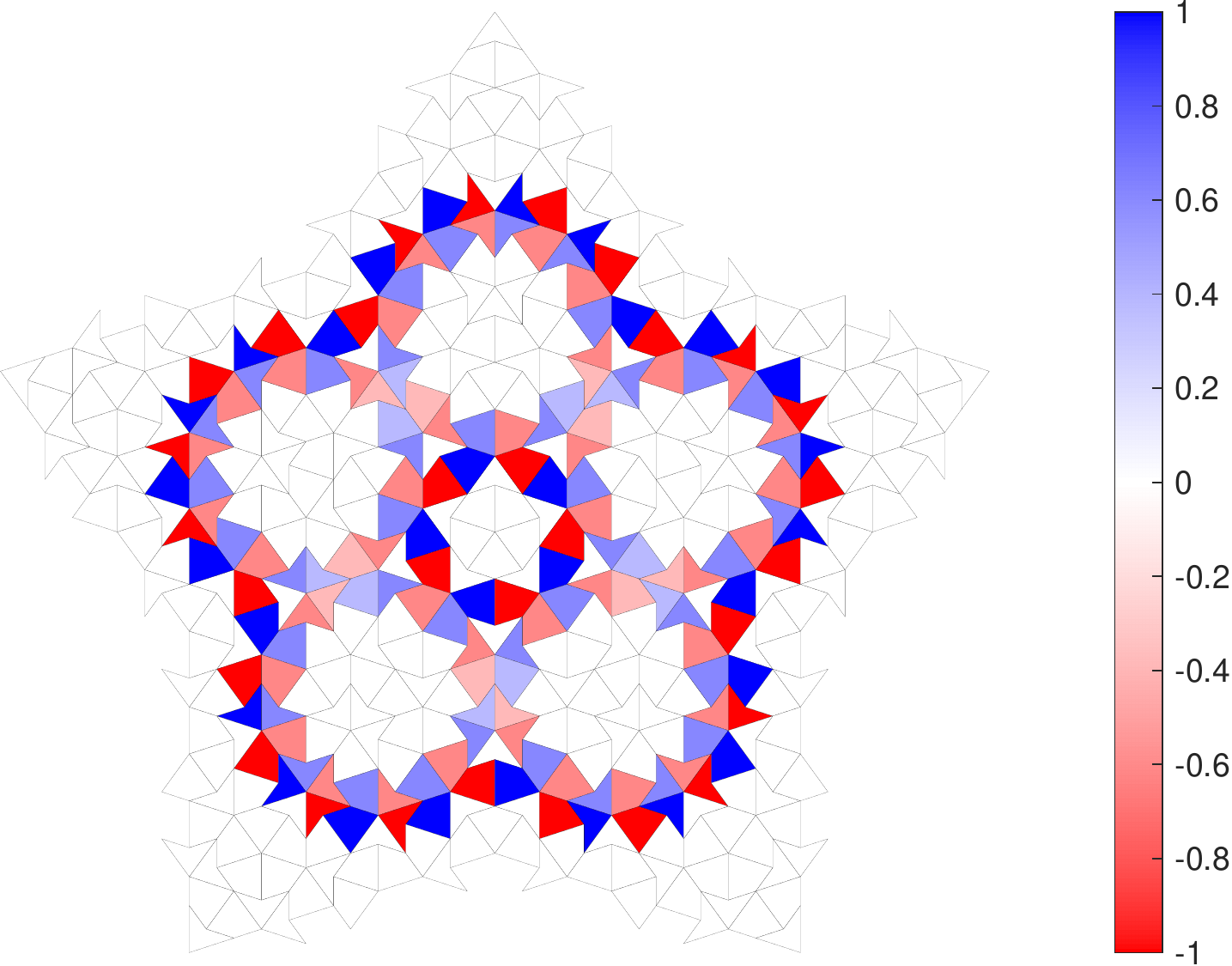}\\ \small \hspace*{-30pt}$E=6-\varphi$ 
\end{center}\end{minipage}
\quad
\begin{minipage}{2.6in} \begin{center}
\includegraphics[width=2.5in]{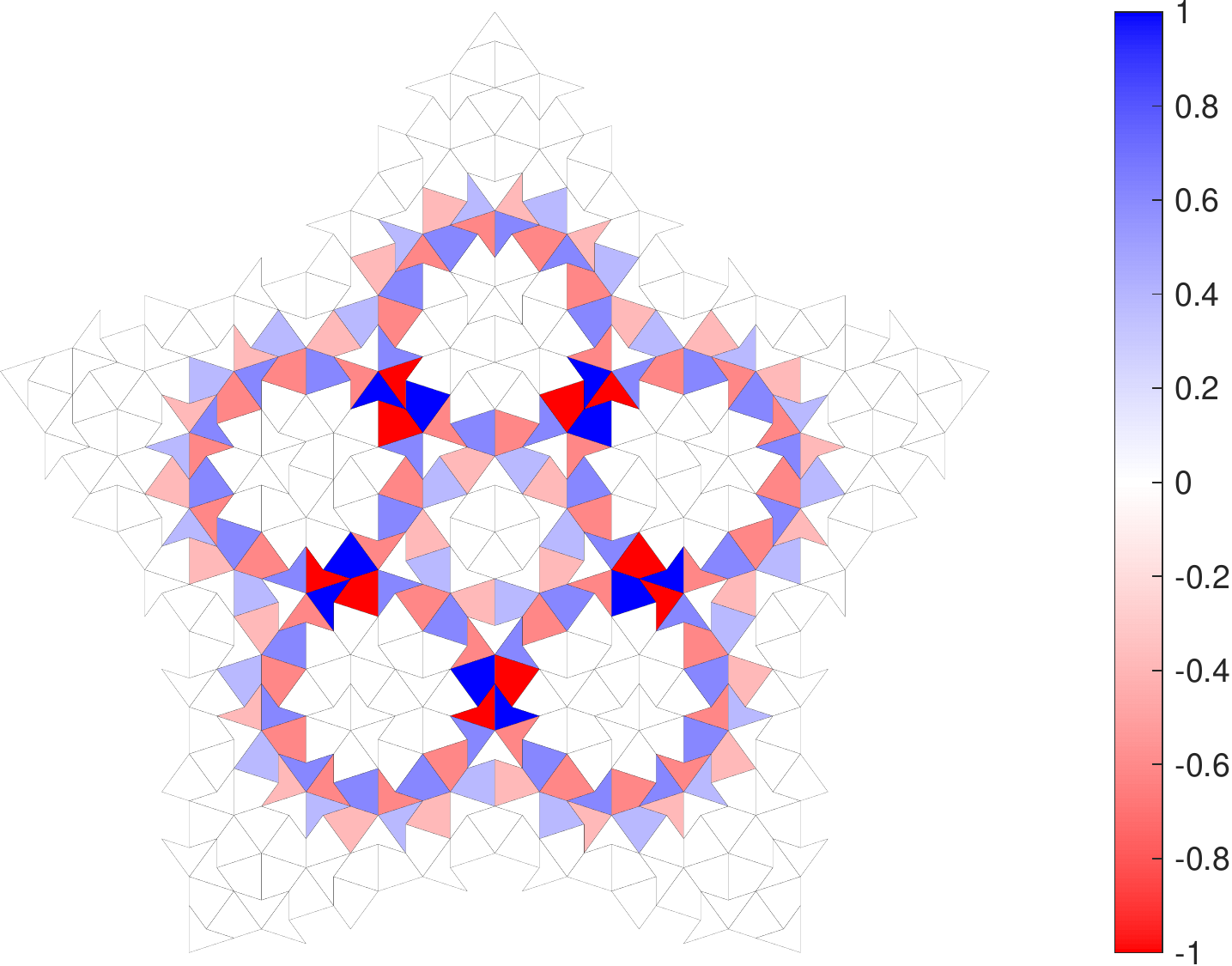}\\ \small \hspace*{-35pt}$E=5+\varphi$ 
\end{center}\end{minipage}
\end{center}
\caption{ \label{fig:kd:ringmode5}
Sum of five ``ring modes'' for level~5 of the kite--dart substitution
for $E=6-\varphi$ and $E=5+\varphi$, illustrating the
overlapping support of these modes. } 
\end{figure}

\begin{figure}[h]
\begin{center}
\includegraphics[width=4in]{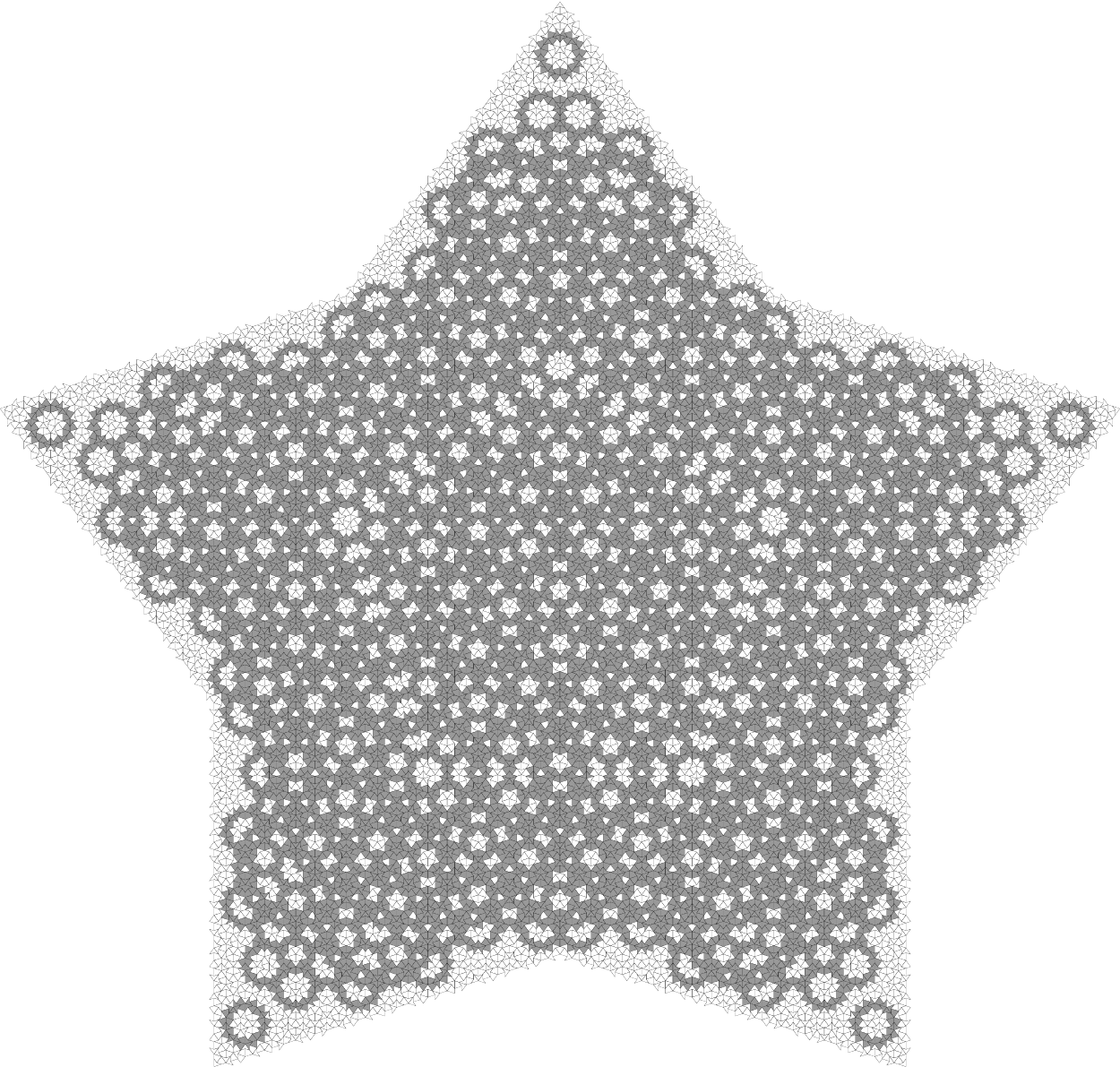}
\end{center}
\caption{\label{fig:kd:supp9}
The gray tiles show the support of the eigenfunctions at $E=5+\varphi$ and $E=6-\varphi$ for the kite--dart tiling at level~9.}
\end{figure}

In contrast to the previous examples, this tiling supports locally supported
eigenfunctions supported away from the boundary at \emph{irrational} energies.
At level~5, such eigenfunctions emerge at
$E=6-\varphi = 4.381966\ldots$ and $E=5+\varphi = 6.618033\ldots.$
(In contrast to the other tilings we consider, this latter energy appears to be at the top of the spectrum.) These eigenfunctions can be represented as rings of 40~tiles (20~kites and 20~darts) taking the values $\pm 1$ and $\pm1/\varphi$, 
as illustrated in Figure~\ref{fig:kd:ringmode}.
While these ring modes may superficially resemble those obtained
for the Robinson triangle (see Figure~\ref{fig:tri:ringmode}),
counting the frequency of these kite--dart modes is significantly
complicated by their overlapping support.  Figure~\ref{fig:kd:ringmode5}
shows the sum of the five ring modes that emerge at level~5
at $E=6-\varphi$ and $E=5+\varphi$.
(Contrast Figure~\ref{fig:kd:ringmode5} to the analogous 
illustration for the Robinson triangle tiling in Figure~\ref{fig:tri:ringmode5}.)
Figure~\ref{fig:kd:supp9} shows the support of the eigenfunctions for
these two energies at level~9, each of which has multiplicity~435.
The support covers~13,535 of the 21,025 tiles.  
The complement of this support exhibits interesting patterns,
including many ``short bow ties''~\cite{Gardner1997,GrunbaumShephard1987}.

Our numerical computations suggest that $E=6-\varphi$ and $E=5+\varphi$
have the same multiplicity (a multiple of~5) up through level~14 
(2,572,510 tiles).
Table~\ref{fig:kitedart:multTable} reports these frequencies,
along with the jump each induces in the integrated density of states.

\begin{table}[t!]
\caption{Kite--dart tiling: The level of the tiling, the number of tiles, 
the multiplicities of $E=5+\varphi$ and $E=6-\varphi$, 
and the jump in the corresponding approximant of the IDS at each of these energies.}
\label{fig:kitedart:multTable}
\begin{tabular}{crrrc}
\multicolumn{1}{c}{\emph{level}} & 
\multicolumn{1}{c}{\emph{tiles}} &
\multicolumn{1}{c}{$E = 6-\varphi$} &
\multicolumn{1}{c}{$E = 5+\varphi$} &
\multicolumn{1}{c}{$k_{\kitedart,n}(E_\kitedart+) - k_{\kitedart,n}(E_\kitedart+)$} \\ \hline
 1 & 10          & 0    & 0   & $0.00000000\ldots$\\
 2 & 30          & 0    & 0   & $0.00000000\ldots$\\
 3 & 75          & 0    & 0   & $0.00000000\ldots$\\
 4 & 180         & 0    & 0   & $0.00000000\ldots$\\
 5 & 460         & 5    & 5   & $0.01086956\ldots$ \\
 6 & 1\,195      & 10   & 10  & $0.00836820\ldots$ \\
 7 & 3\,100      & 50   & 50  & $0.01612903\ldots$ \\
 8 & 8\,060      & 135  & 135 & $0.01674938\ldots$ \\
 9 & 21\,025     & 435  & 435 & $0.02068965\ldots$ \\
10 & 54\,930     & 1\,185  & 1\,185 & $0.02157291\ldots$ \\
11 & 143\,610    & 3\,305  & 3\,305 & $0.02301371\ldots$ \\
12 & 375\,645    & 8\,875  & 8\,875 & $0.02362603\ldots$ \\
13 & 982\,930    & 23\,735 & 23\,735 & $0.02414719\ldots$ \\
14 & 2\,572\,510 & 62\,820 & 62\,820 & $0.02441973\ldots$
\end{tabular}
\end{table}




\section{Ammann--Beenker}

Thus far we have investigated four versions of the Penrose tiling.
In this section we explore related questions for the Ammann--Beenker tiling.
We begin by recalling the substitution rule.

\begin{definition}\label{def:ABRules}
The \emph{Ammann--Beenker} substitution\footnote{Illustration following
{\tt https://tilings.math.uni-bielefeld.de/substitution/ammann-beenker/}} is given by

\begin{center}
\hspace*{-2em}
\includegraphics[width=1.25in]{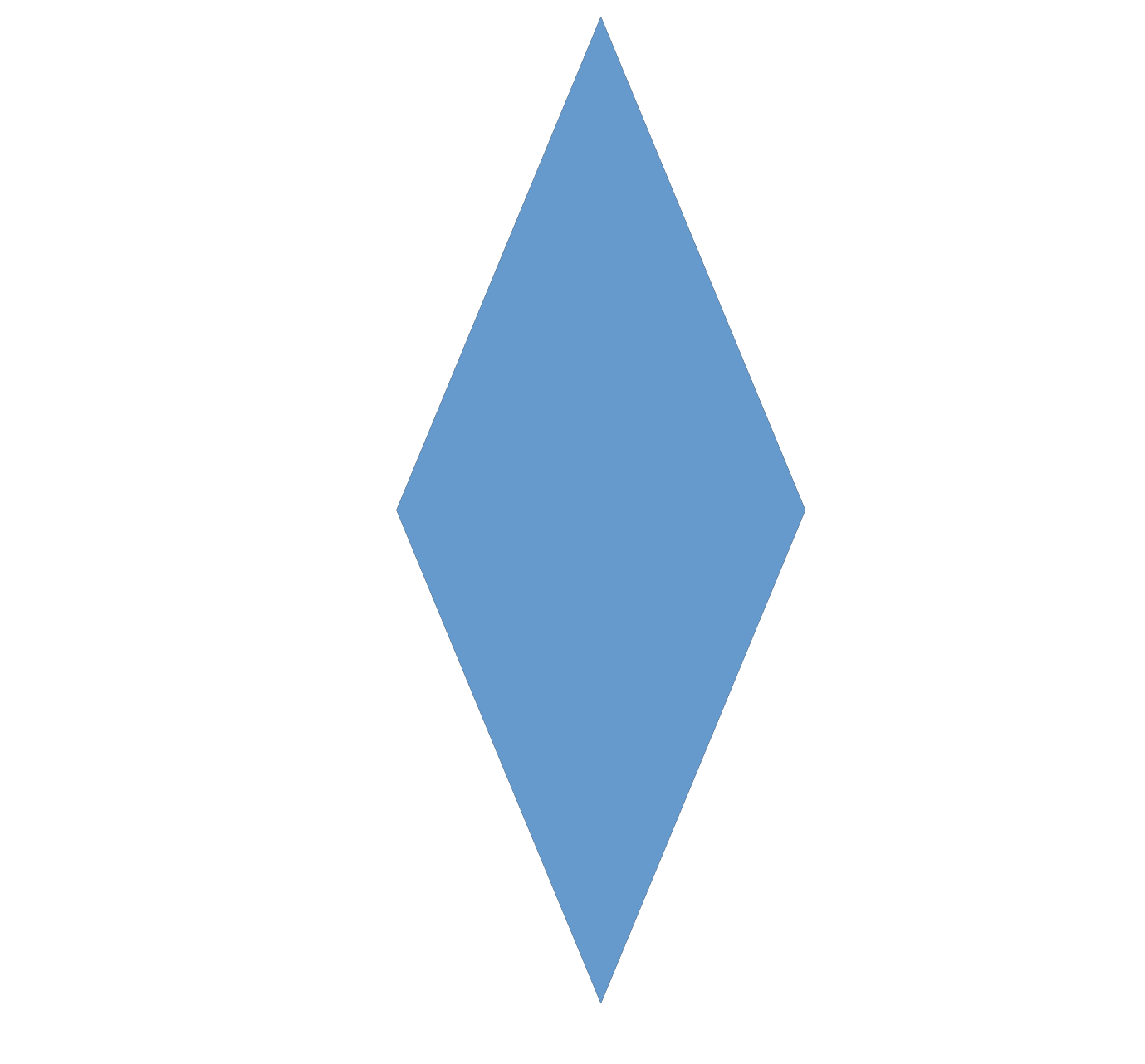}\quad
\includegraphics[width=1.25in]{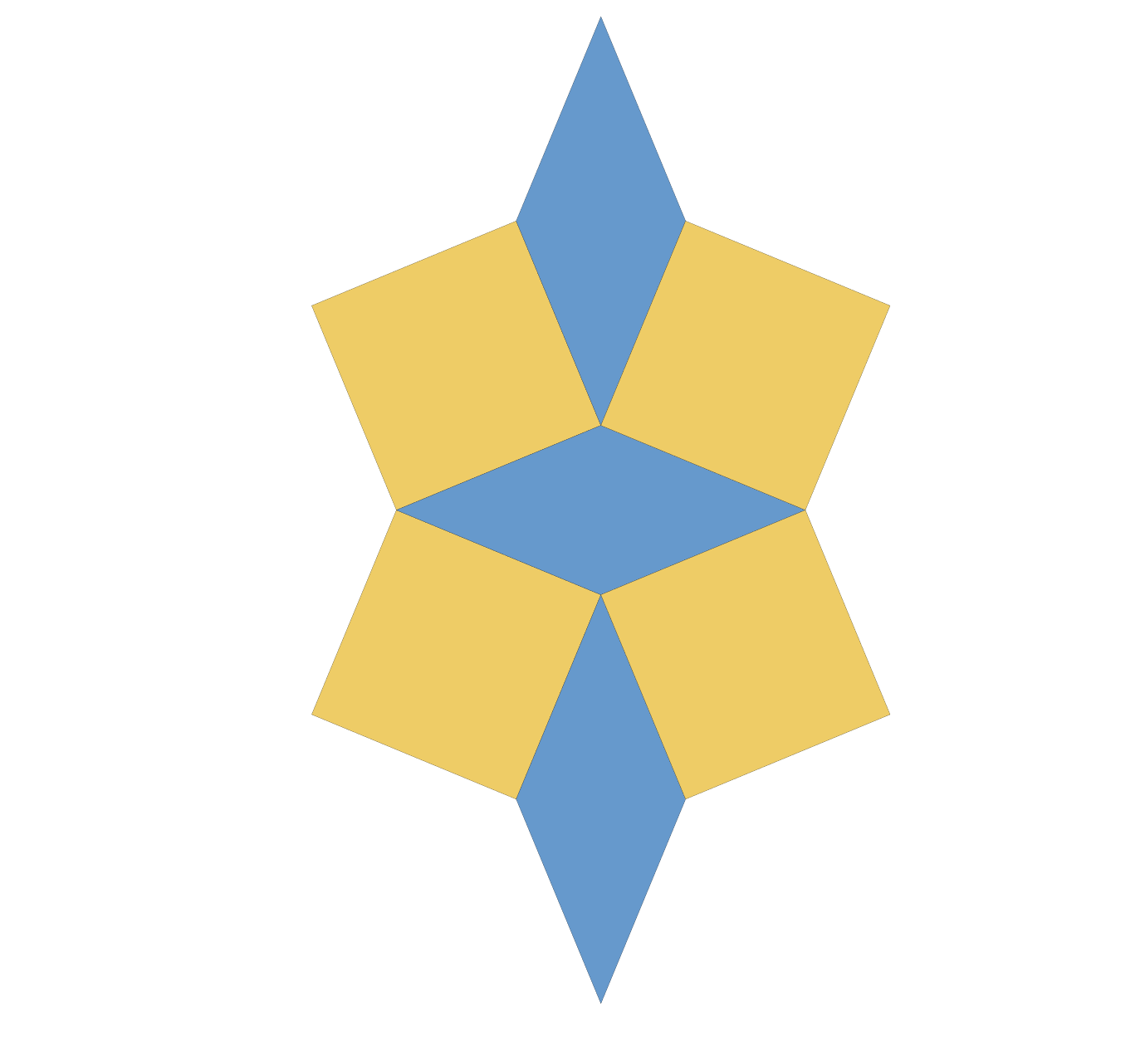}
\qquad\quad
\raisebox{8pt}{\includegraphics[width=1.25in]{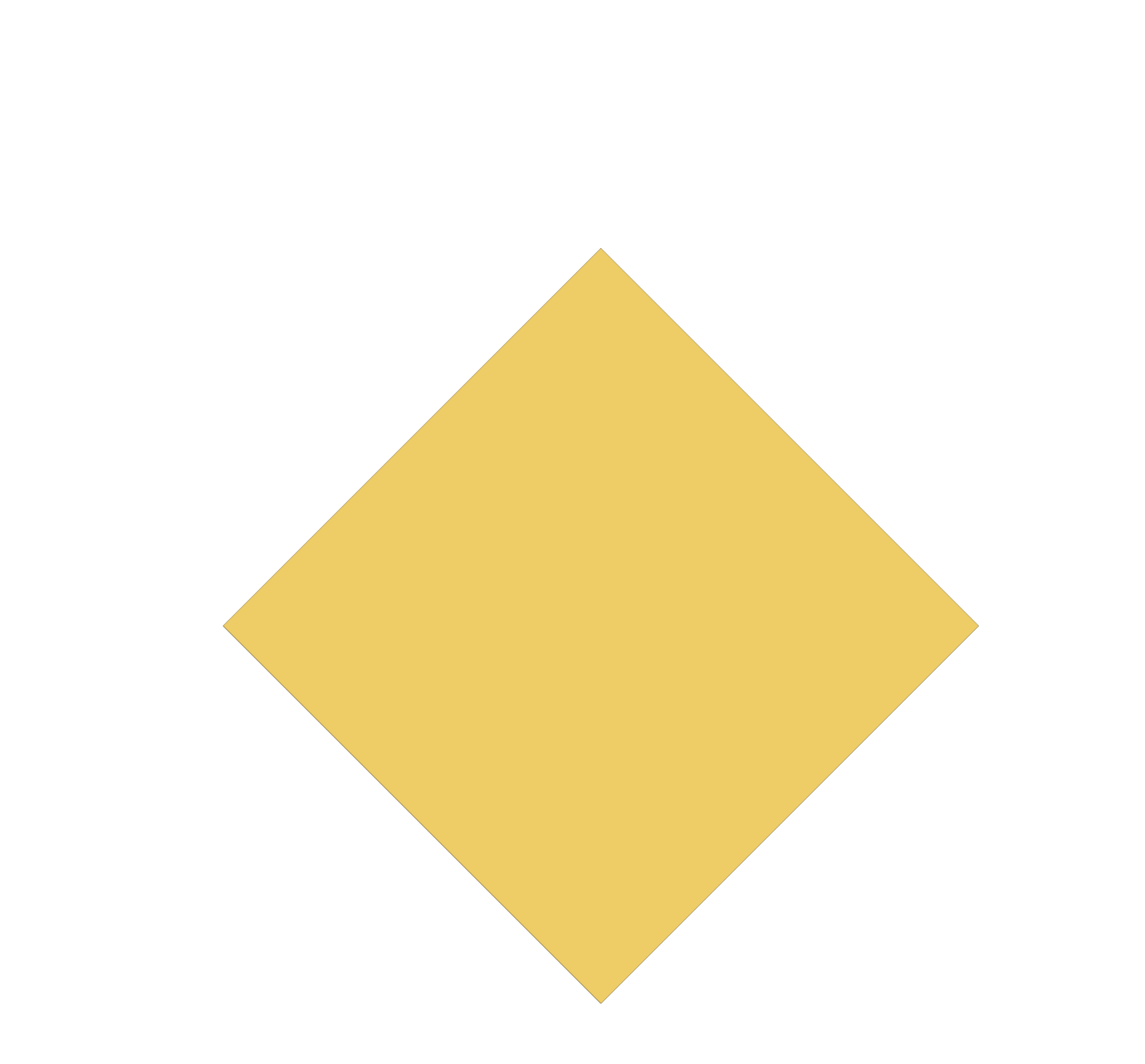}}\qquad
\raisebox{8pt}{\includegraphics[width=1.25in]{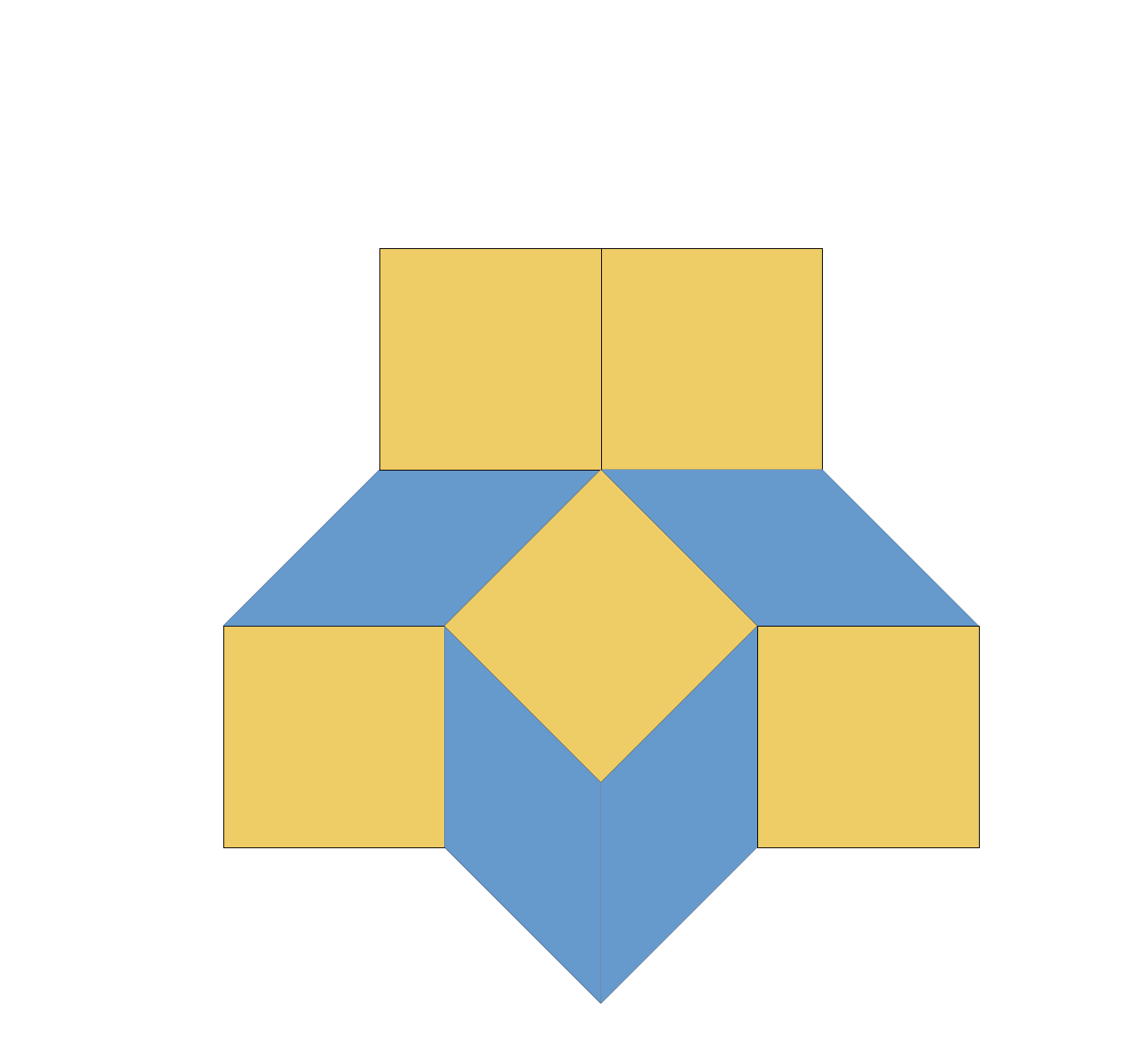}}
\begin{picture}(0,0)(210,13)
\put(-142,53){\large $\to$}
\put(98,53){\large $\to$}
\end{picture}
\end{center}
\end{definition}

As in previous sections, we generate a tiling by beginning with an initial seed and iteratively applying the substitution rule. Let $\calT_0^\AB$ denote the pattern consisting of eight thin rhombi arranged in an eight-point star shape as in Figure~\ref{fig:ABlevel0-4}. Analogous to the kite--dart substitution (see Figure~\ref{fig:kd_star}), we alternate between applications of the substitution rule and trimming back to an octagon.  We let $\calT_n^\AB$ denote $n$ steps of this process.

\begin{figure}[t!]

\includegraphics[width=2in]{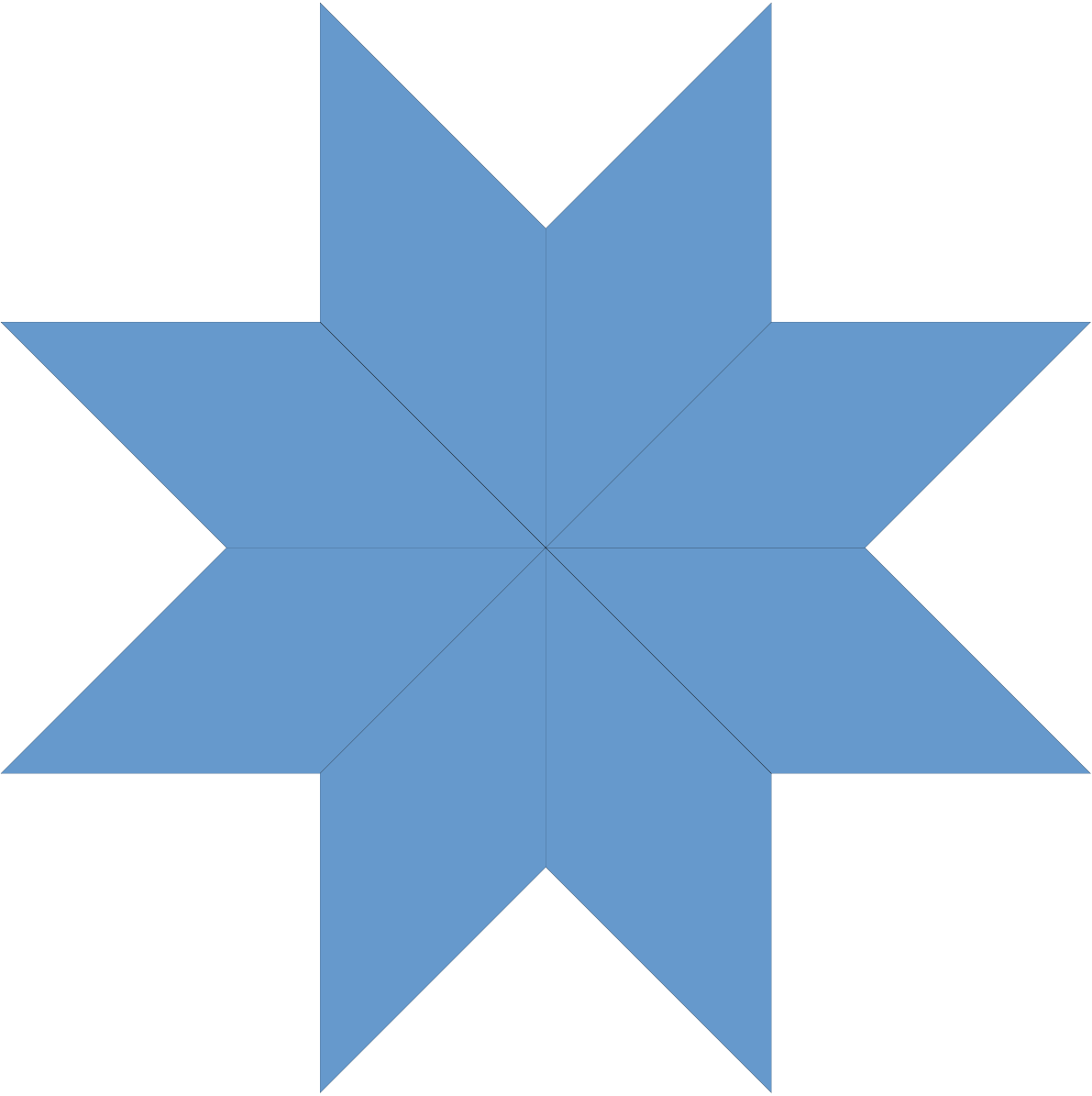}\qquad
\includegraphics[width=2in]{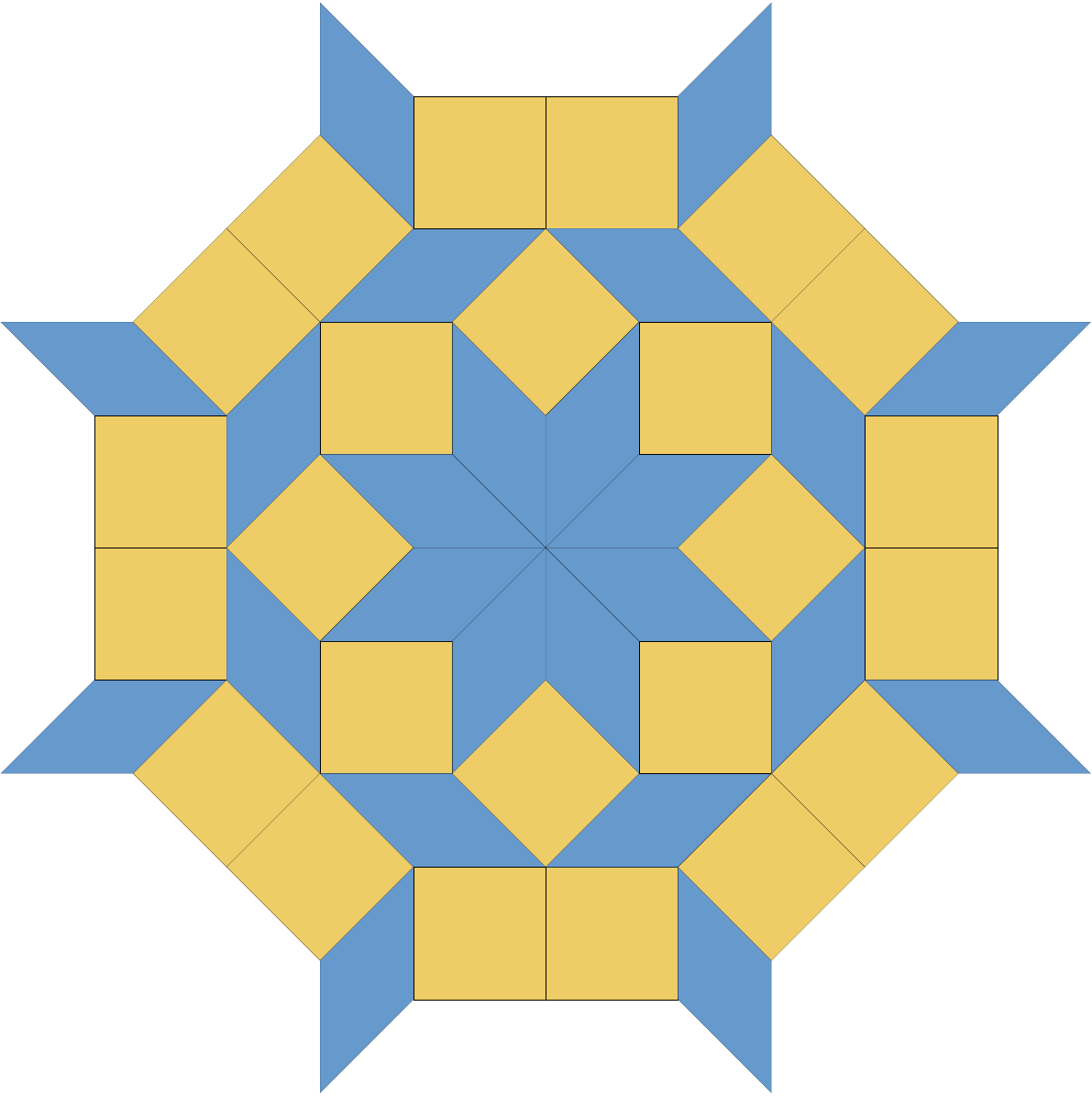}

\begin{picture}(0,0)
\put(-98,0){\small \textsl{Level 0}}
\put( 66,0){\small \textsl{Level 1}}
\end{picture}

\vspace*{1.1em}
\includegraphics[width=2in]{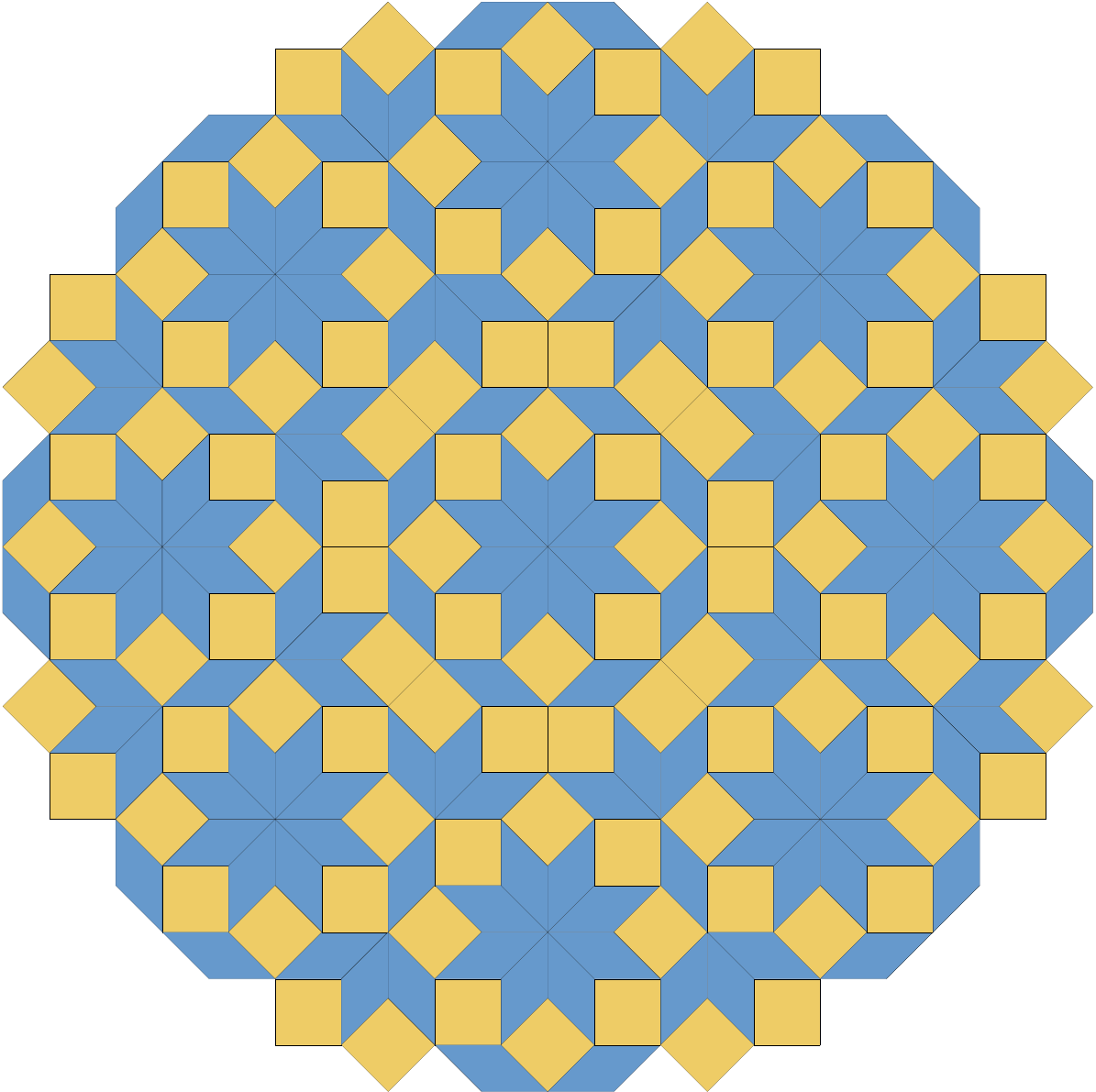}\qquad
\includegraphics[width=2in]{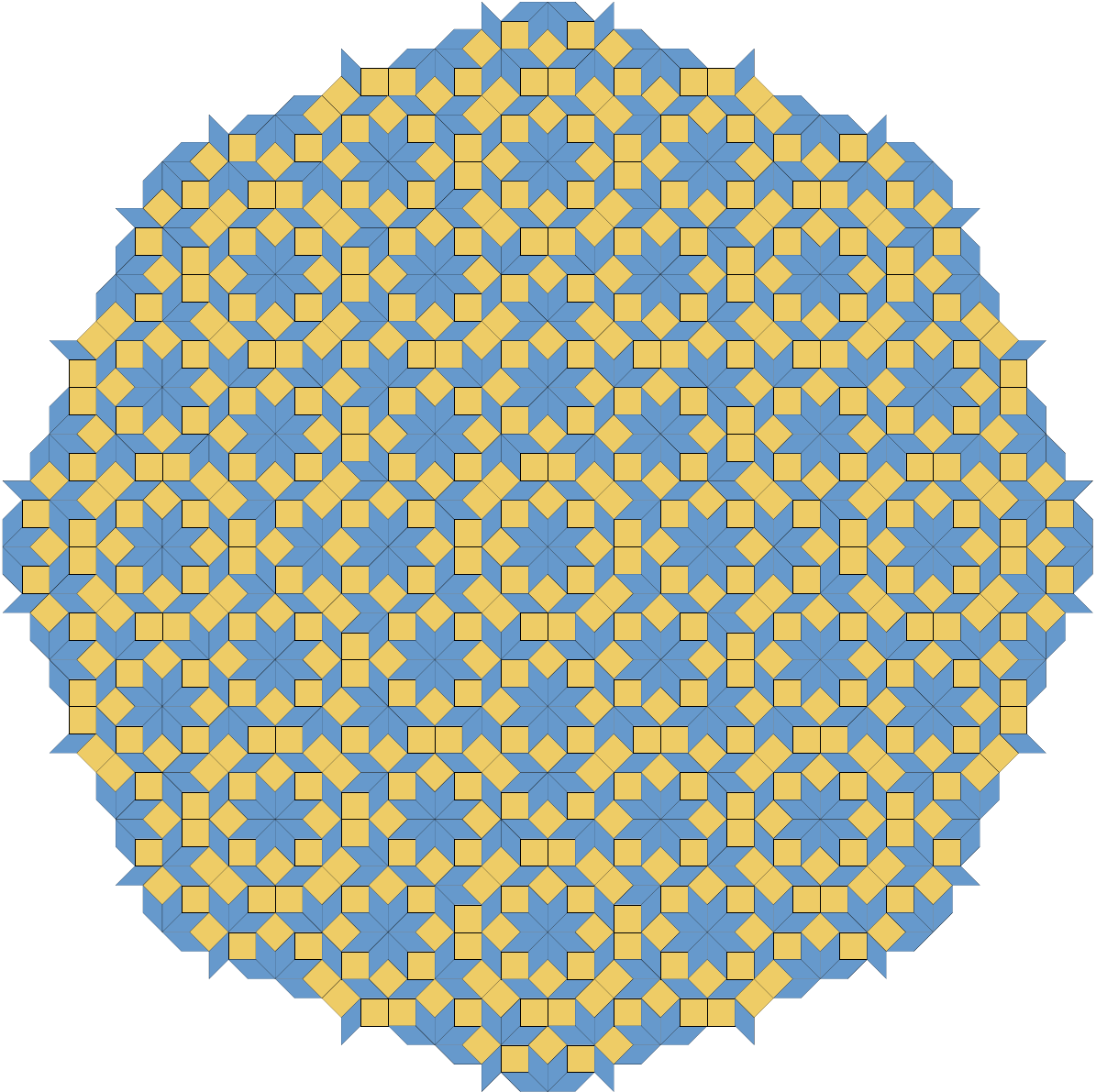}

\begin{picture}(0,0)
\put(-98,0){\small \textsl{Level 2}}
\put( 66,0){\small \textsl{Level 3}}
\end{picture}

\caption{\label{fig:ABlevel0-4}
Three iterations of the Ammann--Beenker rules applied to 
an initial seed of eight thin rhombi: $\calT_0^\AB,  \ldots,  \calT_3^\AB$.}
\end{figure}

\begin{theorem} \label{thm:AB}
The Ammann--Beenker tiling has eigenfunctions at energies $E_\AB = 4,6$.  Denoting $\lambda = \sqrt{2}-1$, we have
\begin{alignat*}{6}
k_\AB(4+)-k_\AB(4-) \;\geq&\  \lambda^4 + \lambda^6 + 2 \lambda^8 \ &=&\  1270-898\sqrt{2}  \ &=&\  0.0362209\ldots,\\
k_\AB(6+)-k_\AB(6-) \;\geq&\  \lambda^4 + \lambda^6               \ &=&\  116 - 82\sqrt{2}  \ &=&\  0.0344878\ldots. 
\end{alignat*}
\end{theorem}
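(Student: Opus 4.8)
The plan is to follow the template already used for the boat--star and Robinson triangle tilings: first assemble the counting data for the Ammann--Beenker substitution, then exhibit explicit locally-supported eigenfunctions at $E=4$ and $E=6$ and verify them by hand, and finally convert a frequency count of their supports into the two lower bounds. All the analytic input (weak-$*$ convergence of $\nu_n$, and the fact that a good eigenfunction support forces an IDS jump equal in size to its frequency) is already in place, so the work is combinatorial.

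First I would record the substitution data. The Ammann--Beenker substitution has two prototiles (a unit square and a $45^\circ$ rhombus), and with this ordering one computes the substitution matrix $M_\AB=\begin{pmatrix}3&2\\4&3\end{pmatrix}$, with characteristic polynomial $x^2-6x+1$ and eigenvalues $\mu = 3+2\sqrt2 = \lambda^{-2}$ and $3-2\sqrt2=\lambda^2$. As in Lemma~\ref{lem:boatStar:tilecount} and Proposition~\ref{prop:tri:tilecount}, this shows that the number of tiles in $\calT_n^\AB$ grows like $C\mu^n$ (the trimming to an octagon at each stage perturbs only lower-order boundary terms and does not affect the limiting frequencies), and that the Perron eigenvector gives square and rhombus frequencies in the ratio $(\sqrt2-1):(2-\sqrt2)$. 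The key consequence is that $\mu^{-1}=\lambda^2$, so a configuration that is generated once per level-$k$ supertile occurs with frequency proportional to $\lambda^{2k}$; the exponents $4,6,8$ in Theorem~\ref{thm:AB} correspond precisely to generating data at supertile levels $k=2,3,4$.

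Next I would exhibit the eigenfunctions. In the spirit of Lemma~\ref{lem:boatStar:ringmodes} and Proposition~\ref{prop:tri:ring}, I would display ring-type modes encircling the high-symmetry ($8$-fold) vertices, assigning values $\pm 1$ (together with any intermediate values needed at the rim of the support) to the tiles of the pattern and $0$ elsewhere, and then verify $\Delta\psi = E\psi$ tile-by-tile: interior and far tiles give $0 = E\cdot 0$ automatically, while each tile in the one-tile neighborhood of the support has its incident values cancel in the required proportion (at $E=4$, using $[\Delta\psi](v)=(4-\deg v)\psi(v)-\sum_{u\sim v, u\notin\supp\psi}\psi(u)$, and similarly at $E=6$). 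This establishes each displayed pattern as a good eigenfunction support in the sense of Section~\ref{sec:tilingDefs}, and hence that $E=4,6$ are eigenvalues. I expect three support shapes for $E=4$ and two for $E=6$, matching the summands, with the coefficient $2$ on $\lambda^8$ reflecting a pair of (mirror-image) orientations of the smallest-frequency mode. For each shape I would then identify the smallest supertile $\subst_\AB^k(P)$ in which it first appears, obtaining $k=2,3,4$, and invoke the multiplicity bound of Section~\ref{sec:tilingDefs}: the multiplicity of $E$ for $\Delta_n$ is at least the number of occurrences of the support that avoid $\partial\calV_n$ and are not contained in a union of other occurrences. Dividing by $C\mu^n$ and letting $n\to\infty$ turns each count into a frequency $\lambda^{2k}$; summing and simplifying with $\lambda^4=17-12\sqrt2$, $\lambda^6=99-70\sqrt2$, $\lambda^8=577-408\sqrt2$ yields $\lambda^4+\lambda^6=116-82\sqrt2$ and $\lambda^4+\lambda^6+2\lambda^8=1270-898\sqrt2$.

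The hard part will be the final count, exactly the overlapping-support difficulty already noted after Proposition~\ref{prop:tri:ringcount} and in the kite--dart discussion. Unlike the boat--star case, the supports of these Ammann--Beenker modes straddle neighboring supertiles, so I cannot simply add naive per-supertile tallies: I must verify that occurrences seeded by distinct supertiles stay linearly independent (no occurrence lies in the union of the others), and confirm that the level-$4$ contribution to $E=4$ genuinely splits into two independent families rather than collapsing to one. Controlling these overlaps and the resulting linear (in)dependence — rather than checking the eigenvalue equation, which is routine — is where the real effort lies, and it is also what fixes the precise constants $1,1,2$ in front of $\lambda^4,\lambda^6,\lambda^8$.
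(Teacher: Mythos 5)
Your proposal follows essentially the same route as the paper: locally-supported eigenfunctions anchored at the eightfold vertex star and at its first and second substitution images, with frequencies $\lambda^4$, $\lambda^6$, $\lambda^8$ coming from the Perron eigenvalue $\lambda^{-2}$ of the substitution (the paper cites Baake--Grimm for the vertex-star frequency $\lambda^4$ and Queff\'elec-type uniform frequency results for the $\lambda^2$-per-level scaling), and linear independence checked by verifying that no support is contained in the union of the others. The one small discrepancy is the origin of the coefficient $2$: in the paper it reflects two genuinely distinct additional $E=4$ eigenfunctions (supported on $104$ and $328$ tiles) that first appear with the twice-substituted vertex star, not mirror-image orientations of a single mode --- but you flagged exactly this point as the one needing verification.
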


\begin{proof}
As can be seen from Figures~\ref{fig:ABlevel0-4}, \ref{fig:ABlevel2_evec4}, \ref{fig:ABlevel2_evec6}, and \ref{fig:ABlevel3_bigmodes}, each occurrence of the eightfold vertex star produces an eigenfunction at both energies, each once-substituted version also gives an eigenfunction at both energies, and each twice-substituted vertex star produces an additional pair of eigenfunctions at energy $E=4$. One can check visually that the support of each eigenfunction is not contained in the union of the supports of the other eigenfunctions, and hence each occurrence of each patch contributes a linearly independent vector to the corresponding eigenspace.

Thus, the estimates contain three pieces that correspond to the frequencies of the eightfold vertex star, and the result of substituting it once and twice.  
The frequency of the eight-fold star is $\lambda^4$, as computed in \cite{BaakeGrimm2013:AOVol1}. 
The frequencies of the other patches can be seen to be bounded from below by $\lambda^2 \cdot \lambda^4 = \lambda^6$ (for the once-substituted eightfold vertex star) and $\lambda^4 \cdot \lambda^4 =\lambda^8$ (for the twice-substituted version), which can be seen by the multi-dimensional analog of the relevant material in Sections~5.3 and 5.4 of \cite{Queffelec2010}. To work out this analog, one needs to invoke uniform existence results for the limits defining the frequencies in question, which are contained, for example, in \cite{DamanikLenz2001} and \cite{GeerseHof1991}.
\end{proof}

\begin{figure}[b]
\begin{center}
\includegraphics[width=2in]{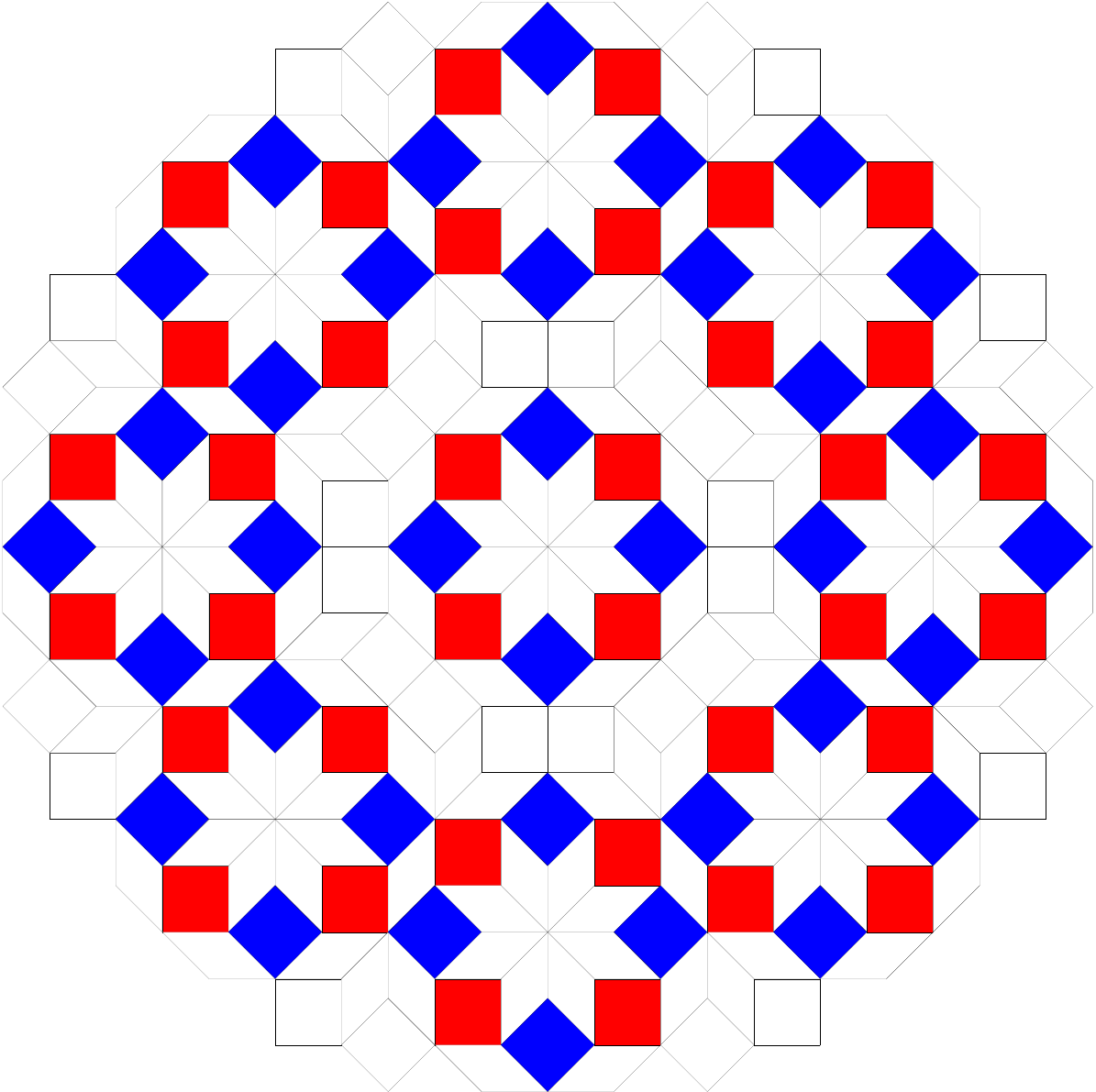}\qquad
\includegraphics[width=2in]{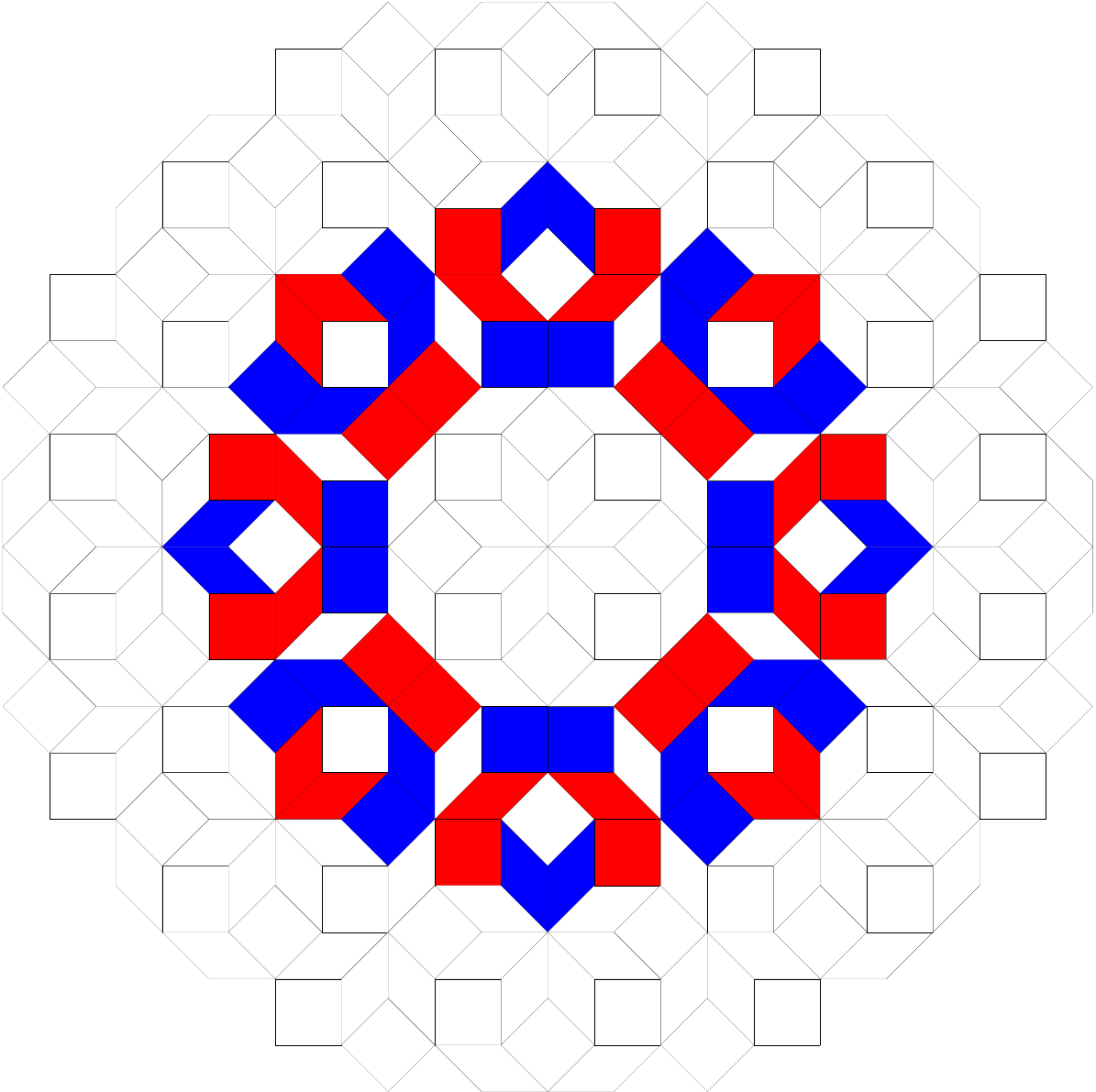}
\end{center}
\caption{\label{fig:ABlevel2_evec4}
Ten linearly independent eigenfunctions at energy $E=4$ (nine on the left, each supported on 8~tiles; one on the right, supported on 64~tiles) at level~2.}
\end{figure}

\begin{figure}[b!]
\begin{center}
\includegraphics[width=2in]{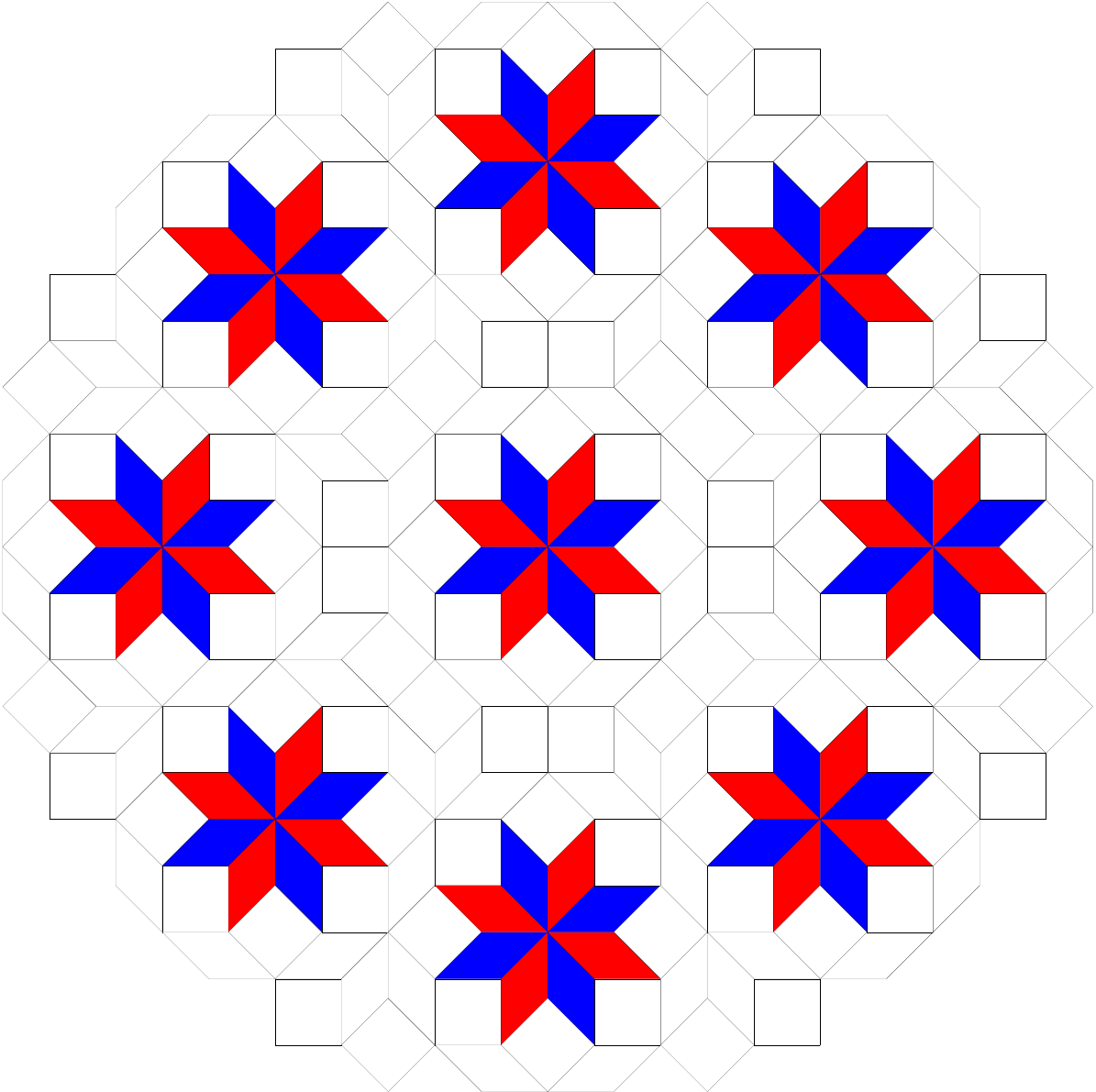}\qquad
\includegraphics[width=2in]{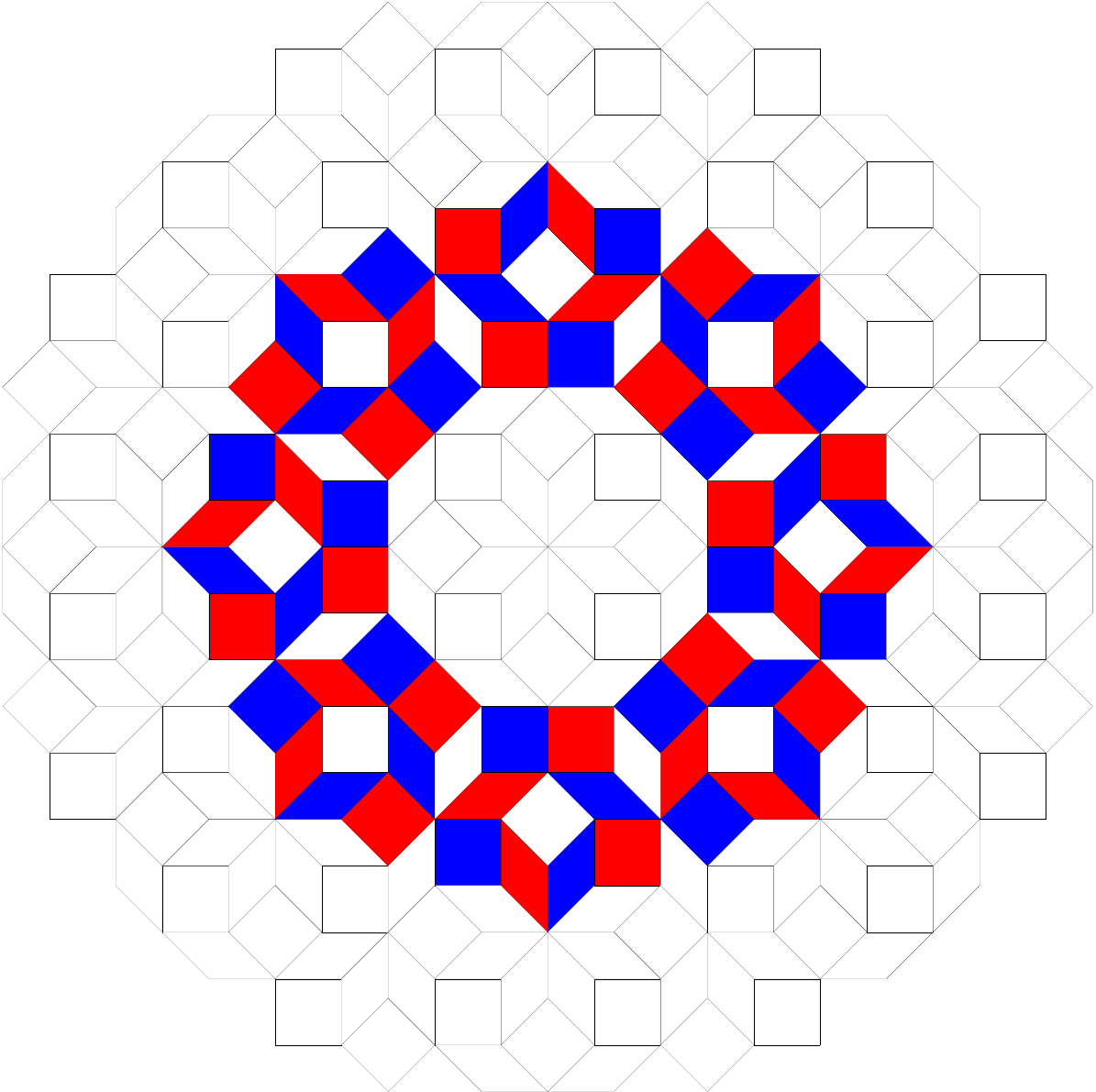}
\end{center}
\caption{\label{fig:ABlevel2_evec6}
Ten linearly independent eigenfunctions at energy $E=6$ (nine on the left, each supported on 8~tiles; one on the right, supported on 64~tiles) at level~2.}
\end{figure}

In Figures~\ref{fig:ABlevel2_evec4} and \ref{fig:ABlevel2_evec6} we present ten locally-supported eigenfunctions that correspond to energies $E=4$ and $E=6$, respectively. 
Table~\ref{tbl:AB} reports the numerically computed 
multiplicities of these eigenvalues up through level~8 (9,096,784 tiles).
An observant reader may notice that the level~2 row of Table~\ref{tbl:AB} 
indicates the existence of an eleventh eigenfunction not shown in 
Figures~\ref{fig:ABlevel2_evec4} and \ref{fig:ABlevel2_evec6}. 
This extra mode lies on the boundary and thus is an artifact of the finite-volume truncation.

\begin{table}[t!]
\caption{\label{tbl:AB}
Ammann--Beenker tiling: The level of tiling, number of tiles, multiplicities of eigenvalues $E=4$ and $E=6$,
and the jump in the IDS at $E=6$.}
\begin{center}
\begin{tabular}{crrrcc}
\emph{level} & 
\multicolumn{1}{c}{\emph{tiles}} & 
\multicolumn{1}{c}{$E = 4$} & 
\multicolumn{1}{c}{$E = 6$} &
\multicolumn{1}{c}{$k_{\AB,n}(4+) - k_{\AB,n}(4-)$}  &
\multicolumn{1}{c}{$k_{\AB,n}(6+) - k_{\AB,n}(6-)$} \\ \hline
 1 & 48          &       3  &        1   & 0.062500\ldots &  0.020833\ldots  \\
 2 & 256         &       11 &       11   & 0.042969\ldots &  0.042969\ldots  \\ 
 3 & 1\,392      &       44 &       42   & 0.031609\ldots &  0.030172\ldots  \\
 4 & 7\,984      &      276 &      258   & 0.034726\ldots &  0.032315\ldots  \\
 5 & 46\,160     &   1\,604 &   1\,538   & 0.034749\ldots &  0.033319\ldots  \\
 6 & 268\,256    &   9\,556 &   9\,106   & 0.035622\ldots &  0.033945\ldots  \\
 7 & 1\,561\,552 &  56\,116 &  53\,490   & 0.036256\ldots &  0.034254\ldots  \\
 8 & 9\,096\,784 & 328\,420 & 312\,834   & 0.036102\ldots &  0.034389\ldots 
\end{tabular}
\end{center}
\end{table}

\begin{figure}[b!]
\begin{center}
\includegraphics[width=2in]{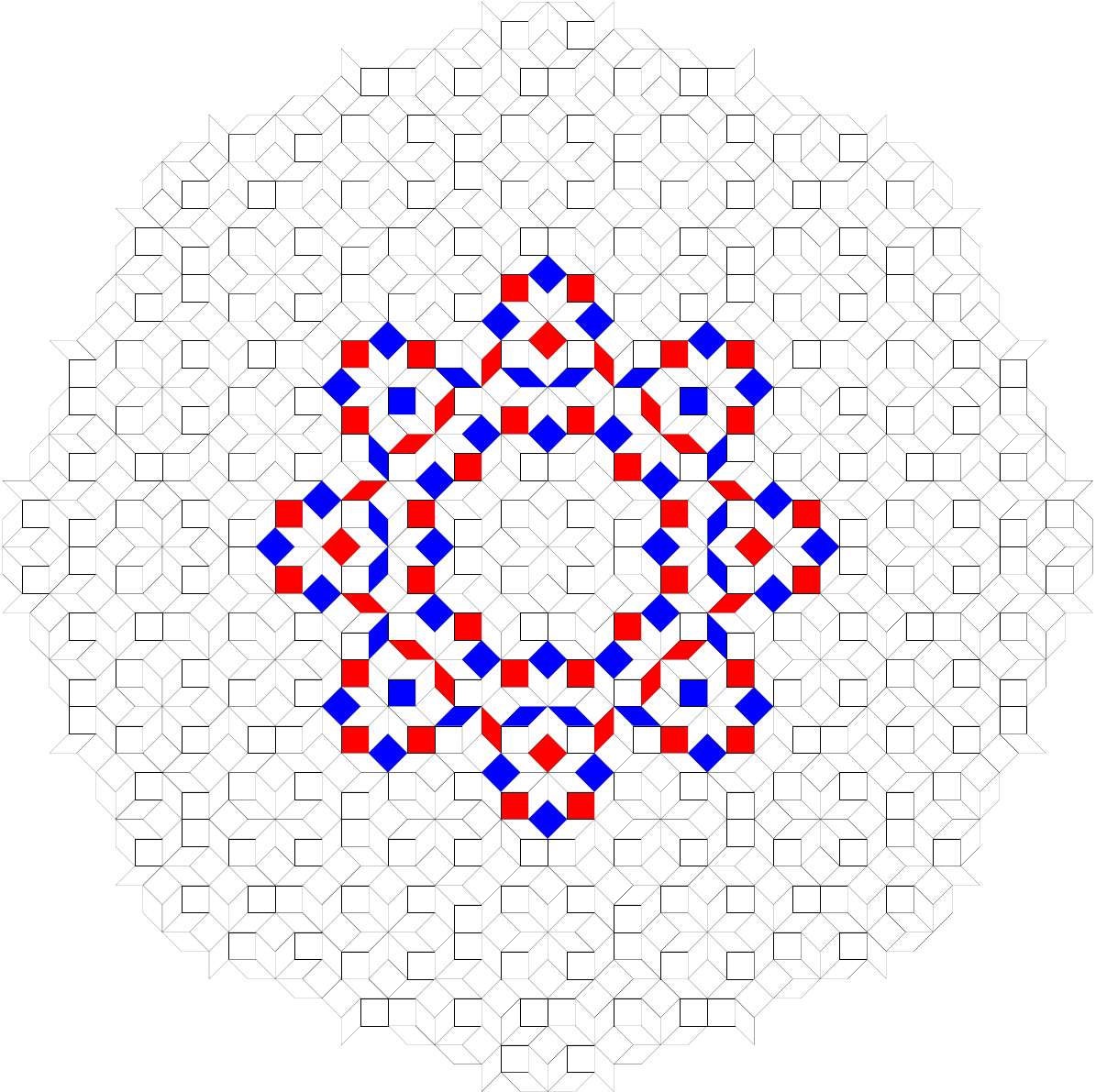}\qquad
\includegraphics[width=2in]{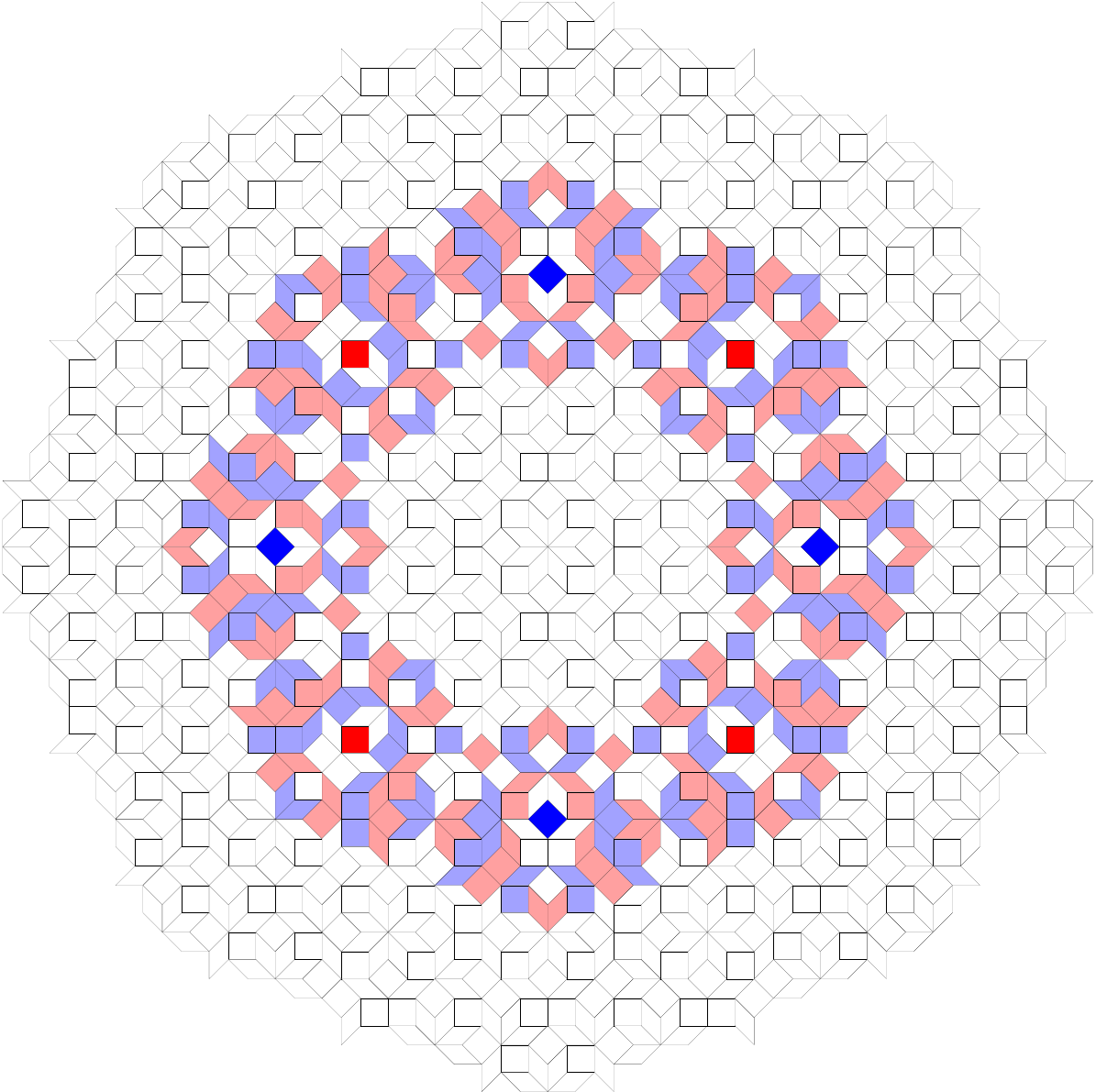}
\end{center}
\caption{Two locally-supported eigenfunctions at level~3 for energy $E=4$ that did not appear on lower levels of the substitution;
the one on the left is supported on 104~tiles with nonzero values $\pm1$;
the one on the right is supported on 328~tiles with nonzero values $\pm1$ and $\pm 1/2$.  }
\label{fig:ABlevel3_bigmodes}
\end{figure}

Theorem~\ref{thm:AB} gives different lower bounds on the 
jump in the integrated density of states for $E=4$ and $E=6$.
The discrepancy in the multiplicity of these eigenvalues emerges
at level~3, where $E=4$ admits two additional eigenfunctions that
are not in the span of the simple mode shapes in Figure~\ref{fig:ABlevel2_evec4}.
While still locally supported, these two modes involve many more tiles:
the eigenfunctions in Figure~\ref{fig:ABlevel3_bigmodes},
supported on 104~tiles and 328~tiles,
provide a basis for this extra two-dimensional eigenspace. 
Accounting for the recurrence of such mode shapes at higher levels 
explains the discrepancy of the bounds for $E=4$ and $E=6$ in Theorem~\ref{thm:AB}.
The situation is analogous to the rhombus tiling, where apparently new mode
shapes emerged at higher levels (see Figure~\ref{fig:rhombus_more_ev}).
Whether modes with additional complexity emerge at still higher levels
is an open question; the presence of such modes is difficult to tease 
out from numerical approximations to the jump in the integrated density
of states, given the relative rarity of those modes and the additional
complication of modes supported on the boundary.



\section{Questions and Open Problems}
Let us conclude by showing numerically-computed approximations to
the integrated density of states (IDS) for the five tilings we have discussed,
and posing some questions these plots suggest.
On one hand, the IDS is a fundamental spectral quantity. On the other hand, the shape of the graph of the IDS naturally suggests several possibilities. Concretely:
\begin{enumerate}
\item A sharp vertical jump suggests the presence of an eigenvalue corresponding to an eigenfunction. Indeed, this is precisely how many of the examples from the present work were observed. Of course, one must be careful here, since one is looking at eigenvalue counting functions associated to finite tilings, so \emph{every} jump is sharp. A simple eigenvalue causes a jump of size $1/(\mbox{\# tiles})$; higher multiplicities give bigger jumps.  One is looking for a jump that is stable, i.e., the size of the jump stays bounded from below as the level of the tiling is increased.
\item Since the IDS is constant on each connected component of the complement of the spectrum, a flat section in the plot of the approximations of the IDS that is stable upon iterating the substitution rule suggests the presence of a spectral gap.
\item Conversely, the spectrum is given by the set of points of increase of the IDS, so an interval on which the IDS is everywhere increasing corresponds to an interval that is completely contained in the spectrum.
\end{enumerate}

\begin{figure}[t!]
\begin{center}
\includegraphics[width=4.5in]{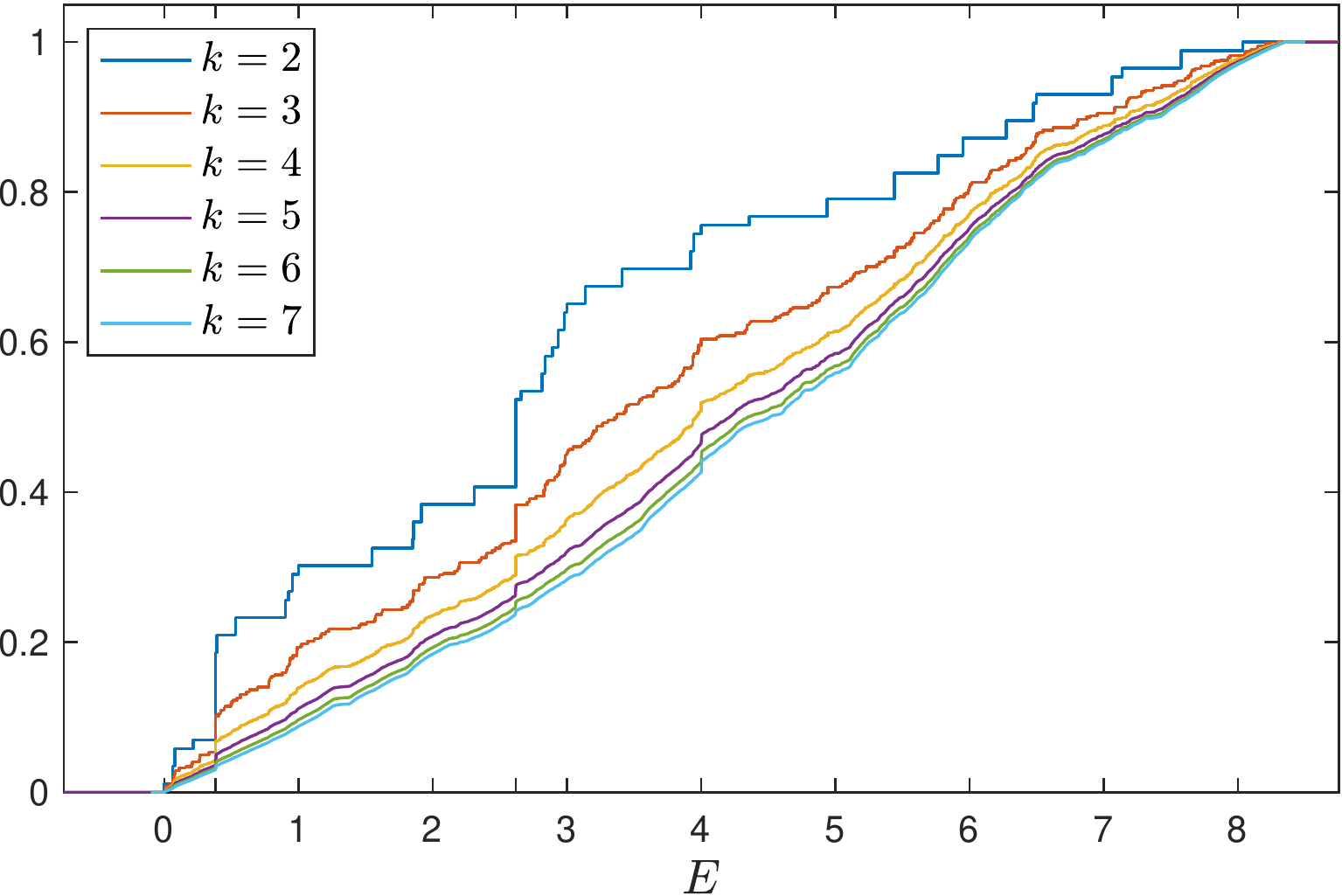}
\begin{picture}(0,0)
\put(-115,33){\makebox(100,10)[r]{\small Boat--Star}}
\end{picture}
\end{center}

\vspace*{-4pt}
\caption{\label{fig:ids_boatstar}
Computed integrated density of states for the boat--star tiling, levels~2 through~7.  Notice the (diminishing) jumps at $1/\varphi^2 \approx 0.381966$ and $\varphi^2 \approx 2.618034$ due to the boundary modes shown in Figure~\ref{fig:boatStar:bdy}; 
the jump at $E=4$ is due to ring modes of the form shown in Figure~\ref{fig:boatStar:pentSupport}.}
\end{figure}

Figure~\ref{fig:ids_boatstar} shows several finite-patch approximations to the 
IDS associated with the boat--star tiling.
To produce this plot (and the other IDS plots that follow), 
we prefer to compute all eigenvalues of $\Delta_n$ numerically (using {\tt eig} in MATLAB).\ \  
While expensive, this calculation allows one to evaluate the multiplicity of eigenvalues
(subject to rounding errors that are well understood for symmetric eigenvalue calculations).
In Figure~\ref{fig:ids_boatstar}, this {\tt eig} approach is feasible up through level~5 
(30,406 tiles).
For level~6 (210,181 tiles) and level~7 (1,447,691 tiles), we use 
a different strategy inspired by spectrum slicing~\cite[sect.~3.3]{Par98}).
The spectral interval is finely discretized with points $\{E_j\}$.
For each $E_j$, we use MATLAB's {\tt ldl} command to compute a factorization
$L - E_j = L_j D_j L_j^T$, where $L_j$ is a permuted unit lower-triangular matrix and $D_j$ is block-diagonal,
having 1-by-1 and 2-by-2 diagonal blocks~\cite[sect.~4.4]{GolubVanLoan2012}.  
By Sylvester's Law of Inertia, the congruent matrices $L-E_j$ and $D_j$ have 
the same number of negative eigenvalues; the block diagonal form of $D_j$ makes that number easy to count.  Since the negative eigenvalues of $L-E_j$ reveal the number of eigenvalues of $L$ smaller than $E_j$, these counts collectively give
an approximation to the IDS.\ \ 
(Indeed, we also use this approach to count the multiplicity of special energies known to have locally-supported eigenfunctions, as presented in the tables throughout this paper.
For these counts, we take slices just above and below the target energy; 
in most cases we vary the slice size to gain confidence in the presented numbers.)

One observes some interesting features that prompt the following questions. First, one is interested in the topological structure of the spectrum.

\begin{question} \label{q:boatstar:int}
Let $\Sigma_\boatstar$ denote the spectrum associated with the boat--star tiling. Is the interior of $\Sigma_\boatstar$ nonempty? If the interior is nonempty, is it dense in $\Sigma_\boatstar$?
\end{question}

We expect that establishing the presence, let alone density, of intervals in the spectrum to be quite challenging. The plots of the approximants to the IDS suggest that one may start looking for nonempty intervals near the extrema of the spectrum. Thus, we pose separately the following question, which may be approachable via perturbative methods.

\begin{question} \label{q:boatstar:extreme}
Does there exist $\delta>0$ such that 
\begin{equation}
[0,\delta) \cup (\max\Sigma_\boatstar-\delta,\max\Sigma_\boatstar] \subseteq\Sigma_\boatstar?
\end{equation}
\end{question}

The estimation and computation of extrema of the spectrum is a separately interesting question. The bottom is given by $E=0$ by elementary arguments, but the top of the spectrum is not always trivial to compute, so we also ask:

\begin{question} \label{q:boatstar:ground}
Can one compute $\max\Sigma_\boatstar$ in closed form?
\end{question}

It is clear from \cite{KlasLenzStol2003CMP} that one may ``insert'' a locally-supported eigenfunction into a Laplacian on any MLD class. Namely, the MLD class of any tiling contains a tiling whose nearest neighbor Laplacian has locally-supported eigenfunctions.
\begin{question}
Can one always remove a locally-supported eigenfunction from a Laplacian on any MLD class? Is it even true that for every MLD class, there exists a tiling in that class whose associated Laplacian does not have any locally-supported eigenfunctions?
\end{question}

On the one hand, as we just mentioned, one can always ensure the presence of some locally supported eigenfunction for a suitable choice of tiling in an MLD class, and then, assuming the model in question is linearly repetitive, this will always lead to a jump in the IDS. On the other hand, as we have seen in this paper, the IDS may in fact have multiple jumps in some cases. This naturally leads to the following question.

\begin{question} \label{q:boatstar:numjumps}
Is the number of jumps in the IDS always finite? Is there an effective way of bounding this number for a given linearly repetitive tiling?
\end{question}

\begin{figure}[b!]
\begin{center}
\includegraphics[width=4.5in]{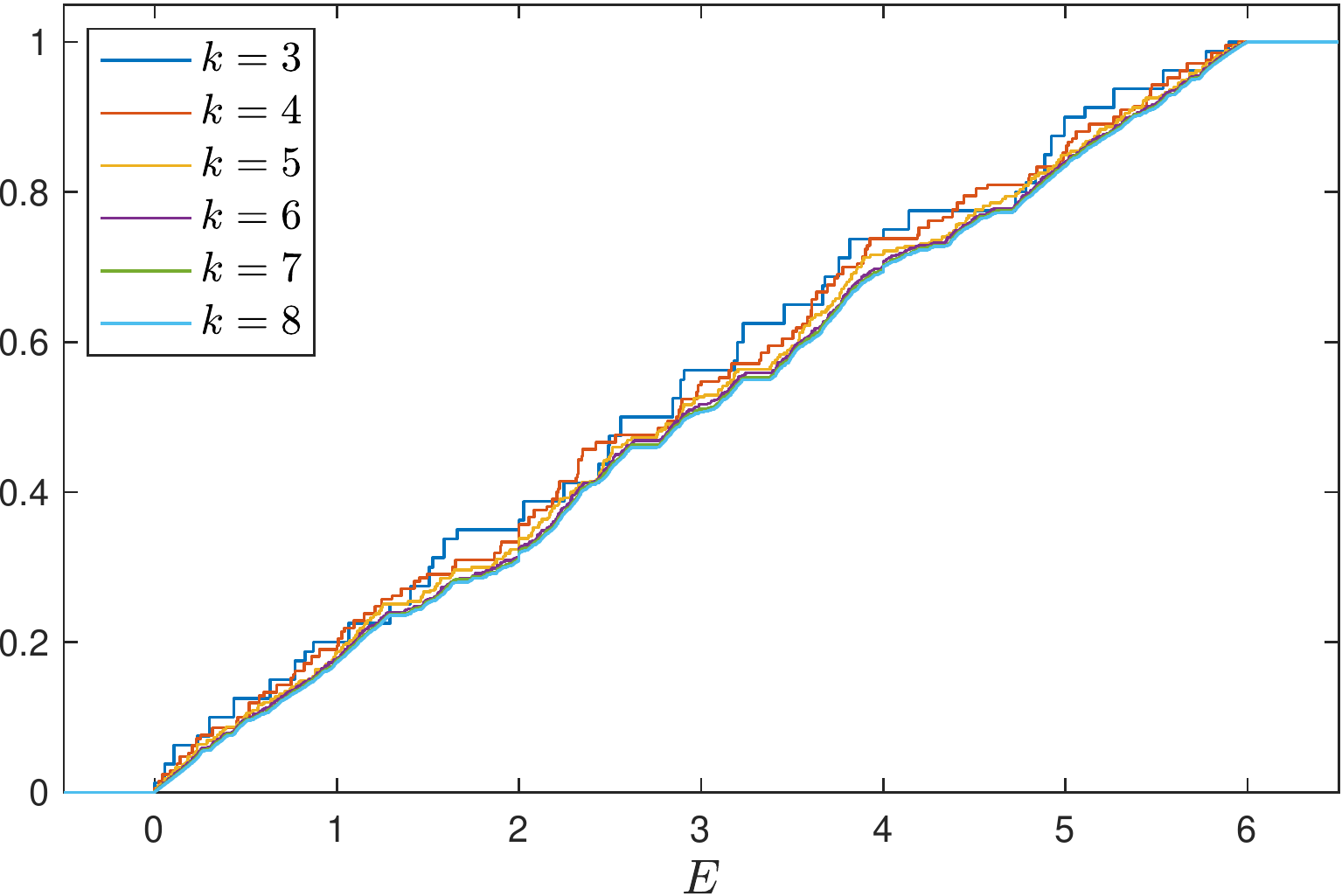}
\begin{picture}(0,0)
\put(-115,33){\makebox(100,10)[r]{\small Robinson triangle}}
\end{picture}

\vspace*{-4pt}
\caption{\label{fig:ids_triangle}
Computed integrated density of states for the Robinson triangle tiling, levels~3 through~8.
Note the jumps at $E=2$ and $E=4$.}
\end{center}
\end{figure}

\begin{figure}[b!]
\begin{center}
 \includegraphics[width=4.5in]{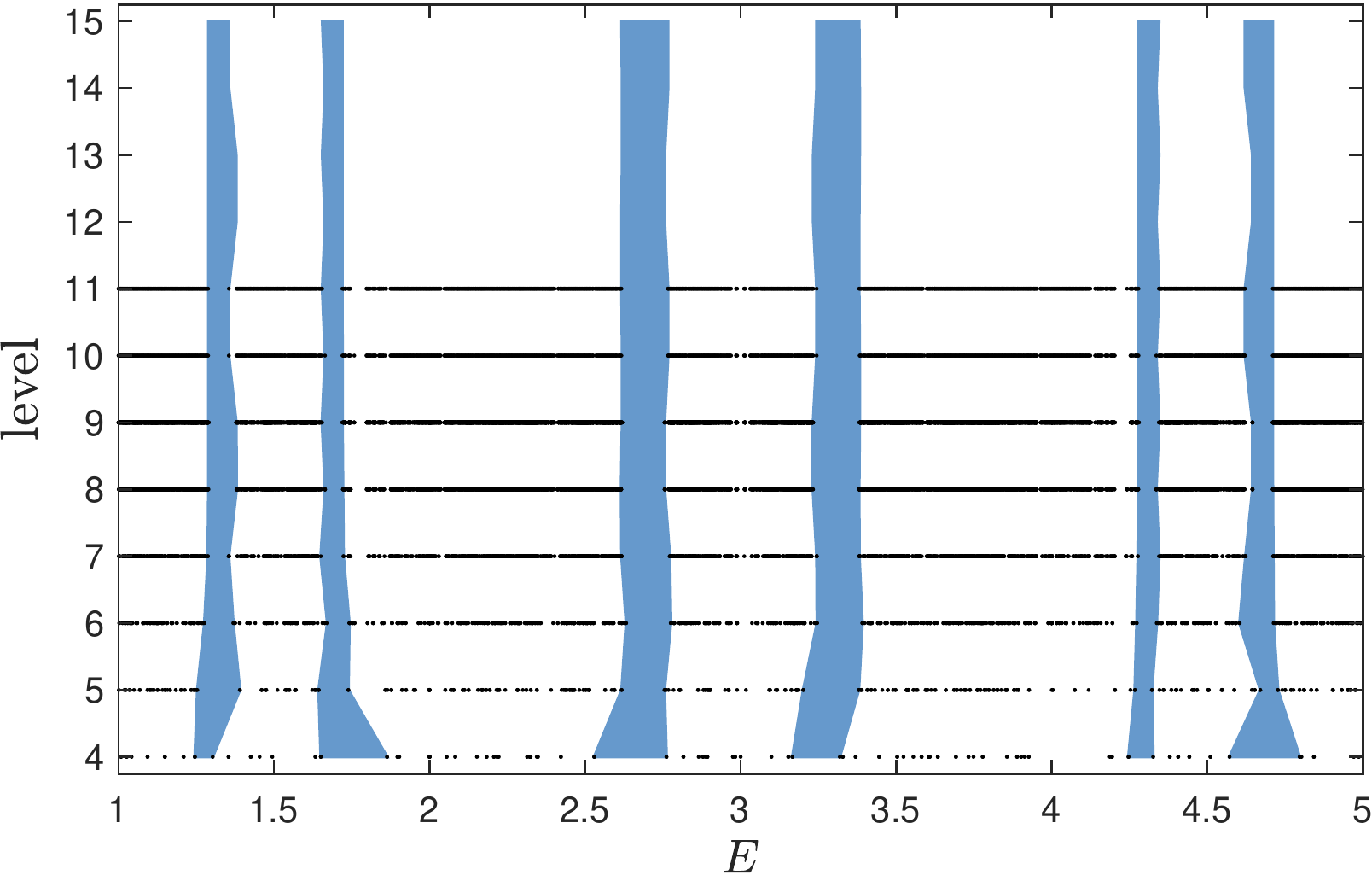}
\end{center}

\vspace*{-5pt}
\caption{\label{fig:trigaps}
The spectra of finite-patch approximations of the Robinson triangle tiling 
exhibit gaps that apparently persist as the level is increased,
corresponding to plateaus in the integrated density of states.
This image shows numerically computed interior bounds for six gaps (blue regions) 
as the level $k$ increases.
For lower levels, we also show all computed eigenvalues as black dots.
Several additional gaps are apparent.  (We suspect there are infinitely many such gaps.)
}
\end{figure}

Note that if the previous question has an affirmative answer, the IDS will be piecewise continuous, and then it is natural to ask for stronger regularity properties on these pieces. Specifically, based on the shape of the IDS plots we have exhibited, we ask the following: 

\begin{question} \label{q:boatstar:holder}
Near the top or bottom of the spectrum, is the IDS $\alpha$-H\"older continuous? Lipschitz continuous?
\end{question}

Figure~\ref{fig:ids_triangle} shows several finite-patch approximations to the IDS associated with the Robinson triangle tiling. The reader can observe the jumps at energies $E = 2$ and $E=4$. As mentioned before, the spectrum is given by the set of points of increase of the IDS.\ \ As such, the parts of this IDS plot that correspond to the bottom and top of the spectrum are somewhat suggestive.

\begin{question}
Investigate the analogs of Questions~\ref{q:boatstar:int}, \ref{q:boatstar:extreme}, and \ref{q:boatstar:holder} for the Robinson triangle tiling.
\end{question}

Notice an intriguing feature of the graph of the approximants to the IDS of the Robinson tiling in Figure~\ref{fig:ids_triangle}: the emergence of what appear to be relatively stable spectral gaps (e.g., a bit to the left and right of $E=3$).  Figure~\ref{fig:trigaps} examines this possibility in finer detail:  we compare finitely computed eigenvalues of $\Delta_n$ as $n$ grows, looking for persistent gaps.  Beyond the level at which we can compute all eigenvalues of $\Delta_n$, we use spectrum slicing to locate eigenvalues that define the edge of the intervals that were suggested at lower levels.
Figure~\ref{fig:trigaps} shows six such gaps in blue; several other potential gaps (e.g., to the right of the second blue gap, near $E=3$, and to the left of the fifth blue gap) are also apparent.
It would be interesting to verify this phenomenon rigorously.

\begin{figure}[b]
\begin{center}
\includegraphics[width=4.5in]{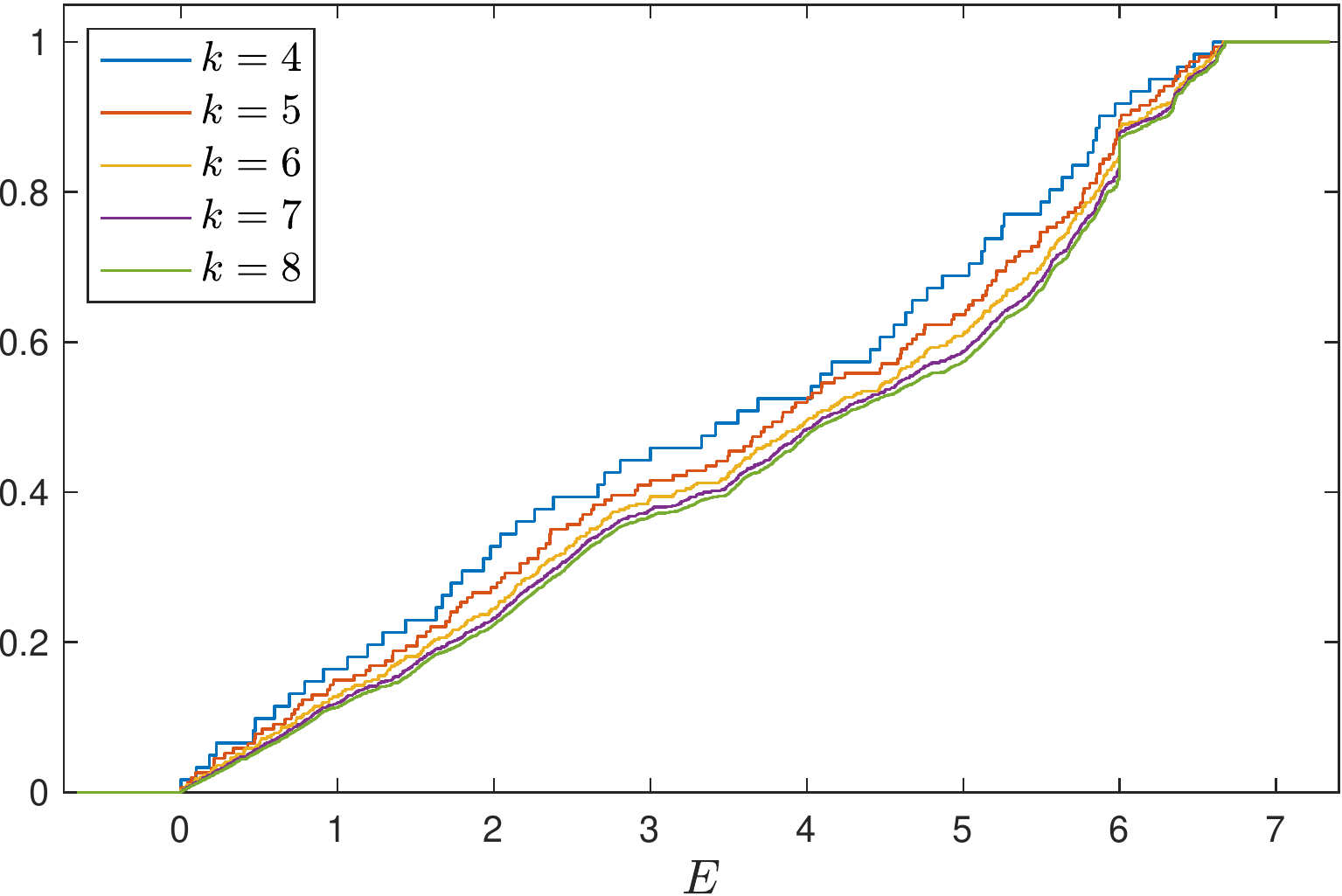}
\begin{picture}(0,0)
\put(-115,33){\makebox(100,10)[r]{\small Rhombus}}
\end{picture}

\vspace*{-4pt}
\caption{\label{fig:ids_rhombus}
Computed integrated density of states for the rhombus tiling, levels~4 through~8.
Note the jump at $E=6$.}
\end{center}
\end{figure}

\begin{figure}[b!]
\begin{center}
\includegraphics[width=4.5in]{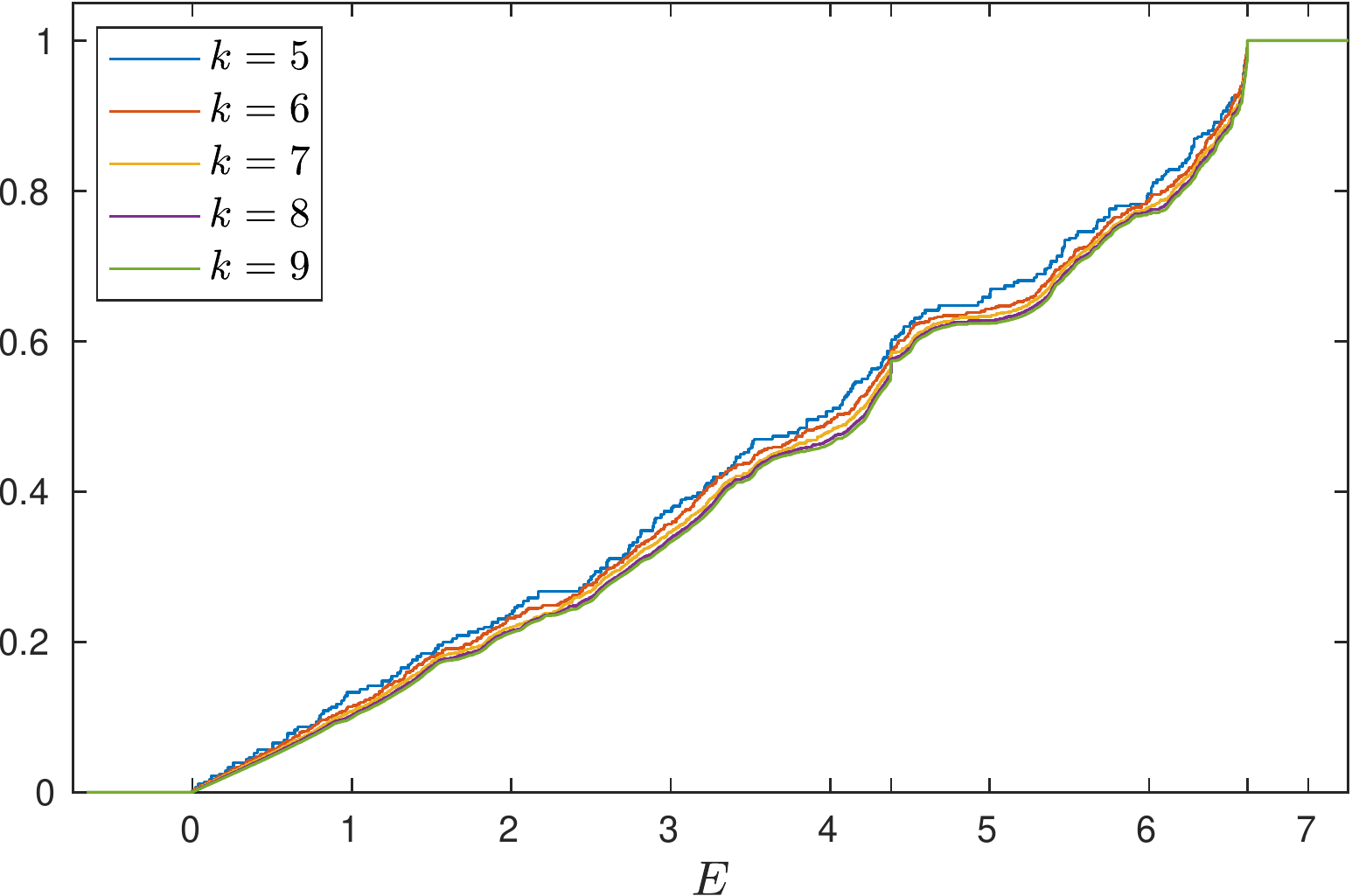}
\begin{picture}(0,0)
\put(-115,33){\makebox(100,10)[r]{\small Kite--Dart}}
\end{picture}

\vspace*{-4pt}
\caption{\label{fig:ids_kitedart}
Computed integrated density of states for the kite--dart tiling, levels~5 through~9.
Note the jumps at $E=6-\varphi=4.381966\ldots$ and $E=5+\varphi=6.618033\ldots$ (at the top of the spectrum).}
\end{center}
\end{figure}

\begin{question} \label{q:rob:gap}
Show that $\Sigma_\robinson$ has a nontrivial spectral gap.  Are there infinitely many?
\end{question}

We conclude with approximations to the IDS for the rhombus, kite--dart, and Ammann--Beenker tilings in Figures~\ref{fig:ids_rhombus}, \ref{fig:ids_kitedart}, and \ref{fig:ids_ab}. These plots have similar features to the first two, and hence one may ask similar questions to Questions~\ref{q:boatstar:int}--\ref{q:rob:gap}. 

\begin{figure}[h!]
\begin{center}
\includegraphics[width=4.5in]{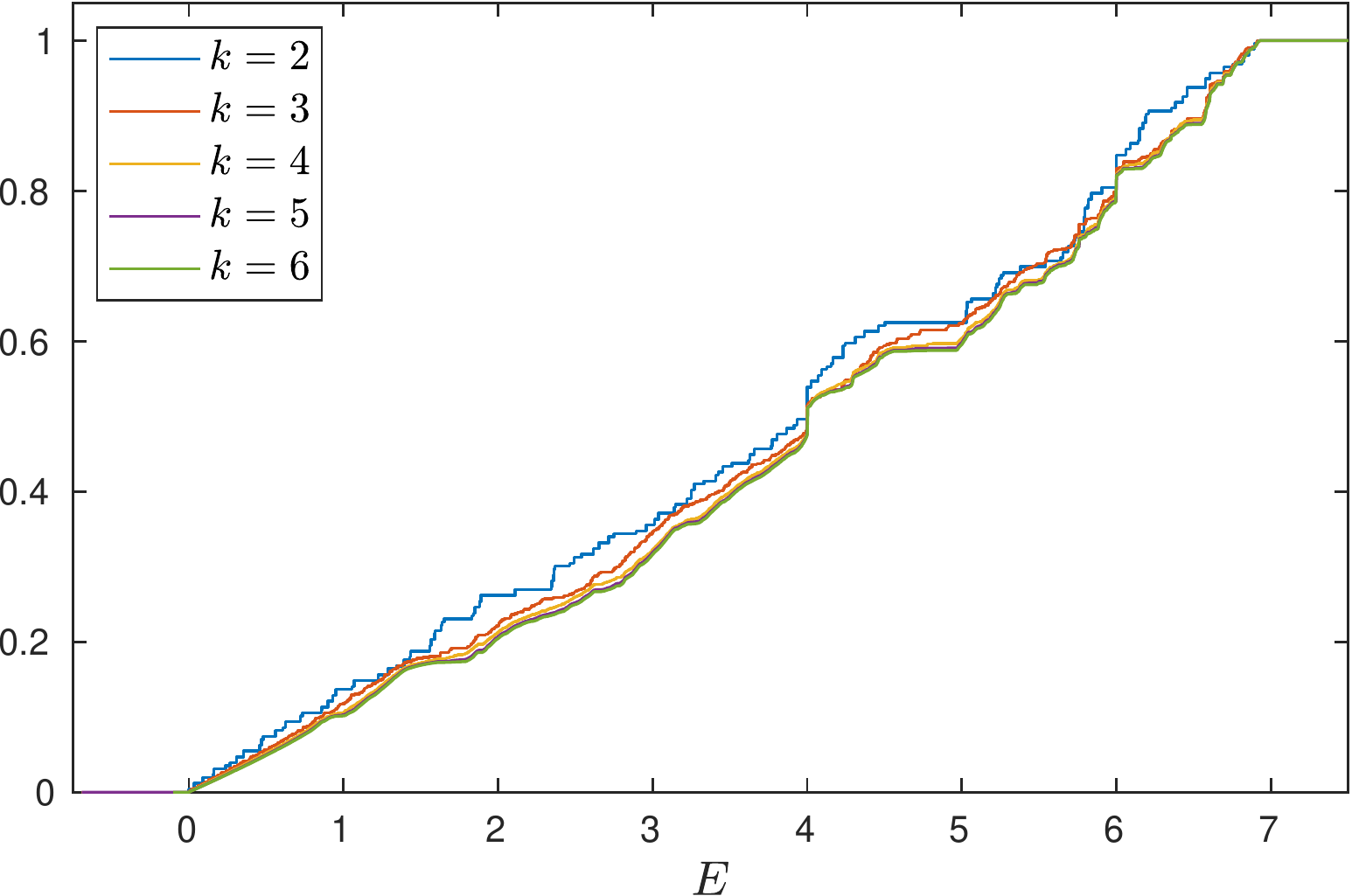}
\begin{picture}(0,0)
\put(-115,33){\makebox(100,10)[r]{\small Ammann--Beenker}}
\end{picture}

\vspace*{-4pt}
\caption{\label{fig:ids_ab}
Computed integrated density of states for the Ammann--Beenker tiling, levels~2 through~6.
Note the jumps at $E=4$ and $E=6$.}
\end{center}
\end{figure}



\bibliographystyle{abbrv}

\bibliography{PenroseBib}

\begin{thebibliography}{10}

\bibitem{ATFK1988PRB}
M.~Arai, T.~Tokihiro, T.~Fujiwara, and M.~Kohmoto.
\newblock Strictly localized states on a two-dimensional {P}enrose lattice.
\newblock {\em Phys. Rev. B (3)}, 38(3):1621--1626, 1988.

\bibitem{BaakeGrimm2013:AOVol1}
M.~Baake and U.~Grimm.
\newblock {\em Aperiodic order. {V}ol. 1}, volume 149 of {\em Encyclopedia of
  Mathematics and its Applications}.
\newblock Cambridge University Press, Cambridge, 2013.
\newblock A mathematical invitation, With a foreword by Roger Penrose.

\bibitem{BaakeGrimm2013:AOVol2}
M.~Baake and U.~Grimm.
\newblock {\em Aperiodic order. {V}ol. 2}, volume 166 of {\em Encyclopedia of
  Mathematics and its Applications}.
\newblock Cambridge University Press, Cambridge, 2017.

\bibitem{BaakeMoody2000CRM}
M.~Baake and R.~V. Moody, editors.
\newblock {\em Directions in mathematical quasicrystals}, volume~13 of {\em CRM
  Monograph Series}.
\newblock American Mathematical Society, Providence, RI, 2000.

\bibitem{DamGorYes2016Invent}
D.~Damanik, A.~Gorodetski, and W.~Yessen.
\newblock The {F}ibonacci {H}amiltonian.
\newblock {\em Invent. Math.}, 206(3):629--692, 2016.

\bibitem{DamanikLenz2001}
D.~Damanik and D.~Lenz.
\newblock Linear repetitivity. {I}. {U}niform subadditive ergodic theorems and
  applications.
\newblock {\em Discrete Comput. Geom.}, 26(3):411--428, 2001.

\bibitem{Frank2008Expo}
N.~P. Frank.
\newblock A primer of substitution tilings of the {E}uclidean plane.
\newblock {\em Expo. Math.}, 26(4):295--326, 2008.

\bibitem{TilingsEncyclopedia}
D.~Frettl\"oh, E.~Harriss, and F.~G\"ahler.
\newblock Tilings encyclopedia.
\newblock https://tilings.math.uni-bielefeld.de.

\bibitem{FATK1988PRB}
T.~Fujiwara, M.~Arai, T.~Tokihiro, and M.~Kohmoto.
\newblock Localized states and self-similar states of electrons on a
  two-dimensional {P}enrose lattice.
\newblock {\em Phys. Rev. B (3)}, 37(6):2797--2804, 1988.

\bibitem{Gardner1997}
M.~Gardner.
\newblock {\em Penrose Tiles to Trapdoor Ciphers}.
\newblock Mathematical Association of America, Washington, DC, revised edition,
  1997.

\bibitem{GeerseHof1991}
C.~P.~M. Geerse and A.~Hof.
\newblock Lattice gas models on self-similar aperiodic tilings.
\newblock {\em Rev. Math. Phys.}, 3(2):163--221, 1991.

\bibitem{GolubVanLoan2012}
G.~H. Golub and C.~F. Van~Loan.
\newblock {\em Matrix Computations}.
\newblock Johns Hopkins University Press, Baltimore, fourth edition, 2012.

\bibitem{GrunbaumShephard1987}
B.~Gr\"{u}nbaum and G.~C. Shephard.
\newblock {\em Tilings and Patterns}.
\newblock W. H. Freeman and Company, New York, 1987.

\bibitem{KellLenzSav2015}
J.~Kellendonk, D.~Lenz, and J.~Savinien, editors.
\newblock {\em Mathematics of aperiodic order}, volume 309 of {\em Progress in
  Mathematics}.
\newblock Birkh\"{a}user/Springer, Basel, 2015.

\bibitem{KlasLenzStol2003CMP}
S.~Klassert, D.~Lenz, and P.~Stollmann.
\newblock Discontinuities of the integrated density of states for random
  operators on {D}elone sets.
\newblock {\em Comm. Math. Phys.}, 241(2-3):235--243, 2003.

\bibitem{KohSut1986PRL}
M.~Kohmoto and B.~Sutherland.
\newblock Electronic states on a {P}enrose lattice.
\newblock {\em Phys. Rev. Lett.}, 56:2740--2743, 1986.

\bibitem{KohSutTan1987}
M.~Kohmoto, B.~Sutherland, and C.~Tang.
\newblock Critical wave functions and a {C}antor-set spectrum of a
  one-dimensional quasicrystal model.
\newblock {\em Phys. Rev. B (3)}, 35(3):1020--1033, 1987.

\bibitem{LenzStollmann2003MPAG}
D.~Lenz and P.~Stollmann.
\newblock Algebras of random operators associated to {D}elone dynamical
  systems.
\newblock {\em Math. Phys. Anal. Geom.}, 6(3):269--290, 2003.

\bibitem{LenzStollmann2001}
D.~Lenz and P.~Stollmann.
\newblock Delone dynamical systems and associated random operators.
\newblock In {\em Operator algebras and mathematical physics ({C}onstan\c{t}a,
  2001)}, pages 267--285. Theta, Bucharest, 2003.

\bibitem{LenzStollmann2005}
D.~Lenz and P.~Stollmann.
\newblock An ergodic theorem for {D}elone dynamical systems and existence of
  the integrated density of states.
\newblock {\em J. Anal. Math.}, 97:1--24, 2005.

\bibitem{MirzOkt2020PRB}
M.~Mirzhalilov and M.~{\"{O}}. Oktel.
\newblock Perpendicular space accounting of localized states in a quasicrystal.
\newblock {\em Phys. Rev. B}, 102, 2020.

\bibitem{Moody1997NATO}
R.~V. Moody, editor.
\newblock {\em The mathematics of long-range aperiodic order}, volume 489 of
  {\em NATO Advanced Science Institutes Series C: Mathematical and Physical
  Sciences}. Kluwer Academic Publishers Group, Dordrecht, 1997.

\bibitem{Oktel2021}
M.~{\"{O}}. Oktel.
\newblock Strictly localized states in the octagonal {A}mmann-{B}eenker
  quasicrystal.
\newblock {\em Phys. Rev. B}, 104, 2021.

\bibitem{Oktel2022}
M.~{\"{O}}. Oktel.
\newblock Localized states in local isomorphism classes of pentagonal
  quasicrystals.
\newblock preprint: arXiv:2203.09899, 2022.

\bibitem{OPRSS1983}
S.~Ostlund, R.~Pandit, D.~Rand, H.~J. Schellnhuber, and E.~D. Siggia.
\newblock One-dimensional {S}chr\"{o}dinger equation with an almost periodic
  potential.
\newblock {\em Phys. Rev. Lett.}, 50(23):1873--1876, 1983.

\bibitem{Par98}
B.~N. Parlett.
\newblock {\em The Symmetric Eigenvalue Problem}.
\newblock SIAM, Philadelphia, {SIAM} {Classics} edition, 1998.

\bibitem{Patera1998FIM}
J.~Patera, editor.
\newblock {\em Quasicrystals and discrete geometry}, volume~10 of {\em Fields
  Institute Monographs}. American Mathematical Society, Providence, RI, 1998.

\bibitem{Queffelec2010}
M.~Queff\'{e}lec.
\newblock {\em Substitution dynamical systems---spectral analysis}, volume 1294
  of {\em Lecture Notes in Mathematics}.
\newblock Springer-Verlag, Berlin, second edition, 2010.

\bibitem{SBGC1984PRL}
D.~Shechtman, I.~Blech, D.~Gratias, and J.~V. Cahn.
\newblock Metallic phase with long-range orientational order and no
  tranlational symmetry.
\newblock {\em Phys. Rev. Lett.}, 53:1951--1953, 1984.

\bibitem{Suto1987CMP}
A.~S\"{u}t\H{o}.
\newblock The spectrum of a quasiperiodic {S}chr\"{o}dinger operator.
\newblock {\em Comm. Math. Phys.}, 111(3):409--415, 1987.

\bibitem{Suto1989JSP}
A.~S\"{u}t\H{o}.
\newblock Singular continuous spectrum on a {C}antor set of zero {L}ebesgue
  measure for the {F}ibonacci {H}amiltonian.
\newblock {\em J. Statist. Phys.}, 56(3-4):525--531, 1989.

\end{thebibliography}

\end{document}